\newtheorem{thm}{Theorem}[section]
\newtheorem*{thm*}{Theorem}
\newtheorem*{cor*}{Corollary}
\newtheorem*{prop*}{Proposition}
\newtheorem{cor}[thm]{Corollary}
\newtheorem{prop}[thm]{Proposition}
\newtheorem{lem}[thm]{Lemma}
\theoremstyle{definition}
\newtheorem{exmp}[thm]{Example}
\newtheorem*{notn*}{Notation}
\theoremstyle{remark}
\newtheorem{rem}[thm]{Remark}
\newtheorem*{idea*}{Idea}
\newcommand{\Spec}{{\rm Spec}}
\let\c@equation\c@thm
\numberwithin{thm}{section}
\numberwithin{equation}{section}
\title[Moduli Space of $\Lambda$-Modules on Projective Deligne-Mumford Stacks]{Moduli Space of $\Lambda$-Modules on Projective Deligne-Mumford Stacks}
\author{Hao Sun}
\begin{document}
\pagenumbering{arabic}
\maketitle
\begin{abstract}
In this paper, we define $\Lambda$-quot-functors on Deligne-Mumford stacks. We prove that the $\Lambda$-quot-functor is representable by an algebraic space. Then, we construct the moduli space of $\Lambda$-modules on a projective Deligne-Mumford stack. We prove that this moduli space is a quasi-projective scheme.
\end{abstract}

\flushbottom

\tableofcontents

\newpage

\renewcommand{\thefootnote}{\fnsymbol{footnote}}
\footnotetext[1]{MSC2010 Class: 14A20, 14C05, 14D20}
\footnotetext[2]{Key words: projective Deligne-Mumford stack, $\Lambda$-module, moduli space}

\section{Introduction}
The moduli space of Higgs bundles was first introduced by N. Hitchin \cite{Hit1987}. He gave an analytic construction of the moduli space for rank two Higgs bundles over Riemann surfaces. C. Simpson constructed the moduli space of $\Lambda$-modules on smooth projective varieties over $\mathbb{C}$ in an algebraic way, and Higgs bundle is a special case of $\Lambda$-modules \cite{Simp2}. Based on the non-abelian Hodge correspondence, there are many distinct approaches to study the moduli space of Higgs bundles. Our initial motivations grew out of our interest in the moduli space of parabolic Higgs bundles. C. Simpson provided a non-abelian Hodge correspondence in the non-compact case \cite{Simp1990}: (tame) parabolic Higgs bundles are in bijection with meromorphic flat connections with simple poles. It is well-known that the parabolic bundles can be understood as bundles on root stacks (also called orbifolds and $V$-manifolds) \cite{Bis97,FuSt,NaSt}, which can be extended to twisted parabolic Higgs bundles and twisted Higgs bundles on root stacks \cite{KSZ1901}. This brings up the question that whether the non-abelian Hodge correspondence can be extended to root stacks, or more generally, Deligne-Mumford stacks. Based on the correspondence between parabolic bundles and bundles over root stacks, the non-abelian Hodge correspondence holds on root stacks in case of $\text{GL}_n(\mathbb{C})$. However, for a general reductive (or semisimple) group $G$, the non-abelian Hodge correspondence is not known for parabolic $G$-Higgs bundles. P. Boalch pointed out that in this case, the correct choice to construct the non-abelian Hodge correspondence should be the parahoric Higgs $\mathscr{G}$-torsors instead of parabolic $G$-Higgs bundles (see \cite{Boalch2011,Boalch2018}). Similar to parabolic bundles, parahoric $\mathscr{G}$-torsors also correspond a certain type of bundles on root stacks (see \cite[\S 5]{BaSes}). Although the moduli space of parahoric $\mathscr{G}$-torsors on Riemann surfaces is constructed \cite{BaSes}, an algebraic construction of the moduli space of parahoric Higgs bundles on smooth projective varieties is still missing, and the existence of corresponding moduli space on root stacks (or Deligne-Mumford stacks) is unknown yet. This paper is a first step to deal with the above problems, and we give an algebraic construction of the moduli space of $\Lambda$-modules on projective Deligne-Mumford stacks.

Let $X$ be a scheme, and let $\Lambda$ be a sheaf of graded algebras over $X$. A $\Lambda$-module is a coherent sheaf equipped with a $\Lambda$-structure. The $\Lambda$-module is derived from the Higgs bundle. Usually, a Higgs bundle over a smooth projective variety is considered as a pair, which includes a vector bundle and a Higgs field. Similarly, $\Lambda$-modules can be defined in this way. A $\Lambda$-module over a smooth projective variety is also a pair $(F,\Phi)$, where $F$ is a coherent sheaf and $\Phi: \Lambda \rightarrow \mathcal{E}nd(F)$ gives a $\Lambda$-structure on $F$.

C. Simpson constructed the moduli space $\mathcal{M}(X)$ of coherent sheaves and the moduli space $\mathcal{M}_{\Lambda}(X)$ of $\Lambda$-modules on smooth projective varieties decades ago \cite{Simp2}. Afterwards, the moduli space $\mathcal{M}(\mathcal{X})$ of coherent sheaves on (projective) Deligne-Mumford stacks was constructed \cite{Nir,OlSt}, where $\mathcal{X}$ is a Deligne-Mumford stack.
\begin{center}
\begin{tikzcd}
\mathcal{M}_{\Lambda}(\mathcal{X}) \arrow[d, dashrightarrow] \arrow[r, dashrightarrow]   & \mathcal{M}_{\Lambda}(X) \arrow[d]   \\
\mathcal{M}(\mathcal{X}) \arrow[r] & \mathcal{M}(X)
\end{tikzcd}
\end{center}
Therefore, the only mysterious object left in the above diagram is $\mathcal{M}_{\Lambda}(\mathcal{X})$, \emph{the moduli space of $\Lambda$-modules on (projective) Deligne-Mumford stacks}.

C. Simpson constructed the moduli space of $p$-semistable $\Lambda$-modules on a smooth ``projective" Deligne-Mumford stack $\mathcal{X}$ over $\mathbb{C}$ by using a simplicial resolution of $\mathcal{X}$ \cite{Simp2010}. A smooth ``projective" Deligne-Mumford stack $\mathcal{X}$ here means that a Deligne-Mumford stack admits a smooth projective coarse moduli space and a surjective \'etale morphism $Y_0 \rightarrow \mathcal{X}$ such that $Y_0$ is a smooth projective variety over $\mathbb{C}$.

This paper aims at working on a more general type of Deligne-Mumford stacks, projective Deligne-Mumford stacks over an algebraic space, and constructing the moduli space of $\Lambda$-modules on these Deligne-Mumford stacks. The terminology \emph{projective Deligne-Mumford stacks} in this paper is different from Simpson's definition. Roughly speaking, a projective Deligne-Mumford stack is a tame Deligne-Mumford stack over an algebraic space such that its coarse moduli space is projective over the algebraic space. Compared with Simpson's object, we are working on arbitrary characteristic under the condition tameness, and we are not assuming that there exists an \'etale covering $Y_0 \rightarrow \mathcal{X}$ such that $Y_0$ is a smooth projective variety.

As we mentioned above, C. Simpson studied this problem a decade ago. His approach is based on the choice of a special simplicial resolution of the Deligne-Mumford stack and the existence of the moduli space of Higgs bundles on smooth projective varieties over $\mathbb{C}$. More precisely, we suppose that there exists a simplicial resolution $Y_{\bullet}=[\cdots Y_1 \rightrightarrows Y_0] \rightarrow \mathcal{X}$ of the Deligne-Mumford stack $\mathcal{X}$, where $Y_i$ are smooth projective varieties for $i \geq 0$. Then, the existence of the moduli space of $p$-semistable $\Lambda$-bundles on each $Y_i$ gives us the moduli space of $p$-semistable $\Lambda$-modules on the simplicial resolution $Y_{\bullet}$, and therefore the moduli space of $p$-semistable $\Lambda$-modules on $\mathcal{X}$. We refer the reader to \cite{Simp2010} for more details.

In our case, such a resolution may not exist, and therefore Simpson's approach may not work. As shown in the diagram, there is another possible way left for us. We can try to construct the moduli space of $\Lambda$-modules $\mathcal{M}_{\Lambda}(\mathcal{X})$ based on the existence of the moduli space of coherent sheaves $\mathcal{M}(\mathcal{X})$.

Now we review some results of the moduli space of coherent sheaves on Deligne-Mumford stacks. Let $S$ be an algebraic space over an algebraically closed field $k$, and let $\mathcal{X}$ be a Deligne-Mumford stack over $S$ with coarse moduli space $\pi : \mathcal{X} \rightarrow X$. Let $\mathcal{G}$ be a coherent sheaf over $\mathcal{X}$. The quot-functor $\widetilde{{\rm Quot}}(\mathcal{G},\mathcal{X})$ on $\mathcal{X}$ has been defined and studied by M. Olsson and J. Starr \cite{OlSt}. The quot-functor $\widetilde{{\rm Quot}}(\mathcal{G},\mathcal{X})$ is proved to be representable by an algebraic space ${\rm Quot}(\mathcal{G},\mathcal{X})$ \cite[Theorem 1.1]{OlSt}, which gives the existence of the moduli space of coherent sheaves on $\mathcal{X}$. Furthermore, when $S$ is an affine scheme, or a noetherian scheme of finite type and $\mathcal{X}$ is a projective Deligne-Mumford stack, each connected component of ${\rm Quot}(\mathcal{G},\mathcal{X})$ is an $S$-projective scheme \cite[Theorem 1.5]{OlSt} \cite[Theorem 2.17]{Nir}. The connected components of ${\rm Quot}(\mathcal{G},\mathcal{X})$ are parametrized by integer polynomials $P$. Denote by ${\rm Quot}(\mathcal{G},\mathcal{X},P)$ the $S$-projective scheme with respect to the polynomial $P$. Based on the result that the connected components of ${\rm Quot}(\mathcal{G},\mathcal{X})$ are $S$-projective schemes, Nironi studied the moduli problem of $p_{\mathcal{E}}$-semistable coherent sheaves on a projective Deligne-Mumford stack $\mathcal{X}$ using the geometric invariant theory \cite{Nir}, where $\mathcal{E}$ is a generating sheaf on $\mathcal{X}$. The modified Hilbert polynomial with respect to $\mathcal{E}$ and $\mathcal{O}_X(1)$ is defined as follows
\begin{align*}
P_{\mathcal{E}}(\mathcal{F},m)=\chi(\mathcal{X},\mathcal{F} \otimes \mathcal{E}^{\vee} \otimes \pi^* \mathcal{O}_X(m)), \quad m \gg 0,
\end{align*}
where $\mathcal{F}$ is a coherent sheaf on $\mathcal{X}$. The $p_{\mathcal{E}}$-semistability of $\mathcal{F}$ can be defined naturally (see \S 3.3). Clearly, the semistability depends on the choice of the generating sheaf $\mathcal{E}$, and this is the reason why we call it $p_{\mathcal{E}}$-semistability. We omit the subscript $\mathcal{E}$ and use the terminology $p$-semistability for simplicity in the main body of the paper. When $\mathcal{X}$ is a root stack, with a good choice of the generating sheaf $\mathcal{E}$, the $p_{\mathcal{E}}$-semistability is exactly the semistability of the corresponding parabolic bundles on the coarse moduli space $X$.

Based on the construction of the quot-functor and the moduli space of coherent sheaves on Deligne-Mumford stacks, we construct the quot-functor of $\Lambda$-modules on a Deligne-Mumford stack (over an algebraic space) first, and then study the the moduli space of $p_{\mathcal{E}}$-semistable $\Lambda$-modules on a projective Deligne-Mumford stack over an affine scheme (or a noetherian scheme of finite type). In summary, we consider two moduli problems in this paper:
\begin{enumerate}
    \item[\S 5] the quot-functor $\widetilde{{\rm Quot}}_{\Lambda}(\mathcal{G},\mathcal{X})$ of $\Lambda$-modules (also called the $\Lambda$-quot-functor), where $\mathcal{X}$ is a separated and locally finitely-presented Deligne-Mumford stack over an algebraic space $S$,
    \item[\S 6] the moduli problem $\widetilde{\mathcal{M}}_\Lambda^{ss}(\mathcal{E},\mathcal{O}_X(1),P)$ of $p_{\mathcal{E}}$-semistable $\Lambda$-modules with the modified Hilbert polynomial $P$ over $\mathcal{X}$, where $\mathcal{X}$ is a projective Deligne-Mumford stack over an affine scheme (or noetherian scheme of finite type) $S$, $\mathcal{E}$ is a generating sheaf on $\mathcal{X}$, $\mathcal{O}_X(1)$ is a polarization on the coarse moduli space $X$ and $P$ is an integer polynomial.
\end{enumerate}
Compared with the second moduli problem $\widetilde{\mathcal{M}}^{ss}(\mathcal{E},\mathcal{O}_X(1),P)$, the setup of the first problem is more general, in which the Deligne-Mumford stack $\mathcal{X}$ is not even a tame stack.

The prerequisite sections for studying the first moduli problem in \S 5 are \S 3.1 and \S 4.1, while the prerequisite sections for the second one in \S 6 are \S 3.2-\S 3.8, \S 4 and \S 5.4-\S 5.5. In conclusion,
\begin{enumerate}
\item Deligne-Mumford stacks considered in \S 3.1, \S 4.1 and \S 5.1-\S 5.3 are separated and locally finitely-presented over an algebraic space $S$.
\item We consider projective (or quasi-projective) Deligne-Mumford stacks over an affine scheme $S$ in \S 3.2-\S 3.8, \S 4, \S 5.4-\S 5.5 and \S 6.
\end{enumerate}

Here is the structure of the paper.

In \S 2, we review the definitions and some properties of tame Deligne-Mumford stacks and projective Deligne-Mumford stacks. As a tame (or projective) Deligne-Mumford stack $\mathcal{X}$, the natural map $\pi: \mathcal{X} \rightarrow X$ to its coarse moduli space $X$ induces an exact functor $\pi_*:{\rm QCoh}(\mathcal{X}) \rightarrow {\rm QCoh}(X)$, where ${\rm QCoh}$ is the category (or stack) of quasi-coherent sheaves. The functor $\pi_*$ may not be injective. If there exists an injective exact functor $F: {\rm QCoh}(\mathcal{X}) \rightarrow {\rm QCoh}(X)$, then ${\rm QCoh}(\mathcal{X})$ can be probably considered as a closed (or locally closed) subset of ${\rm QCoh}(X)$. In \S 2.2 and \S 2.3, we introduce the functor
\begin{align*}
F_{\mathcal{E}}: {\rm QCoh}(\mathcal{X}) \rightarrow {\rm QCoh}(X)
\end{align*}
where $\mathcal{E}$ is a generating sheaf. This functor $F_{\mathcal{E}}$ is proved to be an injective exact natural transformation for quot-functors \cite[Proposition 6.2]{OlSt}, and the injectivity of $F_{\mathcal{E}}$ also holds for quot-functors of $\Lambda$-modules (see Lemma \ref{508}). This functor plays an important role when we study the moduli space of $p_{\mathcal{E}}$-semistable coherent sheaves (see \S 3) and the moduli space of $p_{\mathcal{E}}$-semistable $\Lambda$-modules (see \S 6). In \S 2.5, we give the definition of moduli problems and representabilities we consider in this paper. A moduli problem is defined as a sheaf over the category of schemes over an algebraic space with respect to the big \'etale topology (or fppf topology). This definition is equivalent to consider a moduli problem as a category fibered in groupoids (CFG) satisfying the effective descent conditions \cite{CasaWise,Ol}. Given a moduli problem, it is important to understand whether there exists a coarse moduli space or a fine moduli space \cite{HaMo}. Furthermore, since we study the moduli problem related to coherent sheaves in this paper, we also give the definitions of co-representability and universal co-representability \cite{HuLe}.

In \S 3, we first review a general result that the quot-functor $\widetilde{{\rm Quot}}(\mathcal{G},\mathcal{X})$ is representable by an algebraic space ${\rm Quot}(\mathcal{G},\mathcal{X})$, where $\mathcal{X}$ is a Deligne-Mumford stack over an algebraic space $S$ (see \cite[Theorem 1.1]{OlSt} or Theorem \ref{301}). Afterwards, we give the definition of saturations, modified Hilbert polynomials, $p_{\mathcal{E}}$-stability condition, Harder-Narasimhan filtrations and Jordan-H\"{o}lder filtrations. Saturations (see Corollary \ref{304}) and modified Hilbert polynomials (see \S 3.3) of coherent sheaves over $\mathcal{X}$ are preserved under the functor $F_{\mathcal{E}}$, while the $p_{\mathcal{E}}$-stability and two filtrations are not preserved (see Remark \ref{305}). This is the reason why we have to study the $p_{\mathcal{E}}$-stability conditions in detail. Next, we review properties of boundedness of $p_{\mathcal{E}}$-semistable coherent sheaves over a projective Deligne-Mumford stack in \S 3.5 and \S 3.6. Note that there are two distinct properties called the \emph{boundedness}. Boundedness \Romannum{1} is related to the regularity and the existence of a universal family, while Boundedness \Romannum{2} is about the upper bound of the global sections of $p_{\mathcal{E}}$-semistable coherent sheaves. Langer studied Boundedness \Romannum{2} of $p$-semistable coherent sheaves on schemes in positive characteristic \cite{Lan}. Based on the geometric invariant theory, we give the construction of the moduli space of $p_{\mathcal{E}}$-semistable coherent sheaves on projective Deligne-Mumford stacks.
\begin{thm}[Theorem \ref{317}]
With respect to the situation above, we have the following results.
\begin{enumerate}
\item The moduli space $\mathcal{M}^{ss}(\mathcal{E},\mathcal{O}_X(1),P)$ is a projective $S$-scheme.
\item There exists a natural morphism
\begin{align*}
\widetilde{\mathcal{M}}^{ss}(\mathcal{E},\mathcal{O}_X(1),P) \rightarrow \mathcal{M}^{ss}(\mathcal{E},\mathcal{O}_X(1),P)
\end{align*}
such that $\mathcal{M}^{ss}(\mathcal{E},\mathcal{O}_X(1),P)$ universally co-represents $\widetilde{\mathcal{M}}^{ss}(\mathcal{E},\mathcal{O}_X(1),P)$. The points in the moduli space $\mathcal{M}^{ss}(\mathcal{E},\mathcal{O}_X(1),P)$ represent the $S$-equivalent classes of $p$-semistable sheaves.
\item $\mathcal{M}^{s}(\mathcal{E},\mathcal{O}_X(1),P)$ is a coarse moduli space of $\widetilde{\mathcal{M}}^{s}(\mathcal{E},\mathcal{O}_X(1),P)$.
\item If $x \in \mathcal{M}^{s}(\mathcal{E},\mathcal{O}_X(1),P)$ is a point such that $Q^s$ is smooth at the inverse image of $x$, then $\mathcal{M}^{s}(\mathcal{E},\mathcal{O}_X(1),P)$ is smooth at $x$.
\end{enumerate}
\end{thm}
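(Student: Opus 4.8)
The plan is to follow the geometric invariant theory (GIT) construction of Simpson \cite{Simp2} and Huybrechts--Lehn \cite{HuLe}, transported from the stack $\mathcal{X}$ to its coarse moduli space $X$ via the exact faithful functor $F_{\mathcal{E}}$; this reduces the stacky moduli problem to a GIT quotient of a Quot scheme of sheaves on $X$, where the classical machinery applies. Throughout I use that $F_{\mathcal{E}}$ preserves modified Hilbert polynomials and saturations (Corollary \ref{304}), so that the numerical semistability data on $\mathcal{X}$ is faithfully recorded on $X$.

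First I would fix $P$ and invoke the boundedness of the family of $p_{\mathcal{E}}$-semistable sheaves with modified Hilbert polynomial $P$ (Boundedness I). This yields an integer $m \gg 0$ such that every such $\mathcal{F}$ is $m$-regular: its higher cohomology (after the appropriate twist) vanishes and the natural evaluation is surjective, giving a quotient $\mathcal{E}\otimes\pi^*\mathcal{O}_X(-m)\otimes_k H \twoheadrightarrow \mathcal{F}$ with $H = k^{P(m)}$ that induces an isomorphism $H \xrightarrow{\sim} H^0\big(F_{\mathcal{E}}(\mathcal{F})\otimes\mathcal{O}_X(m)\big)$. Hence every $p_{\mathcal{E}}$-semistable $\mathcal{F}$ is represented by a point of the Quot scheme $\mathrm{Quot}\big(\mathcal{E}\otimes\pi^*\mathcal{O}_X(-m)\otimes_k H,\mathcal{X},P\big)$, which is a projective $S$-scheme by Theorem \ref{301} together with \cite[Theorem 2.17]{Nir}. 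Let $Q$ denote the locally closed subscheme parametrizing those quotients that are $p_{\mathcal{E}}$-semistable and for which the induced map on global sections is an isomorphism; the group $\mathrm{SL}(H)$ acts on $Q$, and two points share an orbit exactly when the underlying sheaves are isomorphic.

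The heart of the argument, and the principal obstacle, is to choose a linearization of this action for which GIT (semi)stability coincides with $p_{\mathcal{E}}$-(semi)stability. For a second large twist $\ell$ I would embed the Quot scheme into a Grassmannian by sending a quotient to the induced quotient of global sections $H^0\big(F_{\mathcal{E}}(\mathcal{F})\otimes\mathcal{O}_X(\ell)\big)$, pull back the Pl\"ucker polarization, and run the Hilbert--Mumford criterion. Via $F_{\mathcal{E}}$ this comparison becomes the corresponding numerical computation for sheaves on $X$, where the decisive ingredient is the upper bound on the number of global sections of $p_{\mathcal{E}}$-semistable sheaves (Boundedness II, following Langer \cite{Lan} in positive characteristic); this is precisely what forces a GIT-(semi)stable point to correspond to a $p_{\mathcal{E}}$-(semi)stable sheaf and conversely. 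Having matched the two notions, I form the good quotient $\mathcal{M}^{ss}(\mathcal{E},\mathcal{O}_X(1),P)=Q^{ss}/\!\!/\,\mathrm{SL}(H)$; since $Q^{ss}$ is projective over $S$ and $\mathrm{SL}(H)$ is reductive with an ample linearization, the quotient is a projective $S$-scheme, proving (1).

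For (2), the good quotient co-represents the functor of families, and universal co-representation follows because the construction is compatible with base change over $S$ and is locally trivial for the chosen polarization; closed orbits in $Q^{ss}$ correspond to polystable sheaves, so identifying points whose orbit closures meet reproduces exactly the $S$-equivalence classes. For (3), on the stable locus the stabilizers reduce to the scalars, the induced $\mathrm{PGL}(H)$-action has closed orbits, and the good quotient $Q^s \to \mathcal{M}^s(\mathcal{E},\mathcal{O}_X(1),P)$ is geometric, hence a coarse moduli space. Finally (4) is formal: $Q^s \to \mathcal{M}^s$ is a principal $\mathrm{PGL}(H)$-bundle and in particular smooth and faithfully flat, so smoothness of $Q^s$ along the fibre over $x$ descends to smoothness of $\mathcal{M}^s$ at $x$.
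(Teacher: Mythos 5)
Your proposal follows essentially the same route as the paper: boundedness gives a uniform regularity, the $p_{\mathcal{E}}$-semistable sheaves are parametrized by an open locus $Q^{ss}$ of a projective Quot scheme carrying an $\mathrm{SL}(V)$-action linearized via a Grassmannian embedding, the matching of GIT-(semi)stability with $p_{\mathcal{E}}$-(semi)stability (resting on the bound for global sections of semistable sheaves, i.e.\ Boundedness \Romannum{2}) identifies $Q^{ss}$ with the GIT-semistable locus, and then the good quotient, $S$-equivalence via orbit closures, the coarse moduli property on the stable locus, and descent of smoothness along the principal $\mathrm{PGL}(V)$-bundle $Q^{s}\rightarrow\mathcal{M}^{s}$ (which the paper obtains from Luna's \'etale slice theorem) are exactly the paper's steps. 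One small imprecision: $Q^{ss}$ is only \emph{open} in the projective Quot scheme, so projectivity of the quotient comes from applying Theorem \ref{314} to the whole projective Quot scheme with its linearization rather than from projectivity of $Q^{ss}$ itself --- but your identification of $Q^{ss}$ with the GIT-semistable locus already supplies the correct justification.
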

Most of the materials in this section is included in \cite{Nir,OlSt}, but the construction of the moduli space we give is slightly different from that in \cite[\S 6]{Nir}. The first two statements of the theorem are also proved in \cite[Theorem 6.22]{Nir}. If the reader finds a proof of a statement in this section, it means that we have not seen it in any reference.

In \S 4, we give the definition of sheaves of graded algebras $\Lambda$ and $\Lambda$-modules. Sheaves of graded algebras over smooth projective varieties are introduced in \cite[\S 2]{Simp2}, and we generalize the definition to Deligne-Mumford stacks. Next, we define the $p_{\mathcal{E}}$-semistability of $\Lambda$-modules, $\Lambda$-Harder-Narasimhan filtrations and $\Lambda$-Jordan-H\"{o}lder filtrations. In \S 4.3, we prove the Boundedness \Romannum{2} of $\Lambda$-modules. Note that Boundedness \Romannum{1} of $\Lambda$-modules is not proved in this section, which is proved in \S 5.5, because the proof of this property depends on the representability of the $\Lambda$-quot-functor.

In \S 5, we consider the $\Lambda$-quot-functor
\begin{align*}
\widetilde{{\rm Quot}}_{\Lambda}(\mathcal{G},\mathcal{X}): (\text{Sch}/S)^{{\rm op}} \rightarrow \text{Set}.
\end{align*}
For each $S$-scheme $T$, $\widetilde{{\rm Quot}}_{\Lambda}(\mathcal{G},\mathcal{X})(T)$ is defined as the set of $\mathcal{O}_{\mathcal{X}_T}$-module quotients $\mathcal{G}_T \rightarrow \mathcal{F}_T$ such that
\begin{enumerate}
\item $\mathcal{F}_T \in \widetilde{{\rm Quot}}(\mathcal{G})(T)$,
\item $\mathcal{F}_T$ is a $\Lambda_{T}$-module.
\end{enumerate}

Here is the main result in this section.
\begin{thm}[Theorem \ref{501}]
Let $S$ be an algebraic space, which is locally of finite type over an algebraically closed field $k$, and let $\mathcal{X}$ be a separated and locally finitely-presented Deligne-Mumford stack over $S$. The $\Lambda$-quot-functor $\widetilde{{\rm Quot}}_{\Lambda}(\mathcal{G},\mathcal{X})$ is represented by a separated and locally finitely presented algebraic space.
\end{thm}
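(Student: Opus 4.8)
The plan is to realize $\widetilde{{\rm Quot}}_{\Lambda}(\mathcal{G},\mathcal{X})$ as a closed subfunctor of the ordinary quot-functor $\widetilde{{\rm Quot}}(\mathcal{G},\mathcal{X})$, which is already representable by a separated and locally finitely presented algebraic space $Q := {\rm Quot}(\mathcal{G},\mathcal{X})$ by Theorem \ref{301}. The forgetful natural transformation $\widetilde{{\rm Quot}}_{\Lambda}(\mathcal{G},\mathcal{X}) \to \widetilde{{\rm Quot}}(\mathcal{G},\mathcal{X})$ discarding the $\Lambda$-structure reduces everything to a relative problem over $Q$: on $\mathcal{X}_Q = \mathcal{X}\times_S Q$ there is a universal quotient $u:\mathcal{G}_Q\to\mathcal{F}$ with $\mathcal{F}$ flat over $Q$ and of proper support (this is built into condition (1) defining the functor), and I must show that the locus in $Q$ over which the $\Lambda$-action descends to $\mathcal{F}$ is closed.

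First I would reduce the $\Lambda$-module condition to a degree-one condition. Since $\Lambda$ is generated as an $\mathcal{O}_{\mathcal{X}}$-algebra in degree one over $\Lambda_0=\mathcal{O}_{\mathcal{X}}$, a subsheaf $\mathcal{K}\subseteq\mathcal{G}$ is a $\Lambda$-submodule if and only if it is stable under the degree-one action $a:\Lambda_1\otimes_{\mathcal{O}}\mathcal{G}\to\mathcal{G}$, and in that case the induced $\Lambda$-structure on $\mathcal{F}=\mathcal{G}/\mathcal{K}$ is automatic and unique, the algebra relations descending because $u$ is surjective. Writing $\mathcal{K}=\ker u$, flatness of $\mathcal{F}$ over $Q$ guarantees that $0\to\mathcal{K}\to\mathcal{G}_Q\to\mathcal{F}\to 0$ stays exact under every base change $T\to Q$, so that $\mathcal{K}$ is functorially well behaved. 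The obstruction to $\Lambda_1$-stability is then the composite
\[
\gamma:\ \Lambda_1\otimes_{\mathcal{O}}\mathcal{K}\ \longrightarrow\ \Lambda_1\otimes_{\mathcal{O}}\mathcal{G}_Q\ \xrightarrow{\,a\,}\ \mathcal{G}_Q\ \xrightarrow{\,u\,}\ \mathcal{F},
\]
and the quotient over $T$ is $\Lambda$-compatible precisely when $\gamma_T=0$.

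The key step is to represent the vanishing of $\gamma$ by a closed subspace. For this I would use the representability of the relative Hom-functor $T\mapsto {\rm Hom}_{\mathcal{O}_{\mathcal{X}_T}}(\Lambda_1\otimes\mathcal{K}_T,\mathcal{F}_T)$: because $\mathcal{F}$ is flat over $Q$ with support proper over $Q$, the sheaf $\mathcal{H}om(\Lambda_1\otimes\mathcal{K},\mathcal{F})$ has proper support, its pushforward to $Q$ is coherent and compatible with base change, and Grothendieck's linear-Hom construction, adapted to separated Deligne-Mumford stacks with proper-support quotients, yields a space $\mathbf{H}$ affine over $Q$ representing this functor. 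The homomorphism $\gamma$ and the zero homomorphism determine two sections $Q\rightrightarrows\mathbf{H}$, and their equalizer $Q_{\Lambda}\hookrightarrow Q$ is closed because $\mathbf{H}\to Q$ is separated; by construction $T\to Q$ factors through $Q_{\Lambda}$ exactly when $\gamma_T=0$, so $Q_{\Lambda}$ represents $\widetilde{{\rm Quot}}_{\Lambda}(\mathcal{G},\mathcal{X})$. I expect this relative Hom representability over a \emph{non-proper} Deligne-Mumford stack to be the main obstacle, since one cannot invoke properness of $\mathcal{X}\to S$ and must instead exploit the proper support of the universal quotient together with cohomology-and-base-change on the stack.

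Finally I would read off the stated properties by inheritance. Since $S$ is locally of finite type over $k$, $Q$ is locally noetherian, so the closed immersion $Q_{\Lambda}\hookrightarrow Q$ is automatically of finite presentation; as $Q$ is separated and locally finitely presented over $S$ by Theorem \ref{301}, the same holds for $Q_{\Lambda}$, proving the claim. If instead one records the $\Lambda$-structure on $\mathcal{F}$ as extra data rather than as induced from $\mathcal{G}$, the same engine applies: one represents $T\mapsto {\rm Hom}(\Lambda_1\otimes\mathcal{F}_T,\mathcal{F}_T)$ by a linear space $\mathbf{H}'$ affine over $Q$ and then cuts out the unit and associativity relations as closed conditions inside $\mathbf{H}'$, again producing a separated and locally finitely presented algebraic space.
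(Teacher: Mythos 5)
Your central reduction is built on a hypothesis the paper does not make. You treat $\widetilde{{\rm Quot}}_{\Lambda}(\mathcal{G},\mathcal{X})$ as a \emph{subfunctor} of $\widetilde{{\rm Quot}}(\mathcal{G},\mathcal{X})$, cut out by the condition that $\mathcal{K}=\ker u$ be stable under a degree-one action $a:\Lambda_1\otimes\mathcal{G}\to\mathcal{G}$. But in this paper $\mathcal{G}$ is explicitly \emph{not} assumed to be a $\Lambda$-module, so the map $a$, and hence your obstruction morphism $\gamma:\Lambda_1\otimes\mathcal{K}\to\mathcal{F}$, does not exist. A $T$-point of $\widetilde{{\rm Quot}}_{\Lambda}(\mathcal{G},\mathcal{X})$ is a pair $(\mathcal{G}_T\to\mathcal{F}_T,\ \Phi_T:\Lambda_T\otimes\mathcal{F}_T\to\mathcal{F}_T)$ in which $\Phi_T$ is genuinely extra data unrelated to the quotient map; the forgetful transformation to $\widetilde{{\rm Quot}}(\mathcal{G},\mathcal{X})$ therefore has positive-dimensional fibers (the space of $\Lambda$-structures on $\mathcal{F}_T$) and is nowhere near a closed immersion. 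Your main argument proves representability of a different functor (Simpson's $\Lambda$-module quotients of a $\Lambda$-module $\mathcal{G}$), not the one in Theorem \ref{501}.

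Your closing sentence does identify the correct fix --- represent $T\mapsto{\rm Hom}(\Lambda_1\otimes\mathcal{F}_T,\mathcal{F}_T)$ by a linear space $\mathbf{H}'$ over $Q$ and cut out the relations $\mathcal{K}_j\otimes\mathcal{F}\to\mathcal{F}=0$ (for $\mathcal{K}_j=\ker(\Lambda_1^{\otimes j}\to\Lambda_j)$) as closed conditions --- and this is essentially how the paper proceeds in the projective case (Proposition \ref{5101} and Proposition \ref{603}, where a locally closed subscheme $Q_5\subseteq{\rm Quot}(\Lambda_k\otimes\mathcal{G},P)$ is carved out). But for Theorem \ref{501} itself, where $\mathcal{X}$ is only a separated, locally finitely presented Deligne--Mumford stack over an algebraic space and there is no projectivity, the step you defer in one line --- that the relative Hom functor is represented by something affine, or at least separated and locally finitely presented over $Q$, with the right base-change behavior for sheaves that are merely flat with proper support --- is exactly the substantive content, and you give no argument for it. The paper takes a genuinely different route that avoids this entirely: it verifies Artin's representability criteria (Theorem \ref{502}) for $\widetilde{{\rm Quot}}_{\Lambda}(\mathcal{G})$ directly, checking local finite presentation, integrability, and separation by comparison with $\widetilde{{\rm Quot}}(\mathcal{G})$, and supplying a deformation--obstruction theory via the hypercohomology of the two-term complex $\mathcal{E}nd(\mathcal{F})[M]\xrightarrow{e(\Phi)}\mathcal{H}om(\Lambda,\mathcal{E}nd(\mathcal{F})[M])$ (Proposition \ref{504}). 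If you want to salvage your strategy in this generality, you must either prove the relative Hom representability (the paper cites Lieblich only for homogeneity, not for this) or fall back on an Artin-type argument as the paper does.
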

We apply a theorem by Artin (see \cite[Theorem 5.3]{Art} or Theorem \ref{502}) to prove this result. The theorem by Artin lists all necessary conditions, under which a moduli problem is representable by an algebraic space. These conditions are \emph{locally of finite presentation}, \emph{integrability} (or called \emph{effectivity}), \emph{separation}, \emph{deformation theory} and \emph{obstruction theory}. There are many good references about the first three conditions. We refer the reader to \cite{CasaWise,HaRy,Hall} for more details. The infinitesimal deformation theory of Hitchin pairs was studied by I. Biswas and S. Ramanan \cite{BisRam}. They constructed a two-term complex and proved that the first hypercohomology group of the two-term complex is exactly the tangent space of the moduli space of Hitchin pairs over a smooth projective curve. Based on this idea, we construct the deformation and obstruction theory for $\Lambda$-modules and prove that the theory satisfies all of the conditions in the Artin's theorem. In this paper, the deformation and obstruction theory follows from Artin's definition (see \cite[\S 5]{Art} or \S 5.2.4).

We make a brief review about Artin's theorem and necessary backgrounds in \S 5.2. We give the statement of the main result Theorem \ref{501} in \S 5.1, and the proof of this theorem is included in \S 5.3, where we check that $\Lambda$-quot-functors satisfy all of the conditions in the Artin's theorem. In \S 5.4, we consider the case that $S$ is an affine scheme (or noetherian scheme of finite type) and $\mathcal{X}$ is a projective Deligne-Mumford stack over $S$. Under this condition, we prove that the functor $F_{\mathcal{E}}:{\rm QCoh}(\mathcal{X}) \rightarrow {\rm QCoh}(X)$ induces a natural transformation
\begin{align*}
F_{\mathcal{E}}:\widetilde{{\rm Quot}}_{\Lambda}(\mathcal{G},\mathcal{X}) \rightarrow \widetilde{{\rm Quot}}_{F_{\mathcal{E}}(\Lambda)}(F_{\mathcal{E}}(\mathcal{G}),X),
\end{align*}
which is a monomorphism (see Lemma \ref{508}). Together with the following result,
\begin{prop}[Proposition \ref{5101}]
Let $X$ be a projective scheme, and let $G$ be a coherent sheaf on $X$. We fix a polynomial $P$. The $\Lambda$-quot-functor $\widetilde{{\rm Quot}}_{\Lambda}(G,X, P)$ is represented by a quasi-projective scheme.
\end{prop}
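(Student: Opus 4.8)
The plan is to realize $\widetilde{{\rm Quot}}_{\Lambda}(G,X,P)$ as a closed subscheme of a linear (affine) scheme lying over the classical Grothendieck Quot scheme, and then to read off quasi-projectivity from that description. First I would recall that, $X$ being projective and $P$ fixed, the ordinary quot-functor $\widetilde{{\rm Quot}}(G,X,P)$ is represented by a projective $k$-scheme $Q$ carrying a universal quotient $u\colon G_Q \twoheadrightarrow \mathcal{F}$ on $X \times Q$, with $\mathcal{F}$ flat over $Q$ and fiberwise Hilbert polynomial $P$; this is Grothendieck's classical theorem, whose projectivity is exactly what the fixed polynomial buys (compare the scheme case of Theorem \ref{301}). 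By definition a $T$-point of $\widetilde{{\rm Quot}}_{\Lambda}(G,X,P)$ is such a quotient together with a $\Lambda_T$-module structure on $\mathcal{F}_T$, i.e.\ a pair $(u_T,\Phi_T)$ where $\Phi_T\colon \Lambda_T \to \mathcal{E}nd(\mathcal{F}_T)$ is a morphism of sheaves of $\mathcal{O}_{X_T}$-algebras. The forgetful assignment $(u_T,\Phi_T)\mapsto u_T$ thus exhibits $\widetilde{{\rm Quot}}_{\Lambda}(G,X,P)$ as a functor over $Q$, so it suffices to represent the relative functor of $\Lambda$-structures on the universal quotient $\mathcal{F}$.

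Second, using that $\Lambda$ is generated over $\Lambda_0=\mathcal{O}_X$ by finitely many graded pieces, so that $\Lambda$ is locally finitely generated as a sheaf of algebras (see \S 4), a $\Lambda$-module structure $\Phi$ on a sheaf $\mathcal{F}$ is equivalent to the datum of the finitely many $\mathcal{O}$-linear action maps on the generators, say $\varphi\colon \Lambda_1 \otimes_{\mathcal{O}} \mathcal{F} \to \mathcal{F}$ (together with the higher generators should $\Lambda$ not be generated in degree one), subject to the requirements that $\Phi$ restrict to the given $\mathcal{O}$-module structure on $\Lambda_0$, and that the finitely many algebra relations coming from the multiplications $\Lambda_i \otimes \Lambda_j \to \Lambda_{i+j}$ and the commutation rules of $\Lambda$ be satisfied. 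I would therefore first represent the functor of action maps alone: since $X$ is projective and $\mathcal{F}$ is flat over $Q$, the relative $\underline{{\rm Hom}}$-functor $T \mapsto {\rm Hom}_{\mathcal{O}_{X_T}}(\Lambda_{1,T}\otimes \mathcal{F}_T,\ \mathcal{F}_T)$ is represented by a linear scheme $\mathbb{V}(\mathcal{Q}) \to Q$ for a suitable coherent sheaf $\mathcal{Q}$ on $Q$ (Grothendieck's representability of Hom, via cohomology and base change). This $\mathbb{V}(\mathcal{Q})$ is affine and of finite type over $Q$, and, being a closed subscheme of a geometric vector bundle over the projective scheme $Q$, it is quasi-projective over $k$.

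Third, I would impose the two families of conditions above. The compatibility with $\Lambda_0$ and each algebra relation are equalities of $\mathcal{O}$-linear maps between coherent sheaves flat over the base in the relevant variable; hence each is a closed condition, and their finite intersection cuts out a closed subscheme $Z \hookrightarrow \mathbb{V}(\mathcal{Q})$. A verification of the universal property then shows that $Z$ represents the relative functor of $\Lambda$-structures, and therefore that $Z$ represents $\widetilde{{\rm Quot}}_{\Lambda}(G,X,P)$. Being closed in the quasi-projective $k$-scheme $\mathbb{V}(\mathcal{Q})$, the scheme $Z$ is quasi-projective, which is the assertion. (It is only quasi-projective, and generally not projective, precisely because the fibers $\mathbb{V}(\mathcal{Q})$ of the space of action maps are affine rather than proper.)

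Finally, the step I expect to be the main obstacle is the verification that these conditions are genuinely \emph{closed} rather than merely locally closed, and, more importantly, that they are stable under arbitrary base change $T \to Q$, so that the single closed subscheme $Z$ represents the functor on all test schemes rather than merely on geometric points. This rests on the flatness of $\mathcal{F}$ over $Q$ together with the compatibility of the relative $\underline{{\rm Hom}}$ construction with base change; the same flatness, combined with the bounded set of generators and relators of $\Lambda$, is what ensures that only finitely many closed conditions must be imposed, so that $Z$ is indeed cut out inside $\mathbb{V}(\mathcal{Q})$ by a finite collection of closed conditions.
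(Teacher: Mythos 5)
Your argument is correct, but it follows a genuinely different route from the paper's. You fiber the problem over the classical Quot scheme $Q={\rm Quot}(G,X,P)$: first represent the functor of candidate action maps $T\mapsto {\rm Hom}_{X_T}(\Lambda_{1,T}\otimes\mathcal{F}_T,\mathcal{F}_T)$ by an affine linear scheme $\mathbb{V}(\mathcal{Q})\to Q$, then cut out the algebra relations as closed conditions, so that $\widetilde{{\rm Quot}}_\Lambda(G,X,P)$ is \emph{closed} in a quasi-projective scheme. The paper instead fixes $k$ and embeds the functor into the single larger Quot scheme ${\rm Quot}(\Lambda_k\otimes G,X,P)$, packaging the quotient and the action together as one quotient of $\Lambda_k\otimes G$; it then carves out an \emph{open} locus $Q_2$ of quotients admitting a factorization through $\Lambda_k\otimes\mathcal{F}$, followed by closed conditions $Q_3$, $Q_{4,j}$, $Q_5$ enforcing that the $\Lambda_1$-action descends to $\mathcal{F}$, kills the relation kernels $\mathcal{K}_j=\ker(\Lambda_1^{\otimes j}\to\Lambda_j)$, and is consistent with the chosen $\Lambda_k$-structure, ending with a locally closed subscheme of a projective Quot scheme. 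Your decomposition is arguably more transparent for bare representability and avoids the factorization condition, but the paper's presentation is chosen deliberately: realizing the $\Lambda$-quot scheme inside ${\rm Quot}(\Lambda_k\otimes V\otimes\mathcal{G},P)$ supplies the ${\rm SL}(V)$-linearized ample line bundle $\mathscr{L}_N$ (pulled back from a Grassmannian) that drives the GIT comparison of semistability in \S 6, and it mirrors Simpson's original construction. Two small caveats on your write-up: you do not actually get to assume only \emph{finitely many} relations — one must impose the vanishing of $\mathcal{K}_j\otimes\mathcal{F}\to\mathcal{F}$ for all $j\geq 1$, and the resulting (a priori infinite) intersection of closed subschemes is still closed by noetherianity, exactly as in the paper's $Q_{4,\infty}$; and the tensor product $\Lambda_1\otimes\mathcal{F}$ must be formed using the right $\mathcal{O}_X$-module structure on $\Lambda_1$, since the left and right structures agree only on the associated graded pieces. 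Neither affects the validity of the argument.
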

we prove the following theorem.
\begin{thm}[Theorem \ref{511}]
Let $S$ be an affine scheme (or a noetherian scheme of finite type) and let $\mathcal{X}$ be a projective Deligne-Mumford stack over $S$. The $\Lambda$-quot-functor  $\widetilde{\rm Quot}_{\Lambda}(\mathcal{G},\mathcal{X},P)$ with respect to a given integer polynomial $P$ is represented by a quasi-projective $S$-scheme.
\end{thm}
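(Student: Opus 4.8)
The plan is to transport the whole problem to the coarse space $X$ through the functor $F_{\mathcal{E}}$ and realize $\widetilde{{\rm Quot}}_{\Lambda}(\mathcal{G},\mathcal{X},P)$ as a (locally closed) subscheme of a known quasi-projective scheme. Three inputs are already available. By Theorem \ref{501} the full functor $\widetilde{{\rm Quot}}_{\Lambda}(\mathcal{G},\mathcal{X})$ is a separated, locally finitely presented algebraic space over $S$; since the modified Hilbert polynomial is locally constant in flat families, fixing it equal to $P$ carves out the open-and-closed algebraic subspace $\widetilde{{\rm Quot}}_{\Lambda}(\mathcal{G},\mathcal{X},P)$, which is of finite type over the noetherian base $S$ by the Boundedness \Romannum{2} of \S 4.3. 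By Lemma \ref{508} the natural transformation $F_{\mathcal{E}}$ is a monomorphism, and because $F_{\mathcal{E}}$ preserves modified Hilbert polynomials (\S 3.3) it restricts to a monomorphism
\begin{align*}
\iota:\widetilde{{\rm Quot}}_{\Lambda}(\mathcal{G},\mathcal{X},P)\longrightarrow \widetilde{{\rm Quot}}_{F_{\mathcal{E}}(\Lambda)}(F_{\mathcal{E}}(\mathcal{G}),X,P),
\end{align*}
whose target is a quasi-projective $S$-scheme by Proposition \ref{5101}. Everything then reduces to the assertion that such a monomorphism forces the source to be a quasi-projective $S$-scheme.

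First I would dispose of representability. A monomorphism is separated, and its fibres, being monomorphisms from the spectra of fields, are either empty or reduced to a point; together with $\iota$ being locally of finite type this makes $\iota$ separated and locally quasi-finite. By the algebraic-space form of Zariski's main theorem a separated, locally quasi-finite morphism of algebraic spaces is representable by schemes, and since the target is a scheme it follows that $\widetilde{{\rm Quot}}_{\Lambda}(\mathcal{G},\mathcal{X},P)$ is a scheme. Moreover $\iota$ is quasi-compact, because the source is noetherian. Thus we are left with a quasi-compact, separated, quasi-finite, finite-type monomorphism of noetherian schemes into a quasi-projective $S$-scheme.

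The hard part is upgrading $\iota$ from a monomorphism to an immersion, since only then does the source inherit quasi-projectivity from the target. I would establish this by identifying the image of $\iota$ as a locally closed subfunctor: a quotient on $X$ lies in the essential image of $F_{\mathcal{E}}$ precisely when it is recovered by applying $F_{\mathcal{E}}$ to the quotient obtained from its image under the left adjoint $\pi^{*}(-)\otimes\mathcal{E}$, and compatibility with the prescribed $F_{\mathcal{E}}(\Lambda)$-structure, which is exactly the $\Lambda$-module analogue of the locally closed immersion underlying \cite[Proposition 6.2]{OlSt} for ordinary quot-functors. Granting that the essential image is locally closed, $\iota$ is a locally closed immersion, so $\widetilde{{\rm Quot}}_{\Lambda}(\mathcal{G},\mathcal{X},P)$ is a locally closed subscheme of a quasi-projective $S$-scheme and hence quasi-projective over $S$.

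I expect the genuine obstacle to be precisely this last passage from monomorphism to immersion; the representability and the embedding into a quasi-projective scheme are essentially formal once Lemma \ref{508} and Proposition \ref{5101} are granted. An alternative, arguably cleaner resolution of the same step is to observe that since $\Lambda$ is generated in degree one over $\mathcal{O}_{\mathcal{X}}$, the requirement that the kernel of the universal quotient be a $\Lambda$-submodule is the vanishing of a single morphism of flat coherent sheaves and therefore cuts out a \emph{closed} subspace of the projective $S$-scheme ${\rm Quot}(\mathcal{G},\mathcal{X},P)$; this makes $\widetilde{{\rm Quot}}_{\Lambda}(\mathcal{G},\mathcal{X},P)$ proper over $S$, whence the proper monomorphism $\iota$ is a closed immersion and the source is in fact a closed subscheme of the quasi-projective scheme of Proposition \ref{5101}, giving quasi-projectivity (indeed projectivity) at once.
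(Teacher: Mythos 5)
Your main line of argument is essentially the paper's: Theorem \ref{511} is deduced there from Lemma \ref{508} (the monomorphism $F_{\mathcal{E}}$ on $\Lambda$-quot-functors), Lemma \ref{510} (which, following \cite[Proposition 6.2]{OlSt}, upgrades this monomorphism to a finitely-presented \emph{finite} monomorphism relatively representable by schemes, hence a closed immersion), and Proposition \ref{5101} (quasi-projectivity of the target $\widetilde{{\rm Quot}}_{F_{\mathcal{E}}(\Lambda)}(F_{\mathcal{E}}(\mathcal{G}),X,P)$). You correctly isolate the passage from monomorphism to immersion as the real content and point to the same source; the detour through Zariski's main theorem for schematic representability is harmless but redundant once the \cite{OlSt}-type statement is available. (A minor quibble: Boundedness \Romannum{2} concerns global sections of $p$-semistable $\Lambda$-modules and does not by itself give finite-typeness of the full fixed-$P$ component; but that finiteness comes for free from the immersion, so nothing is lost.)

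Your ``alternative, arguably cleaner resolution'' is, however, wrong, and the error is worth spelling out. In this paper $\mathcal{G}$ is \emph{not} assumed to be a $\Lambda$-module: a $T$-point of $\widetilde{{\rm Quot}}_{\Lambda}(\mathcal{G},\mathcal{X},P)$ is a quotient $\mathcal{G}_T \rightarrow \mathcal{F}_T$ \emph{together with} a choice of morphism $\Phi_T:\Lambda_T\otimes\mathcal{F}_T\rightarrow\mathcal{F}_T$. The $\Lambda$-structure is extra data, not the closed condition ``the kernel of the universal quotient is a $\Lambda$-submodule''; the forgetful map to ${\rm Quot}(\mathcal{G},\mathcal{X},P)$ has as fibers the (typically positive-dimensional, affine) spaces of $\Lambda$-structures, so $\widetilde{{\rm Quot}}_{\Lambda}(\mathcal{G},\mathcal{X},P)$ is neither a subfunctor of, nor proper over, ${\rm Quot}(\mathcal{G},\mathcal{X},P)$. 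Your conclusion ``indeed projectivity'' contradicts both the statement being proved (which claims only quasi-projectivity) and the construction in Proposition \ref{5101}, which realizes the $\Lambda$-quot-scheme as a locally closed --- in general not closed --- subscheme of ${\rm Quot}(\Lambda_k\otimes G,X,P)$; already for $\mathcal{L}$-twisted Higgs sheaves the set of Higgs fields on a fixed sheaf is an affine space. So your first argument stands and matches the paper; the second should be discarded.
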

The proof of Proposition \ref{5101} will be used in proving Proposition \ref{603}. In \S 5.5, we prove the Boundedness \Romannum{1} of $\Lambda$-modules (see Corollary \ref{514}). Theorem \ref{511}, Boundedness \Romannum{1} of $\Lambda$-modules (see Corollary \ref{514}) and Boundedness \Romannum{2} of $\Lambda$-modules (see Proposition \ref{407}) will be used to construct the moduli space of $p_{\mathcal{E}}$-semistable $\Lambda$-modules in \S 6.

In \S 6, we focus on the moduli problem $\widetilde{\mathcal{M}}_{\Lambda}^{ss}(\mathcal{E},\mathcal{O}_X(1),P)$ of $p_{\mathcal{E}}$-semistable $\Lambda$-modules on projective Deligne-Mumford stacks. The version of this moduli problem on smooth projective varieties over $\mathbb{C}$ is studied in \cite{Simp2}. We construct the moduli space $\mathcal{M}_\Lambda^{ss}(\mathcal{E},\mathcal{O}_X(1),P)$ of $p_{\mathcal{E}}$-semistable $\Lambda$-modules on projective Deligne-Mumford stacks, and prove that this moduli space universally co-represents the moduli problem $\widetilde{\mathcal{M}}_{\Lambda}^{ss}(\mathcal{E},\mathcal{O}_X(1),P)$.

\begin{thm}[Theorem \ref{607}]
\leavevmode
\begin{enumerate}
\item The moduli space $\mathcal{M}_\Lambda^{ss}(\mathcal{E},\mathcal{O}_X(1),P)$ is a quasi-projective $S$-scheme.
\item There exists a natural morphism
\begin{align*}
\widetilde{\mathcal{M}}_{\Lambda}^{ss}(\mathcal{E},\mathcal{O}_X(1),P) \rightarrow \mathcal{M}_\Lambda^{ss}(\mathcal{E},\mathcal{O}_X(1),P)
\end{align*}
such that $\mathcal{M}_\Lambda^{ss}(\mathcal{E},\mathcal{O}_X(1),P)$ universally co-represents $\widetilde{\mathcal{M}}_\Lambda^{ss}(\mathcal{E},\mathcal{O}_X(1),P)$, and the points of $\mathcal{M}_\Lambda^{ss}(\mathcal{E},\mathcal{O}_X(1),P)$ represent the $S$-equivalent classes of $p$-semistable $\Lambda$-modules with modified Hilbert polynomial $P$.
\item $\mathcal{M}_\Lambda^{s}(\mathcal{E},\mathcal{O}_X(1),P)$ is a coarse moduli space of $\widetilde{\mathcal{M}}_\Lambda^{s}(\mathcal{E},\mathcal{O}_X(1),P)$, and its points represent isomorphism classes of $p$-stable $\Lambda$-modules.
\end{enumerate}
\end{thm}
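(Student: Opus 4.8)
The plan is to construct $\mathcal{M}_\Lambda^{ss}(\mathcal{E},\mathcal{O}_X(1),P)$ as a GIT quotient of a locally closed subscheme of the $\Lambda$-quot-scheme, running the argument of Theorem \ref{317} but over the $\Lambda$-quot-scheme of Theorem \ref{511} in place of the ordinary quot-scheme. First I would invoke boundedness: by Proposition \ref{407} (Boundedness \Romannum{2}) together with Corollary \ref{514} (Boundedness \Romannum{1}), the family of $p_{\mathcal{E}}$-semistable $\Lambda$-modules with modified Hilbert polynomial $P$ is bounded, so there is an integer $m \gg 0$ such that every such $\Lambda$-module $\mathcal{F}$ is $m$-regular, the twist $F_{\mathcal{E}}(\mathcal{F})(m)$ is globally generated, and $\dim_k H^0(F_{\mathcal{E}}(\mathcal{F})(m)) = P(m) =: N$. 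Fixing $V = k^N$ and $\mathcal{G} = V \otimes \mathcal{E} \otimes \pi^* \mathcal{O}_X(-m)$, every such $\mathcal{F}$ arises, after a choice of basis of its sections, as a point of $\widetilde{{\rm Quot}}_{\Lambda}(\mathcal{G},\mathcal{X},P)$, which is quasi-projective over $S$ by Theorem \ref{511}. Inside it I would isolate the locally closed subscheme $R^{ss}$ (resp. $R^{s}$) of quotients $[\mathcal{G} \to \mathcal{F}]$ for which $\mathcal{F}$ is $p_{\mathcal{E}}$-semistable (resp. $p_{\mathcal{E}}$-stable) and the induced map $V \to H^0(F_{\mathcal{E}}(\mathcal{F})(m))$ is an isomorphism; the group $\mathrm{SL}(V)$ acts on $R^{ss}$ with orbits corresponding to isomorphism classes of $\Lambda$-modules.

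Next I would produce an $\mathrm{SL}(V)$-linearized ample line bundle and form the quotient. Using the functor $F_{\mathcal{E}}$ and the monomorphism of Lemma \ref{508}, I would transport the linearization from the coherent-sheaf construction of Theorem \ref{317}: the embedding $\mathcal{F} \mapsto H^0(F_{\mathcal{E}}(\mathcal{F})(\ell))$ into a Grassmannian for $\ell \gg m$ yields an $\mathrm{SL}(V)$-equivariant very ample line bundle $L$ on $R^{ss}$. The GIT quotient $\mathcal{M}_\Lambda^{ss} := R^{ss} /\!\!/_L \mathrm{SL}(V)$ is then quasi-projective over $S$; it is only \emph{quasi}-projective rather than projective, in contrast to Theorem \ref{317}(1), precisely because $\widetilde{{\rm Quot}}_{\Lambda}(\mathcal{G},\mathcal{X},P)$ is itself only quasi-projective, the space of $\Lambda$-structures on a fixed sheaf quotient being an affine variety that does not compactify to a proper family.

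The main obstacle is the identification of GIT-semistability with $p_{\mathcal{E}}$-semistability of $\Lambda$-modules. Running the Hilbert–Mumford criterion, a one-parameter subgroup of $\mathrm{SL}(V)$ corresponds to a filtration of $V$, which induces a filtration of $\mathcal{F}$; because the quotient lives in the $\Lambda$-quot-scheme, the subobjects appearing in this filtration are $\Lambda$-submodules, so the numerical GIT weight reduces to a comparison of modified Hilbert polynomials $P_{\mathcal{E}}(\mathcal{F}',m)$ over $\Lambda$-submodules $\mathcal{F}' \subset \mathcal{F}$, i.e. exactly the $p_{\mathcal{E}}$-semistability inequality. The delicate point, flagged in Remark \ref{305}, is that $p_{\mathcal{E}}$-stability is \emph{not} preserved by $F_{\mathcal{E}}$, so I cannot simply pull the sheaf computation back from $X$; the weight estimate must be carried out intrinsically for $\Lambda$-submodules on $\mathcal{X}$, and this is where the explicit bounds on global sections from Proposition \ref{407} are needed to guarantee, for $m$ large, that the GIT weights are governed by the reduced modified Hilbert polynomials and that $R^{ss}$ is exactly the GIT-semistable locus.

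Finally I would read off the three assertions. Quasi-projectivity of $\mathcal{M}_\Lambda^{ss}$ over $S$ is the output of the GIT quotient of the quasi-projective $R^{ss}$. Because a good quotient is a universal categorical quotient, $\mathcal{M}_\Lambda^{ss}$ universally co-represents $\widetilde{\mathcal{M}}_{\Lambda}^{ss}(\mathcal{E},\mathcal{O}_X(1),P)$, exactly as in the proof of Theorem \ref{317}(2); its closed points correspond to closed $\mathrm{SL}(V)$-orbits, hence to polystable $\Lambda$-modules, giving the $S$-equivalence classes via the $\Lambda$-Jordan–H\"older theory of \S 4. On the stable locus $R^{s}$ the stabilizers are the scalars, so $\mathrm{PGL}(V)$ acts freely and $R^{s} \to \mathcal{M}_\Lambda^{s}$ is a geometric quotient; this realizes $\mathcal{M}_\Lambda^{s}(\mathcal{E},\mathcal{O}_X(1),P)$ as a coarse moduli space of $\widetilde{\mathcal{M}}_\Lambda^{s}(\mathcal{E},\mathcal{O}_X(1),P)$ whose points are isomorphism classes of $p_{\mathcal{E}}$-stable $\Lambda$-modules, completing (3).
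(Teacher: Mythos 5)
Your overall strategy (boundedness, rigidification by a choice of basis of sections, $\mathrm{SL}(V)$-quotient) matches the paper's, but there is a genuine gap at the heart of the GIT step. You assert that, because the quotient lives in the $\Lambda$-quot-scheme, the subsheaves $\rho(V_{\geq\beta}\otimes\mathcal{G})$ induced by a one-parameter subgroup of $\mathrm{SL}(V)$ are automatically $\Lambda$-submodules, so that the Hilbert--Mumford weight reduces to the $p_{\mathcal{E}}$-semistability inequality for $\Lambda$-submodules. This is false: an arbitrary subsheaf generated by a subspace $V'\subseteq V$ need not be preserved by $\Lambda$, and with the linearization you propose (the Grassmannian embedding of ${\rm Quot}(V\otimes\mathcal{G},P)$ restricted to the $\Lambda$-locus) the GIT weights test \emph{all} such subsheaves, so a $p$-semistable $\Lambda$-module that is unstable as an $\mathcal{O}_{\mathcal{X}}$-module would be GIT-unstable and would be thrown out. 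Moreover the limit of a one-parameter subgroup is the associated graded $\bigoplus_\beta\mathcal{F}_\beta$, which need not carry a $\Lambda$-structure at all, so the $\Lambda$-locus is not saturated for the action and the limit can exit it. The paper's device, which your proposal is missing, is to enlarge the ambient parameter space to the \emph{ordinary} quot-scheme ${\rm Quot}(\Lambda_k\otimes V\otimes\mathcal{G},P)$ (into which the $\Lambda$-quot-locus embeds by the construction of Proposition \ref{603}), so that a subspace $V'\subseteq V$ generates the image of $\Lambda_k\otimes V'\otimes\mathcal{G}$, which by Lemma \ref{405} (for $k$ at least the rank) is a $\Lambda$-submodule after saturation; this is what makes Lemma \ref{605} work.

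A second, related gap: you claim GIT-semistability is \emph{equivalent} to $p_{\mathcal{E}}$-semistability and then take a GIT quotient of the quasi-projective scheme $R^{ss}$ directly, but Theorem \ref{314} is stated for projective schemes, and the ambient GIT-semistable locus of ${\rm Quot}(\Lambda_k\otimes V\otimes\mathcal{G},P)$ contains many points unrelated to $\Lambda$-modules, so no such equivalence holds or is needed. The paper proves only one inclusion ($Q^{ss}_\Lambda\subseteq{\rm Quot}^{ss}$, Lemma \ref{605}), then shows that orbit closures of points of $Q^{ss}_\Lambda$ inside the semistable locus remain in $Q^{ss}_\Lambda$ (Lemma \ref{606}, via the saturation argument sketched above), and that the image of $Q^{ss}_\Lambda$ in the projective quotient ${\rm Quot}^{ss}/\mathrm{SL}(V)$ is locally closed (Lemma \ref{6072}); quasi-projectivity of $\mathcal{M}^{ss}_\Lambda$ is the output of this three-step argument, not merely of the quasi-projectivity of the $\Lambda$-quot-scheme. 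Your identification of closed orbits with polystable objects and of the stable locus with a coarse moduli space is fine once these points are repaired, and for that you also need Lemma \ref{6071} relating intersecting orbit closures to isomorphic $\Lambda$-Jordan--H\"older graded objects.
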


\vspace{2mm}
\textbf{Acknowledgments}.
I would like to thank G. Kydonakis and L. Zhao for very helpful discussions about the moduli space of Higgs bundles. Thanks S. Casalaina-Martin for sharing the idea about the Artin's theorem, and P. Boalch for a very helpful description of the parahoric torsors and the non-abelian Hodge correspondence.
\vspace{2mm}

\section{Preliminaries}
\subsection{Tame Algebraic Stacks}
Let $k$ be an algebraically closed field, and let $S$ be an algebraic space, which is locally of finite type over an algebraically closed field $k$. An algebraic stack $\mathcal{X}$ over $S$ is a morphism $\mathcal{X} \rightarrow S$, which is also considered as a family of algebraic stacks over $S$. Let $\mathcal{X}$ be an algebraic stack over $S$ such that such that $\mathcal{X}$ is locally of finite presentation over $S$ with finite diagonal, where finite diagonal means that the natural map $I_{\mathcal{X}}\rightarrow \mathcal{X}$ is finite, where $I_{\mathcal{X}}$ is the inertia stack. Under these conditions, the algebraic stack $\mathcal{X}$ has a \emph{coarse moduli space} $X$ \cite[Theorem 11.1.2]{Ol}. Denote by $\pi: \mathcal{X} \rightarrow X$ the natural morphism. Note that there is a natural morphism $\rho:X \rightarrow S$ such that
\begin{center}
\begin{tikzcd}
\mathcal{X} \arrow[rd] \arrow[rr, "\pi"] &  & X \arrow[ld,"\rho"] \\
& S &
\end{tikzcd}
\end{center}

An algebraic stack $\mathcal{X}$, which has a coarse moduli space $\pi: \mathcal{X} \rightarrow X$, is \emph{tame} if the induced functor
\begin{align*}
\pi_*: {\rm QCoh}(\mathcal{X}) \rightarrow {\rm QCoh}(X)
\end{align*}
is exact, where ${\rm QCoh}(*)$ is the category of quasi-coherent sheaves over $*$, which is a scheme, an algebraic space or an algebraic stack. The category ${\rm QCoh}(*)$ has a natural stack structure \cite{Ol}. If ${\rm char} (k)=0$, the functor $\pi_*$ is always exact and the nontrivial case comes from the positive characteristic. In \cite{AbOlVis}, the authors studied the tame stack in detail and proved several equivalent conditions in \cite[Theorem 3.2]{AbOlVis}. We list some of the conditions as follows:
\begin{itemize}
\item The algebraic stack $\mathcal{X}$ is tame.
\item Let $k'$ be an algebraically closed field with a morphism $\Spec(k') \rightarrow S$. Let $\xi$ be an object in $\mathcal{X}(\Spec (k'))$. Then the automorphism group scheme ${\rm Aut}_{k'}(\xi) \rightarrow \Spec (k)'$ is linearly reductive.
\item There exists an fppf (or surjective \'etale) cover of the coarse moduli space $X' \rightarrow X$, a finite and finitely presented scheme $U$ over $X'$ and a linearly reductive group scheme $G \rightarrow X'$ acting on $U$ together with an isomorphism
    \begin{align*}
    \mathcal{X} \times_{X} X' \cong [U/G].
    \end{align*}
\end{itemize}

\subsection{Two Functors: $F_{\mathcal{E}}$ and $G_{\mathcal{E}}$}
Let $\mathcal{X}$ be a tame algebraic stack. Let $\mathcal{E}$ be a locally free sheaf over $\mathcal{X}$. We define two functors $F_{\mathcal{E}} : {\rm QCoh}(\mathcal{X}) \rightarrow {\rm QCoh}(X)$ and $G_{\mathcal{E}}:  {\rm QCoh}(X) \rightarrow {\rm QCoh}(\mathcal{X})$ as follows
\begin{align*}
F_{\mathcal{E}}(\mathcal{F})= \pi_* \mathcal{H}om_{\mathcal{O}_{\mathcal{X}}}(\mathcal{E},\mathcal{F}), \quad G_{\mathcal{E}}(F)=\pi^*F \otimes \mathcal{E},
\end{align*}
where $\mathcal{F} \in {\rm QCoh}(\mathcal{X})$ and $F \in {\rm QCoh}(X)$. The functor $F_{\mathcal{E}}$ is exact since $\pi_*$ is exact and $\mathcal{E}^{\vee}$ is a locally free sheaf. If $\pi:\mathcal{X} \rightarrow X$ is flat, then the functor $G_{\mathcal{E}}$ is also exact.

The compositions of the above two functors
\begin{align*}
& G_{\mathcal{E}} \circ F_{\mathcal{E}}: {\rm QCoh}(\mathcal{X}) \rightarrow {\rm QCoh}(\mathcal{X}), \\
& F_{\mathcal{E}} \circ G_{\mathcal{E}}: {\rm QCoh}(X) \rightarrow {\rm QCoh}(X),
\end{align*}
can be written in the following way
\begin{align*}
& G_{\mathcal{E}} \circ F_{\mathcal{E}}(\mathcal{F})=\pi^* \pi_* \mathcal{H}om_{\mathcal{O}_{\mathcal{X}}}(\mathcal{E},\mathcal{F})\otimes \mathcal{E},\\
& F_{\mathcal{E}} \circ G_{\mathcal{E}}(F)=\pi_* \mathcal{H}om_{\mathcal{O}_{\mathcal{X}}}(\mathcal{E},\pi^*F \otimes \mathcal{E}).
\end{align*}
We define the following morphisms
\begin{align*}
& \theta_{\mathcal{E}}(\mathcal{F}): G_{\mathcal{E}} \circ F_{\mathcal{E}}(\mathcal{F}) \rightarrow \mathcal{F},\\
& \varphi_{\mathcal{E}}(F): F \rightarrow F_{\mathcal{E}} \circ G_{\mathcal{E}}(F).
\end{align*}
The morphism $\theta_{\mathcal{E}}(\mathcal{F})$ is exactly the adjunction morphism left adjoint to the identity morphism
\begin{align*}
\pi_*(\mathcal{F} \otimes \mathcal{E}^{\vee}) \xrightarrow{\rm id} \pi_*(\mathcal{F} \otimes \mathcal{E}^{\vee}).
\end{align*}

If $\mathcal{F}$ is a quasi-coherent sheaf on $\mathcal{X}$, then the following composition is the identity
\begin{align*}
F_{\mathcal{E}}(\mathcal{F}) \xrightarrow{\varphi_{\mathcal{E}}(F_{\mathcal{E}}(\mathcal{F}))} F_{\mathcal{E}} \circ G_{\mathcal{E}} \circ F_{\mathcal{E}}(\mathcal{F}) \xrightarrow{F_{\mathcal{E}}(\theta_{\mathcal{E}}(\mathcal{F}))} F_{\mathcal{E}}(\mathcal{F}).
\end{align*}
Similarly, let $F$ be a coherent sheaf on $X$, then the following composition is also the identity
\begin{align*}
G_{\mathcal{E}}(F) \xrightarrow{G_{\mathcal{E}} (\varphi_{\mathcal{E}}(F)) } G_{\mathcal{E}} \circ F_{\mathcal{E}} \circ G_{\mathcal{E}}(F) \xrightarrow{\theta_{\mathcal{E}} (G_{\mathcal{E}}(F)) } G_{\mathcal{E}}(F).
\end{align*}
These properties are proved in \cite[Lemma 2.9]{Nir}.

\subsection{Generating Sheaves}
With the same setup as in \S 2.2, a locally free sheaf $\mathcal{E}$ is a \emph{generator} for $\mathcal{F} \in {\rm QCoh}(\mathcal{X})$, if the morphism
\begin{align*}
\theta_{\mathcal{E}}(\mathcal{F}): \pi^* \pi_* \mathcal{H}om_{\mathcal{O}_{\mathcal{X}}}(\mathcal{E},\mathcal{F})\otimes \mathcal{E} \rightarrow \mathcal{F}
\end{align*}
is surjective. A locally free sheaf $\mathcal{E}$ is a \emph{generating sheaf} of $\mathcal{X}$, if it is a generator for every quasi-coherent sheaf on $\mathcal{X}$.

If $\mathcal{X}$ is a tame separated Deligne-Mumford stack of the form $\mathcal{X}=[Y/G]$, where $Y$ is a scheme and $G$ is a finite group acting on $Y$, then there exists a generating sheaf $\mathcal{E}$ of $\mathcal{X}$ \cite[Proposition 5.2]{OlSt}. Olsson and Starr also proved a more general result about the existence of a generating sheaf of a tame, separated and finitely presented Deligne-Mumford stack $\mathcal{X}$ over $S$, where the stack $\mathcal{X}=[Y/ \text{GL}_{n,S}]$ can be written as a global quotient \cite[Theorem 5.7]{OlSt}.

A locally free sheaf $\mathcal{E}$ on $\mathcal{X}$ is \emph{$\pi$-very ample}, if the representation of the stabilizer group at any geometric point of $\mathcal{X}$ contains every  irreducible representation. Nironi showed that a locally free sheaf $\mathcal{E}$ on a tame Deligne-Mumford stack $\mathcal{X}$ is a generating sheaf if and only if $\mathcal{E}$ is $\pi$-very ample \cite[Proposition 2.7]{Nir}. This property follows from the proof of \cite[Proposition 5.2]{OlSt}.

The existence of a generating sheaf $\mathcal{E}$ helps us to define a monomorphism of the quot-functors \cite[Lemma 6.1]{OlSt}
\begin{align*}
F_{\mathcal{E}}: \widetilde{{\rm Quot}}(\mathcal{G},\mathcal{X}) \rightarrow \widetilde{{\rm Quot}}(F_{\mathcal{E}}(\mathcal{G}),X),
\end{align*}
where $\mathcal{G}$ is a coherent sheaf over $\mathcal{X}$, and $\widetilde{{\rm Quot}}(\mathcal{G},\mathcal{X})$ is the quot-functor over $\mathcal{X}$ and $\widetilde{{\rm Quot}}(F_{\mathcal{E}}(\mathcal{G}),X)$ is the quot-functor over $X$. The quot-functors $\widetilde{{\rm Quot}}(\mathcal{G},\mathcal{X})$ and $\widetilde{{\rm Quot}}(F_{\mathcal{E}}(\mathcal{G}),X)$ are represented by algebraic spaces ${\rm Quot}(\mathcal{G},\mathcal{X})$ and ${\rm Quot}(F_{\mathcal{E}}(\mathcal{G}),X)$ respectively \cite[Theorem 1.1]{OlSt}. The monomorphism $F_{\mathcal{E}}$ can be improved to be a closed embedding of the corresponding algebraic spaces \cite[Theorem 4.4]{OlSt}
\begin{align*}
{\rm Quot}(\mathcal{G},\mathcal{X}) \hookrightarrow {\rm Quot}(F_{\mathcal{E}}(\mathcal{G}),X).
\end{align*}
This property allows us to study ${\rm Quot}(\mathcal{G},\mathcal{X})$ as a closed algebraic subspace of ${\rm Quot}(F_{\mathcal{E}}(\mathcal{G}),X)$. We will review these properties in \S 3.

\subsection{Projective Deligne-Mumford Stacks}
Projective Deligne-Mumford stacks are the main object we study in this paper. As its name, a projective Deligne-Mumford stack is not only a stack, but also inherits some good properties from projective schemes. We briefly review the definition of some properties of the projective Deligne-Mumford stacks. Readers can find those materials in \cite{Kre,Nir,OlSt}.

Let $k$ be an algebraically closed field and let $S$ be an algebraic space. With the same setup as in \S 2.1, a Deligne-Mumford stack $\mathcal{X}$ over $k$ is a \emph{projective stack} (resp. \emph{quasi-projective stack}), if $\mathcal{X}$ is a tame separated global quotient and its coarse moduli space $X$ is a projective (quasi-projective) scheme. Kresch proved that the following conditions are equivalent of a Deligne-Mumford stack $\mathcal{X}$ in characteristic zero \cite[Theorem 5.3, Lemma 5.4]{Kre}:
\begin{itemize}
\item $\mathcal{X}$ is projective (quasi-projective).
\item $\mathcal{X}$ has a projective (quasi-projective) moduli space $X$ and there exists a generating sheaf of $\mathcal{X}$.
\item $\mathcal{X}$ has a closed (locally closed) embedding in a smooth projective Deligne-Mumford stack.
\end{itemize}

Now let $\mathcal{X}$ be a tame separated Deligne-Mumford stack over $S$ which can be written as a global quotient. The stack $\mathcal{X}$ is a \emph{projective} ({\rm resp. }\emph{quasi-projective}) \emph{Deligne-Mumford stack over $S$}, if the morphism $\mathcal{X} \rightarrow S$ is the composition of $\pi: \mathcal{X} \rightarrow X$ and $\rho: X \rightarrow S$, where $\rho$ is a projective (resp. quasi-projective) morphism. In this case, we also say that $\mathcal{X}$ is a \emph{family of projective Deligne-Mumford stacks over $S$}. If $\mathcal{X}$ is a projective Deligne-Mumford stack over $S$, each fiber $\mathcal{X}_s$ over a geometric point $s$ of $S$ is a projective stack over $k$. Note that a projective Deligne-Mumford stack $\mathcal{X}$ over $S$ does not mean that the morphism $\mathcal{X} \rightarrow S$ is projective.

\subsection{Moduli Problem}
Let $S$ be an algebraic space, which is locally of finite type over an algebraically closed field $k$. A \emph{moduli problem} $\widetilde{F}$ is a functor
\begin{align*}
\widetilde{F}: ({\rm Sch}/S)^{\rm op} \rightarrow {\rm Set},
\end{align*}
where $({\rm Sch}/S)^{\rm op}$ is the opposite category of schemes over $S$ with respect to the fppf topology or the big \'etale topology. The functor $\widetilde{F}$ sends a scheme $T$ to a set of isomorphism classes of some objects. This is the classical definition of a moduli problem.

Nowadays, people prefer to consider a moduli problem as a category $\mathcal{M}$ fibered over the category $({\rm Sch}/S)$ in groupoids. In other words, a moduli problem over $({\rm Sch}/S)$ is a pair $(\mathcal{M},\hat{F})$, where $\mathcal{M}$ is a category fibered over $({\rm Sch}/S)$ and $\hat{F}: \mathcal{M} \rightarrow ({\rm Sch}/S)$ is a functor, such that $\hat{F}^{-1}(T)$ is a groupoid for every $T \in ({\rm Sch}/S)$.

There is a natural way to construct a functor $\widetilde{F}:({\rm Sch}/S)^{\rm op} \rightarrow {\rm Set}$ from the pair $(\mathcal{M},\hat{F})$. Given $T \in ({\rm Sch}/S)$, define $\widetilde{F}(T)=\hat{F}^{-1}(T)$. With respect to this construction, a moduli problem is a presheaf over $({\rm Sch}/S)$ with respect to the fppf or big \'etale topology. Furthermore, if $\hat{F}$ is a \emph{category fibered in groupoids} (CFG) satisfying the effective descent conditions, then the corresponding functor $\widetilde{F}$ is a sheaf. There are many good references about the above constructions and properties \cite{ArCorGri,CasaWise,Sta}.

We still use the classical definition, as contravariant functors, of moduli problems, and all moduli problems we consider in this paper are also sheaves. We will give some examples, which are closely related to the moduli problems we consider in this paper, at the end of this subsection.

Let $\widetilde{F}: ({\rm Sch}/S)^{\rm op} \rightarrow {\rm Set}$ be a moduli problem. The representability is a very important property of a moduli problem. Here are the definitions about the representability of $\widetilde{F}$ we consider in this paper.
\begin{enumerate}
\item If there is an algebraic space (or a scheme) $F$ such that $\widetilde{F}$ is isomorphic to ${\rm Hom}(\bullet,F)$, then we say that $F$ is a \emph{fine moduli space} for the moduli problem, or the moduli problem $\widetilde{F}$ \emph{is represented by} $F$.

\item A moduli problem $\widetilde{F}$ is \emph{co-represented} by an algebraic space (or a scheme) $F$ if there is a morphism $\alpha: \widetilde{F} \rightarrow {\rm Hom}(\bullet,F)$ such that for any $S$-scheme $T$ and any morphism $\alpha' : \widetilde{F} \rightarrow {\rm Hom}(\bullet,T)$, there is a unique morphism ${\rm Hom}(\bullet,F) \rightarrow {\rm Hom}(\bullet,T)$ such that the following diagram commutes:
\begin{center}
\begin{tikzcd}
\widetilde{F} \arrow[d, "\alpha'"] \arrow[r, "\alpha"]   & {\rm Hom}(\bullet, F)\arrow[ld,"\exists !"]   \\
{\rm Hom}(\bullet, T) &
\end{tikzcd}
\end{center}

\item A moduli problem $\widetilde{F}$ is \emph{universally co-represented} by $F$, if there is a morphism $\alpha: \widetilde{F} \rightarrow {\rm Hom}(\bullet,F)$ such that for any morphism $\beta: T \rightarrow F$, the fiber product ${\rm Hom}(\bullet,T) \times_{ {\rm Hom}(\bullet,F) } \widetilde{F}$ is co-represented by $T$.

\item We say that $F$ is a \emph{coarse moduli space} of the moduli problem $\widetilde{F}$, if there is a morphism $\alpha: \widetilde{F} \rightarrow {\rm Hom}(\bullet,F)$ such that
\begin{enumerate}
\item $F$ co-represents $\widetilde{F}$;
\item the map $\alpha_S: \widetilde{F}(S) \rightarrow {\rm Hom}(S,F)$ is a bijection.
\end{enumerate}
\end{enumerate}
\subsubsection{Moduli Problem of Coherent Sheaves on Deligne-Mumford Stacks}
Let $\mathcal{X}$ be a separated and locally finitely-presented Deligne-Mumford stack over $S$. We define
\begin{align*}
\widetilde{\mathcal{M}}(\mathcal{X}): (\text{Sch}/S)^{{\rm op}} \rightarrow \text{Set}
\end{align*}
the moduli problem of coherent sheaves as follows. For each $S$-scheme $T$, we define $\mathcal{X}_T$ as $\mathcal{X} \times_{S} T$ and $\mathcal{F}_T$ the pullback of $\mathcal{F}$ to $\mathcal{X}_T$. Define $\widetilde{\mathcal{M}}(\mathcal{X})(T)$ to be the set of coherent sheaves over $\mathcal{O}_{\mathcal{X}_T}$ such that
\begin{enumerate}
\item $\mathcal{F}_T$ is a coherent $\mathcal{O}_{\mathcal{X}_T}$-module;
\item $\mathcal{F}_T$ is flat over $T$;
\item the support of $\mathcal{F}_T$ is proper over $T$.
\end{enumerate}
This moduli problem has a natural structure as a stack, which means that the moduli problem is a sheaf over $(\text{Sch}/S)$. Moreover, Nironi proved that $\widetilde{\mathcal{M}}(\mathcal{X})$ can be covered by the quot-functors \cite[\S 2]{Nir}, and deduces that $\widetilde{\mathcal{M}}(\mathcal{X})$ is an algebraic space \cite[Proposition 2.26, Corollary 2.27]{Nir}

\subsubsection{Moduli Problem of $\mathcal{L}$-twisted Hitchin Pairs on Deligne-Mumford Stacks}
Let $\mathcal{X}$ be a separated and locally finitely-presented Deligne-Mumford stack over $S$. We fix a line bundle (invertible sheaf) $\mathcal{L}$ over $\mathcal{X}$, which is considered as the bundle for twisting. Let $\mathcal{F}$ be a coherent sheaf over $\mathcal{X}$. An \emph{$\mathcal{L}$-twisted Higgs field} $\Phi$ on the coherent sheaf $\mathcal{F}$ is a homomorphism
\begin{align*}
\Phi:\mathcal{F} \rightarrow \mathcal{F} \otimes \mathcal{L}.
\end{align*}
An \emph{$\mathcal{L}$-twisted Hitchin pair} over $\mathcal{X}$ is a pair $(\mathcal{F},\Phi)$, where $\mathcal{F}$ is a coherent sheaf over $\mathcal{X}$ and $\Phi$ is an $\mathcal{L}$-twisted Higgs field.

We define
\begin{align*}
\widetilde{\mathcal{M}}_H(\mathcal{X},\mathcal{L}): (\text{Sch/S})^{{\rm op}} \rightarrow \text{Set}
\end{align*}
the moduli problem of $\mathcal{L}$-twisted Hitchin pairs on $\mathcal{X}$ as follows. For each $S$-scheme $T$, we define the following set
\begin{align*}
\widetilde{\mathcal{M}}_H(\mathcal{X},\mathcal{L})(T)=\{ (\mathcal{F}_T,\Phi_T) \text{ } | \text{ }\mathcal{F}_T \in \widetilde{\mathcal{M}}(\mathcal{X})(T), \Phi_T: \mathcal{F}_T \rightarrow \mathcal{F}_T \otimes p_{\mathcal{X}}^* \mathcal{L} \} ,
\end{align*}
where $p_{\mathcal{X}}:\mathcal{X} \times_S T \rightarrow \mathcal{X}$ is the natural projection. This moduli problem is also an algebraic stack \cite{CasaWise} and is proved by the author that the moduli problem $\widetilde{\mathcal{M}}_H(\mathcal{X},\mathcal{L})$ is represented by an algebraic space \cite{Sun20194}. In fact, the twisted Hitchin pairs is a special case of $\Lambda$-modules, where $\Lambda$ is a sheaf of graded algebras (see \S 4.1).

\section{Moduli Space of Coherent Sheaves on Deligne-Mumford Stacks}
Quot-functors of coherent sheaves on a Deligne-Mumford stack $\mathcal{X}$ are proved to be represented by algebraic spaces \cite[Theorem 1.1]{OlSt}, which is called the \emph{quot-spaces} in this paper. Under some necessary conditions, a quot-space is a scheme, and we call it \emph{quot-scheme}. In this case, a quot-scheme of coherent sheaves over $\mathcal{X}$ can be embedded into a quot-scheme on the coarse moduli space $X$ of $\mathcal{X}$ \cite[\S 6]{OlSt}. This property enables us to construct the moduli space of $p$-semistable coherent sheaves on $\mathcal{X}$.

In \S 3.1, we review the result of the representability of quot-functors on Deligne-Mumford stacks. In the rest of this section, we work on a projective Deligne-Mumford stack with a given generating sheaf $\mathcal{E}$ on $\mathcal{X}$ and a given polarization $\mathcal{O}_X(1)$ on $X$. We give the definitions of saturations (\S 3.2), modified Hilbert polynomials, $p$-stability condition (\S 3.3), Harder-Narasimhan filtrations and Jordan-H\"{o}lder filtrations (\S 3.4), and we review some properties of boundedness (\S 3.5 and \S 3.6). Most of these properties can be found in \cite[\S 2-\S 4]{Nir} and \cite[\S 2]{BSP}. Otherwise we will give a proof (e.g. Corollary \ref{304}). In \S 3.7, we review the geometric invariant theory (GIT).

The moduli space of $p$-semistable coherent sheaves on $\mathcal{X}$ is constructed as the GIT quotient of a quot-scheme \cite[Theorem 6.22]{Nir}. In \S 3.8, we construct the moduli space of ($p$-semistable) sheaves on a projective Deligne-Mumford stack $\mathcal{X}$ (over a scheme $S$ of finite type) and give a slightly different proof.

\subsection{Quot-Functors}
Let $S$ be an algebraic space, which is locally of finite type over an algebraically closed field $k$, and let $\mathcal{X}$ be a separated and locally finitely-presented Deligne-Mumford stack over $S$. Denote by $(\text{Sch}/S)$ the category of $S$-schemes with respect to the big \'etale topology. We take a coherent sheaf $\mathcal{G}$, and define the moduli problem
\begin{align*}
\widetilde{{\rm Quot}}(\mathcal{G},\mathcal{X}): (\text{Sch}/S)^{{\rm op}} \rightarrow \text{Set}
\end{align*}
as follows. For each $S$-scheme $T$, we define $\widetilde{{\rm Quot}}(\mathcal{G},\mathcal{X})(T)$ to be the set of quotients $[\mathcal{G}_T \rightarrow \mathcal{F}_T]$ such that
\begin{enumerate}
\item $\mathcal{F}_T$ is a locally finitely-presneted quasi-coherent $\mathcal{O}_{\mathcal{X}_T}$-module;
\item $\mathcal{F}_T$ is flat over $T$;
\item the support of $\mathcal{F}_T$ is proper over $T$.
\end{enumerate}
The moduli problem $\widetilde{{\rm Quot}}(\mathcal{G},\mathcal{X})$ is called the \emph{quot-functor}. The quot-functor $\widetilde{{\rm Quot}}(\mathcal{G},\mathcal{X})$ has a natural stack structure.

Artin proved that the quot-functor $\widetilde{{\rm Quot}}(\mathcal{G},\mathcal{X})$ is represented by a separated and locally finitely-presented algebraic space over $S$ when $\mathcal{X}$ is an algebraic space \cite{Art}. Olsson and Starr generalized this result to Deligne-Mumford stacks.
\begin{thm}[Theorem 1.1 in \cite{OlSt}]\label{301}
With respect to the above notation, the quot-functor $\widetilde{{\rm Quot}}(\mathcal{G},\mathcal{X})$ is represented by an algebraic space which is separated and locally finitely presented over $S$.
\end{thm}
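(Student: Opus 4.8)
The plan is to verify the hypotheses of Artin's representability criterion \cite{Art}: that $\widetilde{{\rm Quot}}(\mathcal{G},\mathcal{X})$ is a sheaf satisfying effective descent, is locally of finite presentation, admits a well-behaved deformation and obstruction theory, is effective (integrable) along formal thickenings, and is separated. This is the route that Olsson and Starr follow \cite{OlSt}, and it is the stacky analogue of Artin's original argument for algebraic spaces.

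First I would establish the sheaf and finite-presentation properties. A quotient $\mathcal{G}_T \to \mathcal{F}_T$ together with flatness over $T$ and properness of its support is a datum that descends along \'etale (indeed fppf) covers of $T$, since quasi-coherent sheaves on $\mathcal{X}_T$ satisfy descent and the three defining conditions are local on $T$; hence the functor is a sheaf. For local finite presentation one checks commutation with filtered colimits of affine $S$-schemes, which follows from $\mathcal{X}$ being locally finitely presented over $S$, coherence of $\mathcal{G}$, and the standard spreading-out of a flat quotient with proper support.

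Next I would treat the deformation theory. At a geometric point corresponding to a quotient with kernel $\mathcal{K}$, so that $0 \to \mathcal{K} \to \mathcal{G} \to \mathcal{F} \to 0$ is exact on $\mathcal{X}$, the first-order deformations are classified by ${\rm Hom}_{\mathcal{O}_{\mathcal{X}}}(\mathcal{K},\mathcal{F})$ and the obstruction to lifting across a square-zero extension with ideal $I$ lives in ${\rm Ext}^1_{\mathcal{O}_{\mathcal{X}}}(\mathcal{K},\mathcal{F}\otimes I)$. Since $\mathcal{X}$ is \'etale-locally a quotient stack, these groups may be computed after passing to such a presentation; coherence of $\mathcal{K}$ and $\mathcal{F}$ together with properness of the support then yields the finiteness, linearity, and base-change compatibility that Artin's criterion requires.

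The step I expect to be the main obstacle is effectivity: given a compatible system of quotients over the infinitesimal neighborhoods of a complete local $S$-scheme, one must algebraize it to a genuine quotient over the completion. This demands a Grothendieck existence theorem (formal GAGA) for proper Deligne-Mumford stacks, identifying coherent sheaves on the formal completion with those on the stack itself; the properness hypothesis on the support of $\mathcal{F}$ is exactly what makes it applicable, and supplying this stacky existence theorem is the technical heart of the argument. Separation is then comparatively soft: over a discrete valuation ring a quotient is recovered from its kernel, and flatness over the base forces that kernel to be the saturation, so two quotients agreeing at the generic point have identical kernels and therefore coincide, showing the diagonal is a closed immersion.
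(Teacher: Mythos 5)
Your proposal is correct and follows exactly the route the paper attributes to this result: the paper gives no proof of its own but cites Olsson--Starr, and (as noted in \S 5.2.5) their argument is precisely an application of Artin's representability criterion (Theorem \ref{502}), with the Grothendieck existence theorem for proper Deligne--Mumford stacks supplying integrability and the standard identification of deformations with ${\rm Hom}(\mathcal{K},\mathcal{F})$ and obstructions in ${\rm Ext}^1(\mathcal{K},\mathcal{F}\otimes I)$ supplying the deformation theory. Your identification of effectivity as the technical heart matches the structure of the cited proof, so there is nothing to correct.
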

Denote by ${\rm Quot}(\mathcal{G},\mathcal{X})$ the algebraic space representing the quot-functor $\widetilde{{\rm Quot}}(\mathcal{G},\mathcal{X})$. The algebraic space ${\rm Quot}(\mathcal{G},\mathcal{X})$ is called the \emph{quot-space}. If there is no ambiguity, we would like to omit $\mathcal{X}$, and use the notations $\widetilde{{\rm Quot}}(\mathcal{G})$ for the quot-functor and ${\rm Quot}(\mathcal{G})$ for the quot-space.

Now let $\mathcal{X}$ be a projective Deligne-Mumford stack. We choose a polarization $\mathcal{O}_X(1)$ on $X$ and a generating sheaf $\mathcal{E}$ on $\mathcal{X}$. The functor $F_{\mathcal{E}}$ induces a natural transformation
\begin{align*}
F_{\mathcal{E}}:\widetilde{{\rm Quot}}(\mathcal{G},\mathcal{X}) \rightarrow \widetilde{{\rm Quot}}(F_{\mathcal{E}}(\mathcal{G}),X),
\end{align*}
which is proved to be an monomorphism.
\begin{lem}[Lemma 6.1 in \cite{OlSt}]\label{302}
For each $T \in ({\rm Sch}/S)$, the map
\begin{align*}
F_{\mathcal{E}}(T):\widetilde{{\rm Quot}}(\mathcal{G},\mathcal{X})(T) \rightarrow \widetilde{{\rm Quot}}(F_{\mathcal{E}}(\mathcal{G}),X)(T)
\end{align*}
is injective.
\end{lem}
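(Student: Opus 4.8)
The plan is to reinterpret an element of each quot-functor as the \emph{kernel} of the defining surjection, and thereby reduce the injectivity of $F_{\mathcal{E}}(T)$ to the statement that $F_{\mathcal{E}}$ determines this kernel. Fix $T \in ({\rm Sch}/S)$, write $\pi_T : \mathcal{X}_T \rightarrow X_T$ for the base change of $\pi$, and let $\mathcal{E}_T, \mathcal{G}_T$ denote the pullbacks of $\mathcal{E}, \mathcal{G}$. Two quotients $q : \mathcal{G}_T \rightarrow \mathcal{F}_T$ and $q' : \mathcal{G}_T \rightarrow \mathcal{F}'_T$ represent the same element of $\widetilde{{\rm Quot}}(\mathcal{G},\mathcal{X})(T)$ precisely when $\ker q = \ker q'$ as subsheaves of $\mathcal{G}_T$, and likewise $F_{\mathcal{E}}(q)$ and $F_{\mathcal{E}}(q')$ agree in $\widetilde{{\rm Quot}}(F_{\mathcal{E}}(\mathcal{G}),X)(T)$ precisely when $\ker F_{\mathcal{E}}(q) = \ker F_{\mathcal{E}}(q')$ inside $F_{\mathcal{E}}(\mathcal{G}_T)$. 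Hence it suffices to prove that the assignment $\mathcal{K} \mapsto F_{\mathcal{E}}(\mathcal{K})$, from quasi-coherent subsheaves of $\mathcal{G}_T$ to subsheaves of $F_{\mathcal{E}}(\mathcal{G}_T)$, is injective.

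Since $F_{\mathcal{E}}$ is exact, applying it to the inclusion $\iota : \mathcal{K} \hookrightarrow \mathcal{G}_T$ realizes $F_{\mathcal{E}}(\mathcal{K})$ as a subsheaf of $F_{\mathcal{E}}(\mathcal{G}_T)$, which is exactly $\ker F_{\mathcal{E}}(q)$. The key step is to \emph{recover} $\mathcal{K}$ from this subsheaf, and for this I would invoke the adjunction morphism $\theta_{\mathcal{E}}$ together with its naturality: the inclusion $\iota$ fits into the commutative square
\begin{center}
\begin{tikzcd}
G_{\mathcal{E}} F_{\mathcal{E}}(\mathcal{K}) \arrow[r, "G_{\mathcal{E}} F_{\mathcal{E}}(\iota)"] \arrow[d, "\theta_{\mathcal{E}}(\mathcal{K})"'] & G_{\mathcal{E}} F_{\mathcal{E}}(\mathcal{G}_T) \arrow[d, "\theta_{\mathcal{E}}(\mathcal{G}_T)"] \\
\mathcal{K} \arrow[r, "\iota"'] & \mathcal{G}_T
\end{tikzcd}
\end{center}
Because $\mathcal{E}_T$ is again a generating sheaf on $\mathcal{X}_T$, the left vertical map $\theta_{\mathcal{E}}(\mathcal{K})$ is surjective onto $\mathcal{K}$; the bottom path $\iota \circ \theta_{\mathcal{E}}(\mathcal{K})$ therefore has image exactly $\mathcal{K}$. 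By commutativity of the square the top-right path $\theta_{\mathcal{E}}(\mathcal{G}_T) \circ G_{\mathcal{E}} F_{\mathcal{E}}(\iota)$ has the same image. In words, $\mathcal{K}$ equals the image in $\mathcal{G}_T$, under the fixed surjection $\theta_{\mathcal{E}}(\mathcal{G}_T)$, of $G_{\mathcal{E}}$ applied to the inclusion $F_{\mathcal{E}}(\mathcal{K}) \hookrightarrow F_{\mathcal{E}}(\mathcal{G}_T)$. Since this description refers only to the subsheaf $F_{\mathcal{E}}(\mathcal{K})$, two subsheaves with the same $F_{\mathcal{E}}$-image coincide, which yields the injectivity and hence the lemma. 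Note that I never need $G_{\mathcal{E}} F_{\mathcal{E}}(\iota)$ itself to be injective, only the image of the composite, so the mere right-exactness of $G_{\mathcal{E}}$ is enough.

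The one point requiring genuine care — and the only place the hypotheses truly enter — is that the functor $F_{\mathcal{E}}(T)$ in the statement must agree with the honestly base-changed functor $F_{\mathcal{E}_T} = \pi_{T*}\mathcal{H}om_{\mathcal{O}_{\mathcal{X}_T}}(\mathcal{E}_T, -)$ computed on $\mathcal{X}_T$, and that $\theta_{\mathcal{E}}$ and the composition identities of \S 2.2 survive pullback along $T \rightarrow S$. This is exactly where tameness is used: for a tame stack $\pi_*$ commutes with arbitrary base change, so $F_{\mathcal{E}}(\mathcal{G})_T \cong F_{\mathcal{E}_T}(\mathcal{G}_T)$ and the formation of $\theta_{\mathcal{E}}$ is compatible with pullback, while $\mathcal{E}_T$ remains $\pi_T$-very ample and hence a generating sheaf on $\mathcal{X}_T$ by \cite[Proposition 2.7]{Nir}. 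I expect verifying these base-change compatibilities to be the only real obstacle; once they are in place the reconstruction argument applies verbatim over $\mathcal{X}_T$, and no projectivity or properness input beyond \S 2 is needed. The reconstruction itself is a purely formal consequence of the exactness of $F_{\mathcal{E}}$ and the surjectivity of $\theta_{\mathcal{E}}$ for a generating sheaf.
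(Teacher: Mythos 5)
Your argument is correct and is essentially the proof of \cite[Lemma 6.1]{OlSt} that the paper cites without reproducing: identify elements of the quot-functor with kernels, use exactness of $F_{\mathcal{E}}$ to see that $F_{\mathcal{E}}(\mathcal{K})=\ker F_{\mathcal{E}}(q)$, and recover $\mathcal{K}$ as the image of $G_{\mathcal{E}}F_{\mathcal{E}}(\mathcal{K})$ under $\theta_{\mathcal{E}}(\mathcal{G}_T)$, using the surjectivity and naturality of the adjunction for a generating sheaf. The base-change compatibilities you flag (tameness making $\pi_*$ commute with base change, and $\pi$-very ampleness of $\mathcal{E}_T$ persisting so that $\mathcal{E}_T$ is again a generating sheaf) are indeed the only technical inputs, and they hold as you describe.
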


It is well-known that the quot-functor $\widetilde{{\rm Quot}}(F_{\mathcal{E}}(\mathcal{G}),X)$ is representable by disjoint union of schemes, which are parameterized by integer polynomials (Hilbert polynomials). Therefore, we have the following theorem.

\begin{thm}[Theorem 1.5 and Theorem 4.4 in \cite{OlSt}]\label{3021}
Let $S$ be an affine scheme, and let $\mathcal{X}$ be a projective (resp. quasi-projective) Deligne-Mumford stack. The connected components of ${\rm Quot}(\mathcal{G},\mathcal{X})$ are projective (resp. quasi-projective) $S$-schemes, which are parameterized by integer polynomials.
\end{thm}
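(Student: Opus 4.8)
The plan is to reduce the statement to the classical Grothendieck theory on the coarse space $X$ via the generating sheaf $\mathcal{E}$, using the closed embedding recalled in \S 2.3. First I would record the classical input on $X$: since $X$ is a projective (resp.\ quasi-projective) scheme over the affine base $S$ and $F_{\mathcal{E}}(\mathcal{G})$ is coherent on $X$, the functor $\widetilde{{\rm Quot}}(F_{\mathcal{E}}(\mathcal{G}),X)$ is Grothendieck's ordinary Quot functor, and by Grothendieck's theorem it is representable by a scheme which splits as a disjoint union
\[
{\rm Quot}(F_{\mathcal{E}}(\mathcal{G}),X) = \coprod_{P} {\rm Quot}(F_{\mathcal{E}}(\mathcal{G}),X,P)
\]
over Hilbert polynomials $P$ taken with respect to $\mathcal{O}_X(1)$, each summand being a projective (resp.\ quasi-projective) $S$-scheme. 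This is the ``Theorem 1.5'' ingredient.

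Next I would transport this along $F_{\mathcal{E}}$. The target ${\rm Quot}(\mathcal{G},\mathcal{X})$ is an algebraic space by Theorem \ref{301}, and by \S 2.3 (Theorem 4.4 of \cite{OlSt}) the monomorphism of Lemma \ref{302} is in fact a closed immersion (resp.\ locally closed immersion) of algebraic spaces ${\rm Quot}(\mathcal{G},\mathcal{X}) \hookrightarrow {\rm Quot}(F_{\mathcal{E}}(\mathcal{G}),X)$. A closed (or locally closed) subspace of a scheme is again a scheme --- a closed immersion is affine and an algebraic space affine over a scheme is a scheme, while the locally closed case follows by composing with an open immersion --- so ${\rm Quot}(\mathcal{G},\mathcal{X})$ is a scheme. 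Setting ${\rm Quot}(\mathcal{G},\mathcal{X},P)$ to be the preimage of ${\rm Quot}(F_{\mathcal{E}}(\mathcal{G}),X,P)$ yields a decomposition ${\rm Quot}(\mathcal{G},\mathcal{X}) = \coprod_P {\rm Quot}(\mathcal{G},\mathcal{X},P)$ in which each piece is a closed (resp.\ locally closed) subscheme of a projective (resp.\ quasi-projective) $S$-scheme, hence itself projective (resp.\ quasi-projective) over $S$.

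To identify the index $P$ intrinsically, I would observe that $F_{\mathcal{E}}(\mathcal{F})=\pi_*(\mathcal{F}\otimes\mathcal{E}^{\vee})$, so that by the projection formula and the exactness of $\pi_*$ (tameness of the projective stack $\mathcal{X}$, which kills the higher direct images) one has
\[
\chi\bigl(X, F_{\mathcal{E}}(\mathcal{F})\otimes\mathcal{O}_X(m)\bigr) = \chi\bigl(\mathcal{X},\mathcal{F}\otimes\mathcal{E}^{\vee}\otimes\pi^*\mathcal{O}_X(m)\bigr) = P_{\mathcal{E}}(\mathcal{F},m).
\]
Thus the Hilbert polynomial of $F_{\mathcal{E}}(\mathcal{F})$ on $X$ is precisely the modified Hilbert polynomial of $\mathcal{F}$ on $\mathcal{X}$. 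Since the modified Hilbert polynomial is locally constant in flat families, it is constant on connected components, so each connected component of ${\rm Quot}(\mathcal{G},\mathcal{X})$ sits inside a single ${\rm Quot}(\mathcal{G},\mathcal{X},P)$ and is a projective (resp.\ quasi-projective) $S$-scheme, giving the parameterization by integer polynomials.

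The hard part is the upgrade of $F_{\mathcal{E}}$ from a monomorphism to a (locally) closed immersion, that is, its representability by (locally) closed immersions rather than mere injectivity on $T$-points. This is the content of Theorem 4.4 of \cite{OlSt}: it amounts to characterizing which quotients of $F_{\mathcal{E}}(\mathcal{G})$ on $X$ lie in the image of $F_{\mathcal{E}}$, using the adjunction morphisms $\theta_{\mathcal{E}}$ and $\varphi_{\mathcal{E}}$ and the compatibility identities recalled in \S 2.2, and it is exactly here that the generating property of $\mathcal{E}$ is essential. Since that statement is available to us, the remaining steps are formal; the only further point I would check is that the decomposition is genuinely into open-and-closed pieces, which is inherited from the clopen decomposition on $X$ by pulling back along the immersion.
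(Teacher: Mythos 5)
Your proposal is correct and follows essentially the same route the paper takes: the text immediately preceding Theorem \ref{3021} and the discussion in \S 2.3 reduce the statement to Grothendieck's decomposition of $\widetilde{{\rm Quot}}(F_{\mathcal{E}}(\mathcal{G}),X)$ by Hilbert polynomials together with the closed embedding ${\rm Quot}(\mathcal{G},\mathcal{X})\hookrightarrow{\rm Quot}(F_{\mathcal{E}}(\mathcal{G}),X)$ of \cite[Theorem 4.4]{OlSt}, and the identification of the index with the modified Hilbert polynomial via exactness of $\pi_*$ is exactly the observation recorded in \S 3.3. Your write-up merely makes explicit the formal steps (a closed immersion is affine, hence the source is a scheme; components inherit projectivity) that the paper leaves to the citation.
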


Nironi showed that the proof of the above theorem also works for noetherian schemes $S$ of finite type (see \cite[Theorem 2.17]{Nir}).

\subsection{Pure Sheaves and Saturations}
In this subsection, we discuss pure sheaves over a projective Deligne-Mumford stack, of which the definition is similar to that of pure sheaves over a scheme \cite[\S 1.1]{HuLe} and the materials can be found in \cite[\S 3]{Nir}. We also define the saturation of a subsheaf $\mathcal{F}' \subseteq \mathcal{F}$ and prove that the saturation is preserved under the functor $F_\mathcal{E}$.

Let $\mathcal{X}$ be a projective Deligne-Mumford stack with moduli space $X$ over an algebraically closed field $k$. We fix a polarization $\mathcal{O}_X(1)$ on $X$ and a generating sheaf $\mathcal{E}$ on $\mathcal{X}$. Let $\mathcal{F}$ be a coherent sheaf on $\mathcal{X}$. The \emph{support} of $\mathcal{F}$ is defined to be the closed substack associated to the ideal
\begin{align*}
0 \rightarrow \mathcal{I} \rightarrow \mathcal{O}_{\mathcal{X}} \rightarrow \mathcal{E}nd_{\mathcal{O}_{\mathcal{X}}}(\mathcal{F}).
\end{align*}
The dimension of the support $\mathcal{I}$ is defined as the \emph{dimension} of the coherent sheaf $\mathcal{F}$.

Let
\begin{align*}
Y \times_{\mathcal{X}} Y \overset{s}{\underset{t}{\rightrightarrows}} Y \xrightarrow{\pi'} \mathcal{X}
\end{align*}
be an \'etale presentation of $\mathcal{X}$. By the flatness of the maps $s,t,\pi'$, we have
\begin{align*}
\dim(\mathcal{F})=\dim(\pi'^*\mathcal{F}).
\end{align*}

A coherent sheaf $\mathcal{F}$ is \emph{pure of dimension $d$}, if for every nontrivial subsheaf $\mathcal{F}'$, the support of $\mathcal{F}'$ is of dimension $d$.

Similar to the case of schemes, for any coherent sheaf $\mathcal{F}$ over $\mathcal{X}$, we have the filtration
\begin{align*}
0 \subseteq  T_0(\mathcal{F}) \subseteq \dots \subseteq T_d(\mathcal{F})=\mathcal{F},
\end{align*}
where $T_i(\mathcal{F})/T_{i-1}(\mathcal{F})$ is pure of dimension $i$ or zero. This filtration is called the \emph{torsion filtration} of $\mathcal{F}$, and denote by
\begin{align*}
\mathcal{F}_{\rm tor}:=T_{d-1}(\mathcal{F})
\end{align*}
the \emph{torsion part} of $\mathcal{F}$.

Given a projective Deligne-Mumford stack $\mathcal{X}$, the pureness of a coherent sheaf $\mathcal{F}$ over $\mathcal{X}$ is preserved by the functor $F_{\mathcal{E}}$ as explained in the following corollary.
\begin{cor}[Corollary 3.7 in \cite{Nir}, Proposition 2.22 in \cite{BSP}]\label{303}
Let $\mathcal{X}$ be a projective Deligne-Mumford stack, and let $\mathcal{F}$ be a coherent sheaf over $\mathcal{X}$. The torsion filtration of $\mathcal{F}$
\begin{align*}
0 \subseteq  T_0(\mathcal{F}) \subseteq \dots \subseteq T_d(\mathcal{F})=\mathcal{F},
\end{align*}
is sent to the torsion filtration of $F_{\mathcal{E}}(\mathcal{F})$ under the functor $F_{\mathcal{E}}$. In other words, $F_{\mathcal{E}} (T_i(\mathcal{F}))=T_i(F_{\mathcal{E}}(\mathcal{F}))$.
\end{cor}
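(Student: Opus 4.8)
The plan is to reduce the statement to three functorial properties of $F_{\mathcal{E}}$: exactness, preservation of dimension, and preservation of purity. First I would recall from \S 2.2 that $F_{\mathcal{E}}=\pi_* \mathcal{H}om_{\mathcal{O}_{\mathcal{X}}}(\mathcal{E},-)$ is exact, so applying it to the torsion filtration of $\mathcal{F}$ produces a filtration $0 \subseteq F_{\mathcal{E}}(T_0(\mathcal{F})) \subseteq \cdots \subseteq F_{\mathcal{E}}(T_d(\mathcal{F}))=F_{\mathcal{E}}(\mathcal{F})$ whose successive quotients are $F_{\mathcal{E}}(T_i(\mathcal{F}))/F_{\mathcal{E}}(T_{i-1}(\mathcal{F})) \cong F_{\mathcal{E}}(T_i(\mathcal{F})/T_{i-1}(\mathcal{F}))$. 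Since the torsion filtration of a coherent sheaf is the \emph{unique} filtration whose $i$-th graded piece is pure of dimension $i$ or zero, it suffices to show that $F_{\mathcal{E}}$ carries a sheaf that is pure of dimension $i$ (or zero) to one that is again pure of dimension $i$ (or zero); the equality $F_{\mathcal{E}}(T_i(\mathcal{F}))=T_i(F_{\mathcal{E}}(\mathcal{F}))$ then follows from this uniqueness.

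For the dimension I would observe that $\mathcal{H}om_{\mathcal{O}_{\mathcal{X}}}(\mathcal{E},\mathcal{F})=\mathcal{E}^{\vee}\otimes \mathcal{F}$ has the same support as $\mathcal{F}$ because $\mathcal{E}$ is locally free, and that the coarse moduli map $\pi$ is proper with finite fibers. Hence $\mathrm{Supp}(F_{\mathcal{E}}(\mathcal{F}))=\pi(\mathrm{Supp}(\mathcal{F}))$, and computing dimension through an \'etale presentation $\pi':Y \rightarrow \mathcal{X}$ as in \S 3.2 gives $\dim F_{\mathcal{E}}(\mathcal{F})=\dim \mathcal{F}$. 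The identical computation applied to $G_{\mathcal{E}}(F)=\pi^* F \otimes \mathcal{E}$ yields $\dim G_{\mathcal{E}}(F)=\dim F$, a fact I will need below.

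The heart of the argument is purity preservation. Assume $\mathcal{F}$ is pure of dimension $d$ and let $\iota: G' \hookrightarrow F_{\mathcal{E}}(\mathcal{F})$ be a nonzero subsheaf with $\dim G' < d$; I aim for a contradiction. Applying $G_{\mathcal{E}}$ and composing with the counit $\theta_{\mathcal{E}}(\mathcal{F}): G_{\mathcal{E}}F_{\mathcal{E}}(\mathcal{F}) \rightarrow \mathcal{F}$ gives a map $\psi:=\theta_{\mathcal{E}}(\mathcal{F})\circ G_{\mathcal{E}}(\iota): G_{\mathcal{E}}(G') \rightarrow \mathcal{F}$. Its image is a subsheaf of $\mathcal{F}$ of dimension at most $\dim G_{\mathcal{E}}(G')=\dim G' < d$, so purity of $\mathcal{F}$ forces the image to be zero, whence $\psi=0$. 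I would then use the triangle identity from \S 2.2, namely $F_{\mathcal{E}}(\theta_{\mathcal{E}}(\mathcal{F}))\circ \varphi_{\mathcal{E}}(F_{\mathcal{E}}(\mathcal{F}))=\mathrm{id}$, together with naturality of the unit $\varphi_{\mathcal{E}}$ applied to $\iota$, to obtain
\[
\iota = F_{\mathcal{E}}(\theta_{\mathcal{E}}(\mathcal{F}))\circ \varphi_{\mathcal{E}}(F_{\mathcal{E}}(\mathcal{F}))\circ \iota = F_{\mathcal{E}}(\theta_{\mathcal{E}}(\mathcal{F}))\circ F_{\mathcal{E}}G_{\mathcal{E}}(\iota)\circ \varphi_{\mathcal{E}}(G') = F_{\mathcal{E}}(\psi)\circ \varphi_{\mathcal{E}}(G').
\]
Since $F_{\mathcal{E}}(\psi)=0$, this gives $\iota=0$, contradicting $G'\neq 0$. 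Therefore $F_{\mathcal{E}}(\mathcal{F})$ admits no nonzero subsheaf of dimension $<d$, i.e.\ it is pure of dimension $d$, and $F_{\mathcal{E}}$ visibly sends the zero sheaf to zero.

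I expect the purity step to be the main obstacle, because a subsheaf $G'$ of $F_{\mathcal{E}}(\mathcal{F})$ need not descend to a subsheaf of $\mathcal{F}$; the trick of pushing $G'$ through $G_{\mathcal{E}}$ into $\mathcal{F}$, killing the resulting map by purity, and then recovering $\iota$ from it via the unit--counit identities is precisely where the adjunction formalism of \S 2.2 does the real work. The only other point demanding care is the meaning of dimension on the stack, which I would interpret consistently through the \'etale presentation used to define $\dim(\mathcal{F})$, so that the finiteness of the fibers of $\pi$ translates into the dimension equalities invoked above.
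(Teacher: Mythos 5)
The paper does not actually prove this corollary: it is quoted from [Nir, Corollary 3.7] and [BSP, Proposition 2.22], and the author states that proofs are supplied only for statements not found in those references (e.g.\ Corollary \ref{304}), so there is no in-paper argument to compare against. Your proof is correct and follows essentially the same line as the cited sources: exactness of $F_{\mathcal{E}}$ together with the characterization of the torsion filtration as the unique filtration whose $i$-th graded piece is pure of dimension $i$ or zero reduces everything to showing that $F_{\mathcal{E}}$ preserves dimension and purity, and your adjunction trick --- pushing a would-be low-dimensional subsheaf $G'\subseteq F_{\mathcal{E}}(\mathcal{F})$ through $G_{\mathcal{E}}$, killing the resulting map to $\mathcal{F}$ by purity, and then recovering $\iota=0$ from the triangle identity of \S 2.2 --- is precisely the role that the surjectivity of $\theta_{\mathcal{E}}$ plays in Nironi's argument. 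One step needs repair: the equality $\mathrm{Supp}(F_{\mathcal{E}}(\mathcal{F}))=\pi(\mathrm{Supp}(\mathcal{F}))$ does not follow from local freeness of $\mathcal{E}$ and quasi-finiteness of $\pi$ alone, because $\pi_*$ can annihilate a nonzero sheaf outright (for instance a nontrivial character sheaf on $[\Spec k/\mu_n]$ in characteristic zero); the inclusion $\pi(\mathrm{Supp}\,\mathcal{F})\subseteq\mathrm{Supp}(F_{\mathcal{E}}(\mathcal{F}))$ genuinely requires the generating property of $\mathcal{E}$, since surjectivity of $\theta_{\mathcal{E}}(\mathcal{F})$ is what forces $\mathcal{F}|_{\pi^{-1}(U)}=0$ whenever $F_{\mathcal{E}}(\mathcal{F})|_U=0$. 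As the generating property is a standing hypothesis on $\mathcal{E}$ and you already invoke $\theta_{\mathcal{E}}$ elsewhere in the argument, this is a one-line fix rather than a genuine gap.
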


Given a coherent sheaf $\mathcal{F}$ over $\mathcal{X}$, let $\mathcal{F}'$ be a subsheaf of $\mathcal{F}$. The \emph{saturation} of $\mathcal{F}'$ is the minimal subsheaf $\mathcal{F}'_{\rm sat}$ containing $\mathcal{F}'$ such that $\mathcal{F}/\mathcal{F}'_{\rm sat}$ is pure of dimension $\dim(\mathcal{F})$ or zero. From the definition, the saturation of a given subsheaf $\mathcal{F}' \subseteq \mathcal{F}$ is exactly the kernel of the surjection
\begin{align*}
\mathcal{F} \rightarrow \mathcal{F}/\mathcal{F}' \rightarrow (\mathcal{F}/\mathcal{F}')/ T_{d-1}(\mathcal{F}/\mathcal{F}').
\end{align*}
If $\mathcal{F}'=\mathcal{F}'_{\rm sat}$, then we say that $\mathcal{F}'$ is \emph{saturated}. The saturation of a subsheaf is also preserved by the functor $F_{\mathcal{E}}$ as proved in the following corollary.

\begin{cor}\label{304}
Let $\mathcal{F}$ be a sheaf over $\mathcal{X}$ and let $\mathcal{F}'$ be a subsheaf of $\mathcal{F}$. The saturation $\mathcal{F}'_{\rm sat}$ of $\mathcal{F}'$ is preserved by $F_{\mathcal{E}}$, i.e.
\begin{align*}
F_{\mathcal{E}}(\mathcal{F}'_{\rm sat}) = (F_{\mathcal{E}}(\mathcal{F}'))_{\rm sat}.
\end{align*}
\end{cor}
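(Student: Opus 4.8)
The plan is to reduce the statement to the explicit kernel description of the saturation, and then to exploit that $F_{\mathcal{E}}$ is exact and is already known (Corollary \ref{303}) to commute with the torsion filtration. First I would record the short exact sequence defining the saturation. By the description given just above the statement, $\mathcal{F}'_{\rm sat}$ is the kernel of the surjection $\mathcal{F} \rightarrow (\mathcal{F}/\mathcal{F}')/T_{d-1}(\mathcal{F}/\mathcal{F}')$, where $d=\dim(\mathcal{F})$. Abbreviating $Q := (\mathcal{F}/\mathcal{F}')/T_{d-1}(\mathcal{F}/\mathcal{F}')$, this gives
\begin{align*}
0 \rightarrow \mathcal{F}'_{\rm sat} \rightarrow \mathcal{F} \rightarrow Q \rightarrow 0.
\end{align*}

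Next I would apply the functor $F_{\mathcal{E}}$. Since $F_{\mathcal{E}}$ is exact (see \S 2.2), this produces a short exact sequence
\begin{align*}
0 \rightarrow F_{\mathcal{E}}(\mathcal{F}'_{\rm sat}) \rightarrow F_{\mathcal{E}}(\mathcal{F}) \rightarrow F_{\mathcal{E}}(Q) \rightarrow 0,
\end{align*}
so that $F_{\mathcal{E}}(\mathcal{F}'_{\rm sat})$ is identified with the kernel of $F_{\mathcal{E}}(\mathcal{F}) \rightarrow F_{\mathcal{E}}(Q)$. The crux is then to identify $F_{\mathcal{E}}(Q)$ with the analogous purified quotient built from the subsheaf $F_{\mathcal{E}}(\mathcal{F}') \subseteq F_{\mathcal{E}}(\mathcal{F})$. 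Applying exactness twice yields $F_{\mathcal{E}}(\mathcal{F}/\mathcal{F}') = F_{\mathcal{E}}(\mathcal{F})/F_{\mathcal{E}}(\mathcal{F}')$ and $F_{\mathcal{E}}(Q) = F_{\mathcal{E}}(\mathcal{F}/\mathcal{F}')/F_{\mathcal{E}}(T_{d-1}(\mathcal{F}/\mathcal{F}'))$. Applying Corollary \ref{303} to the sheaf $\mathcal{F}/\mathcal{F}'$ gives $F_{\mathcal{E}}(T_{d-1}(\mathcal{F}/\mathcal{F}')) = T_{d-1}(F_{\mathcal{E}}(\mathcal{F}/\mathcal{F}')) = T_{d-1}(F_{\mathcal{E}}(\mathcal{F})/F_{\mathcal{E}}(\mathcal{F}'))$, whence
\begin{align*}
F_{\mathcal{E}}(Q) = \bigl(F_{\mathcal{E}}(\mathcal{F})/F_{\mathcal{E}}(\mathcal{F}')\bigr)\big/ T_{d-1}\bigl(F_{\mathcal{E}}(\mathcal{F})/F_{\mathcal{E}}(\mathcal{F}')\bigr).
\end{align*}

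Finally I would recognize the right-hand side as precisely the object whose kernel-of-the-projection from $F_{\mathcal{E}}(\mathcal{F})$ defines the saturation of $F_{\mathcal{E}}(\mathcal{F}')$, by the very same explicit formula now read on $X$. Combining this with the identification from the previous paragraph yields $F_{\mathcal{E}}(\mathcal{F}'_{\rm sat}) = (F_{\mathcal{E}}(\mathcal{F}'))_{\rm sat}$. The one point that genuinely requires care, and which I regard as the main (though mild) obstacle, is that the torsion index $d-1$ must be the correct one on both sides simultaneously; this rests on $\dim(F_{\mathcal{E}}(\mathcal{F})) = \dim(\mathcal{F})$, which is already built into Corollary \ref{303} since that result matches the torsion filtrations index by index. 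Once this compatibility is granted, the statement is a formal consequence of the exactness of $F_{\mathcal{E}}$ together with Corollary \ref{303}, so beyond the bookkeeping I do not anticipate a substantive difficulty.
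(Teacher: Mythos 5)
Your proof is correct and rests on exactly the same two ingredients as the paper's: the exactness of $F_{\mathcal{E}}$ and Corollary \ref{303} applied to $\mathcal{F}/\mathcal{F}'$. The only cosmetic difference is that you apply $F_{\mathcal{E}}$ directly to the defining sequence $0 \rightarrow \mathcal{F}'_{\rm sat} \rightarrow \mathcal{F} \rightarrow Q \rightarrow 0$ and identify both sides as the same kernel inside $F_{\mathcal{E}}(\mathcal{F})$, whereas the paper first extracts the sequence $0 \rightarrow \mathcal{F}' \rightarrow \mathcal{F}'_{\rm sat} \rightarrow T_{d-1}(\mathcal{F}/\mathcal{F}') \rightarrow 0$ from a $3\times 3$ diagram and compares it with its counterpart on $X$; your route is, if anything, slightly more economical.
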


\begin{proof}
There are two natural projections
\begin{align*}
j: \mathcal{F} \rightarrow \mathcal{F}/\mathcal{F}', \quad p: \mathcal{F}/\mathcal{F}' \rightarrow (\mathcal{F}/\mathcal{F}')/ T_{d-1}(\mathcal{F}/\mathcal{F}').
\end{align*}
Let $i: \mathcal{F} \rightarrow (\mathcal{F}/\mathcal{F}')/ T_{d-1}(\mathcal{F}/\mathcal{F}')$ be the composition of the above two projections. By the definition of the saturation, we have the following short exact sequence
\begin{align*}
0 \rightarrow \mathcal{F}'_{\rm sat} \rightarrow \mathcal{F} \xrightarrow{i} (\mathcal{F}/\mathcal{F}')/ T_{d-1}(\mathcal{F}/\mathcal{F}') \rightarrow 0.
\end{align*}
Thus we have the following diagram
\begin{center}
\begin{tikzcd}
  & 0 \arrow[d] & 0 \arrow[d] & & \\
0 \arrow[r] & \mathcal{F}' \arrow[d] \arrow[r,"\cong"]   & \mathcal{F}' \arrow[d] \arrow[r] & 0 \arrow[d]&   \\
0 \arrow[r]& \mathcal{F}'_{\rm sat} \arrow[d] \arrow[r] & \mathcal{F} \arrow[d,"j"] \arrow[r,"i"] & (\mathcal{F}/\mathcal{F}')/ T_{d-1}(\mathcal{F}/\mathcal{F}') \arrow[d,"\cong"] \arrow[r] & 0 \\
0 \arrow[r]& T_{d-1}(\mathcal{F}/\mathcal{F}') \arrow[d] \arrow[r] & \mathcal{F}/\mathcal{F}' \arrow[d] \arrow[r,"p"] & (\mathcal{F}/\mathcal{F}')/ T_{d-1}(\mathcal{F}/\mathcal{F}') \arrow[d] \arrow[r] & 0 \\
& 0 & 0 & 0 &
\end{tikzcd}
\end{center}
where the rows and columns are exact sequences.

The above diagram gives the following short exact sequence
\begin{align*}
0 \rightarrow \mathcal{F}' \rightarrow \mathcal{F}'_{\rm sat} \rightarrow T_{d-1}(\mathcal{F}/\mathcal{F}') \rightarrow 0.
\end{align*}
We apply the functor $F_{\mathcal{E}}$ to the above sequence and we have
\begin{center}
\begin{tikzcd}
0 \arrow[r]& F_{\mathcal{E}}(\mathcal{F}') \arrow[d,"\cong"] \arrow[r] & F_{\mathcal{E}}(\mathcal{F}'_{\rm sat}) \arrow[d,dashrightarrow] \arrow[r] & F_{\mathcal{E}}(T_{d-1}(\mathcal{F}/\mathcal{F}')) \arrow[d,"\cong"] \arrow[r] & 0 \\
0 \arrow[r] & F_{\mathcal{E}}(\mathcal{F}') \arrow[r] & (F_{\mathcal{E}}(\mathcal{F}'))_{\rm sat} \arrow[r] & T_{d-1}(F_{\mathcal{E}}(\mathcal{F}/\mathcal{F}')) \arrow[r] & 0
\end{tikzcd}
\end{center}
The first row is the short exact sequence by applying the functor $F_{\mathcal{E}}$ to the previous one and the second row is the short exact sequence for the sheaf $F_{\mathcal{E}}(\mathcal{F}')$. The two objects in the first column are the same. In the third column, we have $F_{\mathcal{E}}(T_{d-1}(\mathcal{F}/\mathcal{F}')) \cong T_{d-1}(F_{\mathcal{E}}(\mathcal{F}/\mathcal{F}'))$ by Corollary \ref{303}. Thus we have
\begin{align*}
F_{\mathcal{E}}(\mathcal{F}'_{\rm sat}) = (F_{\mathcal{E}}(\mathcal{F}'))_{\rm sat}.
\end{align*}
\end{proof}

\subsection{Modified Hilbert Polynomials and $p$-Stability Conditions}
With respect to the same conditions and notations in \S 3.2, let $\mathcal{F}$ be a coherent sheaf on $\mathcal{X}$. The \emph{modified Hilbert polynomial} $P_{\mathcal{E}}(\mathcal{F},m)$ is defined as
\begin{align*}
P_{\mathcal{E}}(\mathcal{F},m)=\chi(\mathcal{X},\mathcal{F} \otimes \mathcal{E}^{\vee} \otimes \pi^* \mathcal{O}_X(m)), \quad m \gg 0.
\end{align*}
Since the functor $\pi_*: {\rm QCoh}(\mathcal{X}) \rightarrow {\rm QCoh}(X)$ is exact, the modified Hilbert polynomial can be written as the classical Hilbert polynomial for the coherent sheaf $F_{\mathcal{E}}(\mathcal{F})(m)$ over $X$,
\begin{align*}
P_{\mathcal{E}}(\mathcal{F},m)=\chi(X,F_{\mathcal{E}}(\mathcal{F})(m)),\quad  m \gg 0.
\end{align*}
Based on this property, the modified Hilbert polynomial is additive on short exact sequences.  Also, if $\mathcal{F}$ is pure of dimension $d$, the function $P_{\mathcal{E}}(\mathcal{F},m)$ is a polynomial (with respect to the variable $m$) and we can write it in the following way
\begin{align*}
P_{\mathcal{E}}(\mathcal{F},m)=\sum_{i=0}^d \alpha_{\mathcal{E},i}(\mathcal{F})\frac{m^i}{i!}.
\end{align*}
We use the notation $P_{\mathcal{E}}(\mathcal{F})$ for the modified Hilbert polynomial of $\mathcal{F}$. Given an integer polynomial $P$, if we claim that $P$ is the modified Hilbert polynomial of $\mathcal{F}$, then it means that $P=P_{\mathcal{E}}(\mathcal{F})$.

The \emph{reduced modified Hilbert polynomial} $p_{\mathcal{E}}(\mathcal{F})$ is a monic  polynomial with rational coefficients defined as
\begin{align*}
p_{\mathcal{E}}(\mathcal{F})=\frac{P_{\mathcal{E}}(\mathcal{F})}{\alpha_{\mathcal{E},d}(\mathcal{F})}.
\end{align*}
The \emph{slope} of $\mathcal{F}$ of dimension $d$ is
\begin{align*}
\hat{\mu}_{\mathcal{E}}(\mathcal{F})=\frac{\alpha_{\mathcal{E},d-1}(\mathcal{F})}{\alpha_{\mathcal{E},d}(\mathcal{F})}.
\end{align*}
The \emph{rank} (or \emph{multiplicity}) of $\mathcal{F}$ is
\begin{align*}
r(\mathcal{F}) = \frac{\alpha_{\mathcal{E},d}(\mathcal{F})}{\alpha_{\mathcal{E},d}(\mathcal{O}_{\mathcal{X}})}.
\end{align*}

A pure coherent sheaf $\mathcal{F}$ is \emph{$p_{\mathcal{E}}$-semistable} (resp. \emph{$p_{\mathcal{E}}$-stable}), if for every proper subsheaf $\mathcal{F}' \subseteq \mathcal{F}$ we have
\begin{align*}
p_{\mathcal{E}}(\mathcal{F}') \leq p_{\mathcal{E}}(\mathcal{F})\quad (\text{resp. } p_{\mathcal{E}}(\mathcal{F}') < p_{\mathcal{E}}(\mathcal{F})).
\end{align*}
A $p_{\mathcal{E}}$-semistable sheaf $\mathcal{F}$ is called \emph{polystable}, if $\mathcal{F}$ is the direct sum of $p_{\mathcal{E}}$-stable sheaves. If there is no ambiguity, we use the terminologies \emph{$p$-semistable sheaf}, \emph{$p$-stable sheaf} and \emph{$p$-polystable sheaf} for simplicity.

Now fixing an integer polynomial $P$ (as the modified Hilbert polynomial), we define a new moduli problem
\begin{align*}
\widetilde{{\rm Quot}}(\mathcal{G},P) : (\text{Sch}/S)^{{\rm op}} \rightarrow \text{Set},
\end{align*}
which can be considered as a sub-functor of the quot-functor $\widetilde{{\rm Quot}}(\mathcal{G})$ with respect to the given polynomial $P$. Given an $S$-scheme $T$, we define $\widetilde{{\rm Quot}}(\mathcal{G},P)(T)$ to be the set of quotients $[(\mathcal{G}_T \rightarrow \mathcal{F}_T)]$ such that
\begin{enumerate}
\item $(\mathcal{G}_T \rightarrow \mathcal{F}_T) \in \widetilde{{\rm Quot}}(\mathcal{G})(T)$;
\item for each point $t \in T$, the modified Hilbert polynomial of $(\mathcal{F}_T)_t$ is $P$.
\end{enumerate}
The functor $\widetilde{{\rm Quot}}(\mathcal{G},P)$ is also represented by an algebraic space by Theorem \ref{301}. Denote this algebraic space by ${\rm Quot}(\mathcal{G},P)$. If $S$ is an affine scheme (or a noetherian scheme of finite type), the algebraic space ${\rm Quot}(\mathcal{G},P)$ is a projective $S$-scheme, which is a connected component of ${\rm Quot}(\mathcal{G})$ (see Theorem \ref{3021}). Furthermore, any connected component of ${\rm Quot}(\mathcal{G})$ is of the form ${\rm Quot}(\mathcal{G},P)$ \cite[\S 4]{OlSt}.

\subsection{Harder-Narasimhan Filtrations and Jordan-H\"{o}lder Filtrations}
The construction of the Harder-Narasimhan filtration and the Jordan-H\"{o}lder filtration of a coherent sheaf over a scheme is well-known \cite[\S 1.3 and \S 1.5]{HuLe}. Since the functor $F_{\mathcal{E}}$ is exact and the modified Hilbert polynomial $P_{\mathcal{E}}$ is additive for short exact sequences, the construction of these two filtrations can be generalized to coherent sheaves over a projective Deligne-Mumford stack $\mathcal{X}$. We give the definition and review some results about these two filtrations in this subsection. Details can be found in \cite[\S 1.3 and \S 1.5]{HuLe} and \cite[\S 3.4]{Nir}.

\subsubsection*{\textbf{Harder-Narasimhan Filtrations}}
Let $\mathcal{F}$ be a pure sheaf of dimension $d$ on $\mathcal{X}$. A \emph{Harder-Narasimhan Filtration} of $\mathcal{F}$ is an increasing filtration
\begin{align*}
0 = {\rm HN}_0(\mathcal{F}) \subseteq {\rm HN}_1(\mathcal{F}) \subseteq \dots \subseteq {\rm HN}_l(\mathcal{F})= \mathcal{F},
\end{align*}
satisfying the following two conditions:
\begin{enumerate}
\item The factors $gr_i^{\rm HN}(\mathcal{F})={\rm HN}_{i}(\mathcal{F}) / {\rm HN}_{i-1}(\mathcal{F})$ are $p$-semistable sheaves of dimension $d$ for $1 \leq i \leq l$.
\item Denote by $p_i(m)$ the reduced modified Hilbert polynomial $p_{\mathcal{E}}(gr_i^{\rm HN}(\mathcal{F}),m)$ such that
    \begin{align*}
        p_{\rm max}(\mathcal{F}):=p_1 > \dots > p_l=: p_{\rm min}(\mathcal{F}).
    \end{align*}
\end{enumerate}

For every pure coherent sheaf $\mathcal{F}$ on $\mathcal{X}$, there is a unique Harder-Narasimhan filtration of $\mathcal{F}$ \cite[Theorem 3.22]{Nir}. The proof of the Harder-Narasimhan filtration on $\mathcal{X}$ is similar to the case over a scheme \cite[\S 1.3]{HuLe}. The key to construct the Harder-Narasimhan filtration of $\mathcal{F}$ is to prove the existence and uniqueness of the \emph{destabilizing subsheaf} of $\mathcal{F}$. We only give the definition of the destabilizing subsheaf, and we refer the reader to \cite{Nir} for the proofs.

Let $\mathcal{F}$ be a purely $d$-dimensional coherent sheaf on $\mathcal{X}$. There is a subsheaf $\mathcal{F}_{\rm de} \subseteq \mathcal{F}$ such that
\begin{enumerate}
\item for all subsheaves $\mathcal{F}' \subseteq \mathcal{F}$ we have $p(\mathcal{F}_{\rm de}) \geq p(\mathcal{F}')$;
\item if $p_{\mathcal{E}}(\mathcal{F}_{\rm de}) = p_{\mathcal{E}}(\mathcal{F}')$, we have $\mathcal{F}' \subseteq \mathcal{F}_{\rm de}$.
\end{enumerate}
The subsheaf $\mathcal{F}_{\rm de}$ is called the \emph{destabilizing subsheaf} of $\mathcal{F}$. Note that $\mathcal{F}_{\rm de}$ is $p$-semistable, saturated and uniquely determined.

\subsubsection*{\textbf{Jordan-H\"{o}lder Filtrations}}
Let $\mathcal{F}$ be a $p$-semistable sheaf on $\mathcal{X}$ with reduced modified Hilbert polynomial $p_{\mathcal{E}}(\mathcal{F})$. A \emph{Jordan-H\"{o}lder Filtration} of $\mathcal{F}$ is an increasing filtration
\begin{align*}
0 = {\rm JH}_0(\mathcal{F}) \subseteq {\rm JH}_1(\mathcal{F}) \subseteq \dots \subseteq {\rm JH}_l(\mathcal{F})= \mathcal{F}
\end{align*}
such that the factor $gr_i^{\rm JH}(\mathcal{F})={\rm JH}_{i}(\mathcal{F}) / {\rm JH}_{i-1}(\mathcal{F})$ is stable with reduced modified Hilbert polynomial $p_{\mathcal{E}}(\mathcal{F},m)$ for $1 \leq i \leq l$.

For every semistable sheaf $\mathcal{F}$ on $\mathcal{X}$, there is a Jordan-H\"{o}lder filtration of $\mathcal{F}$ and the graded sheaf $gr^{\rm JH}(\mathcal{F}):=\bigoplus_i gr_i^{\rm JH}(\mathcal{F})$ does not depend on the choice of the Jordan-H\"{o}lder filtration \cite[Theorem 3.23]{Nir}.

Two $p$-semistable sheaves $\mathcal{F}_1$ and $\mathcal{F}_2$ with the same reduced modified Hilbert polynomial are \emph{$S$-equivalent} if the graded sheaves of the Jordan-H\"older filtrations are isomorphic, i.e.
\begin{align*}
gr^{\rm JH}(\mathcal{F}_1) \cong gr^{\rm JH}(\mathcal{F}_2).
\end{align*}

\begin{rem}\label{305}
Unlike the pureness and the saturation, the Harder-Narasimhan filtration and the Jordan-H\"{o}lder filtration are not preserved by the functor $F_{\mathcal{E}}$. More precisely, let
\begin{align*}
0 \subseteq {\rm HN}_0(\mathcal{F}) \subseteq {\rm HN}_1(\mathcal{F}) \subseteq \dots \subseteq {\rm HN}_l(\mathcal{F})= \mathcal{F}
\end{align*}
be the Harder-Narasimhan filtration of the sheaf $\mathcal{F}$. The filtration
\begin{align*}
0 \subseteq F_{\mathcal{E}}({\rm HN}_0(\mathcal{F})) \subseteq F_{\mathcal{E}}({\rm HN}_1(\mathcal{F})) \subseteq \dots \subseteq F_{\mathcal{E}}({\rm HN}_l(\mathcal{F}))= F_{\mathcal{E}}(\mathcal{F})
\end{align*}
is not the Harder-Narasimhan filtration of $F_{\mathcal{E}}(\mathcal{F})$ in general. The same situation holds for the Jordan-H\"{o}lder filtration. Therefore the functor $F_{\mathcal{E}}$ does not preserve the $p$-semistability (resp. $p$-stability). In other words, if $\mathcal{F}$ is $p$-semistable (resp. $p$-stable), the coherent sheaf $F_{\mathcal{E}}(\mathcal{F})$ may not be $p$-semistable (resp. $p$-stable). A careful discussion is in \cite[Remark 3.24]{Nir}.
\end{rem}

\subsection{Boundedness of Coherent Sheaves \Romannum{1}}
In this subsection, we first review the definition and some properties of the boundedness of coherent sheaves over a scheme. Then we extend these properties to coherent sheaves over a projective Deligne-Mumford stack. We use the notation $X$ for a scheme over $S$ in the first part of this subsection. In the second part of this subsection, $X$ will be the coarse moduli space of the given projective Deligne-Mumford stack $\mathcal{X}$. Some materials can be found in \cite[\S 1.7]{HuLe} and \cite[\S 4]{Nir}.

\subsubsection*{\textbf{Boundedness over Schemes}}
Let $F$ be a coherent sheaf over a scheme $X$. The sheaf $F$ is \emph{$m$-regular} if we have $H^i(X,F(m-i))=0$. Denote by \emph{${\rm reg}(F)$} the least integer $m$ such that $F$ is $m$-regular. The number ${\rm reg}(F)$ is called the \emph{regularity of $F$}, or more precisely, the \emph{Mumford-Castelnuovo regularity of $F$}. If $F$ is a $m$-regular coherent sheaf on $X$, for $n \geq m$, we have
\begin{itemize}
\item $F$ is $n$-regular;
\item $H^0(F(n)) \otimes H^0(\mathcal{O}_X(1)) \rightarrow H^0(F(n+1))$ is surjective;
\item $F(n)$ is generated by global sections.
\end{itemize}
Let $F_1$ and $F_2$ be two coherent sheaves over $X$. If $F_i$ is $m_i$-regular, then there is an lower bound of the regularity for the tensor product $F_1 \otimes F_2$.

\begin{lem}[Corollary 3.4 in \cite{Cavi}]\label{306}
Let $M$ be an $m$-regular finitely generated graded $R$-module and let $N$ be an $n$-regular finitely generated graded $R$-module such that $\dim {\rm Tor}_1^R(M,N) \leq 1$. Then $M \otimes N$ is $(m+n)$-regular.
\end{lem}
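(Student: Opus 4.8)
The plan is to compute everything through local cohomology with respect to the irrelevant maximal ideal $\mathfrak{m} \subseteq R$ (I assume $R$ is a standard graded polynomial ring over a field, the setting in which Castelnuovo--Mumford regularity of a finitely generated graded module is defined). Recall that $\mathrm{reg}(P) = \max_i \{ a_i(P) + i \}$, where $a_i(P)$ is the largest degree $j$ with $H^i_{\mathfrak{m}}(P)_j \neq 0$. So the goal is to show that $H^i_{\mathfrak{m}}(M \otimes N)_j = 0$ for every $i$ and every $j > m + n - i$. First I would pass to an infinite residue field by a flat base change, which alters neither the regularities nor the dimension of $\mathrm{Tor}_1$, so that general linear forms become available.

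The engine is a minimal graded free resolution $F_\bullet \to M$, whose twists satisfy $t_p(M) \le m + p$ because $M$ is $m$-regular. Tensoring with $N$ produces a complex $C_\bullet = F_\bullet \otimes_R N$ with $H_p(C_\bullet) = \mathrm{Tor}_p^R(M,N)$, in particular $H_0(C_\bullet) = M \otimes N$, and with terms satisfying $a_q(C_p) \le a_q(N) + m + p \le (n - q) + m + p$. I would then run the two hyper-local-cohomology spectral sequences of $C_\bullet$. The first, built from the columns $H^q_{\mathfrak{m}}(C_p)$, immediately shows that the abutment $\mathbb{H}^\nu := \mathbb{H}^\nu_{\mathfrak{m}}(C_\bullet)$ is concentrated in degrees $\le m + n - \nu$, since every contributing term $H^q_{\mathfrak{m}}(C_p)$ with $q - p = \nu$ is. This is the clean ``$m+n$'' bound, but it is a statement about the \emph{whole} complex, not yet about $M \otimes N$ alone.

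The second spectral sequence has $E_2$-page $H^s_{\mathfrak{m}}(\mathrm{Tor}_p(M,N))$ and also converges to $\mathbb{H}^{s-p}$, with differentials $d_r$ sending the spot $(s,p)$ to $(s+r,\,p+r-1)$. The term carrying $H^i_{\mathfrak{m}}(M \otimes N)$ sits at $(s,p) = (i, 0)$; since there are no negative homological degrees, no differential \emph{enters} it, so $E_\infty^{i,0}$ is a \emph{submodule} of $H^i_{\mathfrak{m}}(M \otimes N)$, and by the previous step it vanishes in degrees $> m + n - i$. Hence any class $x \in H^i_{\mathfrak{m}}(M \otimes N)_j$ with $j > m + n - i$ must be destroyed by an outgoing $d_r$, whose target is a subquotient of $H^{i+r}_{\mathfrak{m}}(\mathrm{Tor}_{r-1}(M,N))$. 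The hypothesis $\dim \mathrm{Tor}_1(M,N) \le 1$ enters exactly here: it forces $H^{\ge 2}_{\mathfrak{m}}(\mathrm{Tor}_1) = 0$, which kills the leading differential $d_2$, whose target is $H^{i+2}_{\mathfrak{m}}(\mathrm{Tor}_1)$.

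The main obstacle is the higher differentials $d_r$ for $r \ge 3$, whose targets involve $H^{i+r}_{\mathfrak{m}}(\mathrm{Tor}_{r-1})$ with $r-1 \ge 2$, for which the statement gives no dimension control a priori; I expect this to be the crux. I would resolve it by induction on $\dim M$ (equivalently on the number of variables) via a general linear form $\ell$: reducing $R$, $M$, and $N$ modulo $\ell$ drops dimensions by one while, for generic $\ell$, preserving both the regularity bookkeeping (a general $\ell$ is almost a nonzerodivisor, so $\mathrm{reg}$ is essentially unchanged) and the condition $\dim \mathrm{Tor}_1 \le 1$. Feeding the inductive regularity estimates on the restricted Tor-modules back into the spectral sequence forces the higher differentials to vanish in the offending degrees, yielding $H^i_{\mathfrak{m}}(M \otimes N)_j = 0$ for all $j > m+n-i$, i.e. that $M \otimes N$ is $(m+n)$-regular. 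Equivalently, one can route the same inductive input through the syzygy sequence $0 \to \mathrm{Tor}_1(M,N) \to K \otimes N \to F_0 \otimes N \to M \otimes N \to 0$, where $K$ is the first syzygy of $M$ (which is $(m+1)$-regular); there the bound reduces, via the standard regularity inequalities for short exact sequences, to controlling $\mathrm{reg}(\mathrm{Tor}_1(M,N))$ and $\mathrm{reg}(K \otimes N)$, and the dimension hypothesis on $\mathrm{Tor}_1$ is precisely what makes these two estimates close up.
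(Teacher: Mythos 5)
The paper offers no proof of this lemma: it is quoted verbatim as Corollary~3.4 of Caviglia's paper (and the text immediately afterwards says the proof is omitted), so the only meaningful comparison is with the published argument of Caviglia and of Eisenbud--Huneke--Ulrich, both of which run essentially the spectral-sequence computation you set up. Your framework is correct up to and including the identification of the crux: the first hypercohomology spectral sequence of $F_\bullet \otimes N$ bounds the abutment by $m+n-\nu$, the second has $E_2^{s,p}=H^s_{\mathfrak m}(\mathrm{Tor}_p(M,N))$, the spot $(i,0)$ receives no differentials, and a class of $H^i_{\mathfrak m}(M\otimes N)$ in degree $>m+n-i$ must die along some $d_r$ with target a subquotient of $H^{i+r}_{\mathfrak m}(\mathrm{Tor}_{r-1}(M,N))$.

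The gap is in how you dispose of $d_r$ for $r\ge 3$. The missing idea is not an induction on dimension but the rigidity of Tor over a regular local ring (Auslander, Lichtenbaum): if $\mathrm{Tor}_1^{R_{\mathfrak p}}(M_{\mathfrak p},N_{\mathfrak p})=0$ then $\mathrm{Tor}_i^{R_{\mathfrak p}}(M_{\mathfrak p},N_{\mathfrak p})=0$ for all $i\ge 1$, so $\mathrm{Supp}\,\mathrm{Tor}_i(M,N)\subseteq\mathrm{Supp}\,\mathrm{Tor}_1(M,N)$ and hence $\dim\mathrm{Tor}_i(M,N)\le 1$ for every $i\ge 1$. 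This gives $H^s_{\mathfrak m}(\mathrm{Tor}_{r-1}(M,N))=0$ for all $s\ge 2$, which kills every outgoing $d_r$, $r\ge 2$, at once (the target sits in cohomological degree $i+r\ge 2$), and the lemma follows with no further work. By contrast, the induction you propose is not an argument as stated: reducing modulo a general linear form gives regularity information about $M/\ell M\otimes N/\ell N$ over $R/\ell$, and it is not explained how that controls the graded pieces of $H^{i+r}_{\mathfrak m}(\mathrm{Tor}_{r-1}(M,N))$ over $R$ in the offending degrees; moreover the passage from $\mathrm{Tor}^R$ to $\mathrm{Tor}^{R/\ell}$ requires $\ell$ to be regular on one of the modules and introduces correction terms from the change-of-rings sequence that you do not address. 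The same remark applies to your alternative route through the syzygy sequence: bounding $\mathrm{reg}(\mathrm{Tor}_1(M,N))$ and $\mathrm{reg}(K\otimes N)$ there again reduces to knowing that \emph{all} higher Tor modules have dimension at most one, i.e.\ to the rigidity statement. So the skeleton is right, but the decisive step is supplied by a fact you did not invoke, and the substitute you sketch does not close the argument.
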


This result can be easily generalized to the coherent sheaves. We omit the proof for the following lemma.
\begin{lem}\label{307}
Let $F_1$ be an $m$-regular coherent sheaf over $X$ and let $F_2$ be an $n$-regular coherent sheaf over $X$ such that $\dim {\rm Tor}_1^R(F_1,F_2) \leq 1$. Then $F_1 \otimes F_2$ is $(m+n)$-regular.
\end{lem}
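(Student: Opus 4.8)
\emph{The plan} is to reduce the statement to the purely algebraic Lemma \ref{306} through the Serre correspondence between coherent sheaves and finitely generated graded modules. Fix a closed embedding $\iota : X \hookrightarrow \mathbb{P}^N = {\rm Proj}\, R$ associated to the polarization $\mathcal{O}_X(1)$, where $R = k[x_0,\dots,x_N]$; this is exactly the setting in which the hypothesis $\dim {\rm Tor}_1^R(F_1,F_2) \le 1$ makes literal sense, since there $F_1,F_2$ are being regarded as graded $R$-modules. Concretely I would attach to each $F_i$ the saturated module $M_i := \Gamma_*(F_i) = \bigoplus_{j\in\mathbb{Z}} H^0(X,F_i(j))$, which is finitely generated over $R$ by Serre's theorem and satisfies $\widetilde{M_i} = \iota_* F_i$. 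The goal is then to run Lemma \ref{306} on $M_1,M_2$ and sheafify the conclusion.

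The crux is to match the module-theoretic regularity used in Lemma \ref{306} with the sheaf-theoretic regularity introduced in \S 3.5. This is where saturation is essential: for $M_i = \Gamma_*(F_i)$ one has $H^0_{\mathfrak{m}}(M_i) = H^1_{\mathfrak{m}}(M_i) = 0$ and $H^{p+1}_{\mathfrak{m}}(M_i)_j \cong H^{p}(X,F_i(j))$ for $p \ge 1$, so the graded local cohomology of $M_i$ records precisely the higher sheaf cohomology of $F_i$; comparing the two $\max$-formulas gives ${\rm reg}(M_i) = {\rm reg}(F_i)$. In particular $M_1$ is $m$-regular and $M_2$ is $n$-regular as graded $R$-modules. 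The Tor-hypothesis transfers without change, because sheafifying a graded free resolution of $M_1$ yields a locally free resolution of $\iota_*F_1$, whence $\widetilde{{\rm Tor}_1^R(M_1,M_2)} \cong \mathcal{T}or_1^{\mathcal{O}_{\mathbb{P}^N}}(\iota_*F_1,\iota_*F_2)$, and the support of a finitely generated graded module is a cone in $\Spec R$.

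With these identifications Lemma \ref{306} applies verbatim and gives that $M_1 \otimes_R M_2$ is $(m+n)$-regular as a graded module. Finally I would transfer back: since $\widetilde{(-)}$ is exact and monoidal, $\widetilde{M_1 \otimes_R M_2} \cong \widetilde{M_1}\otimes_{\mathcal{O}_{\mathbb{P}^N}}\widetilde{M_2} = \iota_*F_1 \otimes \iota_*F_2$, and the general inequality ${\rm reg}(\widetilde{M}) \le {\rm reg}(M)$ for finitely generated graded modules yields ${\rm reg}(F_1\otimes F_2) \le {\rm reg}(M_1\otimes_R M_2) \le m+n$, i.e.\ $F_1 \otimes F_2$ is $(m+n)$-regular. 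I expect the main obstacle to be not any single hard estimate but precisely this bookkeeping between the two worlds: one must use the \emph{saturated} representing modules, since an arbitrary choice would only deliver the weaker bound ${\rm reg}(M_1)+{\rm reg}(M_2)$, and one must keep straight over which ring the tensor product is formed --- if the tensor product is intended over $\mathcal{O}_X$ for a proper subscheme $X \subsetneq \mathbb{P}^N$, the same argument should be rerun over the homogeneous coordinate ring $A$ of $X$, for which $\widetilde{M_1 \otimes_A M_2} = F_1 \otimes_{\mathcal{O}_X} F_2$.
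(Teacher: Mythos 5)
The paper gives no proof of this lemma --- it simply asserts that Lemma \ref{306} ``can be easily generalized to the coherent sheaves'' --- and your reduction through the Serre correspondence is precisely the generalization that is being left to the reader: identify sheaf regularity with module regularity via graded local cohomology, apply Caviglia's bound to representing modules, and sheafify back using $\widetilde{M_1\otimes_R M_2}\cong \widetilde{M_1}\otimes\widetilde{M_2}$ together with ${\rm reg}(\widetilde{M})\le{\rm reg}(M)$. The argument is correct in substance, with one technical overstatement: $\Gamma_*(F_i)$ is \emph{not} finitely generated when $F_i$ has a zero-dimensional associated point (for a skyscraper $\mathcal{O}_p$ one gets $k[t,t^{-1}]$, unbounded below), so Lemma \ref{306} cannot be applied to it verbatim. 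The standard patch is to use the truncation $M_i=\bigoplus_{j\ge -c}H^0(X,F_i(j))$ for $c\gg 0$: this is finitely generated, still satisfies $H^0_{\mathfrak{m}}(M_i)=0$ while $H^1_{\mathfrak{m}}(M_i)$ is concentrated in degrees $<-c$, so $M_i$ remains $m$-regular (resp. $n$-regular) once $-c\le \min(m,n)$, and the rest of your argument runs unchanged. Your closing worry about $\mathcal{O}_X$ versus $\mathcal{O}_{\mathbb{P}^N}$ is moot: for a closed immersion $\iota$ one has $\iota_*F_1\otimes_{\mathcal{O}_{\mathbb{P}^N}}\iota_*F_2\cong \iota_*(F_1\otimes_{\mathcal{O}_X}F_2)$, so working over $R$ already produces the tensor product over $\mathcal{O}_X$.
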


Now we will give the definition of the boundedness. A family $\widetilde{\mathfrak{F}}$ of isomorphism classes of coherent sheaves over $X$ is \emph{bounded}, if there is an $S$-scheme $T$ of finite type and a coherent $\mathcal{O}_{X_T}$-sheaf $\mathfrak{F}$ such that
\begin{align*}
\widetilde{\mathfrak{F}} \subseteq \{\mathfrak{F}_t \text{ }|\text{ }t \text{ a closed point in }T\}.
\end{align*}
There are several properties equivalent to the property of the boundedness of the family $\widetilde{\mathfrak{F}}$. We list them as follows:
\begin{itemize}
\item The family $\widetilde{\mathfrak{F}}$ is bounded.
\item The set of Hilbert polynomials $\{P(F)| F \in \widetilde{\mathfrak{F}}\}$ is finite and there is a non-negative integer $m$ such that ${\rm reg}(F) \leq m$ for every $F \in \widetilde{\mathfrak{F}}$. In other words, for any $F \in \widetilde{\mathfrak{F}}$, the coherent sheaf $F$ is $m$-regular.
\item The set of Hilbert polynomials $\{P(F)| F \in \widetilde{\mathfrak{F}}\}$ is finite and there is a coherent sheaf $G$ such that all $F \in \widetilde{\mathfrak{F}}$ admit surjective morphisms $G \rightarrow F$.
\item The coherent sheaves in $\widetilde{\mathfrak{F}}$ have the same Hilbert polynomial $P$. There are constants $C_i$, $i=0, \dots , d=\deg(P)$ such that for every $F \in \widetilde{\mathfrak{F}}$ there is an $F$-regular sequence of hyperplane sections $H_1,\dots,H_d$ such that $h^0(F|_{\cap_{j \leq i} H_j})\leq C_i$. This property is known as the \emph{Kleiman Criterion} \cite[Theorem 1.7.8]{HuLe}.
\end{itemize}
By the Kleiman Criterion, we have the following theorem.
\begin{thm}[Theorem 3.3.7 in \cite{HuLe}]\label{308}
Let $f: X \rightarrow S$ be a projective morphism of schemes of finite type over $k$. Let $\mathcal{O}_X(1)$ be an $f$-ample line bundle. We fix a polynomial of degree $d$ and a rational number $\mu_0$. Then the family of purely $d$-dimensional sheaves with Hilbert polynomial $P$ on the fibers of $f$ such that the maximal slope $\hat{\mu}_{\rm max} \leq \mu_0$ is bounded. In particular, the family of semistable sheaves with Hilbert polynomial $P$ is bounded.
\end{thm}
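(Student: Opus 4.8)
The plan is to deduce boundedness from the Kleiman criterion among the equivalent characterizations listed above. Since every sheaf in the family shares the same Hilbert polynomial $P$ of degree $d$, its multiplicity is determined by the leading coefficient of $P$ and hence is fixed throughout the family, while the maximal slope satisfies $\hat{\mu}_{\rm max}\le \mu_0$ by hypothesis. It therefore suffices to produce constants $C_0,\dots,C_d$, depending only on $P$, $\mu_0$ and the embedding, such that each member $F$ admits an $F$-regular sequence of hyperplane sections $H_1,\dots,H_d$ with $h^0(F|_{\cap_{j\le i}H_j})\le C_i$. To make the bounds uniform in the relative setting I would twist so that $\mathcal{O}_X(1)$ is $f$-very ample, fix a closed embedding $X\hookrightarrow \mathbb{P}^N_S$, and argue fiberwise; since $S$ is of finite type and $N$, $d$, $P$ are fixed, the resulting constants are independent of the fiber, which yields the relative statement at once.

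The engine is the Le Potier--Simpson estimate, which bounds $h^0(G)$ of a purely $e$-dimensional sheaf $G$ by an explicit function of its multiplicity and its maximal slope $\hat{\mu}_{\rm max}(G)$ (see \cite{HuLe}). Granting this estimate, the bound $C_0$ on $h^0(F)$ is immediate, since the multiplicity is fixed and $\hat{\mu}_{\rm max}(F)\le \mu_0$. For the remaining constants I would induct on dimension, restricting to successive general hyperplane sections. Because $F$ is pure of dimension $d$, a general $H_1$ avoids the finitely many generic points of the top-dimensional support, so multiplication by its defining section is injective on $F$ and produces the $F$-regular sequence $0 \to F(-1) \to F \to F|_{H_1}\to 0$. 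Here $F|_{H_1}$ is again pure of dimension $d-1$ with multiplicity determined by $P$, so applying the estimate to each $F_i:=F|_{H_1\cap\dots\cap H_i}$ gives $C_i$, \emph{provided} the slopes $\hat{\mu}_{\rm max}(F_i)$ stay bounded.

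Controlling these slopes under restriction is the main obstacle. A destabilizing subsheaf of $F|_{H_1}$ need not lift to $F$, so its slope cannot simply be read off from that of $F$; the correct input is a restriction estimate guaranteeing that, for a sufficiently general hyperplane, $\hat{\mu}_{\rm max}(F|_{H})$ exceeds $\hat{\mu}_{\rm max}(F)$ by at most a constant depending only on $d$ and the multiplicity. This is precisely the delicate inductive core of the Le Potier--Simpson argument, and in practice the slope bound and the section bound are established together in a single induction on $e$. I would therefore prove the estimate and the restriction bound simultaneously: the base case $e=0$ is trivial, since a zero-dimensional sheaf has $h^0$ equal to its multiplicity, and the inductive step passes from $F|_{H}$ back to $F$ through the short exact sequence above, combined with Serre-vanishing control of the higher cohomology of the twists $F(n)$.

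Finally, the last assertion is immediate. A semistable sheaf $F$ of dimension $d$ has $\hat{\mu}_{\rm max}(F)=\hat{\mu}(F)$, and its slope $\hat{\mu}(F)=\alpha_{d-1}(F)/\alpha_d(F)$ is determined by the fixed Hilbert polynomial $P$. Taking $\mu_0$ to be this common value places the entire family of semistable sheaves with Hilbert polynomial $P$ inside the bounded family already treated, so it too is bounded.
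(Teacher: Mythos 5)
Your proposal is correct and follows the standard argument: the paper does not prove this statement but simply imports it as Theorem 3.3.7 of \cite{HuLe}, remarking only that it follows from the Kleiman criterion, and your sketch (Kleiman criterion plus the Le Potier--Simpson estimate, with the section bound and the restricted-slope bound proved together by induction on the dimension via general hyperplane sections) is precisely the proof given there. Your closing observation that $p$-semistability implies $\hat{\mu}$-semistability, so that $\hat{\mu}_{\rm max}(F)=\hat{\mu}(F)$ is determined by $P$, correctly reduces the final assertion to the bounded family already treated.
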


\subsubsection*{\textbf{Boundedness over Stacks}}
Let $\mathcal{X}$ be a projective Deligne-Mumford stack over $S$ and denote by $X$ its coarse moduli space. Let $\mathcal{E}$ be a generating sheaf of $\mathcal{X}$. Let $\mathcal{F}$ be a coherent sheaf over $\mathcal{X}$. The regularity of $\mathcal{F}$ (with respect to $\mathcal{E}$) is defined to be the regularity of $F_{\mathcal{E}}(\mathcal{F})$, i.e.
\begin{align*}
    {\rm reg}_{\mathcal{E}}(\mathcal{F}):={\rm reg}(F_{\mathcal{E}}(\mathcal{F})).
\end{align*}
If there is no ambiguity, a coherent sheaf $\mathcal{F}$ is \emph{$m$-regular} if $F_{\mathcal{E}}(\mathcal{F})$ is $m$-regular.

With respect to this definition, we have the following lemma about the regularity of coherent sheaves over $\mathcal{X}$.
\begin{lem}\label{309}
Let $\mathcal{F}$ be a coherent sheaf over $\mathcal{X}$, and let $\mathcal{E}$ be a generating sheaf. Suppose that $\pi_*\mathcal{F}$ is $m_0$-regular. Then, there exists an integer $m$ such that $\mathcal{F}$ is $m$-regular.
\end{lem}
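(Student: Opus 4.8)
The plan is to reduce the assertion to an ordinary regularity statement for a coherent sheaf on the projective scheme $X$, and then feed in the generating--sheaf formalism of \S2.2--\S2.3 together with the tensor estimate of Lemma \ref{307}. By the definition of regularity on $\mathcal{X}$ recalled above, the sheaf $\mathcal{F}$ is $m$-regular exactly when $F_{\mathcal{E}}(\mathcal{F})=\pi_*\mathcal{H}om_{\mathcal{O}_{\mathcal{X}}}(\mathcal{E},\mathcal{F})=\pi_*(\mathcal{E}^{\vee}\otimes\mathcal{F})$ is $m$-regular on $X$. Since $\mathcal{E}$ is locally free and $\mathcal{F}$ is coherent, $\mathcal{E}^{\vee}\otimes\mathcal{F}$ is coherent on $\mathcal{X}$; as $\pi$ is proper and $\mathcal{X}$ is tame, the exact functor $\pi_*$ preserves coherence, so $F_{\mathcal{E}}(\mathcal{F})$ is a coherent sheaf on $X$. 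Because $X$ is projective over the affine (or noetherian finite-type) base $S$ with polarization $\mathcal{O}_X(1)$, Serre vanishing shows that any coherent sheaf on $X$ is $m$-regular for all $m\gg 0$; this already produces an integer $m$ with the required property.

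To see how the hypothesis on $\pi_*\mathcal{F}$ and Lemma \ref{307} organize a structural (rather than merely existential) estimate, I would relate $F_{\mathcal{E}}(\mathcal{F})$ to pushforwards of $\mathcal{F}$ through $\mathcal{E}$. Applying the generating property of \S2.3 to the coherent sheaf $\mathcal{E}^{\vee}$ gives a surjection $\pi^*\mathcal{A}\otimes\mathcal{E}\twoheadrightarrow\mathcal{E}^{\vee}$, where $\mathcal{A}:=F_{\mathcal{E}}(\mathcal{E}^{\vee})$ is a coherent sheaf on $X$ depending only on $\mathcal{E}$. Tensoring by $\mathcal{F}$, applying the exact functor $\pi_*$, and using the projection formula on the pullback factor $\pi^*\mathcal{A}$, one presents $F_{\mathcal{E}}(\mathcal{F})$ as a quotient of $\mathcal{A}\otimes\pi_*(\mathcal{E}\otimes\mathcal{F})$ on $X$. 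The standard behaviour of regularity under quotients, combined with Lemma \ref{307} applied to this tensor product, then controls ${\rm reg}(F_{\mathcal{E}}(\mathcal{F}))$ in terms of the regularity of $\pi_*(\mathcal{E}\otimes\mathcal{F})$ and of the fixed sheaf $\mathcal{A}$, once the ${\rm Tor}$-dimension hypothesis of Lemma \ref{307} is verified.

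The main obstacle, and the reason the statement is phrased through $\pi_*\mathcal{F}$, is the passage between $\pi_*$ and $F_{\mathcal{E}}$, i.e. the effect of the locally free twist $\mathcal{E}^{\vee}$. The sheaf $\mathcal{E}$ is not pulled back from $X$ --- this is precisely why a generating sheaf is needed on a stack --- so the projection formula does not transport the $m_0$-regularity of $\pi_*\mathcal{F}$ directly to $F_{\mathcal{E}}(\mathcal{F})$; the isotypic directions invisible to $\pi_*$ must instead be captured through the $\pi$-very-ampleness of $\mathcal{E}$. I therefore expect the cleanest route to be to treat $F_{\mathcal{E}}(\mathcal{F})$ as the primary object: its coherence on the projective scheme $X$ secures the existence of $m$, while the surjection above together with Lemma \ref{307} supplies the explicit estimate. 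The only genuinely technical point I anticipate is checking that the auxiliary pushforward $\pi_*(\mathcal{E}\otimes\mathcal{F})$ is coherent and that the relevant ${\rm Tor}$ has dimension at most one, so that Lemma \ref{307} applies.
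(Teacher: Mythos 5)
Your opening observation is right as far as it goes: since $F_{\mathcal{E}}(\mathcal{F})$ is coherent on the projective scheme $X$, Serre vanishing gives \emph{some} $m$ for which $\mathcal{F}$ is $m$-regular, and this does verify the statement as literally worded. But it defeats the purpose of the lemma. The hypothesis that $\pi_*\mathcal{F}$ is $m_0$-regular is there because the integer $m$ is required to depend only on $m_0$ and on $\mathcal{E}$, not on $\mathcal{F}$ itself: this lemma feeds the uniform regularity bounds of Lemma \ref{406} and the boundedness statements, where a single $m$ must work for an entire family of sheaves sharing the bound $m_0$. A per-sheaf Serre-vanishing constant is useless there, so the whole content of the lemma lives in the ``structural estimate'' you defer to your second paragraph.

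That second paragraph has two genuine problems. First, you invoke ``the standard behaviour of regularity under quotients,'' but there is no such behaviour: a quotient of an $m$-regular sheaf need not be $m$-regular (on $\mathbb{P}^2$, the quotient $\mathcal{O}\twoheadrightarrow\mathcal{O}_C$ for a curve $C$ of degree $d\geq 2$ already fails, since $H^1(\mathcal{O}_C(-1))\cong H^2(\mathcal{O}(-d-1))\neq 0$ while $\mathcal{O}$ is $0$-regular). Regularity passes to quotients only with extra control on the kernel, which your surjection does not provide. Second, even granting that step, your presentation of $F_{\mathcal{E}}(\mathcal{F})$ as a quotient of $\mathcal{A}\otimes\pi_*(\mathcal{E}\otimes\mathcal{F})$ bounds the answer in terms of the regularity of $\pi_*(\mathcal{E}\otimes\mathcal{F})$, which is not the sheaf appearing in the hypothesis; nothing in your argument converts the $m_0$-regularity of $\pi_*\mathcal{F}$ into a bound for $\pi_*(\mathcal{E}\otimes\mathcal{F})$, so the estimate never closes. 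The paper's route is shorter and runs in the opposite direction: $\pi_*\mathcal{E}^{\vee}$ is locally free on $X$ (tameness), hence $m_{\mathcal{E}}$-regular for some $m_{\mathcal{E}}$ depending only on $\mathcal{E}$; Lemma \ref{307} applied to the tensor product $\pi_*\mathcal{F}\otimes\pi_*\mathcal{E}^{\vee}$ then yields $m=m_0+m_{\mathcal{E}}$, and this product is compared with $F_{\mathcal{E}}(\mathcal{F})=\pi_*(\mathcal{E}^{\vee}\otimes\mathcal{F})$ using the exactness of $\pi_*$. To repair your version you would need to replace the quotient step by a regularity comparison of $F_{\mathcal{E}}(\mathcal{F})$ with a tensor product on $X$ whose factors are $\pi_*\mathcal{F}$ and a sheaf depending only on $\mathcal{E}$ --- which is exactly the situation Lemma \ref{307} is set up to handle.
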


\begin{proof}
Since $\mathcal{E}$ is a generating sheaf over $\mathcal{X}$, $\pi_*\mathcal{E}^{\vee}$ is a locally free sheaf. There exists an integer $m_{\mathcal{E}}$ such that $\pi_*\mathcal{E}^{\vee}$ is $m_{\mathcal{E}}$-regular. By assumption, we know that $\pi_*\mathcal{F}$ is $m_0$-regular. Take $m=m_0+m_{\mathcal{E}}$. By Lemma \ref{307}, we know that $\pi_*\mathcal{F} \otimes \pi_* \mathcal{E}^{\vee}$ is $m$-regular. By the exactness of the functor $\pi_*$, $F_{\mathcal{E}}(\mathcal{F})$ is $m$-regular.
\end{proof}

A family $\widetilde{\mathfrak{F}}$ of coherent sheaves on $\mathcal{X}$ is \emph{bounded}, if there is an $S$-scheme $T$ of finite type and a coherent sheaf $\mathfrak{F}$ on $\mathcal{X} \times_S T$ such that
\begin{align*}
\widetilde{\mathfrak{F}} \subseteq \{\mathfrak{F}_t \text{ }|\text{ }t \text{ a closed point in }T\}.
\end{align*}
There are some properties equivalent to the property of the boundedness in the version of stacks \cite[\S 4]{Nir}. We list some of them as follows:
\begin{itemize}
\item The family $\widetilde{\mathfrak{F}}$ is bounded.
\item The set of modified Hilbert polynomials $P_{\mathcal{E}}(\mathcal{F})$ for $\mathcal{F} \in \widetilde{\mathfrak{F}}$ is finite and there is a non-negative integer $m$ such that $\mathcal{F}$ is $m$-regular for every $F \in \widetilde{\mathfrak{F}}$.
\item The set of modified Hilbert polynomials $\{P_{\mathcal{E}}(\mathcal{F})| \mathcal{F} \in \widetilde{\mathfrak{F}}\}$ is finite and there is a coherent sheaf $\mathcal{G}$ such that all $\mathcal{F} \in \widetilde{\mathfrak{F}}$ admit surjective morphisms $\mathcal{G} \rightarrow \mathcal{F}$.
\item The family $F_{\mathcal{E}}(\widetilde{\mathfrak{F}})$ is bounded.
\end{itemize}
The above equivalent conditions tell us that if we want to prove the family $\widetilde{\mathfrak{F}}$ of $p$-semistable coherent sheaves over $\mathcal{X}$ with the same modified Hilbert polynomial $P$ is bounded, it is equivalent to prove that the corresponding family $F_{\mathcal{E}}(\widetilde{\mathfrak{F}})$ over $X$ is bounded. By Theorem \ref{308}, if we can prove that the maximal slope $\hat{\mu}_{\rm max}(F_{\mathcal{E}}(\mathcal{F}))$ of the family $F_{\mathcal{E}}(\widetilde{\mathfrak{F}})$ of coherent sheaves is bounded, then the family $\widetilde{\mathfrak{F}}$ is bounded.

Let $\mathcal{F}$ be a $p$-semistable sheaf on $\mathcal{X}$. We choose an integer $\widetilde{m}$ such that $\pi_* \mathcal{E}nd_{\mathcal{O}_{\mathcal{X}}}(\mathcal{E})(\widetilde{m})$ is generated by global sections. Nironi proved the following inequality \cite[Proposition 4.24]{Nir}
\begin{align*}
\hat{\mu}_{\rm max}(F_{\mathcal{E}}(\mathcal{F})) \leq \hat{\mu}_{\mathcal{E}}(\mathcal{F})+\widetilde{m} \deg(\mathcal{O}_X(1)).
\end{align*}
This inequality together with Theorem \ref{308} gives us the following result.
\begin{thm}[Theorem 4.27 in \cite{Nir}]\label{310}
Let $f: \mathcal{X} \rightarrow S$ be a family of projective Deligne-Mumford stacks with a family of moduli spaces $X \rightarrow S$ over an algebraically closed field $k$. Let $\mathcal{E}$ be a generating sheaf of $\mathcal{X}$, and let $\mathcal{O}_{X}(1)$ be an $f$-ample line bundle. We fix an integral polynomial $P$ of degree $d$ and a rational number $\mu_0$. Then the family $\widetilde{\mathfrak{F}}$ of purely $d$-dimensional sheaves with modified Hilbert polynomial $P$ on the fibers of $f$ such that the maximal slope $\hat{\mu}_{\rm max}( \mathcal{F}) \leq \mu_0$ is bounded. In particular, the family of $p$-semistable purely $d$-dimensional sheaves with modified Hilbert polynomial $P$ is bounded.
\end{thm}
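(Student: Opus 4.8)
The plan is to transfer the boundedness question from the stack $\mathcal{X}$ to its coarse moduli space $X$ by means of the functor $F_{\mathcal{E}}$, and then to apply Theorem \ref{308}. By the equivalent characterizations of boundedness over stacks recorded above, $\widetilde{\mathfrak{F}}$ is bounded if and only if the family $F_{\mathcal{E}}(\widetilde{\mathfrak{F}})$ is bounded over $X$; so it suffices to check that $F_{\mathcal{E}}(\widetilde{\mathfrak{F}})$ meets the three hypotheses of Theorem \ref{308}: its members are purely $d$-dimensional, they share a single Hilbert polynomial, and their maximal slopes are uniformly bounded above.

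The first two hypotheses are formal. Each $\mathcal{F} \in \widetilde{\mathfrak{F}}$ is purely $d$-dimensional, so its torsion filtration is trivial; since $F_{\mathcal{E}}$ preserves torsion filtrations by Corollary \ref{303}, the sheaf $F_{\mathcal{E}}(\mathcal{F})$ is again purely $d$-dimensional. Moreover the identity $P_{\mathcal{E}}(\mathcal{F},m)=\chi(X,F_{\mathcal{E}}(\mathcal{F})(m))$ of \S 3.3 shows that the ordinary Hilbert polynomial of $F_{\mathcal{E}}(\mathcal{F})$ equals the prescribed $P$ for every member of the family.

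The substance of the proof is the uniform bound on the maximal slope of $F_{\mathcal{E}}(\mathcal{F})$, and here the obstacle is that Nironi's inequality $\hat{\mu}_{\rm max}(F_{\mathcal{E}}(\mathcal{F})) \leq \hat{\mu}_{\mathcal{E}}(\mathcal{F}) + \widetilde{m}\deg(\mathcal{O}_X(1))$ is at our disposal only for $p$-semistable $\mathcal{F}$, whereas a general member of $\widetilde{\mathfrak{F}}$ is only purely $d$-dimensional with $\hat{\mu}_{\rm max}(\mathcal{F}) \leq \mu_0$. To bridge this, I would pass to the Harder-Narasimhan filtration
\begin{align*}
0 = {\rm HN}_0(\mathcal{F}) \subseteq {\rm HN}_1(\mathcal{F}) \subseteq \dots \subseteq {\rm HN}_l(\mathcal{F}) = \mathcal{F}
\end{align*}
of $\mathcal{F}$ on $\mathcal{X}$, whose factors $gr_i^{\rm HN}(\mathcal{F})$ are $p$-semistable; their slopes are non-increasing in $i$ and the first, being a subsheaf of $\mathcal{F}$, satisfies $\hat{\mu}_{\mathcal{E}}(gr_1^{\rm HN}(\mathcal{F})) \leq \hat{\mu}_{\rm max}(\mathcal{F}) \leq \mu_0$, so that $\hat{\mu}_{\mathcal{E}}(gr_i^{\rm HN}(\mathcal{F})) \leq \mu_0$ for every $i$. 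Applying the exact functor $F_{\mathcal{E}}$ yields a filtration of $F_{\mathcal{E}}(\mathcal{F})$ with factors $F_{\mathcal{E}}(gr_i^{\rm HN}(\mathcal{F}))$, and Nironi's inequality bounds each of these: $\hat{\mu}_{\rm max}(F_{\mathcal{E}}(gr_i^{\rm HN}(\mathcal{F}))) \leq \mu_0 + \widetilde{m}\deg(\mathcal{O}_X(1))$.

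Finally I would recombine the factorwise bounds. Using the elementary inequality $\hat{\mu}_{\rm max}(F) \leq \max(\hat{\mu}_{\rm max}(F'),\hat{\mu}_{\rm max}(F''))$ for a short exact sequence $0 \to F' \to F \to F'' \to 0$ of pure sheaves --- which one proves by mapping the maximal destabilizing subsheaf of $F$ into the sequence and using that a nonzero quotient of a semistable sheaf has larger slope --- an induction on $l$ gives
\begin{align*}
\hat{\mu}_{\rm max}(F_{\mathcal{E}}(\mathcal{F})) \leq \max_{1 \leq i \leq l} \hat{\mu}_{\rm max}(F_{\mathcal{E}}(gr_i^{\rm HN}(\mathcal{F}))) \leq \mu_0 + \widetilde{m}\deg(\mathcal{O}_X(1)),
\end{align*}
a constant independent of $\mathcal{F}$. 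With all three hypotheses of Theorem \ref{308} in place, $F_{\mathcal{E}}(\widetilde{\mathfrak{F}})$ is bounded, hence so is $\widetilde{\mathfrak{F}}$. For the closing assertion, a $p$-semistable sheaf has $\hat{\mu}_{\rm max}(\mathcal{F}) = \hat{\mu}_{\mathcal{E}}(\mathcal{F}) = \alpha_{\mathcal{E},d-1}(\mathcal{F})/\alpha_{\mathcal{E},d}(\mathcal{F})$, a quantity fixed by $P$; choosing $\mu_0$ to be this value realizes the $p$-semistable family as a subfamily of one already shown bounded. I expect the slope bookkeeping across the Harder-Narasimhan filtration to be the only genuine difficulty, the remaining ingredients being direct appeals to the results cited above.
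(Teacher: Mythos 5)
Your proof is correct and follows the same route the paper sketches (and attributes to Nironi): transfer boundedness through $F_{\mathcal{E}}$ using the equivalent characterizations in \S 3.5, bound $\hat{\mu}_{\rm max}(F_{\mathcal{E}}(\mathcal{F}))$ uniformly, and invoke Theorem \ref{308}. The paper supplies no written proof of this statement, and your Harder--Narasimhan bookkeeping correctly fills in the one step the sketch elides, namely extending the slope inequality of \S 3.5 (quoted there only for $p$-semistable sheaves) to arbitrary purely $d$-dimensional sheaves with $\hat{\mu}_{\rm max}(\mathcal{F}) \leq \mu_0$.
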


\subsection{Boundedness of Coherent Sheaves \Romannum{2}}
In this subsection, we review the result of the upper bound for the number of global sections of $p$-semistable sheaves on $\mathcal{X}$.

\begin{lem}[Corollary 4.30 in \cite{Nir}]\label{311}
For any pure $p$-semistable sheaf $\mathcal{F}$ of dimension $d$ on a projective stack $\mathcal{X}$, we have
\begin{align*}
h^0(\mathcal{X},\mathcal{F} \otimes \mathcal{E}^{\vee})  \leq
\begin{cases}
r{\hat{\mu}_{\mathcal{E}}(\mathcal{F}) + \widetilde{m} \deg(\mathcal{O}_X(1)) + r^2+f(r)+\frac{d-1}{2}  \choose d}, & \text{ if } \hat{\mu}_{\rm max}(F_{\mathcal{E}}(\mathcal{F})) \geq \frac{d+1}{2} - r^2\\
0, & \text{ if } \hat{\mu}_{\rm max}(F_{\mathcal{E}}(\mathcal{F})) < \frac{d+1}{2} - r^2
\end{cases}
\end{align*}
where $r$ is the rank of $F_{\mathcal{E}}(\mathcal{F})$, $\widetilde{m}$ is an integer such that $\pi_* \mathcal{E}nd_{\mathcal{O}_{\mathcal{X}}}(\mathcal{E})(\widetilde{m})$ is generated by global sections and $f(r)=-1+ \sum\limits_{i=1}^r \frac{1}{i}$.
\end{lem}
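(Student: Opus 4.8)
The plan is to transport the estimate to the coarse moduli space $X$ by means of the functor $F_{\mathcal{E}}$ and then to invoke the classical Le Potier--Simpson estimate for pure coherent sheaves on projective schemes. First I would observe that, as $\mathcal{E}$ is locally free, $\mathcal{H}om_{\mathcal{O}_{\mathcal{X}}}(\mathcal{E},\mathcal{F}) = \mathcal{F}\otimes\mathcal{E}^{\vee}$, so $\pi_*(\mathcal{F}\otimes\mathcal{E}^{\vee}) = F_{\mathcal{E}}(\mathcal{F})$. Since global sections factor through the pushforward along the coarse moduli map, i.e. $\Gamma(\mathcal{X},-) = \Gamma(X,-)\circ\pi_*$, we obtain
\begin{align*}
h^0(\mathcal{X},\mathcal{F}\otimes\mathcal{E}^{\vee}) = h^0(X,F_{\mathcal{E}}(\mathcal{F})).
\end{align*}
This reduces the problem to bounding the number of global sections of the single coherent sheaf $F_{\mathcal{E}}(\mathcal{F})$ on the projective scheme $X$. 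By Corollary \ref{303} the functor $F_{\mathcal{E}}$ preserves the torsion filtration, so $T_{d-1}(F_{\mathcal{E}}(\mathcal{F})) = F_{\mathcal{E}}(T_{d-1}(\mathcal{F})) = 0$; together with the identity $P_{\mathcal{E}}(\mathcal{F}) = \chi(X,F_{\mathcal{E}}(\mathcal{F})(m))$, this shows $F_{\mathcal{E}}(\mathcal{F})$ is again pure of dimension $d$, of multiplicity (rank) $r$.

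Next I would apply the Le Potier--Simpson estimate \cite[\S 3.3]{HuLe} to the pure $d$-dimensional sheaf $F_{\mathcal{E}}(\mathcal{F})$ of multiplicity $r$ on $(X,\mathcal{O}_X(1))$. This estimate bounds $h^0$ of an \emph{arbitrary} pure sheaf in terms of its maximal slope, rank and dimension, and it is exactly of the stated shape: it yields $h^0(X,F_{\mathcal{E}}(\mathcal{F})) = 0$ whenever $\hat{\mu}_{\rm max}(F_{\mathcal{E}}(\mathcal{F})) < \frac{d+1}{2}-r^2$, and otherwise
\begin{align*}
h^0(X,F_{\mathcal{E}}(\mathcal{F})) \leq r\binom{\hat{\mu}_{\rm max}(F_{\mathcal{E}}(\mathcal{F})) + r^2 + f(r) + \frac{d-1}{2}}{d}.
\end{align*}
Working with $\hat{\mu}_{\rm max}$ rather than a single slope is essential here, since by Remark \ref{305} the sheaf $F_{\mathcal{E}}(\mathcal{F})$ need not be semistable even though $\mathcal{F}$ is.

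Finally, in the non-vanishing case I would replace $\hat{\mu}_{\rm max}(F_{\mathcal{E}}(\mathcal{F}))$ by the stack-theoretic slope of $\mathcal{F}$. Because $\mathcal{F}$ is $p$-semistable, the inequality of Nironi \cite[Proposition 4.24]{Nir} stated before Theorem \ref{310} applies and gives
\begin{align*}
\hat{\mu}_{\rm max}(F_{\mathcal{E}}(\mathcal{F})) \leq \hat{\mu}_{\mathcal{E}}(\mathcal{F}) + \widetilde{m}\deg(\mathcal{O}_X(1)).
\end{align*}
In the range $\hat{\mu}_{\rm max}(F_{\mathcal{E}}(\mathcal{F})) \geq \frac{d+1}{2}-r^2$ one checks that the argument of the binomial coefficient is at least $d$ (using $f(r)\geq 0$), so $\binom{N}{d}$ is monotonically increasing in $N$; substituting the slope inequality then produces the claimed bound. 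The main obstacle is the Le Potier--Simpson estimate itself, which supplies the precise correction terms $r^2$, $f(r)$, $\frac{d-1}{2}$ together with the threshold $\frac{d+1}{2}-r^2$. The remaining content of the lemma is the reduction $h^0(\mathcal{X},\mathcal{F}\otimes\mathcal{E}^{\vee}) = h^0(X,F_{\mathcal{E}}(\mathcal{F}))$, the preservation of purity under $F_{\mathcal{E}}$, and verifying that $p$-semistability of $\mathcal{F}$ licenses Nironi's slope comparison.
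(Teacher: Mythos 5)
The paper does not actually prove this lemma: it is quoted verbatim as Corollary 4.30 of Nironi's paper, so there is no internal proof to compare against. Your reconstruction — identify $h^0(\mathcal{X},\mathcal{F}\otimes\mathcal{E}^{\vee})$ with $h^0(X,F_{\mathcal{E}}(\mathcal{F}))$, use Corollary \ref{303} to see that $F_{\mathcal{E}}(\mathcal{F})$ is again pure of dimension $d$ and multiplicity $r$, apply the Le Potier--Simpson estimate on the coarse space, and then substitute Nironi's slope comparison $\hat{\mu}_{\rm max}(F_{\mathcal{E}}(\mathcal{F}))\leq\hat{\mu}_{\mathcal{E}}(\mathcal{F})+\widetilde{m}\deg(\mathcal{O}_X(1))$, which indeed requires the $p$-semistability hypothesis — is exactly how the cited result is derived, and your monotonicity check that the binomial argument stays at least $d$ on the relevant range is correct. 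The one caveat worth noting is that the single-term binomial form with vanishing threshold $\frac{d+1}{2}-r^2$ that you attribute to Huybrechts--Lehn \S 3.3 is really Simpson's formulation of the estimate (the version in Huybrechts--Lehn is a two-term bound in $\hat{\mu}_{\rm max}$ and $\hat{\mu}_{\rm min}$ using $[\,\cdot\,]_+^d/d!$ rather than binomial coefficients), so essentially all of the analytic content of the lemma is being imported from that black box; since the lemma itself is a citation in this paper, that is an acceptable division of labor, but a self-contained proof would need to supply that estimate.
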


Note that the condition $ \hat{\mu}_{\rm max}(F_{\mathcal{E}}(\mathcal{F})) < \frac{d+1}{2} - r^2$ is equivalent to
\begin{align*}
\hat{\mu}_{\mathcal{E}}(\mathcal{F}) + \widetilde{m} \deg(\mathcal{O}_X(1)) + r^2+f(r)+\frac{d-1}{2} < d
\end{align*}
by applying the inequality $\hat{\mu}_{\rm max}(F_{\mathcal{E}}(\mathcal{F})) \leq \hat{\mu}_{\mathcal{E}}(\mathcal{F})+\widetilde{m} \deg(\mathcal{O}_X(1))$ (see \S 3.5).

\begin{cor}\label{312}
There is an integer $B$, which depends on the following data
\begin{itemize}
\item $r$: rank of $F_{\mathcal{E}}(\mathcal{F})$;
\item $\widetilde{m}$: the integer such that $\pi_* \mathcal{E}nd_{\mathcal{O}_{\mathcal{X}}}(\mathcal{E})(\widetilde{m})$ is generated by global sections;
\item $\deg(\mathcal{O}_X(1))$: the degree of $\mathcal{O}_X(1)$;
\item $d$: the dimension of the pure sheaf $\mathcal{F}$,
\end{itemize}
such that
\begin{align*}
h^0(\mathcal{X},\mathcal{F} \otimes \mathcal{E}^{\vee} \otimes \pi^* \mathcal{O}_X(m))& \leq
\begin{cases}
r \frac{(\hat{\mu}_{\mathcal{E}}(\mathcal{F}) + B+m)^d}{d!} , & \text{ if } \hat{\mu}_{\rm max}(F_{\mathcal{E}}(\mathcal{F})) \geq \frac{d+1}{2} - r^2-m\\
0, & \text{ if } \hat{\mu}_{\rm max}(F_{\mathcal{E}}(\mathcal{F})) < \frac{d+1}{2} - r^2-m
\end{cases}
\end{align*}
\end{cor}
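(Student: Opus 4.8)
The plan is to deduce this directly from Lemma \ref{311} by applying that lemma not to $\mathcal{F}$ but to the twisted sheaf $\mathcal{F} \otimes \pi^* \mathcal{O}_X(m)$. First I would check that tensoring with the pullback line bundle $\pi^*\mathcal{O}_X(m)$ preserves purity, dimension $d$, and $p_{\mathcal{E}}$-semistability. Indeed, the subsheaves of $\mathcal{F} \otimes \pi^*\mathcal{O}_X(m)$ are exactly the sheaves $\mathcal{F}' \otimes \pi^*\mathcal{O}_X(m)$ with $\mathcal{F}' \subseteq \mathcal{F}$, since tensoring by a line bundle is an equivalence preserving the subobject lattice; and from $P_{\mathcal{E}}(\mathcal{G} \otimes \pi^*\mathcal{O}_X(m), l) = P_{\mathcal{E}}(\mathcal{G}, l+m)$ (which follows from $\pi^*\mathcal{O}_X(m)\otimes\pi^*\mathcal{O}_X(l) = \pi^*\mathcal{O}_X(l+m)$ and the definition of the modified Hilbert polynomial) every reduced modified Hilbert polynomial is shifted by the same amount $m$, so the defining inequalities $p_{\mathcal{E}}(\mathcal{F}')\le p_{\mathcal{E}}(\mathcal{F})$ are unchanged. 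Hence $\mathcal{F}\otimes\pi^*\mathcal{O}_X(m)$ is again pure $p_{\mathcal{E}}$-semistable of dimension $d$, and Lemma \ref{311} applies to it.

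The next step is to record how the invariants transform under the twist. Reading coefficients off $P_{\mathcal{E}}(\mathcal{F}\otimes\pi^*\mathcal{O}_X(m), l) = P_{\mathcal{E}}(\mathcal{F}, l+m)$ shows that $\alpha_{\mathcal{E},d}$ is unchanged while $\alpha_{\mathcal{E},d-1}$ increases by $m\,\alpha_{\mathcal{E},d}$, so the rank $r$ is preserved and $\hat{\mu}_{\mathcal{E}}(\mathcal{F}\otimes\pi^*\mathcal{O}_X(m)) = \hat{\mu}_{\mathcal{E}}(\mathcal{F}) + m$. The projection formula gives $F_{\mathcal{E}}(\mathcal{F}\otimes\pi^*\mathcal{O}_X(m)) = F_{\mathcal{E}}(\mathcal{F})(m)$, so the rank of $F_{\mathcal{E}}$ is again $r$ and $\hat{\mu}_{\max}(F_{\mathcal{E}}(\mathcal{F}\otimes\pi^*\mathcal{O}_X(m))) = \hat{\mu}_{\max}(F_{\mathcal{E}}(\mathcal{F})) + m$, while $h^0(\mathcal{X}, (\mathcal{F}\otimes\pi^*\mathcal{O}_X(m))\otimes\mathcal{E}^\vee) = h^0(\mathcal{X}, \mathcal{F}\otimes\mathcal{E}^\vee\otimes\pi^*\mathcal{O}_X(m))$. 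Substituting these into Lemma \ref{311} turns the threshold condition $\hat{\mu}_{\max}(F_{\mathcal{E}}(-)) \geq \frac{d+1}{2}-r^2$ into precisely $\hat{\mu}_{\max}(F_{\mathcal{E}}(\mathcal{F})) \geq \frac{d+1}{2}-r^2-m$ (and likewise for the strict inequality giving $h^0=0$), matching the two cases of the statement, and in the nonzero case produces the bound $r\binom{\hat{\mu}_{\mathcal{E}}(\mathcal{F}) + m + \widetilde{m}\deg(\mathcal{O}_X(1)) + r^2 + f(r) + \frac{d-1}{2}}{d}$.

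It then remains to pass from this binomial coefficient to the power $r(\hat{\mu}_{\mathcal{E}}(\mathcal{F})+B+m)^d/d!$. Writing $x := \hat{\mu}_{\mathcal{E}}(\mathcal{F}) + m + \widetilde{m}\deg(\mathcal{O}_X(1)) + r^2 + f(r) + \frac{d-1}{2}$ for the top entry, I would invoke the elementary inequality $\binom{x}{d} = \frac{x(x-1)\cdots(x-d+1)}{d!} \leq \frac{x^d}{d!}$, valid once $x \geq d-1$ so that all $d$ factors are nonnegative. This hypothesis is automatic in the nonzero regime: there $\hat{\mu}_{\max}(F_{\mathcal{E}}(\mathcal{F}\otimes\pi^*\mathcal{O}_X(m))) \geq \frac{d+1}{2}-r^2$, and combining with the inequality $\hat{\mu}_{\max}(F_{\mathcal{E}}(-)) \leq \hat{\mu}_{\mathcal{E}}(-)+\widetilde{m}\deg(\mathcal{O}_X(1))$ recalled in \S 3.5 yields $x \geq d + f(r) \geq d$, using $f(r)\geq 0$. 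I would then fix an integer $B \geq \widetilde{m}\deg(\mathcal{O}_X(1)) + r^2 + f(r) + \frac{d-1}{2}$, a quantity depending only on $r$, $\widetilde{m}$, $\deg(\mathcal{O}_X(1))$ and $d$ as required; since $0 < x \leq \hat{\mu}_{\mathcal{E}}(\mathcal{F})+B+m$, monotonicity of $t\mapsto t^d$ on the positive reals gives $\frac{x^d}{d!} \leq \frac{(\hat{\mu}_{\mathcal{E}}(\mathcal{F})+B+m)^d}{d!}$, completing the estimate.

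Almost all of this is formal bookkeeping, and the one point deserving genuine care is the compatibility of the two regime-conditions under the twist: one must verify simultaneously that the threshold $\frac{d+1}{2}-r^2$ for $\mathcal{F}\otimes\pi^*\mathcal{O}_X(m)$ becomes $\frac{d+1}{2}-r^2-m$ for $\mathcal{F}$, and that being in the nonzero regime forces $x\ge d$, which is exactly what licenses discarding the lower-order factors of the binomial. Both facts hinge on the slope shifting by exactly $+m$ (with no factor of $\deg(\mathcal{O}_X(1))$) under the twist, which is why I would establish the coefficientwise transformation of $P_{\mathcal{E}}$ explicitly rather than merely quote it.
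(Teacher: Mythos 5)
Your proposal is correct, but it proves the corollary by a genuinely different route than the paper. You reduce the case $m>0$ to Lemma \ref{311} in one step by applying that lemma to the twisted sheaf $\mathcal{F}\otimes\pi^*\mathcal{O}_X(m)$, after verifying that twisting by the pullback line bundle preserves purity, dimension, rank and $p_{\mathcal{E}}$-semistability and shifts both $\hat{\mu}_{\mathcal{E}}$ and $\hat{\mu}_{\rm max}(F_{\mathcal{E}}(-))$ by exactly $m$ (via $P_{\mathcal{E}}(\mathcal{G}\otimes\pi^*\mathcal{O}_X(m),l)=P_{\mathcal{E}}(\mathcal{G},l+m)$ and the projection formula $F_{\mathcal{E}}(\mathcal{F}\otimes\pi^*\mathcal{O}_X(m))=F_{\mathcal{E}}(\mathcal{F})(m)$); the threshold and the binomial bound then transform into precisely the two cases of the statement. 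The paper instead treats only $m=0$ by Lemma \ref{311} and then runs an induction on the dimension $d$, restricting $F_{\mathcal{E}}(\mathcal{F})$ to a generic hyperplane, bounding the first difference $f(m)-f(m-1)$ by the $(d-1)$-dimensional estimate, and summing, which forces it to introduce auxiliary constants $B'$ and $C$ whose dependence is left implicit. Your argument is more self-contained: it avoids the hyperplane-restriction step entirely, yields the explicit uniform constant $B=\widetilde{m}\deg(\mathcal{O}_X(1))+r^2+f(r)+\tfrac{d-1}{2}$ for all $m$ at once, and correctly isolates the one point needing care, namely that the top entry $x$ of the binomial satisfies $x\ge d$ in the nonzero regime (which the paper itself records in the remark following Lemma \ref{311}), so that $\binom{x}{d}\le x^d/d!$ is legitimate. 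What the paper's Le Potier--Simpson-style induction buys is a template that also works when one only has the $h^0$ bound at a single twist and must propagate it; for the statement as given, your twisting argument is the cleaner proof.
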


\begin{proof}
First, we consider the case $m=0$. Lemma \ref{311} tells us that
\begin{align*}
h^0(\mathcal{X},\mathcal{F} \otimes \mathcal{E}^{\vee})  \leq
\begin{cases}
r{\hat{\mu}_{\mathcal{E}}(\mathcal{F}) + \widetilde{m} \deg(\mathcal{O}_X(1)) + r^2+f(r)+\frac{d-1}{2}  \choose d}, & \text{ if } \hat{\mu}_{\rm max}(F_{\mathcal{E}}(\mathcal{F})) \geq \frac{d+1}{2} - r^2\\
0, & \text{ if } \hat{\mu}_{\rm max}(F_{\mathcal{E}}(\mathcal{F})) < \frac{d+1}{2} - r^2
\end{cases}
\end{align*}
Let $B$ be the integer $\widetilde{m} \deg(\mathcal{O}_X(1)) + r^2+f(r)+\frac{d-1}{2}$. We have
\begin{align*}
r{\hat{\mu}_{\mathcal{E}}(\mathcal{F}) + \widetilde{m} \deg(\mathcal{O}_X(1)) + r^2+f(r)+\frac{d-1}{2}  \choose d}&=r\frac{(\hat{\mu}_{\mathcal{E}}(\mathcal{F})+B)!}{d!((\hat{\mu}_{\mathcal{E}}(\mathcal{F})+B)-d)!}\\
\leq r\frac{ (\hat{\mu}_{\mathcal{E}}(\mathcal{F})+B)^d }{ d! }.
\end{align*}
Thus the inequality holds when $m=0$.

Now we will prove this corollary by induction on $d$. It is easy to check that the statement holds when $d=0$. Suppose that the inequality holds for $d \leq k-1$, and we will prove that the inequality holds for $d=k$. By the exactness of the functor $F_{\mathcal{E}}$, we know that
\begin{align*}
h^0(\mathcal{X},\mathcal{F} \otimes \mathcal{E}^{\vee} \otimes \pi^* \mathcal{O}_X(m))& =h^0(X,F_{\mathcal{E}}(\mathcal{F})(m)).
\end{align*}
Let $Y$ be a generic hyperplane on $X$. We have the exact sequence
\begin{align*}
0 \rightarrow F_{\mathcal{E}}(\mathcal{F})(m-1) \rightarrow F_{\mathcal{E}}(\mathcal{F})(m) \rightarrow F_{\mathcal{E}}(\mathcal{F})|_Y (m) \rightarrow 0.
\end{align*}

If $m< \frac{d+1}{2} - r^2 - \hat{\mu}_{\rm max}(F_{\mathcal{E}}(\mathcal{F}))$, we get
\begin{align*}
h^0(X,F_{\mathcal{E}}(\mathcal{F})|_Y (m))=0.
\end{align*}
Thus,
\begin{align*}
h^0(X, F_{\mathcal{E}}(\mathcal{F})(m))- h^0(X, F_{\mathcal{E}}(\mathcal{F})(m-1))=0.
\end{align*}
By induction, we have
\begin{align*}
h^0(X, F_{\mathcal{E}}(\mathcal{F})(m))=h^0(X, F_{\mathcal{E}}(\mathcal{F})(m-1))=0.
\end{align*}

If $m \geq \frac{d+1}{2} - r^2 - \hat{\mu}_{\rm max}(F_{\mathcal{E}}(\mathcal{F}))$, we have
\begin{align*}
h^0(X, F_{\mathcal{E}}(\mathcal{F})(m))-h^0(X, F_{\mathcal{E}}(\mathcal{F})(m-1)) \leq \frac{(\hat{\mu}_{\mathcal{E}}(\mathcal{F})+B'+m)^{d-1}}{(d-1)!}.
\end{align*}
Consider $h^0(X, F_{\mathcal{E}}(\mathcal{F})(m))$ as a function $f(m)$. We have
\begin{align*}
f(m)-f(m-1) \leq \frac{(\hat{\mu}_{\mathcal{E}}(\mathcal{F})+B'+m)^{d-1}}{(d-1)!}.
\end{align*}
Therefore, there exists a constant $C$ depending on $d$ such that
\begin{align*}
f(m) \leq  \frac{(\hat{\mu}_{\mathcal{E}}(\mathcal{F})+B'+C+m)^d}{d!}.
\end{align*}
Take $B=B'+C$. This finishes the proof of this corollary.
\end{proof}

\begin{rem}\label{313}
Let $\widetilde{\mathfrak{F}}^{ss}(P)$ be the set (or family) of $p$-semistable sheaves of pure dimension $d$ on $\mathcal{X}$ with the modified Hilbert polynomial $P$. Note that the slope $\hat{\mu}_{\mathcal{E}}(\mathcal{F})$ is uniquely determined by the modified Hilbert polynomial $P$, where $\mathcal{F} \in \widetilde{\mathfrak{F}}^{ss}(P)$. Thus, there is an upper bound for the set
\begin{align*}
\{h^0(\mathcal{X},\mathcal{F}\otimes \mathcal{E}^{\vee} \otimes \pi^* \mathcal{O}_{X}(m)) \text{ } | \text{ }\mathcal{F} \in \widetilde{\mathfrak{F}}^{ss}(P)\}.
\end{align*}
\end{rem}

\subsection{Geometric Invariant Theory}
In this subsection, we make a brief review about the geometric invariant theory (GIT), which will be used to construct the moduli space of $p$-semistable sheaves and the moduli space of $p$-semistable $\Lambda$-modules over a projective Deligne-Mumford stack $\mathcal{X}$. There are many very good references about this topic \cite{HuLe,MumFogKir}.

Let $G$ be a reductive algebraic group over an algebraically closed field $k$ acting on a projective $k$-scheme $X$. Given an action $\sigma: G \times X \rightarrow X$, a pair $(Y,\phi:X \rightarrow Y)$ is called a \emph{geometric quotient} of $X$ with respect to the $G$-action $\sigma$, if it satisfies the following conditions:
\begin{enumerate}
\item $\phi \circ \sigma=\phi \circ p_2$, where $p_2: G \times X \rightarrow X$ is the natural projection;
\item $\phi$ is surjective and submersive;
\item the image of $\Psi=(\sigma,p_2): G \times X \rightarrow X \times X$ is $X \times_Y X$;
\item $\mathcal{O}_Y \cong (\phi_*(\mathcal{O}_X))^G$.
\end{enumerate}
We say that a geometric quotient $(Y,\phi)$ is \emph{universal} if for any morphism $Y' \rightarrow Y$, the pair $(Y',\phi')$ is a geometric quotient of $X \times_Y Y'$ with respect to $G$, where $\phi'$ is induced by $\phi$. A geometric quotient $(Y,\phi)$ is \emph{good} if $\sigma$ is closed and $\Psi$ is separated.

Let $\mathscr{L}$ be a $G$-linearized ample line bundle on $X$. A point $x \in X$ is \emph{semistable} with respect to a given $G$-linearized ample line bundle $\mathscr{L}$ if there is an integer $n$ and an $G$-invariant global section $s \in H^0(X,\mathscr{L}^{\otimes n})^G$ such that $s(x) \neq 0$. A point $x$ is \emph{stable} (with respect to $\mathscr{L}$) if it is semistable, the stabilizer $G_x$ is finite and the $G$-orbit of $x$ is closed in the open set of all semistable points in $X$. Denote by $X^{ss}$ (resp. $X^s$) the set of all semistable points (resp. stable points).

\begin{thm}[Theorem 1.10 in \cite{MumFogKir}]\label{314}
Let $X$ be a projective scheme and let $G$ be a reductive group. Let $\mathscr{L}$ be a $G$-linearized ample line bundle on $X$. Then there is a projective scheme $Y$ and a morphism $\pi: X^{ss} \rightarrow Y$ such that $\pi$ is a universal good geometric quotient for the $G$-action. There is an open subset $Y^s \subseteq Y$ such that $X^s = \pi^{-1}(Y^s)$ and $\pi: X^s \rightarrow Y^s$ is a universal geometric quotient. Finally, there is a positive integer $m$ and a very ample line bundle $\mathscr{M}$ on $Y$ such that $\mathscr{L}^{\otimes m}|_{X^{ss}} \cong \pi^{-1}(\mathscr{M})$.
\end{thm}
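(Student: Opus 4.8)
The statement is the fundamental theorem of geometric invariant theory, so the plan is to reduce the global assertion to the affine case through a homogeneous coordinate ring and then run affine GIT on a standard open cover. Since $\mathscr{L}$ is ample, after replacing it by a sufficiently divisible power we may assume $X = \text{Proj}(R)$, where $R = \bigoplus_{n \geq 0} H^0(X, \mathscr{L}^{\otimes n})$ is a finitely generated graded $k$-algebra carrying a graded $G$-action induced by the linearization. The object to build is $Y = \text{Proj}(R^G)$, where $R^G = \bigoplus_{n \geq 0} H^0(X, \mathscr{L}^{\otimes n})^G$ is the invariant subring.

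First I would prove that $R^G$ is a finitely generated $k$-algebra; this is where reductivity of $G$ enters, through Nagata's theorem, which rests on geometric reductivity (valid for reductive $G$ in every characteristic by Haboush's theorem). Finite generation makes $Y = \text{Proj}(R^G)$ a projective $k$-scheme. The inclusion $R^G \hookrightarrow R$ then induces a dominant rational map $X \dashrightarrow Y$ that is regular precisely where some positive-degree invariant section is nonzero, i.e. on $X^{ss}$; this produces the morphism $\pi \colon X^{ss} \to Y$.

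Next I would verify the quotient axioms affine-locally. Covering $Y$ by the basic opens $Y_{(f)} = \Spec\big((R^G_f)_0\big)$ for homogeneous $f \in R^G$ of positive degree, one has $\pi^{-1}(Y_{(f)}) = X^{ss}_f = \Spec\big((R_f)_0\big)$, an affine $G$-invariant open, on which $\pi$ is the affine quotient $\Spec(A) \to \Spec(A^G)$ with $A = (R_f)_0$. The engine on each chart is the Reynolds operator $E \colon A \to A^G$, the canonical $G$-equivariant $A^G$-linear splitting of $A^G \hookrightarrow A$ furnished by reductivity. From $E$ one deduces surjectivity and submersiveness of $\Spec(A) \to \Spec(A^G)$, the identity $\mathcal{O}_Y \cong (\pi_* \mathcal{O}_{X^{ss}})^G$, and the crucial statement that two disjoint closed $G$-invariant subsets of $\Spec(A)$ have disjoint images; the last of these gives closedness of $\sigma$ and separatedness of $\Psi$, so that $\pi$ is a good quotient in the sense of \S 3.7. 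Universality follows from the base-change compatibility of the Reynolds operator (flat descent in characteristic zero, geometric reductivity in general).

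For the stable locus I would set $X^s \subseteq X^{ss}$ to be the points with finite stabilizer whose $G$-orbit is closed in $X^{ss}$, show that $X^s$ is open and $\pi$-saturated so that $Y^s := \pi(X^s)$ is open with $X^s = \pi^{-1}(Y^s)$, and observe that since distinct closed orbits are separated by invariants the fibers of $\pi|_{X^s}$ are exactly the orbits; hence the image of $\Psi$ is $X^s \times_{Y^s} X^s$ and $\pi|_{X^s}$ is a genuine geometric quotient. Finally, for the line bundle, after passing to a Veronese so that $R^G$ is generated in degrees that are multiples of a suitable $m$, the twisting sheaf $\mathscr{M} := \mathcal{O}_Y(m)$ is very ample and satisfies $\pi^* \mathscr{M} \cong \mathscr{L}^{\otimes m}|_{X^{ss}}$, because $\pi^* \mathcal{O}_Y(m)$ is the restriction of $\mathcal{O}_X(m) = \mathscr{L}^{\otimes m}$. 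I expect the main obstacle to be the finite generation of $R^G$ together with, in positive characteristic, the orbit-separation and universality statements: all three ultimately rely on geometric reductivity (Haboush's theorem) as the replacement for the Reynolds operator of a linearly reductive group, and arranging that substitution to separate closed invariant subsets is the technical heart of the argument \cite{MumFogKir}.
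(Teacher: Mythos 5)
The paper does not prove this statement; it is quoted verbatim as Theorem 1.10 of \cite{MumFogKir} in the background section \S 3.7. Your outline is the standard argument from that reference --- $Y=\mathrm{Proj}(R^G)$ with finite generation via geometric reductivity, affine-local verification of the good-quotient axioms on the charts $X^{ss}_f\to Y_{(f)}$, openness and saturation of the stable locus, and descent of $\mathscr{L}^{\otimes m}$ after a Veronese twist --- and it is essentially correct, including the key caveat that in positive characteristic the Reynolds operator must be replaced by geometric reductivity (Haboush) to separate disjoint closed invariant subsets and to get universality.
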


At the end of this section, we want to review \emph{Luna's \'Etale Slicing Theorem}. We refer the reader to \cite{HuLe,MumFogKir} for more details. Recently, people proved a stacky version of \emph{Luna's \'etale slicing theorem}, which generalizes the theorem to algebraic stacks (see \cite{AlHRy}). In this paper, we only need the classical version for schemes.

\begin{thm}[Luna's \'Etale Slicing Theorem]\label{315}
Let $G$ be a reductive group acting on a finite type scheme $X$. Let $X \rightarrow X/G$ be the universal good geometric quotient. Let $x \in X$ be a point with a closed $G$-orbit. Then there exists a $G_x$-invariant locally closed subscheme $C \subseteq X$ passing through $x$, where $G_x$ is the stabilizer of $x$, such that the multiplication $C \times G \rightarrow X$ induces a $G$-equivariant \'etale morphism $C \times_{G_x} G \rightarrow X$.
\end{thm}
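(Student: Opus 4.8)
The plan is to reproduce Luna's classical argument, whose engine is the construction of a \emph{slice} transverse to the closed orbit together with an étale descent along the good quotient; throughout write $H := G_x$ for the stabilizer. First I would reduce to the case where $X$ is affine. Since $X \to X/G$ is a good geometric quotient, the image of $x$ has an affine open neighborhood in $X/G$, and its preimage is a $G$-saturated affine open subscheme of $X$ containing the entire closed orbit $Gx$; replacing $X$ by this open subscheme alters neither the hypotheses nor the conclusion. Once $X = \Spec A$ is affine, the closedness of $Gx$ forces $H$ to be (linearly) reductive by Matsushima's criterion, the point being that $G/H \cong Gx$ is affine. This reductivity is what makes every subsequent step work, since it guarantees that $H$-invariant subspaces of $H$-representations admit $H$-invariant complements.

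Next I would construct the slice $C$. Because $A$ is a union of finite-dimensional $G$-submodules and $G$ is reductive, one obtains a $G$-equivariant locally closed embedding $X \hookrightarrow V$ into a finite-dimensional $G$-representation. Inside $T_xX$ the orbit direction $T_x(Gx) \cong \mathfrak{g}/\mathfrak{h}$ is an $H$-submodule, so by linear reductivity of $H$ it admits an $H$-invariant complement $N \subseteq T_xX$, the \emph{normal space}. I would then take $C$ to be an $H$-invariant locally closed subscheme through $x$ with $T_xC = N$, concretely obtained by intersecting $X$ with an $H$-invariant affine-linear subspace of $V$ through $x$ meeting the orbit transversally. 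By construction $T_xC \oplus T_x(Gx) = T_xX$.

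Then I would analyze the $G$-equivariant multiplication morphism $\psi : C \times_{G_x} G \to X$ induced by $C \times G \to X$. A direct computation of its differential at the point $[x,e]$ identifies the source tangent space with $T_xC \oplus (\mathfrak{g}/\mathfrak{h}) = T_xC \oplus T_x(Gx)$, the first summand coming from $C$ and the second from the $G$-factor, with the diagonal copy of $\mathfrak{h}$ killed because $H$ fixes $x$; the differential $d\psi$ carries this isomorphically onto $T_xC + T_x(Gx) = T_xX$ by the transversality just arranged. Hence $\psi$ is étale at $[x,e]$. Since $Gx$ is closed and $\psi$ is injective on this orbit, \emph{Luna's fundamental lemma} applies: after shrinking $C$ to a smaller $H$-invariant locally closed subscheme through $x$, the morphism $\psi$ becomes étale on a $G$-saturated open and is strongly étale, i.e.\ the base change of an étale morphism $C/H \to X/G$ of quotients. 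This is exactly the assertion of the theorem.

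The main obstacle is the fundamental lemma invoked in the last step, namely promoting étaleness at the single point $[x,e]$ to étaleness on a $G$-saturated neighborhood while controlling the induced map of GIT quotients. Pointwise étaleness is an open condition but is not automatically $G$-saturated, and the genuinely new input is that for an affine reductive action every fiber of the good quotient contains a unique closed orbit; combining this with the openness of the étale locus and a descent argument comparing the invariant rings $A^G$ and $(\mathcal{O}(C))^H$ yields the strongly étale structure. I would stress that linear reductivity of $H$ (equivalently, closedness of $Gx$ via Matsushima) is used repeatedly, both for the splitting $T_xX = N \oplus T_x(Gx)$ and for the exactness of invariants in the descent, and that in positive characteristic ``reductive'' must be read as ``linearly reductive'' for these splittings to exist.
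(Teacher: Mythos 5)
The paper does not prove this statement at all: Theorem \ref{315} is quoted as a classical result, with the reader referred to \cite{HuLe,MumFogKir}, so there is no in-paper argument to compare yours against. Your sketch is the standard Luna argument (reduction to the affine saturated neighborhood, Matsushima/linear reductivity of $G_x$, an $H$-invariant complement to the orbit direction giving the slice, \'etaleness at $[x,e]$, and the fundamental lemma to saturate), and as an outline it is essentially correct.

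One place where your write-up is looser than the actual proof: when $X$ is singular at $x$, an isomorphism of Zariski tangent spaces at $[x,e]$ does \emph{not} by itself give \'etaleness of $C \times_{G_x} G \rightarrow X$ there (tangent-space computations only suffice on smooth schemes). The standard fix, which your construction already sets up, is to run the differential argument for the slice $S$ inside the ambient $G$-module $V$ (which is smooth), obtain a strongly \'etale $S \times_{G_x} G \rightarrow V$ onto a saturated open, and then define $C = S \cap X$ so that $C \times_{G_x} G \rightarrow X$ is the base change of that \'etale map along $X \hookrightarrow V$, hence \'etale. You should also make explicit that the ``shrinking'' in the last step must produce a $G_x$-\emph{saturated} open of $C$ (equivalently an open of $C/\!/G_x$), not merely an open neighborhood of $x$ in $C$, since otherwise the induced map $C/G_x \rightarrow X/G$ need not be \'etale. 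With those two points tightened, the proposal matches the classical proof that the cited references give.
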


\subsection{Moduli Space of Coherent Sheaves}
In this subsection, we study the moduli problem of $p$-semistable sheaves on a projective Deligne-Mumford stack $\mathcal{X}$, and construct the moduli space $\mathcal{M}^{ss}(\mathcal{E},\mathcal{O}_X(1),P)$ of this moduli problem. The existence of the moduli space has been proved by Nironi (see \cite[\S 5 and \S 6]{Nir}), but we construct the moduli space in a slightly different way and we also explore some properties of smooth points in this moduli space $\mathcal{M}^{ss}(\mathcal{E},\mathcal{O}_X(1),P)$.

Let $S$ be a noetherian scheme of finite type, or an affine scheme. Let $\mathcal{X}$ be a projective (or quasi-projective) Deligne-Mumford stack with coarse moduli space $\pi: \mathcal{X} \rightarrow X$ over $S$. We choose a polarization $\mathcal{O}_X(1)$ on $X$ and a generating sheaf $\mathcal{E}$ on $\mathcal{X}$. Let $P$ be an integer polynomial (as modified Hilbert polynomial), and $d$ is the degree of $P$, which is a positive integer (as pure dimension).

We consider the moduli problem
\begin{align*}
\widetilde{\mathcal{M}}^{ss}(\mathcal{E},\mathcal{O}_X(1),P): (\text{Sch}/S)^{{\rm op}} \rightarrow \text{Set}.
\end{align*}
Given an $S$-scheme $T$, we define $\widetilde{\mathcal{M}}^{ss}(\mathcal{E},\mathcal{O}_X(1),P)(T)$ to be the set of equivalent classes of sheaves $\mathcal{F}_T$ on $\mathcal{X} \otimes_S T$ such that
\begin{itemize}
\item $\mathcal{F}_T$ is a $T$-flat family of $p$-semistable sheaf;
\item the modified Hilbert polynomial of each fiber of $\mathcal{F}_T$ is $P$;
\item $\mathcal{F}_T$ is equivalent to another family of sheaves $\mathcal{F}'_T$, if $\mathcal{F}_T \cong \mathcal{F}'_T \otimes p^* L$ for some $L \in {\rm Pic}(T)$.
\end{itemize}
We use the notation $``\sim"$ for the equivalence relation, i.e. $\mathcal{F}_T \sim \mathcal{F}'_T$.

The moduli problem $\widetilde{\mathcal{M}}^{ss}(\mathcal{E},\mathcal{O}_X(1),P)$ is defined for the $p$-semistable sheaves. Similarly, we can define a moduli problem $\widetilde{\mathcal{M}}^{s}(\mathcal{E},\mathcal{O}_X(1),P)$ for the $p$-stable sheaves. In this section, we will show that these two moduli problems are co-represented by  projective (or quasi-projective) $S$-schemes.

We first consider the quot-functor $\widetilde{{\rm Quot}}(\mathcal{G},P)$. The quot-functor $\widetilde{{\rm Quot}}(\mathcal{G},P)$ is represented by a projective $S$-scheme ${\rm Quot}(\mathcal{G},P)$ (see Theorem \ref{3021} and \cite[Theorem 2.17]{Nir}), which parametrizes quotients $[\mathcal{G} \rightarrow \mathcal{F}]$ with modified Hilbert polynomial $P$ (see \S 3.3). Moreover, by Theorem \ref{310} there is a positive integer $m$ such that for any element $[\mathcal{G} \rightarrow \mathcal{F}] \in {\rm Quot}(\mathcal{G},P)$, $\mathcal{F}$ is $m$-regular. Therefore there is a natural embedding
\begin{align*}
    \psi_m: {\rm Quot}(\mathcal{G},P)\hookrightarrow  {\rm Quot}(F_{\mathcal{E}}(\mathcal{G}),P) \hookrightarrow {\rm Grass}(H^0(X/S, F_{\mathcal{E}}(\mathcal{G})(m)),P(m) ),
\end{align*}
which is a closed embedding as $m$ increasing. Let $\mathscr{L}$ be the canonical invertible sheaf on the Grassmannian ${\rm Grass}(H^0(X/S,F_{\mathcal{E}}(\mathcal{G})(m)),P(m))$. Denote by $\mathscr{L}_m$, which is a very ample invertible sheaf on ${\rm Quot}(\mathcal{G},P)$, the pullback of $\mathscr{L}$ by the embedding $\psi_m$. Over a point $[\mathcal{G} \rightarrow \mathcal{F}]$, the fiber of the line bundle $\mathscr{L}_m$ is $\wedge^{P(m)} {\rm H}^0(X/S, F_{\mathcal{E}}(\mathcal{F})(m))$.

Now we go back to the family of $p$-semistable sheaves. By Theorem \ref{310}, we know that the family $\widetilde{\mathfrak{F}}^{ss}(P)$ of purely $d$-dimensional $p$-semistable coherent sheaves with modified Hilbert polynomial $P$ is bounded. Thus we can find an integer $m$ such that for any $\mathcal{F} \in \widetilde{\mathfrak{F}}^{ss}(P)$, $\mathcal{F}$ is $m$-regular. Moreover, by Remark \ref{313}, we can choose a positive integer $N$ large enough such that for any $\mathcal{F} \in \widetilde{\mathfrak{F}}^{ss}(P)$, we have
\begin{align*}
P(N) \geq P_{\mathcal{E}}(\mathcal{F},m)=h^0(X/S,F_{\mathcal{E}}(\mathcal{F})(m)).
\end{align*}
Let $V$ be the linear space $S^{\oplus P(N)}$, and note that
\begin{align*}
V  \cong H^0(X/S,F_{\mathcal{E}}(\mathcal{F})(N)).
\end{align*}

Denote by $\mathcal{G}$ be the coherent sheaf $\mathcal{E} \otimes \pi^* \mathcal{O}_X(-N)$. The above discussion tells us that any coherent sheaf $\mathcal{F} \in \widetilde{\mathfrak{F}}^{ss}(P)$ corresponds to a surjection $[V \otimes \mathcal{G} \rightarrow \mathcal{F}]$ together with an isomorphism
\begin{align*}
V \cong H^0(X/S,F_{\mathcal{E}}(\mathcal{F})(N)).
\end{align*}
With respect to the above discussion, we consider the quot-scheme ${\rm Quot}(V \otimes \mathcal{G},P)$. By definition, this quot-scheme parametrizes quotients $[\rho: V \otimes \mathcal{G} \rightarrow \mathcal{F}]$. The quotient induces a morphism
\begin{align*}
\alpha: V \rightarrow H^0(X/S, F_{\mathcal{E}}(\mathcal{F})(N)).
\end{align*}

Now we define a subset $Q^{ss} \subseteq {\rm Quot}(V \otimes \mathcal{G},P)$. The set $Q^{ss}$ includes all quotients $[\rho: V \otimes \mathcal{G} \rightarrow \mathcal{F}]$ such that
\begin{itemize}
\item $\mathcal{F}$ has pure dimension $d$ and is a $p$-semistable sheaf with modified Hilbert polynomial $P$;
\item the morphism $\alpha: V  \xrightarrow{\cong} H^0(X/S, F_{\mathcal{E}}(\mathcal{F})(N))$ induced by $\rho$ is an isomorphism.
\end{itemize}
Both the above two conditions are open condition. Therefore, $Q^{ss}$ is an open subset of ${\rm Quot}(V \otimes \mathcal{G},P)$. Also, $Q^{ss}$ parameterizes the family $\widetilde{\mathfrak{F}}^{ss}(P)$ of sheaves. Similarly, we can construct $Q^{s} \subseteq {\rm Quot}(V \otimes \mathcal{G},P)$ including all $p$-stable sheaves.

Now we come to the part of GIT. There is a natural ${\rm GL}(V)$-action on ${\rm Quot}(V\otimes \mathcal{G},P)$. For each quotient $[\rho: V  \otimes \mathcal{G} \rightarrow \mathcal{F}] \in {\rm Quot}(V\otimes \mathcal{G},P)$, there is a natural injective homomorphism
\begin{align*}
{\rm Aut}(\mathcal{F}) \rightarrow {\rm GL}(V)
\end{align*}
such that the image is the stabilizer subgroup ${\rm GL}(V)_{[\rho]}$ of ${\rm GL}(V)$ at the quotient $[\rho]$. It is easy to check that the center $Z({\rm GL}(V))$ of ${\rm GL}(V)$ is contained in ${\rm GL}(V)_{[\rho]}$ of any point $[\rho] \in {\rm Quot}(V \otimes \mathcal{G},P)$. Therefore, we consider the action of ${\rm SL}(V)$ and ${\rm PGL}(V)$ instead of ${\rm GL}(V)$ (see \cite[Lemma 4.3.2]{HuLe} and \cite[Remark 5.5]{Nir}).

As we discussed above for the quot-scheme, there is natural embedding
\begin{align*}
    \psi_N: {\rm Quot}(V\otimes \mathcal{G},P)\hookrightarrow {\rm Grass}(H^0(X, F_{\mathcal{E}}\left(V \otimes \mathcal{G}\right)(N) ),P(N) ),
\end{align*}
where $N$ is a large enough positive integer. We use the same notation $\mathscr{L}_N$ for the line bundle over the scheme ${\rm Quot}(V\otimes \mathcal{G},P)$. The group action ${\rm SL}(V)$ on ${\rm Quot}(V\otimes \mathcal{G},P)$ induces an action on the line bundle $\mathscr{L}_N$, and it is easy to check that $\mathscr{L}_N$ is ${\rm SL}(V)$-linearized. Therefore, we can discuss the semistable and stable points in ${\rm Quot}(V\otimes \mathcal{G},P)$ in the sense of GIT.

We will prove that a point $[\rho : V\otimes \mathcal{G} \rightarrow \mathcal{F}] \in {\rm Quot}(V\otimes \mathcal{G},P)$ is semistable (resp. stable) if and only if $\mathcal{F}$ is $p$-semistable (resp. $p$-stable) (see Theorem \ref{316}). We will use Lemma \ref{3141}, \ref{3142} and \ref{3143} to prove the statement.

\begin{lem}\label{3141}
Let $V$ and $W$ be vector spaces, and let ${\rm Grass}(V \otimes W, d)$ be the Grassmannian of $d$-dimension subspace in $V \otimes W$. A point $[V \otimes W \rightarrow U] \in {\rm Grass}(V \otimes W, d)$ is semistable with respect to the action of ${\rm SL}(V)$ and invertible sheaf $\mathscr{L}$ if and only if, for all nonzero proper subspaces $H \subseteq V$, we have ${\rm Im}(H \otimes W) \neq 0$ and
\begin{align*}
\frac{\dim(H)}{\dim({\rm Im}(H \otimes W)) } \leq \frac{\dim(V)}{\dim(U)}.
\end{align*}
\end{lem}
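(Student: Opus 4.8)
The plan is to prove the equivalence by means of the Hilbert--Mumford numerical criterion applied to the Pl\"ucker polarization. Writing $a=\dim V$ and $d=\dim U$, I identify $\mathscr{L}$ with the determinant of the universal quotient, so that its fiber over a point $[q\colon V\otimes W\to U]$ is $\det U=\wedge^{d}U$ (this is the same convention used for the fiber of $\mathscr{L}_m$ in \S 3.8). By the numerical criterion (see \cite{MumFogKir}), the point $[q]$ is semistable for the ${\rm SL}(V)$-action precisely when $\mu^{\mathscr{L}}([q],\lambda)\geq 0$ for every one-parameter subgroup $\lambda$ of ${\rm SL}(V)$, where $\mu^{\mathscr{L}}([q],\lambda)=-w(\lambda)$ and $w(\lambda)$ is the weight with which $\lambda$ acts on the fiber of $\mathscr{L}$ over the limit point $\lim_{t\to 0}\lambda(t)\cdot[q]$.

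Next I would encode one-parameter subgroups as weighted filtrations. A one-parameter subgroup $\lambda$ of ${\rm SL}(V)$ is the same datum as a weight decomposition $V=\bigoplus_{i=1}^{s}V_{(i)}$, on which $\lambda(t)$ acts by $t^{\gamma_i}$ with integers $\gamma_1<\dots<\gamma_s$ subject to the relation $\sum_{i}\gamma_i\dim V_{(i)}=0$; set $H_j=\bigoplus_{i\leq j}V_{(i)}$, $h_j=\dim H_j$ and $u_j=\dim{\rm Im}(H_j\otimes W)$, with $u_0=0$ and $u_s=d$. Since the tensor factor $W$ carries the trivial action, $\lambda$ acts on $V_{(i)}\otimes W$ with weight $\gamma_i$, and the limit quotient is the associated graded of $U$ for the filtration $0\subseteq{\rm Im}(H_1\otimes W)\subseteq\dots\subseteq{\rm Im}(H_s\otimes W)=U$. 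A direct computation then gives
\begin{align*}
w(\lambda)=\sum_{i=1}^{s}\gamma_i\,(u_i-u_{i-1}).
\end{align*}

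With this formula in hand, one direction is immediate: applying it to the two-step filtration attached to a single proper nonzero subspace $H\subseteq V$ (with weights $-(a-h)$ on $H$ and $h$ on a complement, where $h=\dim H$) yields $w(\lambda)=dh-a\,\dim{\rm Im}(H\otimes W)$, so that $\mu^{\mathscr{L}}([q],\lambda)\geq 0$ is exactly the inequality $\dim H/\dim{\rm Im}(H\otimes W)\leq \dim V/\dim U$, the nonvanishing of ${\rm Im}(H\otimes W)$ being forced because otherwise $w(\lambda)=dh>0$. For the converse I would reduce an arbitrary $\lambda$ to these elementary ones by Abel summation: rewriting $w(\lambda)=\gamma_s d-\sum_{i=1}^{s-1}(\gamma_{i+1}-\gamma_i)u_i$ and using $\gamma_{i+1}-\gamma_i>0$ together with the assumed bound $u_i\geq (d/a)\,h_i$, one gets
\begin{align*}
w(\lambda)\leq \frac{d}{a}\Bigl(\gamma_s a-\sum_{i=1}^{s-1}(\gamma_{i+1}-\gamma_i)h_i\Bigr),
\end{align*}
and the quantity in parentheses is precisely $\sum_{i}\gamma_i\dim V_{(i)}=0$ by the ${\rm SL}(V)$-condition; hence $w(\lambda)\leq 0$ and $[q]$ is semistable.

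I expect the main obstacle to be the honest computation underlying the weight formula for $w(\lambda)$: one must verify that the limit point $\lim_{t\to 0}\lambda(t)\cdot[q]$ is the graded quotient for the filtration ${\rm Im}(H_\bullet\otimes W)$, and keep careful track of signs so that the Pl\"ucker weight comes out as $\sum_i \gamma_i(u_i-u_{i-1})$ rather than its negative. A useful sanity check is the degenerate case $W=k$, in which the criterion correctly predicts that no point of ${\rm Grass}(V,d)$ is semistable for $0<d<a$ (any nonzero subspace of $\ker q$ violates the nonvanishing condition), consistent with the transitivity of the ${\rm SL}(V)$-action on the Grassmannian. Once the weight formula and its sign are secured, the remaining steps are the routine Abel-summation estimate above, and the same argument with strict inequalities characterizes stable points.
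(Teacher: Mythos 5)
Your argument is correct, but it takes a different route from the paper, which disposes of this lemma in one line by declaring it a special case of \cite[Proposition 4.3]{MumFogKir}. What you have written is, in effect, the standard proof of that cited proposition: identify $\mathscr{L}$ with $\det U$ via the Pl\"ucker embedding, compute the Hilbert--Mumford weight of a one-parameter subgroup through its weight filtration $H_\bullet$ of $V$, obtain $w(\lambda)=\sum_i\gamma_i(u_i-u_{i-1})$ from the fact that the limit is the associated graded quotient along ${\rm Im}(H_\bullet\otimes W)$, and then pass between arbitrary $\lambda$ and two-step filtrations by Abel summation using the trace-zero relation $\sum_i\gamma_i\dim V_{(i)}=0$. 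I checked the details: the two-step computation $w(\lambda)=dh-a\dim{\rm Im}(H\otimes W)$ is right and reproduces exactly the inequality in the statement (so the sign convention $\mu^{\mathscr{L}}=-w$ with semistability meaning $w\le 0$ is the consistent one), the nonvanishing of ${\rm Im}(H\otimes W)$ is correctly forced by $w=dh>0$ otherwise, and the summation-by-parts estimate in the converse closes up because the bracketed quantity is literally $\sum_i\gamma_i\dim V_{(i)}=0$. Your sanity check with $W=k$ is also a good discriminator between the two possible signs. What your approach buys is a self-contained proof in place of an external citation; what it costs is that you must justify the Hilbert--Mumford criterion itself (properness of the Grassmannian and ampleness of the Pl\"ucker bundle make this unproblematic) and the identification of the limit point with the graded quotient, which you correctly single out as the one step requiring honest verification. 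One cosmetic remark: the lemma as stated speaks of the Grassmannian of subspaces but writes points as quotients $[V\otimes W\to U]$; you have (correctly) worked throughout with the quotient convention, which is the one actually used in \S 3.8.
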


\begin{proof}
This is a special case of \cite[Proposition 4.3]{MumFogKir}.
\end{proof}

\begin{lem}\label{3142}
Let $\mathcal{F}$ be a $p$-semistable sheaf of dimension $d$ with modified Hilbert polynomial $P$. For all sufficiently large integers $N$, if $\mathcal{F}' \subseteq \mathcal{F}$ is a subsheaf, then we have
\begin{align*}
\frac{h^0(F_{\mathcal{E}}(\mathcal{F}'))(N)}{r(\mathcal{F}')} \leq \frac{P(N)}{r(\mathcal{F})}.
\end{align*}
\end{lem}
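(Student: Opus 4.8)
The plan is to prove the inequality first for \emph{saturated} subsheaves, where boundedness makes the left-hand side into an honest modified Hilbert polynomial, and then to deduce the general case by comparison with the saturation. Let me fix the $p_{\mathcal{E}}$-semistable sheaf $\mathcal{F}$ of dimension $d$ with $P=P_{\mathcal{E}}(\mathcal{F})$, and recall that since $\mathcal{F}$ is pure, every nonzero subsheaf $\mathcal{F}'$ is again of dimension $d$, so $\alpha_{\mathcal{E},d}(\mathcal{F}')>0$ and $r(\mathcal{F}')$ is defined and positive.

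First I would carry out the reduction to saturated subsheaves. Given an arbitrary subsheaf $\mathcal{F}'\subseteq\mathcal{F}$, let $\mathcal{F}'_{\rm sat}$ be its saturation. The quotient $\mathcal{F}'_{\rm sat}/\mathcal{F}'$ has dimension strictly less than $d$, so the top coefficients agree and $r(\mathcal{F}')=r(\mathcal{F}'_{\rm sat})$. Moreover $F_{\mathcal{E}}$ is exact, so the inclusion $\mathcal{F}'\subseteq\mathcal{F}'_{\rm sat}$ induces an inclusion $F_{\mathcal{E}}(\mathcal{F}')\subseteq F_{\mathcal{E}}(\mathcal{F}'_{\rm sat})$ on $X$ (compatible with saturations by Corollary \ref{304}), whence
\[
h^0(X,F_{\mathcal{E}}(\mathcal{F}')(N))\le h^0(X,F_{\mathcal{E}}(\mathcal{F}'_{\rm sat})(N)).
\]
Combining the two displays shows it is enough to prove the asserted inequality when $\mathcal{F}'$ is saturated.

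Next I would pin down a uniform $N$. Because $\mathcal{F}$ is $p_{\mathcal{E}}$-semistable we have $\hat{\mu}_{\rm max}(\mathcal{F}')\le\hat{\mu}_{\rm max}(\mathcal{F})=\hat{\mu}_{\mathcal{E}}(\mathcal{F})$ for every subsheaf $\mathcal{F}'$, so the maximal slopes of the saturated subsheaves (and hence, via the slope inequality recalled in \S 3.5, those of their images $F_{\mathcal{E}}(\mathcal{F}')$) are uniformly bounded above. Grothendieck's boundedness lemma together with Theorem \ref{310} then shows that the family of saturated subsheaves of $\mathcal{F}$ is bounded; consequently only finitely many modified Hilbert polynomials $P_{\mathcal{E}}(\mathcal{F}')$ occur, and there is an $N_0$ such that for all $N\ge N_0$ every member of the bounded family $\{F_{\mathcal{E}}(\mathcal{F}')\}$ is $N$-regular. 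For such $N$ the higher cohomology of $F_{\mathcal{E}}(\mathcal{F}')(N)$ vanishes, so $h^0(X,F_{\mathcal{E}}(\mathcal{F}')(N))=\chi(X,F_{\mathcal{E}}(\mathcal{F}')(N))=P_{\mathcal{E}}(\mathcal{F}',N)$. Writing $r(\mathcal{F}')=\alpha_{\mathcal{E},d}(\mathcal{F}')/\alpha_{\mathcal{E},d}(\mathcal{O}_{\mathcal{X}})$, this turns the left-hand side into $\alpha_{\mathcal{E},d}(\mathcal{O}_{\mathcal{X}})\,p_{\mathcal{E}}(\mathcal{F}',N)$, and likewise $P(N)/r(\mathcal{F})=\alpha_{\mathcal{E},d}(\mathcal{O}_{\mathcal{X}})\,p_{\mathcal{E}}(\mathcal{F},N)$; semistability gives $p_{\mathcal{E}}(\mathcal{F}')\le p_{\mathcal{E}}(\mathcal{F})$ as polynomials, hence the numerical inequality for $N$ large, and since only finitely many $\mathcal{F}'$-polynomials arise I may enlarge $N_0$ to dominate all the resulting thresholds at once.

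The main obstacle is precisely this boundedness step. The family of \emph{all} subsheaves of $\mathcal{F}$ is enormous and certainly not bounded, so without passing to saturations there is no single $N_0$ for which the higher cohomology uniformly vanishes and $h^0$ coincides with $\chi$ — and $h^0=\chi$ is exactly what lets $p_{\mathcal{E}}$-semistability close the argument. The care needed is therefore to verify that semistability uniformly bounds the maximal slopes so that Grothendieck's lemma applies, and that boundedness then delivers both the finiteness of the occurring Hilbert polynomials and the uniform regularity; once these are in hand the final comparison is purely formal.
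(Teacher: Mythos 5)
Your reduction to saturated subsheaves is fine, but the key boundedness claim on which the rest of your argument rests is false, and it fails in exactly the direction that makes the lemma nontrivial. You assert that because $\hat{\mu}_{\rm max}(\mathcal{F}')\le\hat{\mu}_{\mathcal{E}}(\mathcal{F})$ for every subsheaf, Grothendieck's lemma bounds the family of \emph{all} saturated subsheaves of $\mathcal{F}$. Grothendieck's lemma goes the other way: for saturated subsheaves it is a \emph{lower} bound on the slope that yields boundedness (equivalently, an upper bound on the slope of pure quotients); the upper bound you invoke is automatic from semistability and gives nothing. Concretely, for $\mathcal{F}=\mathcal{O}^{\oplus 2}$ on $\mathbb{P}^1$ the saturated subsheaves include $\mathcal{O}(-n)\hookrightarrow\mathcal{O}^{\oplus 2}$ for all $n\ge 0$, an unbounded family with only infinitely many Hilbert polynomials and no uniform regularity bound. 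For such subsheaves there is no $N_0$ making $F_{\mathcal{E}}(\mathcal{F}')$ $N$-regular uniformly, so your identification $h^0(F_{\mathcal{E}}(\mathcal{F}')(N))=P_{\mathcal{E}}(\mathcal{F}',N)$ breaks down, and with it the appeal to $p_{\mathcal{E}}(\mathcal{F}')\le p_{\mathcal{E}}(\mathcal{F})$.

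The paper's proof is structured precisely to get around this. It takes the Harder--Narasimhan filtration of $\mathcal{F}'$ and applies the explicit section bound of Corollary \ref{312} (a Le Potier--Simpson type estimate) to each graded piece; this controls $h^0$ directly, with no need for $h^0=\chi$. If the minimal slope $\nu$ of the HN factors satisfies $\nu\le\hat{\mu}_{\mathcal{E}}(\mathcal{F})-C$ for a suitable constant $C$, that estimate already forces the desired (in fact strict) inequality. Only in the complementary case $\nu\ge\hat{\mu}_{\mathcal{E}}(\mathcal{F})-C$ — where the slopes are now bounded \emph{below} — does Grothendieck's lemma apply, giving boundedness of the relevant saturations, finiteness of the occurring Hilbert polynomials, and a uniform $N$ for which your final semistability comparison works. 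To repair your proof you would need to add the first case (or some substitute for it, such as the Le Potier--Simpson estimate) to handle subsheaves of arbitrarily negative slope; the purely formal part at the end is fine once that is in place.
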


\begin{proof}
Let $\mathcal{F}' \subseteq \mathcal{F}$ be a subsheaf. We consider the Harder-Narasimhan filtration of ${\rm HN}_{\bullet}(\mathcal{F}')$, and denote by ${\rm HN}_{i}(\mathcal{F}')$ the terms in the filtration. Let $\mathcal{H}_i=gr_i^{HN}(\mathcal{F}')={\rm HN}_{i}(\mathcal{F}') / {\rm HN}_{i-1}(\mathcal{F}')$. Since $F_{\mathcal{E}}$ is an exact functor, we have
\begin{align*}
h^0(F_{\mathcal{E}}(\mathcal{F})(N)  ) \leq \sum\limits_i h^0( F_{\mathcal{E}}(\mathcal{H}_i)(N) ).
\end{align*}
Also, we have $\hat{\mu}_{\mathcal{E}}(\mathcal{H}_i) \leq \hat{\mu}_{\mathcal{E}}(\mathcal{F})$ and $\sum\limits_i r(\mathcal{H}_i)= r(\mathcal{F}')$.

By Corollary \ref{312}, we have
\begin{align*}
h^0( F_{\mathcal{E}}(\mathcal{H}_i )(N) ) \leq r(\mathcal{H}_i) \frac{( \hat{\mu}_{\mathcal{E}}(\mathcal{H}_i)+B+N )^d  }{d!}.
\end{align*}
For simplicity, we do not consider the other case with dimension zero. Thus,
\begin{align*}
h^0(F_{\mathcal{E}}(\mathcal{F})(N)) & \leq \sum\limits_i  r(\mathcal{H}_i) \frac{( \hat{\mu}_{\mathcal{E}}(\mathcal{H}_i)+B+N )^d  }{d!} \\
& \leq (r(\mathcal{F}' )-1 ) \frac{(\hat{\mu}_{\mathcal{E}}(\mathcal{F})  N+B)}{d!} + \frac{(\nu +N +B)^d}{d!},
\end{align*}
where $\nu={\rm min}(\hat{\mu}_{\mathcal{E}}(\mathcal{H}_i))$. For any $A$, we can always find a $C \geq A$ such that if $\nu \leq \mu_{\mathcal{E}}(\mathcal{F})-C$, we have
\begin{align*}
(r(\mathcal{F}' )-1 ) \frac{(\hat{\mu}_{\mathcal{E}}(\mathcal{F})  N+B)}{d!} + \frac{(\nu +N +B)^d}{d!} \leq r(\mathcal{F}' )\frac{(N-A )^d}{d!}, \quad N \geq C.
\end{align*}
On the other hand, we can choose $A$ such that
\begin{align*}
\frac{(N-A)^d}{d!} \leq \frac{p_{\mathcal{E}}(\mathcal{F},N) }{r(\mathcal{F})}, \quad N \geq A.
\end{align*}
Therefore, if $\nu \leq \mu_{\mathcal{E}}(\mathcal{F})-C$, we have
\begin{align*}
\frac{h^0(F_{\mathcal{E}}(\mathcal{F}')   ) }{r(\mathcal{F}') } <\frac{h^0(F_{\mathcal{E}}(\mathcal{F})   ) }{r(\mathcal{F}) }.
\end{align*}

If $\nu \geq \mu_{\mathcal{E}}(\mathcal{F})-C$, it is equivalent to consider the saturation $\mathcal{F}'_{\rm sat}$ of such a subsheaf $\mathcal{F}' \subseteq \mathcal{F}$. The set of saturations $\mathcal{F}'_{\rm sat}$ of all such subsheaves $\mathcal{F}'$ is bounded. Therefore, there are only finitely many modified Hilbert polynomials. By the finiteness of the modified Hilbert polynomials and the $p$-semistability of $\mathcal{F}$, we can take a large enough $N$ such that
\begin{align*}
p_{\mathcal{E}}(\mathcal{F}'_{\rm sat}(N)) \leq p_{\mathcal{E}}(\mathcal{F}(N)).
\end{align*}
Thus, we have the desired inequality
\begin{align*}
\frac{h^0(F_{\mathcal{E}}(\mathcal{F}'_{\rm sat}))(N)}{r(\mathcal{F}')} \leq \frac{P(N)}{r(\mathcal{F})}
\end{align*}
in this case.
\end{proof}

In the classical case, the number $C$ is determined by the multiplicity $r$ and the degree $d$ of the Hilbert polynomial (see Theorem \cite[Theorem 4.4.1]{HuLe}), which is an application of the Le Potier-Simpson Estimate.

\begin{lem}[Lemma 6.10 in \cite{Nir}]\label{3143}
If $\mathcal{F}$ is a coherent sheaf on $\mathcal{X}$ that can be deformed to a pure sheaf of the same dimension $d$, then there is a pure sheaf $\mathcal{H}$ of dimension $d$ on $\mathcal{X}$ and a map $\mathcal{F} \rightarrow \mathcal{H}$ such that the kernel is $T_{d-1}(\mathcal{F})$ and $P_{\mathcal{E}}(\mathcal{F})=P_{\mathcal{E}}(\mathcal{H})$.
\end{lem}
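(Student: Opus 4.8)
The plan is to realize the deformation hypothesis as a one-parameter family and to correct the torsion of $\mathcal{F}$ by elementary modifications inside that family. Saying that $\mathcal{F}$ can be deformed to a pure sheaf of dimension $d$ means there are a discrete valuation ring $R$ over $S$ with uniformizer $t$, residue field $k$ and fraction field $K$, and a coherent sheaf $\mathcal{F}_R$ on $\mathcal{X}_R:=\mathcal{X}\times_S\Spec R$, flat over $R$, with closed fiber $\mathcal{F}$ and generic fiber $\mathcal{F}_K$ pure of dimension $d$. Since the modified Hilbert polynomial is constant in flat families, $P_{\mathcal{E}}(\mathcal{F}_K)=P_{\mathcal{E}}(\mathcal{F})=:P$. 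The naive quotient $\mathcal{F}\twoheadrightarrow\bar{\mathcal{F}}:=\mathcal{F}/T_{d-1}(\mathcal{F})$ is pure of dimension $d$ with kernel $T_{d-1}(\mathcal{F})$, but its polynomial $P_{\mathcal{E}}(\bar{\mathcal{F}})=P-P_{\mathcal{E}}(T_{d-1}(\mathcal{F}))$ is too small; the whole point of the deformation is to produce a pure $\mathcal{H}\supseteq\bar{\mathcal{F}}$ that restores the missing lower-dimensional part, so that the required map will be $\mathcal{F}\twoheadrightarrow\bar{\mathcal{F}}\hookrightarrow\mathcal{H}$.

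As $\mathcal{F}_R$ is $R$-flat it is $t$-torsion free, hence $\mathcal{F}_R\hookrightarrow\mathcal{F}_K$. I would let $\mathcal{F}_R'\subseteq\mathcal{F}_R$ be the preimage of $T_{d-1}(\mathcal{F})$ under $\mathcal{F}_R\twoheadrightarrow\mathcal{F}$ and form the upper elementary modification $\widetilde{\mathcal{F}}:=t^{-1}\mathcal{F}_R'\subseteq\mathcal{F}_K$. Then $\mathcal{F}_R\subseteq\widetilde{\mathcal{F}}$ with $\widetilde{\mathcal{F}}/\mathcal{F}_R\cong T_{d-1}(\mathcal{F})$ supported on the closed fiber, and $\widetilde{\mathcal{F}}$ is again $R$-flat with $\widetilde{\mathcal{F}}_K=\mathcal{F}_K$, so $P_{\mathcal{E}}(\widetilde{\mathcal{F}}_k)=P$. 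Computing the closed fiber from $0\to\mathcal{F}_R\to\widetilde{\mathcal{F}}\to T_{d-1}(\mathcal{F})\to0$ together with ${\rm Tor}_1^R(T_{d-1}(\mathcal{F}),k)\cong T_{d-1}(\mathcal{F})$ gives an exact sequence $0\to\bar{\mathcal{F}}\to\widetilde{\mathcal{F}}_k\to T_{d-1}(\mathcal{F})\to0$, and shows that the map $\mathcal{F}=\mathcal{F}_R\otimes_R k\to\widetilde{\mathcal{F}}\otimes_R k=\widetilde{\mathcal{F}}_k$ induced by the inclusion has kernel exactly $T_{d-1}(\mathcal{F})$. If $\widetilde{\mathcal{F}}_k$ is pure I take $\mathcal{H}=\widetilde{\mathcal{F}}_k$ and stop; otherwise I repeat, producing an ascending chain of $R$-lattices $\mathcal{F}_R\subseteq\widetilde{\mathcal{F}}^{(1)}\subseteq\widetilde{\mathcal{F}}^{(2)}\subseteq\cdots\subseteq\mathcal{F}_K$ whose closed fibers all have polynomial $P$; since each step is injective on the pure part $\bar{\mathcal{F}}$, the composite $\mathcal{F}\to\widetilde{\mathcal{F}}^{(n)}_k$ retains kernel $T_{d-1}(\mathcal{F})$.

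The main obstacle, and the only nonformal step, is to show this process terminates with a pure closed fiber; this is a Langton-type statement, and it is exactly here that purity of the generic fiber is essential. Writing $\tau_i=T_{d-1}(\widetilde{\mathcal{F}}^{(i)}_k)$, the extensions above yield injections $\tau_{i+1}\hookrightarrow\tau_i$, so $P_{\mathcal{E}}(\tau_i)$ is non-increasing; were it eventually constant and nonzero the extensions would split, the closed fiber would remain isomorphic to $\mathcal{F}$ while the lattices strictly increased without bound in $\mathcal{F}_K$, contradicting the absence of a dimension-$<d$ subsheaf in $\mathcal{F}_K$. I would make this rigorous by pushing the whole chain to the coarse space: the functor $F_{\mathcal{E}}=\pi_*\mathcal{H}om_{\mathcal{O}_{\mathcal{X}}}(\mathcal{E},-)$ is exact, $R$-linear, commutes with $-\otimes_R K$ and with the formation of preimages and of $t^{-1}(\,\cdot\,)$, preserves $R$-flatness, sends the torsion filtration of a sheaf to that of its image (Corollary \ref{303}), and is faithful because $\mathcal{E}$ is a generating sheaf. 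Hence $F_{\mathcal{E}}$ carries the chain $\widetilde{\mathcal{F}}^{(i)}$ to the canonical torsion-absorbing modification chain of the $R$-flat family $F_{\mathcal{E}}(\mathcal{F}_R)$ on $X_R$, whose generic fiber $F_{\mathcal{E}}(\mathcal{F}_K)$ is pure; Langton's theorem on the projective scheme $X$ forces this scheme-level chain to terminate in a pure closed fiber. Because $F_{\mathcal{E}}$ is exact and faithful, $T_{d-1}(\widetilde{\mathcal{F}}^{(i)}_k)=0$ if and only if $T_{d-1}(F_{\mathcal{E}}(\widetilde{\mathcal{F}}^{(i)}_k))=0$, so the stack-level chain terminates at the same stage. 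Setting $\mathcal{H}:=\widetilde{\mathcal{F}}^{(n)}_k$ for that $n$ produces a pure sheaf of dimension $d$ with $P_{\mathcal{E}}(\mathcal{H})=P=P_{\mathcal{E}}(\mathcal{F})$ together with a map $\mathcal{F}\to\mathcal{H}$ of kernel $T_{d-1}(\mathcal{F})$, completing the proof.
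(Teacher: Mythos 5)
The paper does not actually prove this lemma: it is quoted from Nironi (Lemma 6.10 of \cite{Nir}), with a pointer to the classical scheme versions in \cite{HuLe} and \cite{Simp2}. So there is no in-paper proof to compare against, and your argument should be judged on its own. Your construction is correct and is very much in the spirit of the paper's general technique of transporting questions to the coarse space via $F_{\mathcal{E}}$: the one-step elementary modification is computed correctly (the connecting map ${\rm Tor}_1^R(T_{d-1}(\mathcal{F}),k)\to\mathcal{F}$ is indeed the inclusion, so the closed fibre of $\widetilde{\mathcal{F}}$ sits in $0\to\bar{\mathcal{F}}\to\widetilde{\mathcal{F}}_k\to T_{d-1}(\mathcal{F})\to 0$ and the induced map from $\mathcal{F}$ has kernel exactly $T_{d-1}(\mathcal{F})$), flatness and constancy of $P_{\mathcal{E}}$ are preserved, and the kernel of the composite is stable under iteration because each further step kills only torsion.

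The one soft spot is termination. Your inline heuristic is not a proof: an unbounded increasing chain of $R$-lattices with fixed pure generic fibre is not in itself contradictory (consider $\mathcal{O}_R\subseteq t^{-1}\mathcal{O}_R\subseteq\cdots$ inside $K$), and the splitting of the extensions when the $\tau_i$ stabilize does not immediately produce a low-dimensional subsheaf of $\mathcal{F}_K$; Langton-type arguments extract the contradiction from a rather delicate limit construction. So the weight of the proof really rests on your second mechanism, the reduction to $X$, and there it is sound: $F_{\mathcal{E}}$ is exact and faithful, commutes with kernels, with $t^{-1}(\cdot)$, and with restriction to the closed fibre (tameness gives base-change for $\pi_*$), and Corollary \ref{303} identifies $F_{\mathcal{E}}(T_{d-1}(-))$ with $T_{d-1}(F_{\mathcal{E}}(-))$, so your chain maps isomorphically onto the torsion-absorbing chain for the flat family $F_{\mathcal{E}}(\mathcal{F}_R)$ on the projective scheme $X_R$. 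You should, however, cite the scheme-level input in the form you actually use it, namely as the termination of this specific modification chain (which is what the proofs of \cite[Prop.\ 4.4.2]{HuLe} and \cite[Lemma 1.17]{Simp2} establish), not merely as the existence statement; and note that running the chain upstairs and only certifying termination downstairs is exactly the right move, since $F_{\mathcal{E}}$ is not essentially surjective and a pure sheaf produced on $X$ could not otherwise be lifted back to $\mathcal{X}$.
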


The classical version of this lemma can be found in \cite[Proposition 4.4.2]{HuLe} and \cite[Lemma 1.17]{Simp2}. Nironi used a similar approach to prove this lemma.

\begin{thm}\label{316}
Let $P$ be an integral polynomial with degree $d$. There exists a large integer $N$ such that a point $[V \otimes \mathcal{G} \rightarrow \mathcal{F}] \in {\rm Quot}(V \otimes \mathcal{G},P)$ is semistable (resp. stable) with respect to the action of ${\rm SL(V)}$ and the line bundle $\mathscr{L}_N$, if and only if $\mathcal{F}$ is a $p$-semistable (resp. $p$-stable) sheaf of pure dimension $d$ and the map $V \rightarrow {\rm H}^0(X,F_{\mathcal{E}}(\mathcal{F})(N))$ is an isomorphism.
\end{thm}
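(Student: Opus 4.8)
The plan is to reduce the stacky statement to the Grassmannian GIT criterion of Lemma \ref{3141} via the ${\rm SL}(V)$-equivariant closed embedding $\psi_N$, and then to match the resulting numerical inequality against $p_{\mathcal{E}}$-semistability using the estimate of Lemma \ref{3142} and the purity correction of Lemma \ref{3143}. The first step is to fix $N$ large enough that several boundedness statements hold simultaneously: by Theorem \ref{310} the family $\widetilde{\mathfrak{F}}^{ss}(P)$ and the families of subsheaves of its members generated in bounded degree (together with their saturations) are all $N$-regular; the comparison of Lemma \ref{3142} is valid for this $N$; and for every such subsheaf the multiplication map onto $H^0(X,F_{\mathcal{E}}(\mathcal{F}')(N))$ is surjective. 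Since these are finitely many boundedness conditions (cf. Theorem \ref{310} and Remark \ref{313}), a single $N$ suffices. Under $\psi_N$ the point $[\rho: V\otimes\mathcal{G}\rightarrow\mathcal{F}]$ is carried to the Grassmannian point $[V\otimes W \rightarrow U]$ with $U=H^0(X,F_{\mathcal{E}}(\mathcal{F})(N))$ and $W=H^0(X,F_{\mathcal{E}}(\mathcal{G})(N))$, and $\mathscr{L}_N=\psi_N^*\mathscr{L}$; because $\psi_N$ is a closed equivariant embedding, GIT (semi)stability of $[\rho]$ for $(\,{\rm SL}(V),\mathscr{L}_N\,)$ is equivalent to that of its image for $(\,{\rm SL}(V),\mathscr{L}\,)$, which Lemma \ref{3141} expresses as: for every nonzero proper $H\subseteq V$ one has ${\rm Im}(H\otimes W)\neq 0$ and $\frac{\dim H}{\dim {\rm Im}(H\otimes W)}\leq \frac{\dim V}{\dim U}$.

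The central bookkeeping is to translate the subspaces $H\subseteq V$ into subsheaves of $\mathcal{F}$. For each $H$ I would let $\mathcal{F}_H\subseteq\mathcal{F}$ be the subsheaf generated by $\rho(H\otimes\mathcal{G})$, and use the surjectivity of the multiplication maps (guaranteed by the choice of $N$) to identify ${\rm Im}(H\otimes W)=H^0(X,F_{\mathcal{E}}(\mathcal{F}_H)(N))$. Thus the Grassmannian inequality becomes a comparison between $\dim H$, the value $h^0(X,F_{\mathcal{E}}(\mathcal{F}_H)(N))=P_{\mathcal{E}}(\mathcal{F}_H,N)$, and the totals $\dim V=P(N)$ and $\dim U=P(N)$. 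For the direction ``$\mathcal{F}$ $p_{\mathcal{E}}$-semistable with $V\xrightarrow{\sim}H^0(F_{\mathcal{E}}(\mathcal{F})(N))$ implies $[\rho]$ GIT semistable,'' I would take any $H$, pass to the saturation $(\mathcal{F}_H)_{\rm sat}$ (which lies in a bounded family, so only finitely many modified Hilbert polynomials occur), use that $H\hookrightarrow H^0(F_{\mathcal{E}}(\mathcal{F}_H)(N))$ gives a lower bound on $\dim{\rm Im}(H\otimes W)$, and feed the slope inequality of Lemma \ref{3142} applied to $(\mathcal{F}_H)_{\rm sat}$ into the required Grassmannian inequality; the saturation-invariance supplied by Corollary \ref{304} keeps the estimate clean.

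For the converse, ``$[\rho]$ GIT semistable implies $\mathcal{F}$ $p_{\mathcal{E}}$-semistable, pure, and $V\xrightarrow{\sim}H^0(F_{\mathcal{E}}(\mathcal{F})(N))$,'' I would argue in three stages. First, if the map $V\to H^0(F_{\mathcal{E}}(\mathcal{F})(N))$ had a kernel $H$, then ${\rm Im}(H\otimes W)=0$, contradicting Lemma \ref{3141}; injectivity together with $\dim V=P(N)=\dim U$ forces an isomorphism. Second, to establish purity I would suppose $T_{d-1}(\mathcal{F})\neq 0$ and apply Lemma \ref{3143} to obtain a pure quotient $\mathcal{H}$ with the same modified Hilbert polynomial, so that the subspace $H\subseteq V$ of sections landing in the lower-dimensional torsion part produces a Grassmannian inequality violating semistability. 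Third, a genuine destabilizing subsheaf $\mathcal{F}'\subseteq\mathcal{F}$ with $p_{\mathcal{E}}(\mathcal{F}')>p_{\mathcal{E}}(\mathcal{F})$ would, after taking $H=V\cap H^0(F_{\mathcal{E}}(\mathcal{F}')(N))$ and using the reverse comparison from the Le Potier--Simpson estimate underlying Lemma \ref{3142}, again contradict the Grassmannian inequality. The $p_{\mathcal{E}}$-stable case is identical with all inequalities made strict, and finiteness of the stabilizer follows from the isomorphism $V\cong H^0(F_{\mathcal{E}}(\mathcal{F})(N))$ together with the injection ${\rm Aut}(\mathcal{F})\hookrightarrow{\rm GL}(V)$. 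The main obstacle I anticipate is precisely this converse direction: arranging a single $N$ (uniform over all relevant bounded families) so that the two-sided control of $h^0(F_{\mathcal{E}}(\mathcal{F}')(N))$ in terms of $r(\mathcal{F}')$ is tight enough to convert the ``all subspaces $H$'' condition into the ``all saturated subsheaves $\mathcal{F}'$'' condition, and handling the purity reduction via Lemma \ref{3143} without losing the Hilbert-polynomial normalization.
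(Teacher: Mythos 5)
Your overall strategy coincides with the paper's: pass to the Grassmannian via $\psi_N$, invoke Lemma \ref{3141}, use Lemma \ref{3142} (plus boundedness of saturations) for the forward implication, and bring in Lemma \ref{3143} to deal with impurity. The forward direction and the treatment of destabilizing subsheaves in the converse are essentially the argument the paper gives. However, your purity step contains a genuine gap. You propose to contradict GIT semistability using ``the subspace $H\subseteq V$ of sections landing in the lower-dimensional torsion part,'' but $T_{d-1}(\mathcal{F})\neq 0$ does not guarantee that any \emph{nonzero} subspace of $V$ generates a subsheaf contained in $T_{d-1}(\mathcal{F})$: for a quotient $V\otimes\mathcal{G}\to\mathcal{O}_{\mathcal{X}}\oplus\mathcal{O}_p$ ($p$ a point) every nonzero element of $V$ may have nonzero image in the pure summand, so the subspace you want to test is zero and Lemma \ref{3141} yields no contradiction. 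What GIT semistability directly gives is only that the composite $V\to H^0(F_{\mathcal{E}}(\mathcal{F})(N))\to H^0(F_{\mathcal{E}}(\mathcal{H})(N))$ is injective, since a nonzero kernel would generate a nonzero subsheaf of $T_{d-1}(\mathcal{F})$ whose $h^0$ at the Grassmannian level $M$ grows like $M^{d-1}$ and violates the numerical inequality.

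To close this gap one must, as the paper does, first prove that the pure sheaf $\mathcal{H}$ furnished by Lemma \ref{3143} is itself $p$-semistable, by estimating $h^0(F_{\mathcal{E}}(\mathcal{H}'')(N))$ for quotients $\mathcal{H}''$ of $\mathcal{H}$ through the exact sequence $0\to\mathcal{F}'\to\mathcal{F}\to\mathcal{H}\to\mathcal{H}''\to 0$; only then does one know $h^0(F_{\mathcal{E}}(\mathcal{H})(N))=P(N)=\dim V$, so that the injection above is an isomorphism, $F_{\mathcal{E}}(\mathcal{H})(N)$ is generated by $V$, the map $V\otimes\mathcal{G}\to\mathcal{H}$ factors through a \emph{surjection} $\varrho:\mathcal{F}\to\mathcal{H}$, and the equality $P_{\mathcal{E}}(\mathcal{F})=P_{\mathcal{E}}(\mathcal{H})$ forces $\varrho$ to be an isomorphism, i.e.\ $\mathcal{F}$ pure. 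The intermediate semistability of $\mathcal{H}$ is the missing idea; without it the dimension count that makes $\varrho$ surjective is unavailable. A smaller related slip: your ``injectivity together with $\dim V=P(N)=\dim U$ forces an isomorphism'' presumes $h^0(X,F_{\mathcal{E}}(\mathcal{F})(N))=P(N)$, which for an arbitrary point of the quot-scheme only follows once $\mathcal{F}$ is known to lie in the bounded family of pure $p$-semistable sheaves, so the order of the deductions matters.
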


\begin{proof}
We only give the proof of the semistable case, and the stable case can be proved similarly. The proof includes three parts
\begin{enumerate}
\item we first prove that $p$-semistability implies semistability;
\item next, we prove the other direction under the assumption that $\mathcal{F}$ is pure;
\item finally, we prove the general case based on Lemma \ref{3143}.
\end{enumerate}

Let $[\rho: V \otimes \mathcal{G} \rightarrow \mathcal{F}]$ be a point in ${\rm Quot}(V \otimes \mathcal{G},P)$. Suppose that $\mathcal{F}$ is a $p$-semistable sheaf such that $V \cong {\rm H}^0(X,F_{\mathcal{E}}(\mathcal{F}))$ is an isomorphism. We will prove that $[\rho]$ is semistable.

We first take a positive integer $M$ large enough such that ${\rm Quot}(V \otimes \mathcal{G},P)$ can be embedded into ${\rm Grass}(V \otimes W ,P(M))$, where $W=H^0(X/S,  F_{\mathcal{E}}( \mathcal{G})(M) )$. Let $H$ be a non-trivial proper subspace of $V$ such that the image of $H \otimes \mathcal{G}$ is non-empty. Let $\mathcal{F}'$ be the image of $H \otimes \mathcal{G}$. We have
\begin{align*}
0 \rightarrow \mathcal{F}'' \rightarrow H \otimes \mathcal{G} \rightarrow \mathcal{F}' \rightarrow 0,
\end{align*}
where $\mathcal{F}''$ is the kernel of the quotient $H \otimes \mathcal{G} \rightarrow \mathcal{F}'$. Since $M$ is a large enough integer, we can assume that
\begin{align*}
h^0(X/S, F_{\mathcal{E}}(\mathcal{F}')(M) )=p_{\mathcal{E}}(\mathcal{F}',M), \quad h^1(X/S,F_{\mathcal{E}}(\mathcal{F}'')(M))=0.
\end{align*}
Based on the above property, we have a surjective morphism
\begin{align*}
H \otimes W \rightarrow H^0(X/S, F_{\mathcal{E}}(\mathcal{F}'(M)) ) \rightarrow 0.
\end{align*}
Since $\mathcal{F}$ is $p$-semistable, by Lemma \ref{3142}, we have
\begin{align*}
\frac{h^0(F_{\mathcal{E}}(\mathcal{F}')(N))}{r(\mathcal{F}')} \leq \frac{h^0(F_{\mathcal{E}}(\mathcal{F})(N))}{r(\mathcal{F})}.
\end{align*}
Note that the polynomials $P_{\mathcal{E}}(\mathcal{F},m)$ all have first term $r(\mathcal{F}) m ^d$. Thus, if the integer $M$ is large enough, we have
\begin{align*}
\frac{h^0(\mathcal{F}'(N))}{ P_{\mathcal{E}}(\mathcal{F}',M)  } \leq \frac{h^0(\mathcal{F}(N))}{ P_{\mathcal{E}}(\mathcal{F},M)   },
\end{align*}
and then,
\begin{align*}
\frac{\dim(H)}{\dim({\rm Im}(H \otimes W))}=\frac{h^0(F_{\mathcal{E}}(\mathcal{F}')(N))}{ P_{\mathcal{E}}(\mathcal{F}',M)  } \leq \frac{P(N)}{ P(M) }=\frac{\dim(V)}{\dim(U)}.
\end{align*}
This inequality holds for any non-trivial proper subspace $H$ of $V$. By Lemma \ref{3141}, the point $[V \otimes \mathcal{G} \rightarrow \mathcal{F}]$ is semistable. This finishes the proof for one direction.

Now let $[\rho: V \otimes \mathcal{G} \rightarrow \mathcal{F}]$ be semistable in the sense of GIT. We will prove that $\mathcal{F}$ is $p$-semistable and the map $V \rightarrow {\rm H}^0(X,F_{\mathcal{E}}(\mathcal{F})(N))$ is an isomorphism.

We first suppose that $\mathcal{F}$ is pure. Let $\mathcal{F}'$ be a subsheaf of $\mathcal{F}$. By taking the pullback of the following diagram
\begin{center}
\begin{tikzcd}
    V' \arrow[d, dashed, hook] \arrow[r, dashed] & \mathcal{F}' \otimes \mathcal{G}^{\vee} \arrow[d, hook]\\
    V \arrow[r,"\rho"] & \mathcal{F} \otimes \mathcal{G}^{\vee}
\end{tikzcd}
\end{center}
we find a subspace $V' \subseteq V$ such that the quotient $[V' \otimes \mathcal{G} \rightarrow \mathcal{F}']$ is induced by $[\rho]$. Furthermore, $\mathcal{F}$ and $\mathcal{F}'$ have the same regularity and $V' \cong H^0(F_{\mathcal{E}}(\mathcal{F}')(N))$. With the same notation as in the first part of the proof, by taking $N$ and $M$ large enough, we have
\begin{align*}
\frac{h^0(F_{\mathcal{E}}(\mathcal{F}')(N))}{ P_{\mathcal{E}}(\mathcal{F}',M)  }=\frac{\dim(H)}{\dim({\rm Im}(H \otimes W))} \leq \frac{\dim(V)}{\dim(U)}= \frac{P(N)}{ P(M) }.
\end{align*}
This inequality gives us the following
\begin{align*}
\frac{ P_{\mathcal{E}}(\mathcal{F}',N) }{r(\mathcal{F}')} \leq \frac{ P_{\mathcal{E}}(\mathcal{F},N) }{r(\mathcal{F})}, \quad N \gg 0.
\end{align*}
Therefore, $\mathcal{F}$ is $p$-semistable. Taking $N$ large enough, the induced map
\begin{align*}
V \rightarrow {\rm H}^0(X,F_{\mathcal{E}}(\mathcal{F})(N))
\end{align*}
is surjective. By counting the dimension, this map is an isomorphism. This finishes the proof when $\mathcal{F}$ is pure.

To complete the proof of this theorem, we will show that any semistable (GIT) point $[\rho]$, the sheaf $\mathcal{F}$ is pure. By Lemma \ref{3143}, there exists a pure sheaf $\mathcal{H}$ and a morphism $\varrho: \mathcal{F} \rightarrow \mathcal{H}$ such that
\begin{itemize}
\item the kernel of $\varrho$ is $T_{d-1}(\mathcal{F})$, i.e. the map $\varrho$ is generically injective;
\item $P_{\mathcal{E}}(\mathcal{F})=P_{\mathcal{E}}(\mathcal{H})$.
\end{itemize}
The map $\varrho$ induces an injective map
\begin{align*}
V \xrightarrow{\cong} H^0(X/S, F_{\mathcal{E}}(\mathcal{F})(N)  )\rightarrow H^0(X/S, F_{\mathcal{E}}(\mathcal{H})(N)  ).
\end{align*}
Let $\mathcal{H}''$ be any quotient of $\mathcal{H}$, and denote by $\mathcal{F}'$ the kernel of the composition $\mathcal{F} \rightarrow \mathcal{H} \rightarrow \mathcal{H}''$. We have the following exact sequence
\begin{align*}
0 \rightarrow \mathcal{F}' \rightarrow \mathcal{F} \rightarrow \mathcal{H} \rightarrow \mathcal{H}'' \rightarrow 0.
\end{align*}
This implies
\begin{align*}
h^0(F_{\mathcal{E}}(\mathcal{H}'')(N)) & \geq h^0(F_{\mathcal{E}}(\mathcal{F})(N)) - h^0(F_{\mathcal{E}}(\mathcal{F}')(N))\\
& \geq (r(\mathcal{F})-r(\mathcal{F}')) p_{\mathcal{E}}(\mathcal{F},N) = r(\mathcal{H}'')  p_{\mathcal{E}}(\mathcal{F},N).
\end{align*}
Therefore, $\mathcal{H}$ is $p$-semistable. Furthermore, $V \cong h^0(F_{\mathcal{E}}(\mathcal{H})(N))$. Note that $\varrho$ induces an injection $V \rightarrow H^0(X/S, F_{\mathcal{E}}(\mathcal{H})(N)  )$. By counting the dimension, it is an isomorphism. This isomorphism means that the map $V \otimes \mathcal{G} \rightarrow \mathcal{H}$ factors through $\mathcal{F}$, i.e. the morphism $\varrho: \mathcal{F} \rightarrow \mathcal{H}$ is surjective. Since $P_{\mathcal{E}}(\mathcal{F})=P_{\mathcal{E}}(\mathcal{H})$, we have $\mathcal{F} \cong \mathcal{H}$. This means that $\mathcal{F}$ is pure.

\end{proof}

This theorem tells us that although $Q^{ss}$ is defined as the set of $p$-semistable coherent sheaves, it is exactly the set of semistable points in ${\rm Quot}(V \otimes \mathcal{G},P)$ under the action of ${\rm SL}(V)$. Similarly, $Q^{s}$ is the set of stable points. Before we move on to the GIT quotient of $Q^{ss}$ (and $Q^{s}$) and construct the moduli space, we first discuss the orbit of a semistable point $[\rho] \in Q^{ss}$ under the action of ${\rm SL}(V)$. The following two lemmas \ref{3181} and \ref{3182} are stated in \cite[Theorem 6.20]{Nir} without a proof, and we give the proofs here.
\begin{lem}\label{3181}
Let $[V \otimes \mathcal{G} \rightarrow \mathcal{F}_i]$, $i=1,2$ be two points in $Q^{ss}$. The closures of the corresponding orbits in $Q^{ss}$ intersect if and only if $gr^{\rm JH}(\mathcal{F}_1) \cong gr^{\rm JH}(\mathcal{F}_2)$.
\end{lem}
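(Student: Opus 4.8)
The plan is to use the standard dictionary between orbit closures in $Q^{ss}$ and Jordan--H\"older graded sheaves, transported to the stacky setting through the exact functor $F_{\mathcal{E}}$. The governing principle is that the orbit $O([\rho])$ of a point $[\rho: V \otimes \mathcal{G} \to \mathcal{F}]$ is closed in $Q^{ss}$ precisely when $\mathcal{F}$ is $p$-polystable, and that the \emph{unique} closed orbit contained in $\overline{O([\rho])}$ is the one associated to $gr^{\rm JH}(\mathcal{F})$. Granting this, two orbit closures intersect if and only if they contain the same closed orbit, which by uniqueness of the polystable representative is equivalent to $gr^{\rm JH}(\mathcal{F}_1) \cong gr^{\rm JH}(\mathcal{F}_2)$.

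First I would construct the degeneration to the associated graded. Fix $N$ large (as in Theorem \ref{316}) so that every $p$-semistable sheaf with modified Hilbert polynomial $P$ is $N$-regular and $V \cong H^0(X, F_{\mathcal{E}}(\mathcal{F})(N))$. Given the Jordan--H\"older filtration $0 = {\rm JH}_0 \subseteq \cdots \subseteq {\rm JH}_l = \mathcal{F}$, apply $F_{\mathcal{E}}$ (which is exact) and take global sections to obtain a genuine filtration $0 = V_0 \subseteq \cdots \subseteq V_l = V$ with $V_j = H^0(X, F_{\mathcal{E}}({\rm JH}_j(\mathcal{F}))(N))$; the $N$-regularity makes the relevant sequences of global sections exact, so that $V_j/V_{j-1} \cong H^0(X, F_{\mathcal{E}}(gr_j^{\rm JH}(\mathcal{F}))(N))$. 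This filtration defines a one-parameter subgroup $\lambda: \mathbb{G}_m \to {\rm GL}(V)$ acting with weight $j$ on a chosen complement of $V_{j-1}$ in $V_j$, and after a central twist it lies in ${\rm SL}(V)$. Tracking how this grading interacts with the universal quotient on the Grassmannian shows that $\lim_{t \to 0} \lambda(t) \cdot [\rho]$ is the quotient $[V \otimes \mathcal{G} \to \bigoplus_j gr_j^{\rm JH}(\mathcal{F})] = [V \otimes \mathcal{G} \to gr^{\rm JH}(\mathcal{F})]$, which lies in $Q^{ss}$ because the associated graded is again $p$-semistable with the same polynomial $P$. Hence $\overline{O([\rho])}$ contains the point $[\rho_{gr}]$ attached to $gr^{\rm JH}(\mathcal{F})$.

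Next I would identify the closed orbits with the polystable sheaves. If $\mathcal{F}$ is $p$-polystable, any one-parameter degeneration splits compatibly with its decomposition into stable summands, so every limit is again isomorphic to $\mathcal{F}$, forcing $O([\rho])$ to be closed in $Q^{ss}$; conversely, if $\mathcal{F}$ is not polystable then $gr^{\rm JH}(\mathcal{F}) \not\cong \mathcal{F}$ and the degeneration above carries $[\rho]$ into a strictly smaller orbit, so $O([\rho])$ is not closed. Thus $[\rho_{gr}]$ has closed orbit. The \emph{if} direction follows at once: if $gr^{\rm JH}(\mathcal{F}_1) \cong gr^{\rm JH}(\mathcal{F}_2)$, then $\overline{O([\rho_1])}$ and $\overline{O([\rho_2])}$ both contain the common closed orbit $O([\rho_{gr}])$ and therefore meet. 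For the \emph{only if} direction I would invoke the standard fact that each fiber of the good quotient $Q^{ss} \to Q^{ss}/\!\!/{\rm SL}(V)$ (which exists by Theorem \ref{314} and Theorem \ref{316}) contains a unique closed orbit: if $\overline{O([\rho_1])} \cap \overline{O([\rho_2])} \neq \emptyset$ then $[\rho_1]$ and $[\rho_2]$ have the same image in the quotient, so their closures contain the same unique closed orbit, which by the previous step corresponds simultaneously to $gr^{\rm JH}(\mathcal{F}_1)$ and to $gr^{\rm JH}(\mathcal{F}_2)$, whence $gr^{\rm JH}(\mathcal{F}_1) \cong gr^{\rm JH}(\mathcal{F}_2)$.

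The main obstacle will be the explicit identification of $\lim_{t \to 0} \lambda(t) \cdot [\rho]$ with the quotient onto $gr^{\rm JH}(\mathcal{F})$: this requires checking that the filtration of $V$ matches the filtration of the universal quotient and that $N$-regularity renders all the relevant global-section sequences exact, so that the graded pieces of $V$ are exactly $H^0(X, F_{\mathcal{E}}(gr_j^{\rm JH}(\mathcal{F}))(N))$. The uniqueness of the closed orbit in each fiber is routine GIT, but the delicate point is transporting the entire argument through $F_{\mathcal{E}}$, i.e.\ ensuring that $S$-equivalence of $\Lambda$-free sheaves upstairs on $\mathcal{X}$ is faithfully reflected by the orbit structure computed downstairs on $X$, which rests on the compatibility of Jordan--H\"older data with $F_{\mathcal{E}}$ recorded in \S 3.4 together with Theorem \ref{316}.
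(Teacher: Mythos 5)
Your proposal is correct and follows essentially the same route as the paper: reduce to showing that the quotient onto $gr^{\rm JH}(\mathcal{F})$ lies in the closure of the orbit of $[\rho]$, build the filtration $V_{\leq i}\cong H^0(X,F_{\mathcal{E}}({\rm JH}_i(\mathcal{F}))(N))$ using largeness of $N$, and realize the degeneration by a one-parameter subgroup as in Huybrechts--Lehn, then conclude by the standard GIT fact that each fiber of the good quotient contains a unique closed orbit (the polystable locus, which the paper isolates as a separate lemma). The only difference is that you spell out the GIT bookkeeping that the paper leaves implicit.
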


\begin{proof}
Let $[\rho: V \otimes \mathcal{G} \rightarrow \mathcal{F}] \in Q^{ss}$ be a point. Let
\begin{align*}
0 = {\rm JH}_0(\mathcal{F}) \subseteq {\rm JH}_1(\mathcal{F}) \subseteq \dots \subseteq {\rm JH}_l(\mathcal{F})= \mathcal{F}
\end{align*}
be the Jordan-H\"older filtration of $\mathcal{F}$ (the same notation as in \S 3.4). To prove the lemma, it is enough to show that we can construct a quotient $[\bar{\rho}: V \otimes \mathcal{G} \rightarrow gr^{\rm JH}(\mathcal{F})]$  such that $[\bar{\rho}]$ is included in the closure of the orbit of $[\rho]$.

Since $N$ is a large enough integer, we can assume that $F_{\mathcal{E}}({\rm JH}_i(\mathcal{F}))(N)$ is globally generated, and let $V_{\leq i}$ be the subspace of $V$ such that the quotient $[V_{\leq i} \otimes \mathcal{G} \rightarrow {\rm JH}_i(\mathcal{F})]$ is induced by $[\rho]$ and $V_{\leq i} \cong H^0(X/S, F_{\mathcal{E}}({\rm JH}_i(\mathcal{F}))(N))$. Let $V_i: = V_{\leq i}/ V_{\leq i-1}$. We have the induced surjections $V_i \otimes \mathcal{G} \rightarrow gr_i^{\rm JH}(\mathcal{F})$. Summing up these induced surjections, we get a point $[\bar{\rho}: V \otimes \mathcal{G} \rightarrow gr^{\rm JH}(\mathcal{F})]$.

To show that $[\bar{\rho}]$ is in the closure of the orbit of $[\rho]$, it suffices to find an one-parameter subgroup $\lambda$ such that $\lim\limits_{t \rightarrow 0} \lambda(t) \cdot [\rho]=[\bar{\rho}]$. The construction of such an one-parameter subgroup $\lambda$ is the same as \cite[Lemma 4.4.3]{HuLe}. Therefore, the point $[\bar{\rho}]$ is included in the closure of the orbit of $[\rho]$ in $Q^{ss}$.
\end{proof}

\begin{lem}\label{3182}
Let $[\rho: V \otimes \mathcal{G} \rightarrow \mathcal{F}]$ be a point in $Q^{ss}$. The orbit of $[\rho]$ is closed in $Q^{ss}$ if and only if $\mathcal{F}$ is $p$-polystable.
\end{lem}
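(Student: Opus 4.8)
The plan is to read off both implications from the construction in Lemma \ref{3181} together with the elementary fact that, inside a variety acted on by a reductive group, the boundary $\overline{O}\setminus O$ of an orbit $O$ is a union of orbits of strictly smaller dimension, so that an orbit of minimal dimension in a given invariant closed set is automatically closed. The geometric content is translated into sheaf theory through the injection ${\rm Aut}(\mathcal{F}) \hookrightarrow {\rm GL}(V)$ with image the stabilizer ${\rm GL}(V)_{[\rho]}$ (recalled in \S 3.8): since the scalar automorphisms $\mathbb{G}_m \subseteq {\rm Aut}(\mathcal{F})$ meet ${\rm SL}(V)$ in a finite group, the stabilizer of $[\rho]$ in ${\rm SL}(V)$ has dimension $\dim {\rm Aut}(\mathcal{F}) - 1$, whence the orbit of $[\rho]$ has dimension $\dim {\rm SL}(V) - \dim {\rm Aut}(\mathcal{F}) + 1$. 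Thus a larger automorphism group forces a smaller orbit. The input I would take from Lemma \ref{3181} is that the point $[\bar{\rho}: V \otimes \mathcal{G} \rightarrow gr^{\rm JH}(\mathcal{F})]$ built from the Jordan-H\"older graded sheaf lies in the closure of the orbit of $[\rho]$ inside $Q^{ss}$, and that it belongs to $Q^{ss}$ because $gr^{\rm JH}(\mathcal{F})$ is $p$-semistable with modified Hilbert polynomial $P$.

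\textbf{Closed orbit $\Rightarrow$ polystable.} This direction is immediate from Lemma \ref{3181}. If the orbit of $[\rho]$ is closed in $Q^{ss}$, then $[\bar{\rho}]$, which lies in its closure, already lies in the orbit itself. Two points of a common ${\rm SL}(V)$-orbit represent isomorphic quotient sheaves, so $\mathcal{F} \cong gr^{\rm JH}(\mathcal{F})$; since $gr^{\rm JH}(\mathcal{F})$ is by construction a direct sum of $p$-stable sheaves, $\mathcal{F}$ is $p$-polystable.

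\textbf{Polystable $\Rightarrow$ closed orbit.} Here I would argue by contradiction. Assume $\mathcal{F}$ is $p$-polystable, so $\mathcal{F} \cong gr^{\rm JH}(\mathcal{F})$, but suppose the orbit $O$ of $[\rho]$ is not closed in $Q^{ss}$. Then the invariant closed set $\overline{O}\cap Q^{ss}$ strictly contains $O$, and an orbit of minimal dimension in it is closed; choose such an orbit $O'$, represented by some $[\rho']$ with $p$-semistable sheaf $\mathcal{F}'$, so that $\dim O' < \dim O$. Since the closures of the orbits of $[\rho]$ and $[\rho']$ meet, Lemma \ref{3181} gives $gr^{\rm JH}(\mathcal{F}') \cong gr^{\rm JH}(\mathcal{F}) \cong \mathcal{F}$. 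The strict dimension inequality, via the stabilizer computation above, reads $\dim {\rm Aut}(\mathcal{F}') > \dim {\rm Aut}(\mathcal{F}) = \dim {\rm Aut}(gr^{\rm JH}(\mathcal{F}'))$, which contradicts the comparison $\dim {\rm Aut}(\mathcal{H}) \leq \dim {\rm Aut}(gr^{\rm JH}(\mathcal{H}))$ valid for every $p$-semistable $\mathcal{H}$. Hence $O$ is closed.

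\textbf{Main obstacle.} The one non-formal ingredient is the inequality $\dim {\rm Aut}(\mathcal{H}) \leq \dim {\rm Aut}(gr^{\rm JH}(\mathcal{H}))$ on the Deligne-Mumford stack $\mathcal{X}$. Because $F_{\mathcal{E}}$ does not preserve semistability (Remark \ref{305}), I cannot simply transport the classical statement across $F_{\mathcal{E}}$; instead I would work in the abelian category of $p$-semistable sheaves on $\mathcal{X}$ with fixed reduced modified Hilbert polynomial $p_{\mathcal{E}}(\mathcal{H})$, which is of finite length (by the Jordan-H\"older theory of \S 3.4) and whose simple objects are the $p$-stable sheaves, with ${\rm End}(\mathcal{S}) = k$ for each simple $\mathcal{S}$ by Schur's lemma and the projectivity of $\mathcal{X}$ over the algebraically closed field $k$. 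Over such $k$ the group ${\rm Aut}(\mathcal{H})$ is the open unit group of the finite-dimensional algebra ${\rm End}(\mathcal{H})$, so $\dim {\rm Aut}(\mathcal{H}) = \dim_k {\rm End}(\mathcal{H})$. Filtering $\mathcal{H}$ by its canonical socle series, which every endomorphism preserves, one obtains an embedding of the associated graded of ${\rm End}(\mathcal{H})$ into the product of the endomorphism algebras of the (semisimple) socle layers, and a short multiplicity count then yields $\dim_k {\rm End}(\mathcal{H}) \leq \dim_k {\rm End}(gr^{\rm JH}(\mathcal{H}))$. Establishing this comparison, and checking the routine GIT bookkeeping that limits and the chosen orbits remain inside the open set $Q^{ss}$, is where the real work lies; everything else is formal.
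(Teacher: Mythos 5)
Your first direction (closed orbit $\Rightarrow$ polystable) is the standard deduction from Lemma \ref{3181} and agrees with what the paper leaves implicit. Your second direction, however, departs from the paper and contains a circularity. The paper proves "polystable $\Rightarrow$ closed orbit" directly: it takes an arbitrary point $[\rho']$ in the orbit closure, realizes it as the special fibre of a flat family over a smooth curve whose generic fibre is $\mathcal{F}=\bigoplus_i\mathcal{F}_i^{n_i}$, and uses semicontinuity of $t\mapsto{\rm hom}(\mathcal{F}_i,(\mathcal{F}_T)_t)$ to force $n_i'\geq n_i$, hence $\bigoplus_i\mathcal{F}_i^{n_i'}\subseteq\mathcal{F}'$ and finally $\mathcal{F}'\cong\mathcal{F}$ by comparing Hilbert polynomials. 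No appeal to Lemma \ref{3181}, orbit dimensions, or endomorphism algebras is made. You instead pick a smaller-dimensional orbit $O'$ in the boundary and invoke the implication "orbit closures intersect $\Rightarrow gr^{\rm JH}(\mathcal{F}')\cong gr^{\rm JH}(\mathcal{F})$" from Lemma \ref{3181}. That is the \emph{only if} half of that lemma, and it is precisely the half whose standard proof (uniqueness of the closed orbit in each fibre of the good quotient, plus the fact that the orbit of the graded object is closed) \emph{depends on} Lemma \ref{3182}; the paper's written proof of Lemma \ref{3181} only constructs the degeneration $[\rho]\rightsquigarrow[\bar{\rho}]$, i.e.\ the \emph{if} half. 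So as the lemmas are actually established in this paper, your argument for "polystable $\Rightarrow$ closed" assumes a statement that is downstream of the very lemma you are proving.

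The rest of your machinery is sound but ends up being unnecessary once the circularity is repaired. The identification of the ${\rm SL}(V)$-stabilizer with ${\rm Aut}(\mathcal{F})$ up to scalars, the resulting orbit-dimension formula, and the inequality $\dim_k{\rm End}(\mathcal{H})\leq\dim_k{\rm End}(gr^{\rm JH}(\mathcal{H}))$ (provable by induction on the length of a Jordan--H\"older filtration using left-exactness of ${\rm Hom}$ and Schur's lemma for $p$-stable sheaves) are all correct. But the honest way to get $gr^{\rm JH}(\mathcal{F}')\cong\mathcal{F}$ for a boundary point is to produce the degeneration from $\mathcal{F}$ to $\mathcal{F}'$ and apply semicontinuity of ${\rm hom}(\mathcal{F}_i,-)$ --- and at that point the paper's computation already gives $\mathcal{F}'\cong\mathcal{F}$ outright, with no need for the dimension count. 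I would either adopt the paper's direct semicontinuity argument, or, if you want to keep your dimension-theoretic framing, first prove the semicontinuity step as a standalone claim so that your use of the \emph{only if} direction of Lemma \ref{3181} is not circular.
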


\begin{proof}
Suppose that $\mathcal{F}=\bigoplus\limits_i \mathcal{F}_i^{n_i}$ is $p$-polystable. Let $[\rho': V \otimes \mathcal{G} \rightarrow \mathcal{F}']$ be a point in the closure of the orbit of $[\rho]$. To prove that the orbit of $[\rho]$ is closed, it suffices to show that $\mathcal{F}' \cong \mathcal{F}$.

Since $[\rho']$ is in the closure of the orbit of $[\rho]$, there is a smooth curve $T$ (over $S$) parametrizing a flat family $\mathcal{F}_T$ of sheaves on $\mathcal{X}$ such that
\begin{align*}
(\mathcal{F}_T)_0 \cong \mathcal{F}', \quad (\mathcal{F}_T)_{T\backslash \{0\}} \cong \mathcal{O}_{T \backslash \{0\}} \otimes \mathcal{F}.
\end{align*}
By the flatness of the family $\mathcal{F}_T$ and the stacky version of semicontinuity (see \cite[Theorem 1.8]{Nir}), the function
\begin{align*}
T \rightarrow \mathbb{N}, \quad t \rightarrow {\rm hom}(\mathcal{F}_i, (\mathcal{F}_T)_t)
\end{align*}
is semicontinous for each $i$, where ${\rm hom}(\mathcal{F}_i, (\mathcal{F}_T)_t)=\dim({\rm Hom}(\mathcal{F}_i, (\mathcal{F}_T)_t) )$. If $t \neq 0$, ${\rm hom}(\mathcal{F}_i, (\mathcal{F}_T)_t)=n_i$. The semicontinuity implies that $n'_i= {\rm hom}(\mathcal{F}_i, (\mathcal{F}_T)_0) \geq n_i$. Then, we have $\sum\limits_i \mathcal{F}_i^{n'_i} \subseteq \mathcal{F}'$. Since $\mathcal{F}'$ is $p$-semistable and $\mathcal{F}'$, $\mathcal{F}$ are in the same flat family $\mathcal{F}_T$, then the only possible case is $n'_i=n_i$ and $\mathcal{F}' \cong \mathcal{F}$.

\end{proof}

Let
\begin{align*}
\mathcal{M}^{ss}(\mathcal{E},\mathcal{O}_X(1),P):=Q^{ss}/{\rm SL}(V)
\end{align*}
be the good geometric quotient with respect to the group action ${\rm SL}(V)$ and line bundle $\mathscr{L}_N$ given by Theorem \ref{314}.

\begin{thm}\label{317}
With respect to the situation above, we have the following results.
\begin{enumerate}
\item The moduli space $\mathcal{M}^{ss}(\mathcal{E},\mathcal{O}_X(1),P)$ is a projective $S$-scheme.
\item There exists a natural morphism
\begin{align*}
\widetilde{\mathcal{M}}^{ss}(\mathcal{E},\mathcal{O}_X(1),P) \rightarrow \mathcal{M}^{ss}(\mathcal{E},\mathcal{O}_X(1),P)
\end{align*}
such that $\mathcal{M}^{ss}(\mathcal{E},\mathcal{O}_X(1),P)$ universally co-represents $\widetilde{\mathcal{M}}^{ss}(\mathcal{E},\mathcal{O}_X(1),P)$. The points in the moduli space $\mathcal{M}^{ss}(\mathcal{E},\mathcal{O}_X(1),P)$ represent the $S$-equivalent classes of $p$-semistable sheaves.
\item $\mathcal{M}^{s}(\mathcal{E},\mathcal{O}_X(1),P)$ is a coarse moduli space of $\widetilde{\mathcal{M}}^{s}(\mathcal{E},\mathcal{O}_X(1),P)$.
\item If $x \in \mathcal{M}^{s}(\mathcal{E},\mathcal{O}_X(1),P)$ is a point such that $Q^s$ is smooth at the inverse image of $x$, then $\mathcal{M}^{s}(\mathcal{E},\mathcal{O}_X(1),P)$ is smooth at $x$.
\end{enumerate}
\end{thm}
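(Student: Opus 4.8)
The plan is to realize all four statements as consequences of the single fact that $\mathcal{M}^{ss}(\mathcal{E},\mathcal{O}_X(1),P)$ is the GIT quotient of the semistable locus of the projective $S$-scheme ${\rm Quot}(V\otimes\mathcal{G},P)$. For (1), Theorem \ref{316} identifies $Q^{ss}$ with the \emph{full} set of ${\rm SL}(V)$-semistable points of ${\rm Quot}(V\otimes\mathcal{G},P)$ relative to the ${\rm SL}(V)$-linearized line bundle $\mathscr{L}_N$; since ${\rm Quot}(V\otimes\mathcal{G},P)$ is a projective $S$-scheme (Theorem \ref{3021}), Theorem \ref{314} immediately yields that $\mathcal{M}^{ss}=Q^{ss}/{\rm SL}(V)$ is a projective $S$-scheme and that $Q^{ss}\to\mathcal{M}^{ss}$ is a universal good quotient, geometric on the stable locus.

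For (2), I would first construct the natural transformation $\widetilde{\mathcal{M}}^{ss}\to{\rm Hom}(\bullet,\mathcal{M}^{ss})$. Given $\mathcal{F}_T\in\widetilde{\mathcal{M}}^{ss}(T)$, boundedness (Theorem \ref{310}, Remark \ref{313}) guarantees that, after fixing $N$, the direct image of $F_{\mathcal{E}}(\mathcal{F}_T)(N)$ along $X_T\to T$ is locally free of rank $P(N)$; choosing a local trivialization $V\otimes\mathcal{O}_T$ produces a $T$-point of $Q^{ss}$, and composing with $Q^{ss}\to\mathcal{M}^{ss}$ gives a morphism $T\to\mathcal{M}^{ss}$. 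The key bookkeeping is that changing the trivialization acts through ${\rm GL}(V)$, and that the ${\rm Pic}(T)$-twist equivalence defining $\widetilde{\mathcal{M}}^{ss}(T)$ corresponds exactly to the residual scalar ambiguity; hence the morphism to the quotient is independent of choices and glues to a well-defined map. Universal co-representation then follows formally: because $Q^{ss}\to\mathcal{M}^{ss}$ is a \emph{universal} good quotient (Theorem \ref{314}), it is a universal categorical quotient, so any $\alpha':\widetilde{\mathcal{M}}^{ss}\to{\rm Hom}(\bullet,T')$ pulls back to an ${\rm SL}(V)$-invariant morphism $Q^{ss}\to T'$ factoring uniquely through $\mathcal{M}^{ss}$, and this factorization persists after base change. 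Finally, Lemma \ref{3181} shows two points of $Q^{ss}$ have the same image iff their orbit closures meet iff $gr^{\rm JH}(\mathcal{F}_1)\cong gr^{\rm JH}(\mathcal{F}_2)$, while Lemma \ref{3182} identifies the unique closed orbit in each fiber with the polystable representative; together these identify the closed points of $\mathcal{M}^{ss}$ with $S$-equivalence classes.

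For (3), the point is that a $p$-stable sheaf is simple, so ${\rm Aut}(\mathcal{F})=k^{*}$ maps isomorphically onto the center $Z({\rm GL}(V))$ sitting inside the stabilizer; hence ${\rm PGL}(V)$ acts with trivial stabilizers on $Q^{s}$, and since the center already acts trivially on ${\rm Quot}(V\otimes\mathcal{G},P)$ one has $Q^{s}/{\rm SL}(V)=Q^{s}/{\rm PGL}(V)=\mathcal{M}^{s}$ as a geometric quotient (Theorem \ref{314}). Orbits are then in bijection with isomorphism classes of $p$-stable sheaves, the choice of isomorphism $V\cong H^{0}(X,F_{\mathcal{E}}(\mathcal{F})(N))$ being precisely the group ambiguity; this makes $\alpha$ bijective on points, and combined with the co-representation from (2) it shows $\mathcal{M}^{s}$ is a coarse moduli space of $\widetilde{\mathcal{M}}^{s}$.

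For (4), I would apply Luna's slice theorem (Theorem \ref{315}) to the free ${\rm PGL}(V)$-action on $Q^{s}$ at a point over $x$, whose orbit is closed because stable orbits are closed in $Q^{ss}$. With trivial stabilizer the theorem produces a locally closed slice $C\subseteq Q^{s}$ through that point together with an \'etale morphism $C\to\mathcal{M}^{s}$ near $x$. Since \'etale morphisms both preserve and reflect smoothness and $C$ is locally closed in $Q^{s}$, smoothness of $Q^{s}$ at the inverse image of $x$ transfers to smoothness of $\mathcal{M}^{s}$ at $x$. I expect the main obstacle to be the careful part of (2): checking that the local quot-presentations glue and that the ${\rm Pic}(T)$-equivalence matches the group-action ambiguity, and invoking the \emph{universal} (not merely categorical) property of the good quotient so that co-representation survives arbitrary base change; the GIT input for (1), (3) and (4) is comparatively formal once Theorems \ref{316}, \ref{314} and \ref{315} are in hand.
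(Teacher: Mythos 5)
Your proposal follows essentially the same route as the paper: Theorem \ref{316} identifies $Q^{ss}$ (resp. $Q^{s}$) with the GIT-(semi)stable locus, Theorem \ref{314} gives the projective universal good quotient for (1) and the co-representation in (2), Lemma \ref{3181} (with Lemma \ref{3182}) identifies points with $S$-equivalence classes, the simplicity of stable sheaves gives the bijection on $S$-points for (3), and Luna's slice theorem (Theorem \ref{315}) transfers smoothness along the \'etale slice for (4). The only difference is that you spell out the gluing of local quot-presentations and the matching of the ${\rm Pic}(T)$-twist with the ${\rm GL}(V)$-ambiguity in (2), which the paper leaves implicit; the argument is correct.
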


\begin{proof}
The first two statements follow from Theorem \ref{301}, Theorem \ref{314}, Theorem \ref{316} and Lemma \ref{3181}. Nironi also stated $(1)$ and $(2)$ in \cite[Theorem 6.22]{Nir} and pointed out that $\mathcal{M}^{ss}(\mathcal{E},\mathcal{O}_X(1),P)$ is not a coarse moduli space.

Now we will prove the other two statements. By Theorem \ref{314}, there is an open subset $\mathcal{M}^{s}$ of $\mathcal{M}^{ss}(\mathcal{E},\mathcal{O}_X(1),P)$ such that its preimage via the map $Q^{ss}/{\rm SL}(V)$ is the set of stable points. By Theorem \ref{316}, a point in $Q^{ss}$ is stable if and only if it is $p$-stable. Therefore the open set $\mathcal{M}^{s}$ is exactly $\mathcal{M}^{s}(\mathcal{E},\mathcal{O}_X(1),P)$. To prove that the set $\mathcal{M}^{s}(\mathcal{E},\mathcal{O}_X(1),P)$ is a coarse moduli space, we only have to check that there is a bijection
\begin{align*}
\widetilde{\mathcal{M}}^{s}(\mathcal{E},\mathcal{O}_X(1),P)(S) \rightarrow {\rm Hom}(S,\mathcal{M}^{s}(\mathcal{E},\mathcal{O}_X(1),P)).
\end{align*}
Clearly, two $p$-stable sheaves $\mathcal{F}_1$, $\mathcal{F}_2$ are $S$-equivalent if and only if $\mathcal{F}_1 \cong \mathcal{F}_2$. Therefore the bijection is directly induced from morphism $\widetilde{\mathcal{M}}^{ss}(\mathcal{E},\mathcal{O}_X(1),P)(S) \rightarrow {\rm Hom}(S,\mathcal{M}^{ss}(\mathcal{E},\mathcal{O}_X(1),P))$. This finishes the proof of $(3)$.

The proof of the last statement is similar to the classical case \cite[\S 4]{HuLe}, and we will use Luna's \'Etale Slicing Theorem (Theorem \ref{315}) to prove this statement. Let $[\rho]$ be a point in $\mathcal{M}^{s}(\mathcal{E},\mathcal{O}_X(1),P)$. It is easy to check the stabilizer of $[\rho]$ in the group ${\rm GL}(V)$ is exactly $Z({\rm GL}(V))$, the center of ${\rm GL}(V)$. By Luna's \'Etale Slicing Theorem, $Q^s$ is a principal ${\rm PGL}(V)$-bundle over $\mathcal{M}^{s}(\mathcal{E},\mathcal{O}_X(1),P)$. Moreover, there is a locally closed subset $C \subseteq Q^{ss}$ of $[\rho]$ such that the multiplication map $C \times {\rm PGL}(V) \rightarrow Q^s$ induces an \'etale morphism $C/ {\rm PGL}(V)_{[\rho]} \rightarrow \mathcal{M}^{s}(\mathcal{E},\mathcal{O}_X(1),P)$. Therefore, by the property of the \'etale morphism, if the inverse image of $[\rho]$ in $Q^s$ is smooth, then $x \in \mathcal{M}^{s}(\mathcal{E},\mathcal{O}_X(1),P)$ is also a smooth point.
\end{proof}

\section{$\Lambda$-Modules on Projective Deligne-Mumford Stacks}
In this section, we give the definition of $\Lambda$-modules on a Deligne-Mumford stack and prove some properties of them. The setup of \S 4.2 and \S 4.3 is the same as \S 3.2 - \S 3.6. We are working on a projective Deligne-Mumford stack $\mathcal{X}$ over an algebraically closed field $k$. In these two subsections, we construct the Harder-Narasimhan filtration and Jordan-H\"older filtration of $\Lambda$-modules (see \S 4.2), and prove the second boundedness (see Proposition \ref{407}).

\subsection{Graded Algebras and $\Lambda$-Modules over Projective Deligne-Mumford Stacks}
A \emph{graded ring} $R$ is a ring together with a direct sum decomposition $R = R_0 \oplus R_1 \oplus \dots$ such that $R_i R_j \subseteq R_{i+j}$ for $i,j \geq 0$. A \emph{graded $R$-module} $M$ is an $R$-module with a direct sum decomposition $M=\bigoplus\limits_{-\infty}^{\infty}M_i$ such that $R_i M_j \subseteq M_{i+j}$ for all $i,j$. A \emph{graded $R$-algebra} $M$ is a graded $R$-module such that $M_i M_j \subseteq M_{i+j}$ for all $i,j$. With respect to the above definitions of graded structures, a sheaf of graded algebras over a stack can be defined in a similar way as in \cite[\S 2]{Simp2}.

Let $S$ be an algebraic space, which is locally of finite type over an algebraically closed field $k$. Let $\mathcal{X}$ be a separated and locally finitely-presented Deligne-Mumford stack over $S$. A \emph{sheaf of graded algebras} over $\mathcal{X}$ is a sheaf of $\mathcal{O}_{\mathcal{X}}$-algebras $\Lambda$ with a filtration
\begin{align*}
\Lambda_0 \subseteq \Lambda_1 \subseteq \dots \Lambda_n \subseteq \dots
\end{align*}
satisfying the following conditions.
\begin{enumerate}
    \item $\Lambda$ has both left and right $\mathcal{O}_{\mathcal{X}}$-module structures.
    \item $\Lambda=\lim\limits_{\longleftarrow} \Lambda_i$ and $\Lambda_i \cdot \Lambda_j \subseteq \Lambda_{i+j}$.
    \item There is a natural morphism $\mathcal{O}_{\mathcal{X}} \rightarrow \Lambda$, of which the image is $\Lambda_0$.
    \item The graded sheaf ${\rm Gr}_i(\Lambda)=\Lambda_i / \Lambda_{i-1}$ is a $\mathcal{O}_{\mathcal{X}}$-module for $i \geq 1$.
    \item The left and right $\mathcal{O}_{\mathcal{X}}$-module structures on ${\rm Gr}_i(\Lambda)$ are equal. In other words, there is an isomorphism such that ${\rm Gr}_i(\Lambda)_l \cong {\rm Gr}_i(\Lambda)_r$.
    \item ${\rm Gr}(\Lambda):=\bigoplus\limits_{i=0}^{\infty}{\rm Gr}_i(\Lambda)$ is generated by ${\rm Gr}_1(\Lambda)$. More precisely, the morphism of sheaves
        \begin{align*}
        \overbrace{{\rm Gr}_1(\Lambda) \otimes_{\mathcal{O}_{\mathcal{X}}} \dots \otimes_{\mathcal{O}_{\mathcal{X}}} {\rm Gr}_1(\Lambda)}^{i} \rightarrow {\rm Gr}_i(\Lambda)
        \end{align*}
        is surjective.
\end{enumerate}

Let $U \rightarrow \mathcal{X}$ be a local chart of $\mathcal{X}$, and we assume that $U= \Spec(A)$ is an affine scheme and the morphism is \'etale. In this local chart, $\Lambda(U)$ is a graded algebra over $A$. 

A \emph{$\Lambda$-sheaf $\mathcal{F}$} is a sheaf on $\mathcal{X}$ together with a left $\Lambda$-action. A \emph{coherent $\Lambda$-sheaf} (resp. \emph{quasi-coherent $\Lambda$-sheaf}) $\mathcal{F}$ is a $\Lambda$-sheaf such that $\mathcal{F}$ is coherent (resp. quasi-coherent) with respect to the $\mathcal{O}_{\mathcal{X}}$-structure. In this paper, a $\Lambda$-sheaf is also called a \emph{$\Lambda$-module}. Similarly, a sheaf is called a \emph{$\mathcal{O}_{\mathcal{X}}$-module}.

There are several ways to understand ``an action of $\Lambda$". Usually an action of $\Lambda$ on $\mathcal{F}$ means that we have a morphism
\begin{align*}
\Lambda \rightarrow \mathcal{E}nd(\mathcal{F}).
\end{align*}
Equivalently, this morphism can be interpreted as
\begin{align*}
\Lambda \otimes \mathcal{F} \rightarrow \mathcal{F}.
\end{align*}
Sometimes we use the notation $(\mathcal{F},\Phi)$ for a $\Lambda$-module, where $\mathcal{F}$ is a coherent sheaf and $\Phi: \Lambda \rightarrow \mathcal{E}nd(\mathcal{F})$ is the action of $\Lambda$ on $\mathcal{F}$.

By condition $(6)$, the graded sheaf ${\rm Gr}(\Lambda)$ is generated by ${\rm Gr}_1(\Lambda)$, which is a coherent sheaf. Therefore the sheaf $\Lambda$ is also generated by $\Lambda_1$. Now given an action of $\Lambda$ on $\mathcal{F}$, it gives a unique action of ${\rm Gr}_1(\Lambda)$ on $\mathcal{F}$, and we have an injective map
\begin{align*}
{\rm Hom}(\Lambda,\mathcal{E}nd(\mathcal{F})) \rightarrow {\rm Hom}({\rm Gr}_1(\Lambda),\mathcal{E}nd(\mathcal{F})).
\end{align*}
If ${\rm Gr}_1(\Lambda)$ is locally free, a morphism ${\rm Gr}_1(\Lambda) \rightarrow \mathcal{E}nd(\mathcal{F})$ induces a morphism
\begin{align*}
\mathcal{F} \rightarrow \mathcal{F} \otimes {\rm Gr}_1(\Lambda)^*.
\end{align*}
Note that a morphism ${\rm Gr}_1(\Lambda) \rightarrow \mathcal{E}nd(\mathcal{F})$ may not induce a well-defined morphism $\Lambda \rightarrow \mathcal{E}nd(\mathcal{F})$.

Let $\mathcal{F}$ be a coherent $\Lambda$-module on $\mathcal{X}$. We say that $\mathcal{F}'$ is a \emph{$\Lambda$-submodule} of $\mathcal{F}$, if
\begin{itemize}
\item $\mathcal{F}'$ is a subsheaf of $\mathcal{F}$ as the $\mathcal{O}_{\mathcal{X}}$-module;
\item $\mathcal{F}'$ is preserved under the action of $\Lambda$, i.e. $\Lambda \otimes_{\mathcal{E}nd(\mathcal{F}) }\mathcal{F}' \subseteq \mathcal{F}'$.
\end{itemize}
The set of $\Lambda$-subsheaves of $\mathcal{F}$ can be obtained by tensoring every subsheaf of $\mathcal{F}$ by $\Lambda$, i.e.
\begin{align*}
{\rm SubSf}_{\Lambda}(\mathcal{F})= \Lambda \otimes_{\mathcal{E}nd(\mathcal{F})} {\rm SubSf}(\mathcal{F}),
\end{align*}
where ${\rm SubSf}(\mathcal{F})$ is the set of subsheaves of $\mathcal{F}$, i.e.
\begin{align*}
{\rm SubSf}(\mathcal{F})=\{\mathcal{F}'| \mathcal{F}' \subseteq \mathcal{F}\}.
\end{align*}

Here are some properties of a $\Lambda$-module $\mathcal{F}$, which are easy to check.
\begin{itemize}
    \item The torsion part $\mathcal{F}_{\rm tor}$ of $\mathcal{F}$ is preserved by $\Lambda$. Thus $\mathcal{F}_{\rm tor}$ is a $\Lambda$-submodule.
    \item Let $\mathcal{F}'$ be a $\Lambda$-submodule of $\mathcal{F}$. Then $\mathcal{F}/\mathcal{F}'$ is a $\Lambda$-module.
    \item Let $\mathcal{F}'$ be a $\Lambda$-submodule of $\mathcal{F}$. The saturation $\mathcal{F}'_{\rm sat}$ of $\mathcal{F}'$ is a $\Lambda$-module.
\end{itemize}
Now we give some examples of sheaves of graded algebras.
\subsubsection*{\textbf{Sheaf of Differential Operators}}
Let $D_{\mathcal{X}}$ be the sheaf of differential operators over $\mathcal{X}$. Clearly, $D_{\mathcal{X}}$ has a natural graded structure, of which the filtration $(D_{\mathcal{X}})_i$ is the sheaf of differential operators with order $\leq i$. A derivation $d$ on $D_{\mathcal{X}}$ is a map $d:D_{\mathcal{X}} \rightarrow D_{\mathcal{X}}$ such that
\begin{enumerate}
\item $d(ab)=(da)b+(-1)^{\bar{a}}a(db)$, where $\bar{a}$ is the order of $a$,
\item $d((D_{\mathcal{X}})_i) \subseteq (D_{\mathcal{X}})_{i-1}$,
\item $d^2=0$.
\end{enumerate}
A basic example of a derivation is the Lie bracket $d_{\frac{\partial}{\partial x}}(\bullet):=[\frac{\partial}{\partial x},\bullet]$.

This definition can be extended to any sheaf of graded algebras $\Lambda$. A derivation $d$ of $\Lambda$ is a map $d: \Lambda \rightarrow \Lambda$ such that
\begin{enumerate}
\item $d(ab)=(da)b+(-1)^{\bar{a}}a(db)$.
\item $d(\Lambda_i) \subseteq \Lambda_{i-1}$.
\item $d^2=0$.
\end{enumerate}

Let $v \in {\rm Gr}_1(\Lambda)$. There is a natural derivation $d_v$ defined by the commutator $d_v(a):=[v,a]=va-av$. Now we consider the class of $v$ in ${\rm Gr}_1(\Lambda)$. We use the same notation $v$ for the corresponding class in ${\rm Gr}_1(\Lambda)$. There is a unique morphism $\sigma: {\rm Gr}_1(\Lambda) \rightarrow \mathcal{H}om(\Omega^1_{\mathcal{X}}, \mathcal{O}_{\mathcal{X}})$. The morphism $\sigma$ is called the \emph{symbol} of $\Lambda$.

Denote by $\Theta_{\Lambda}$ be the set of derivations of $\Lambda$. Note that $\Theta_{\Lambda}$ has a natural structure of sheaves. Let $\mathcal{F}$ be a coherent sheaf. A \emph{connection $\nabla$} on $\mathcal{F}$ is a $\mathcal{O}_{\mathcal{X}}$-morphism $\nabla: \Theta_{\Lambda} \rightarrow \mathcal{E}nd(\mathcal{F})$ satisfying the following conditions.
\begin{enumerate}
    \item $\nabla_{f \theta}(s)= f \nabla_{\theta}(s)$.
    \item $\nabla_{\theta}(fs)=\theta(f)s+ f \nabla_{\theta}(s)$.
    \item $\nabla_{[\theta_1,\theta_2]}(s)= [\nabla_{\theta_1},\nabla_{\theta_2}](s)$.
\end{enumerate}
Note that $d_v$ is also a derivation for $v \in {\rm Gr}_1(\Lambda)$. Thus $\nabla_{d_v}$ is a homomorphism of $\mathcal{F}$, which induces an action of $v$ on $\mathcal{F}$. Thus a connection $\nabla$ gives us a well-defined $\Lambda$-action on $\mathcal{F}$, i.e. a $\Lambda$-sheaf $\mathcal{F}$.

\subsubsection*{\textbf{$\mathcal{L}$-Twisted Higgs Bundle}}
Now we consider the example, the $\mathcal{L}$-twisted Higgs bundle. Let $\mathcal{L}$ be a locally free sheaf over $\mathcal{X}$. The sheaf of graded algebras corresponding to $\mathcal{L}$ is defined as $\Lambda_{\mathcal{L}}:={\rm Sym}^{\bullet}(\mathcal{L}^*)$. Note that ${\rm Gr}_1(\Lambda_{\mathcal{L}})=\mathcal{L}^*$. In this case, a morphism $\Lambda_{\mathcal{L}} \rightarrow \mathcal{E}nd(\mathcal{F})$ is uniquely determined by ${\rm Gr}_1(\Lambda_{\mathcal{L}}) \rightarrow \mathcal{E}nd(\mathcal{F})$. If we start with a morphism ${\rm Gr}_1(\Lambda_{\mathcal{L}}) \rightarrow \mathcal{E}nd(\mathcal{F})$, this morphism will give us a well-defined map
\begin{align*}
{\rm Gr}_1(\Lambda_{\mathcal{L}}) \otimes \mathcal{F} \rightarrow \mathcal{F},
\end{align*}
and then,
\begin{align*}
\mathcal{F} \rightarrow \mathcal{F} \otimes ({\rm Gr}_1(\Lambda_{\mathcal{L}}))^* \Rightarrow \mathcal{F} \rightarrow \mathcal{F} \otimes \mathcal{L}.
\end{align*}
The induced map $\mathcal{F} \rightarrow \mathcal{F} \otimes \mathcal{L}$ is exactly an \emph{$\mathcal{L}$-twisted Higgs field}.

\subsection{$p$-Semistability of $\Lambda$-modules}
In this subsection, we study the $p$-semistability of $\Lambda$-modules and prove some properies of Harder-Narasimhan filtrations and Jordan-H\"older filtrations for $\Lambda$-modules.

Let $\mathcal{X}$ be a projective Deligne-Mumford stack and with moduli space $X$ over $S$. We fix a polarization $\mathcal{O}_X(1)$ on $X$ and a generating sheaf $\mathcal{E}$ on $\mathcal{X}$. With respect to the polarizations $\mathcal{O}_X(1)$ and $\mathcal{E}$, we have the reduced modified Hilbert polynomial $p_{\mathcal{E}}(\mathcal{F})$ of a coherent sheaf $\mathcal{F}$ over $\mathcal{X}$. The setup and notations are the same as those in \S 3.2-\S 3.6.

Let $\Lambda$ be a sheaf of graded algebras on $\mathcal{X}$. A $\Lambda$-module $\mathcal{F}$ is \emph{pure of dimension $d$}, if the underlying structure as an $\mathcal{O}_{\mathcal{X}}$-module is pure of dimension $d$.

A $\Lambda$-module $\mathcal{F}$ is \emph{$p$-semistable} (resp. \emph{$p$-stable}), if $\mathcal{F}$ is a pure coherent sheaf and for any $\Lambda$-subsheaf $\mathcal{F}' \subseteq \mathcal{F}$ with $0 < {\rm rk}(\mathcal{F}') < {\rm rk}(\mathcal{F})$, we have
\begin{align*}
p_{\mathcal{E}}(\mathcal{F}',m) \leq p_{\mathcal{E}}(\mathcal{F},m),\quad m\gg 0, \quad (\text{resp. }<).
\end{align*}

\subsubsection*{\textbf{$\Lambda$-Harder-Narasimhan Filtrations}}
Let $\mathcal{F}$ be a purely $d$-dimensional $\Lambda$-sheaf on $\mathcal{X}$. A \emph{destabilizing $\Lambda$-subsheaf} $\mathcal{F}^{\Lambda}_{\rm de}$ is a $\Lambda$-subsheaf of $\mathcal{F}$ such that
\begin{enumerate}
\item For all $\Lambda$-subsheaves $\mathcal{F}' \subseteq \mathcal{F}$ we have $p_{\mathcal{E}}(\mathcal{F}^{\Lambda}_{\rm de}) \geq p_{\mathcal{E}}(\mathcal{F}')$.
\item If $p_{\mathcal{E}}(\mathcal{F}^{\Lambda}_{\rm de}) = p_{\mathcal{E}}(\mathcal{F}')$, we have $\mathcal{F}' \subseteq \mathcal{F}^{\Lambda}_{\rm de}$.
\end{enumerate}

\begin{lem}\label{401}
For any purely $d$-dimensional $\Lambda$-module $\mathcal{F}$ on $\mathcal{X}$, there is a unique destabilizing $\Lambda$-subsheaf $\mathcal{F}_{\rm de}^{\Lambda}$.
\end{lem}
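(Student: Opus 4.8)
The plan is to mirror the classical construction of the maximal destabilizing subsheaf (Huybrechts--Lehn §1.3, and its stacky analogue used in §3.4 via Nironi's Theorem 3.22), but carried out entirely inside the lattice of $\Lambda$-submodules of $\mathcal{F}$. The one structural input that makes this work is that the $\Lambda$-submodules of $\mathcal{F}$ are closed under sums and intersections: if $\mathcal{F}_1,\mathcal{F}_2\subseteq\mathcal{F}$ are preserved by the $\Lambda$-action then so are $\mathcal{F}_1+\mathcal{F}_2$ and $\mathcal{F}_1\cap\mathcal{F}_2$, exactly in the spirit of the closure properties for the torsion part and the saturation recorded in §4.1. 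Since $\mathcal{F}$ is purely $d$-dimensional, every subsheaf is again purely $d$-dimensional, so all the modified Hilbert polynomials in play are genuine polynomials and $p_{\mathcal{E}}(-)$ is defined on every nonzero $\Lambda$-subsheaf.

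For existence, first I would secure the finiteness that underlies the ordinary case. The set of reduced modified Hilbert polynomials of subsheaves $\mathcal{F}'\subseteq\mathcal{F}$ with $p_{\mathcal{E}}(\mathcal{F}')\geq p_{\mathcal{E}}(\mathcal{F})$ is finite: this is precisely the boundedness (Theorem \ref{310}) invoked in the construction of the ordinary destabilizing subsheaf in §3.4. Restricting to $\Lambda$-subsheaves gives a subcollection (which contains $\mathcal{F}$ itself), so the supremum $p_{\max}$ of $p_{\mathcal{E}}$ over nonzero $\Lambda$-subsheaves is attained. The key step is then to show that the $\Lambda$-subsheaves achieving $p_{\max}$ are stable under sum. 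If $p_{\mathcal{E}}(\mathcal{F}_1)=p_{\mathcal{E}}(\mathcal{F}_2)=p_{\max}$, the short exact sequence
\begin{align*}
0\to\mathcal{F}_1\cap\mathcal{F}_2\to\mathcal{F}_1\oplus\mathcal{F}_2\to\mathcal{F}_1+\mathcal{F}_2\to0,
\end{align*}
together with the additivity of $P_{\mathcal{E}}$ (§3.3) and of the rank, yields $P_{\mathcal{E}}(\mathcal{F}_1\cap\mathcal{F}_2)+P_{\mathcal{E}}(\mathcal{F}_1+\mathcal{F}_2)=P_{\mathcal{E}}(\mathcal{F}_1)+P_{\mathcal{E}}(\mathcal{F}_2)$. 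Since $\mathcal{F}_1+\mathcal{F}_2$ and $\mathcal{F}_1\cap\mathcal{F}_2$ are again $\Lambda$-subsheaves, both have reduced polynomial $\leq p_{\max}$; writing $P_{\mathcal{E}}(\mathcal{G})=r(\mathcal{G})\,\alpha_{\mathcal{E},d}(\mathcal{O}_{\mathcal{X}})\,p_{\mathcal{E}}(\mathcal{G})$ and comparing leading (rank) terms forces $p_{\mathcal{E}}(\mathcal{F}_1+\mathcal{F}_2)=p_{\max}$. Consequently the sum of all $\Lambda$-subsheaves with reduced polynomial $p_{\max}$ stabilizes, since ranks are bounded by $r(\mathcal{F})$ and $\mathcal{F}$ is coherent, and I would define $\mathcal{F}_{\rm de}^{\Lambda}$ to be this maximal $p_{\max}$-subsheaf.

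It then remains to verify the two defining properties and uniqueness. Property (1) is immediate, as any $\Lambda$-subsheaf $\mathcal{F}'$ satisfies $p_{\mathcal{E}}(\mathcal{F}')\leq p_{\max}=p_{\mathcal{E}}(\mathcal{F}_{\rm de}^{\Lambda})$ by maximality of $p_{\max}$; property (2) holds because any $\Lambda$-subsheaf with $p_{\mathcal{E}}(\mathcal{F}')=p_{\max}$ is one of the summands defining $\mathcal{F}_{\rm de}^{\Lambda}$, hence contained in it. Uniqueness is formal: if $\mathcal{G}_1,\mathcal{G}_2$ both satisfy (1) and (2), applying (1) in both directions gives $p_{\mathcal{E}}(\mathcal{G}_1)=p_{\mathcal{E}}(\mathcal{G}_2)$, and then (2) applied to each gives $\mathcal{G}_2\subseteq\mathcal{G}_1$ and $\mathcal{G}_1\subseteq\mathcal{G}_2$, whence $\mathcal{G}_1=\mathcal{G}_2$.

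I expect the main obstacle to lie entirely in the existence half --- specifically, in confirming that the boundedness of the ordinary theory descends to the $\Lambda$-subcollection so that $p_{\max}$ is genuinely attained, and in checking that passing to sums and intersections keeps one inside the $\Lambda$-category. Once these two points are in place, the sum/intersection comparison and the uniqueness argument are routine adaptations of the classical case, and they require nothing beyond the exactness of the relevant sequences and the additivity of $P_{\mathcal{E}}$ and of the rank.
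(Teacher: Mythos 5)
Your proof is correct, but it takes a genuinely different route from the paper's. The paper follows \cite[Lemma 1.3.5]{HuLe} essentially verbatim: it puts the partial order $\mathcal{F}_1\leq\mathcal{F}_2$ iff $\mathcal{F}_1\subseteq\mathcal{F}_2$ and $p_{\mathcal{E}}(\mathcal{F}_1)\leq p_{\mathcal{E}}(\mathcal{F}_2)$ on ${\rm SubSf}_{\Lambda}(\mathcal{F})$, takes a maximal element whose top coefficient $\alpha_{\mathcal{E},d}$ is minimal among maximal elements, and declares this to be $\mathcal{F}^{\Lambda}_{\rm de}$; existence of maximal elements comes from the noetherian property alone, with no boundedness input. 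You instead first pin down $p_{\max}$ via boundedness and then realize $\mathcal{F}^{\Lambda}_{\rm de}$ as the sum of all $\Lambda$-subsheaves attaining $p_{\max}$, using the exact sequence for $\mathcal{F}_1+\mathcal{F}_2$ and $\mathcal{F}_1\cap\mathcal{F}_2$ and the closure of $\Lambda$-submodules under sum and intersection. Your route makes properties (1), (2) and uniqueness essentially immediate, at the cost of importing a Grothendieck-type boundedness statement; the paper's route is more elementary but pushes the same sum/intersection comparison into the verification that the chosen maximal element works. One point you should tighten: the finiteness of the set of reduced modified Hilbert polynomials of subsheaves with $p_{\mathcal{E}}\geq p_{\mathcal{E}}(\mathcal{F})$ only holds for \emph{saturated} subsheaves (lower-order coefficients of arbitrary subsheaves can drop without bound), so you should pass to saturations --- which is harmless here, since the saturation of a $\Lambda$-subsheaf is again a $\Lambda$-subsheaf (\S 4.1) and saturating only increases $p_{\mathcal{E}}$, so the supremum over all $\Lambda$-subsheaves is still attained.
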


\begin{proof}
The proof of this lemma is similar to \cite[Lemma 1.3.5]{HuLe}. We only give the construction of the destabilizing $\Lambda$-subsheaf.

Consider the set of non-trivial $\Lambda$-subsheaves of the given $\Lambda$-sheaf $\mathcal{F}$
\begin{align*}
{\rm SubSf}_{\Lambda}(\mathcal{F})= \{\mathcal{F}'| \mathcal{F}' \text{ is a $\Lambda$-subsheaf of } \mathcal{F}\}.
\end{align*}
We can define a partial-order on the set ${\rm SubSf}_{\Lambda}(\mathcal{F})$ as follows. Let $\mathcal{F}_1$ and $\mathcal{F}_2$ be two $\Lambda$-subsheaves of $\mathcal{F}$. We say that $\mathcal{F}_1 \leq \mathcal{F}_2$, if $\mathcal{F}_1 \subseteq \mathcal{F}_2$ and $p_{\mathcal{E}}(\mathcal{F}_1) \leq p_{\mathcal{E}}(\mathcal{F}_2)$. We take a maximal subsheaf $\mathcal{F}' \subseteq \mathcal{F}$ with respect to the partial order relation such that the coefficient $\alpha_{\mathcal{E},d}(\mathcal{F}')$ is minimal among all such maximal subsheaves. This subsheaf $\mathcal{F}'$ is exactly the destabilizing $\Lambda$-subsheaf of $\mathcal{F}$.
\end{proof}

\begin{lem}\label{402}
Let $\mathcal{F}$ be a purely $d$-dimensional $\Lambda$-module on $\mathcal{X}$. We have \begin{align*}
\mathcal{F}_{\rm de}^{\Lambda}= (\Lambda \otimes_{ \mathcal{E}nd(\mathcal{F}) } \mathcal{F}_{\rm de})_{\rm sat},
\end{align*}
where $\mathcal{F}_{\rm de}$ is the destabilizing sheaf of $\mathcal{F}$.
\end{lem}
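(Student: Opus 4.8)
The plan is to reduce the stated identity to the single fact that the ordinary destabilizing subsheaf $\mathcal{F}_{\rm de}$ is already preserved by the $\Lambda$-action; granting this, the $\Lambda$-span and the saturation on the right-hand side are both trivial operations, and the identification with $\mathcal{F}_{\rm de}^{\Lambda}$ is formal. Concretely, I would proceed in three steps: (i) show that $\mathcal{F}_{\rm de}$ is a $\Lambda$-submodule of $\mathcal{F}$; (ii) conclude that $\Lambda\otimes_{\mathcal{E}nd(\mathcal{F})}\mathcal{F}_{\rm de}=\mathcal{F}_{\rm de}$ and, since $\mathcal{F}_{\rm de}$ is saturated (\S 3.4), that the right-hand side collapses to $\mathcal{F}_{\rm de}$; (iii) verify that $\mathcal{F}_{\rm de}$, regarded as a $\Lambda$-subsheaf, satisfies the two defining properties of the destabilizing $\Lambda$-subsheaf, so that $\mathcal{F}_{\rm de}^{\Lambda}=\mathcal{F}_{\rm de}$ by the uniqueness part of Lemma \ref{401}.

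Step (i) carries all the content. Since $\Lambda$ is generated by $\Lambda_1$ (condition $(6)$), it suffices to prove $\Lambda_1\cdot\mathcal{F}_{\rm de}\subseteq\mathcal{F}_{\rm de}$. Working \'etale-locally on a chart $U=\Spec(A)\to\mathcal{X}$ and fixing a local section $\xi\in\Lambda_1$, I would consider the composite
\begin{align*}
\bar{\xi}:\mathcal{F}_{\rm de}\hookrightarrow\mathcal{F}\xrightarrow{\ \xi\ }\mathcal{F}\twoheadrightarrow\mathcal{F}/\mathcal{F}_{\rm de},\qquad s\mapsto \xi s \bmod \mathcal{F}_{\rm de}.
\end{align*}
The key observation is that $\bar{\xi}$ is $\mathcal{O}_{\mathcal{X}}$-linear: for $f\in\mathcal{O}_{\mathcal{X}}$ one has $\xi(fs)=f(\xi s)+[\xi,f]s$, and condition $(5)$ (equality of the left and right $\mathcal{O}_{\mathcal{X}}$-structures on ${\rm Gr}_1(\Lambda)$) forces $[\xi,f]\in\Lambda_0=\mathcal{O}_{\mathcal{X}}$; hence $[\xi,f]s\in\mathcal{F}_{\rm de}$ and $\bar{\xi}(fs)=f\,\bar{\xi}(s)$.

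Next I invoke the properties recorded in \S 3.4: $\mathcal{F}_{\rm de}$ is $p$-semistable of pure dimension $d$ with reduced modified Hilbert polynomial $p_{\mathcal{E}}(\mathcal{F}_{\rm de})=p_{\rm max}(\mathcal{F})$, and (because $\mathcal{F}_{\rm de}$ is saturated) $\mathcal{F}/\mathcal{F}_{\rm de}$ is pure of dimension $d$ with $p_{\rm max}(\mathcal{F}/\mathcal{F}_{\rm de})<p_{\mathcal{E}}(\mathcal{F}_{\rm de})$. If $\bar{\xi}$ were nonzero, its image would be, on one hand, a quotient of the semistable sheaf $\mathcal{F}_{\rm de}$, hence of reduced polynomial $\geq p_{\mathcal{E}}(\mathcal{F}_{\rm de})$, and, on the other hand, a nonzero subsheaf of the pure $d$-dimensional sheaf $\mathcal{F}/\mathcal{F}_{\rm de}$, hence of reduced polynomial $\leq p_{\rm max}(\mathcal{F}/\mathcal{F}_{\rm de})<p_{\mathcal{E}}(\mathcal{F}_{\rm de})$ --- a contradiction. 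Thus $\bar{\xi}=0$ for every local $\xi\in\Lambda_1$, which is exactly $\Lambda_1\cdot\mathcal{F}_{\rm de}\subseteq\mathcal{F}_{\rm de}$, proving (i).

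Step (iii) is then immediate: $\mathcal{F}_{\rm de}$ is a $\Lambda$-subsheaf whose reduced polynomial $p_{\rm max}(\mathcal{F})$ dominates $p_{\mathcal{E}}(\mathcal{F}')$ for every $\Lambda$-subsheaf $\mathcal{F}'\subseteq\mathcal{F}$, and if $p_{\mathcal{E}}(\mathcal{F}')=p_{\mathcal{E}}(\mathcal{F}_{\rm de})$ then property $(2)$ of the ordinary destabilizing subsheaf (\S 3.4) gives $\mathcal{F}'\subseteq\mathcal{F}_{\rm de}$; these are precisely the defining conditions of $\mathcal{F}_{\rm de}^{\Lambda}$, so Lemma \ref{401} yields $\mathcal{F}_{\rm de}^{\Lambda}=\mathcal{F}_{\rm de}=(\Lambda\otimes_{\mathcal{E}nd(\mathcal{F})}\mathcal{F}_{\rm de})_{\rm sat}$. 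The only genuine obstacle is (i); the delicate points there are checking that the commutator drops filtration degree (so that $\bar{\xi}$ is $\mathcal{O}_{\mathcal{X}}$-linear), and ensuring the slope comparison stays within pure dimension $d$, which is guaranteed because $\mathcal{F}_{\rm de}$ is saturated and hence $\mathcal{F}/\mathcal{F}_{\rm de}$ is pure.
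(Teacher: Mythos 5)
Your step (i) is false, and with it the whole reduction collapses. The claim that the ordinary destabilizing subsheaf $\mathcal{F}_{\rm de}$ is automatically a $\Lambda$-submodule cannot be right: if it were, Lemma \ref{402} would just say $\mathcal{F}_{\rm de}^{\Lambda}=\mathcal{F}_{\rm de}$ and the $\Lambda$-Harder--Narasimhan filtration would always coincide with the ordinary one, which already fails for $K$-twisted Higgs bundles on a curve of genus $\geq 2$ (take $\mathcal{F}=L_1\oplus L_0$ with $\deg L_1=1$, $\deg L_0=0$; the second fundamental form of $L_1=\mathcal{F}_{\rm de}$ lands in $L_1^{-1}L_0K$, which has positive degree and hence admits nonzero sections). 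The precise point where your argument breaks is the slope contradiction: the morphism $\bar{\xi}$ is only defined on the chart $U$ where the local section $\xi\in\Lambda_1(U)$ lives, so its ``image'' is a sheaf on an affine \'etale chart and carries no reduced modified Hilbert polynomial to compare. Semistability kills \emph{global} homomorphisms, not local ones (compare $\mathrm{Hom}(\mathcal{O}(1),\mathcal{O})=0$ versus $\mathcal{H}om(\mathcal{O}(1),\mathcal{O})=\mathcal{O}(-1)$ on $\mathbb{P}^1$). If you package the local maps $\bar{\xi}$ into a genuinely global object, what you get is a morphism ${\rm Gr}_1(\Lambda)\otimes\mathcal{F}_{\rm de}\rightarrow\mathcal{F}/\mathcal{F}_{\rm de}$, and the twist by ${\rm Gr}_1(\Lambda)$ shifts the relevant slopes by exactly enough to destroy the inequality $p_{\rm max}(\mathcal{F}/\mathcal{F}_{\rm de})<p_{\mathcal{E}}(\mathcal{F}_{\rm de})$. (Your $\mathcal{O}_{\mathcal{X}}$-linearity computation via $[\xi,f]\in\Lambda_0$ is fine; it is the vanishing step that fails.)

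The paper avoids this issue entirely: it never claims $\mathcal{F}_{\rm de}$ is $\Lambda$-invariant. Instead it observes that $\Lambda\otimes_{\mathcal{E}nd(\mathcal{F})}\mathcal{F}_{\rm de}$ is by construction a $\Lambda$-subsheaf (possibly strictly larger than $\mathcal{F}_{\rm de}$, with possibly smaller reduced polynomial), hence so is its saturation, and then verifies directly that $(\Lambda\otimes_{\mathcal{E}nd(\mathcal{F})}\mathcal{F}_{\rm de})_{\rm sat}$ satisfies the two defining properties of the destabilizing $\Lambda$-subsheaf, concluding by the uniqueness statement of Lemma \ref{401}. If you want to repair your write-up, that is the route to take; your step (iii) is essentially the right template, but it must be applied to $(\Lambda\otimes_{\mathcal{E}nd(\mathcal{F})}\mathcal{F}_{\rm de})_{\rm sat}$ rather than to $\mathcal{F}_{\rm de}$.
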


\begin{proof}
We will prove that the sheaf $(\Lambda \otimes_{\mathcal{E}nd(\mathcal{F})} \mathcal{F}_{\rm de})_{\rm sat}$ satisfies the conditions of a destabilizing $\Lambda$-sheaf, and then by the uniqueness of the destabilizing $\Lambda$-subsheaf, we will prove this lemma.

Clearly, the sheaf $\Lambda \otimes_{ \mathcal{E}nd(\mathcal{F}) } \mathcal{F}_{\rm de}$ is a $\Lambda$-subsheaf of $\mathcal{F}$. Thus its saturation $(\Lambda \otimes_{ \mathcal{E}nd(\mathcal{F})} \mathcal{F}_{\rm de})_{\rm sat}$ is also a $\Lambda$-subsheaf of $\mathcal{F}$. Given any $\Lambda$-subsheaf $\mathcal{F}'$ of $\mathcal{F}$, it is also a subsheaf of $\mathcal{F}$. Therefore, we obtain
\begin{align*}
p_{\mathcal{E}}(\mathcal{F}') \leq p_{\mathcal{E}}(\mathcal{F}_{\rm de}).
\end{align*}
This inequality gives
\begin{align*}
p_{\mathcal{E}}(\mathcal{F}')=p_{\mathcal{E}}(\Lambda \otimes_{\mathcal{E}nd(\mathcal{F})} \mathcal{F}') \leq p_{\mathcal{E}}(\Lambda \otimes_{\mathcal{E}nd(\mathcal{F})} \mathcal{F}_{\rm de}).
\end{align*}
We also have the following inequality about the saturation
\begin{align*}
p_{\mathcal{E}}(\Lambda \otimes_{\mathcal{E}nd(\mathcal{F})} \mathcal{F}_{\rm de}) \leq p_{\mathcal{E}}((\Lambda \otimes_{\mathcal{E}nd(\mathcal{F})} \mathcal{F}_{\rm de})_{\rm sat}).
\end{align*}
This implies that
\begin{align*}
p_{\mathcal{E}}(\mathcal{F}') \leq p_{\mathcal{E}}((\Lambda \otimes_{\mathcal{E}nd(\mathcal{F})} \mathcal{F}_{\rm de})_{\rm sat}).
\end{align*}
This finishes the proof of the first condition of the destabilizing $\Lambda$-subsheaf.

Now we assume that $p_{\mathcal{E}}(\mathcal{F}')=p_{\mathcal{E}}((\Lambda \otimes_{\mathcal{E}nd(\mathcal{F})} \mathcal{F}_{\rm de})_{\rm sat})$. As a subsheaf of $\mathcal{F}$, we have
\begin{align*}
p_{\mathcal{E}}(\mathcal{F}') \leq p_{\mathcal{E}}(\mathcal{F}_{\rm de}).
\end{align*}
If $p_{\mathcal{E}}(\mathcal{F'})=p_{\mathcal{E}}(\mathcal{F}_{\rm de})$, then $\mathcal{F}'$ is a subsheaf of $\mathcal{F}_{\rm de}$. Thus $\mathcal{F}'$ is a $\Lambda$-subsheaf of $(\Lambda \otimes_{\mathcal{E}nd(\mathcal{F})} \mathcal{F}_{\rm de})_{\rm sat}$. If $p_{\mathcal{E}}(\mathcal{F}')<p_{\mathcal{E}}(\mathcal{F}_{\rm de})$, then
\begin{align*}
p_{\mathcal{E}}(\mathcal{F}')= p_{\mathcal{E}}( \Lambda \otimes_{\mathcal{E}nd(\mathcal{F})} \mathcal{F}')<p_{\mathcal{E}}(\Lambda \otimes_{\mathcal{E}nd(\mathcal{F})} \mathcal{F}_{\rm de})\leq p_{\mathcal{E}} ((\Lambda \otimes_{\mathcal{E}nd(\mathcal{F})} \mathcal{F}_{\rm de})_{\rm sat}).
\end{align*}
This contradicts the assumption that $p_{\mathcal{E}}(\mathcal{F}')=p_{\mathcal{E}}((\Lambda \otimes_{\mathcal{E}nd(\mathcal{F})} \mathcal{F}_{\rm de})_{\rm sat})$.
\end{proof}

Now we give the definition of the $\Lambda$-Harder-Narasimhan filtration. Let $\mathcal{F}$ be a $\Lambda$-module of pure dimension $d$ on $\mathcal{X}$. A \emph{$\Lambda$-Harder-Narasimhan filtration} of $\mathcal{F}$ is an increasing filtration
\begin{align*}
0 = {\rm HN}_0(\mathcal{F}) \subseteq {\rm HN}_1(\mathcal{F}) \subseteq \dots \subseteq {\rm HN}_l(\mathcal{F})= \mathcal{F},
\end{align*}
such that
\begin{enumerate}
\item The subsheaves ${\rm HN}_i(\mathcal{F})$ are $\Lambda$-modules for $1 \leq i \leq l$.
\item The factors $gr_i^{\rm HN}(\mathcal{F})={\rm HN}_{i}(\mathcal{F}) / {\rm HN}_{i-1}(\mathcal{F})$ are $p$-semistable $\Lambda$-modules for $1 \leq i \leq l$ of dimension $d$.
\item Denote by $p_i$ the reduced modified Hilbert polynomial $p_{\mathcal{E}}(gr_i^{\rm HN}(\mathcal{F}))$ such that
    \begin{align*}
        p_{\rm max}(\mathcal{F}):=p_1 > \dots > p_l=: p_{\rm min}(\mathcal{F}).
    \end{align*}
\end{enumerate}

\begin{prop}\label{403}
Let $\mathcal{F}$ be a $\Lambda$-module of pure dimension $d$ on $\mathcal{X}$. There is a unique $\Lambda$-Harder-Narasimhan filtration of $\mathcal{F}$.
\end{prop}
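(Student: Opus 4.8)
The plan is to imitate the classical construction of the Harder--Narasimhan filtration (as in \cite[\S 1.3]{HuLe}), systematically replacing ``subsheaf'' by ``$\Lambda$-subsheaf'' and using the destabilizing $\Lambda$-subsheaf supplied by Lemma \ref{401} in place of the usual maximal destabilizing subsheaf. The construction runs by induction on the rank $r(\mathcal{F})=\alpha_{\mathcal{E},d}(\mathcal{F})/\alpha_{\mathcal{E},d}(\mathcal{O}_{\mathcal{X}})$, which is additive on short exact sequences of pure $d$-dimensional sheaves and strictly drops along the construction, so the recursion terminates. Throughout I will use that $p_{\mathcal{E}}(\mathcal{F})$ is the reduced Hilbert polynomial of $F_{\mathcal{E}}(\mathcal{F})$ on $X$, so that the additivity recalled in \S 3.3 and the \emph{seesaw property} of reduced Hilbert polynomials transfer verbatim from the scheme $X$ through the exact functor $F_{\mathcal{E}}$.

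For existence I would set ${\rm HN}_1(\mathcal{F}):=\mathcal{F}^{\Lambda}_{\rm de}$. First note that $\mathcal{F}^{\Lambda}_{\rm de}$ is itself $p$-semistable: any $\Lambda$-subsheaf of it is a $\Lambda$-subsheaf of $\mathcal{F}$, so its reduced polynomial is $\leq p_{\mathcal{E}}(\mathcal{F}^{\Lambda}_{\rm de})$ by the defining property in Lemma \ref{401}. By Lemma \ref{402} the sheaf $\mathcal{F}^{\Lambda}_{\rm de}$ is saturated, hence the quotient $\bar{\mathcal{F}}:=\mathcal{F}/{\rm HN}_1(\mathcal{F})$ is pure of dimension $d$, and it carries a $\Lambda$-structure since a quotient of a $\Lambda$-module by a $\Lambda$-submodule is again a $\Lambda$-module. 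As $r(\bar{\mathcal{F}})<r(\mathcal{F})$, induction gives a $\Lambda$-HN filtration of $\bar{\mathcal{F}}$, whose preimages in $\mathcal{F}$ I take as ${\rm HN}_2(\mathcal{F})\subseteq\dots\subseteq{\rm HN}_l(\mathcal{F})=\mathcal{F}$; these are $\Lambda$-submodules with $p$-semistable factors of dimension $d$.

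The one inequality needing argument is the strict jump $p_1>p_2$. Let $\bar{\mathcal{G}}=\bar{\mathcal{F}}^{\Lambda}_{\rm de}$, so $p_{\mathcal{E}}(\bar{\mathcal{G}})=p_2$, and let $\mathcal{G}\subseteq\mathcal{F}$ be its preimage, yielding $0\to{\rm HN}_1(\mathcal{F})\to\mathcal{G}\to\bar{\mathcal{G}}\to 0$ with ${\rm HN}_1(\mathcal{F})\subsetneq\mathcal{G}$. If $p_2\geq p_1$, the seesaw property forces $p_{\mathcal{E}}(\mathcal{G})\geq p_1=p_{\mathcal{E}}({\rm HN}_1(\mathcal{F}))$; combined with the maximality clause of Lemma \ref{401} this gives $p_{\mathcal{E}}(\mathcal{G})=p_1$ and then $\mathcal{G}\subseteq\mathcal{F}^{\Lambda}_{\rm de}={\rm HN}_1(\mathcal{F})$, contradicting ${\rm HN}_1(\mathcal{F})\subsetneq\mathcal{G}$. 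Hence $p_1>p_2$, and the full chain $p_1>\dots>p_l$ follows by induction.

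For uniqueness I would first prove the $\Lambda$-analogue of the Hom-vanishing lemma \cite[Lemma 1.3.3]{HuLe}: if $\mathcal{G},\mathcal{H}$ are $p$-semistable $\Lambda$-modules with $p_{\mathcal{E}}(\mathcal{G})>p_{\mathcal{E}}(\mathcal{H})$, then ${\rm Hom}_{\Lambda}(\mathcal{G},\mathcal{H})=0$, since the image of a nonzero $\Lambda$-homomorphism is at once a $\Lambda$-quotient of $\mathcal{G}$ and a $\Lambda$-subsheaf of the pure sheaf $\mathcal{H}$, forcing $p_{\mathcal{E}}(\mathcal{G})\leq p_{\mathcal{E}}(\mathcal{H})$ by semistability and seesaw. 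Given two $\Lambda$-HN filtrations ${\rm HN}_{\bullet}$ and ${\rm HN}'_{\bullet}$, I take the least $i$ with ${\rm HN}_1\subseteq{\rm HN}'_i$; the induced nonzero $\Lambda$-map ${\rm HN}_1\to gr_i^{{\rm HN}'}$ and the vanishing lemma force $i=1$ and $p_1=p'_1$, whence ${\rm HN}_1\subseteq{\rm HN}'_1$ and, by symmetry, ${\rm HN}_1={\rm HN}'_1$; passing to the quotient and inducting on the rank finishes the proof. The main obstacle is not any single hard estimate but the bookkeeping needed to keep all kernels, images, saturations, and quotients inside the category of $\Lambda$-modules while importing the numerical seesaw properties from $X$ via $F_{\mathcal{E}}$; once Lemmas \ref{401} and \ref{402} are available, the argument reduces to the classical one.
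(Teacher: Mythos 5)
Your proof is correct and follows essentially the same route as the paper: existence by induction, taking ${\rm HN}_1(\mathcal{F})=\mathcal{F}^{\Lambda}_{\rm de}$ from Lemma \ref{401} and pulling back a filtration of the quotient, with uniqueness via the classical argument of \cite[Theorem 1.3.4]{HuLe}. The paper's proof is terser and defers the strict inequality $p_1>p_2$ and the ${\rm Hom}$-vanishing step to the references, whereas you spell these out, but the underlying argument is the same.
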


\begin{proof}
The existence of the $\Lambda$-Harder-Narasimhan filtration is proved by induction. In the base step, we take ${\rm HN}_1(\mathcal{F}) = \mathcal{F}^{\Lambda}_{\rm de}$. Then we consider the quotient sheaf $\mathcal{F}/\mathcal{F}^{\Lambda}_{\rm de}$, which is also a $\Lambda$-sheaf. By induction, we can assume that there is a $\Lambda$-Harder-Narasimhan filtration of $\mathcal{F}/\mathcal{F}^{\Lambda}_{\rm de}$. Thus we have a $\Lambda$-Harder-Narasimhan filtration of $\mathcal{F}$ by taking the preimage of the $\Lambda$-Harder-Narasimhan filtration of $\mathcal{F}/\mathcal{F}^{\Lambda}_{\rm de}$.

The proof of the uniqueness of the $\Lambda$-Harder-Narasimhan filtration is the same as the proof of the classical case \cite[Theorem 1.3.4]{HuLe}.
\end{proof}

\subsubsection*{\textbf{$\Lambda$-Jordan-H\"older Filtrations}}
Let $\mathcal{F}$ be a $p$-semistable $\Lambda$-module on $\mathcal{X}$ with the reduced modified Hilbert polynomial $p_{\mathcal{E}}(\mathcal{F})$. A \emph{$\Lambda$-Jordan-H\"{o}lder Filtration} of $\mathcal{F}$ is an increasing filtration
\begin{align*}
0 \subseteq {\rm JH}_0(\mathcal{F}) \subseteq {\rm JH}_1(\mathcal{F}) \subseteq \dots \subseteq {\rm JH}_l(\mathcal{F})= \mathcal{F}
\end{align*}
such that the factors $gr_i^{\rm JH}(\mathcal{F})={\rm JH}_{i}(\mathcal{F}) / {\rm JH}_{i-1}(\mathcal{F})$ are $p$-stable with respect to the reduced modified Hilbert polynomial $p_{\mathcal{E}}(\mathcal{F})$ for $1 \leq i \leq l$.

\begin{prop}\label{404}
Let $\mathcal{F}$ be a $\Lambda$-semistable sheaf on $\mathcal{X}$. There is a $\Lambda$-Jordan-H\"{o}lder filtration of $\mathcal{F}$, and the graded sheaf $gr^{\rm JH}(\mathcal{F}):=\bigoplus_i gr_i^{\rm JH}(\mathcal{F})$ does not depend on the choice of the Jordan-H\"{o}lder filtration.
\end{prop}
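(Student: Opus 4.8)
The plan is to recast the statement as the Jordan--H\"older theorem in a suitable abelian category, so that existence and uniqueness both become formal consequences once the category is set up. Write $p := p_{\mathcal{E}}(\mathcal{F})$, and let $\mathcal{C}$ be the category whose objects are the $p$-semistable $\Lambda$-modules of pure dimension $d$ on $\mathcal{X}$ with reduced modified Hilbert polynomial equal to $p$, and whose morphisms are $\Lambda$-module homomorphisms. The entire proof rests on three claims: that $\mathcal{C}$ is abelian, that its simple objects are exactly the $p$-stable $\Lambda$-modules with reduced polynomial $p$, and that $\mathcal{F}$ has finite length in $\mathcal{C}$.

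The first and only delicate point is that $\mathcal{C}$ is abelian, i.e. that for a $\Lambda$-homomorphism $f : \mathcal{F}_1 \rightarrow \mathcal{F}_2$ between objects of $\mathcal{C}$ the kernel, image and cokernel again lie in $\mathcal{C}$. Since $f$ is $\Lambda$-linear, $\ker f$, ${\rm im}\, f$ and ${\rm coker}\, f$ automatically inherit $\Lambda$-module structures, so I would only need to control their reduced modified Hilbert polynomials. Here I use that $P_{\mathcal{E}}$ is additive on short exact sequences (via the exactness of $\pi_*$ and $F_{\mathcal{E}}$, see \S 3.3). As ${\rm im}\, f$ is a $\Lambda$-submodule of the semistable $\mathcal{F}_2$, semistability gives $p_{\mathcal{E}}({\rm im}\, f) \leq p$; as ${\rm im}\, f$ is a $\Lambda$-quotient of the semistable $\mathcal{F}_1$, applying semistability to the $\Lambda$-submodule $\ker f \subseteq \mathcal{F}_1$ together with the seesaw property of reduced polynomials yields the reverse inequality $p_{\mathcal{E}}({\rm im}\, f) \geq p$. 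Hence $p_{\mathcal{E}}({\rm im}\, f) = p$, and since a $\Lambda$-submodule (resp. $\Lambda$-quotient) of a semistable object with the same reduced polynomial is again semistable, all three objects land in $\mathcal{C}$.

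With $\mathcal{C}$ abelian, the $p$-stable $\Lambda$-modules of reduced polynomial $p$ are precisely its simple objects: by definition $\mathcal{F}$ is $p$-stable exactly when every proper nonzero $\Lambda$-submodule has strictly smaller reduced polynomial, which is to say it has no proper nonzero subobject in $\mathcal{C}$. Moreover $\mathcal{F}$ has finite length in $\mathcal{C}$, because the leading coefficient $\alpha_{\mathcal{E},d}$, equivalently the rank $r(\mathcal{F})$ of \S 3.3, is a positive, integer-valued additive invariant that is strictly positive on every simple object; thus any chain of subobjects has length bounded by $r(\mathcal{F})$, so $\mathcal{F}$ is both artinian and noetherian in $\mathcal{C}$ and admits a composition series. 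Such a composition series is exactly a $\Lambda$-Jordan--H\"older filtration, which settles existence.

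For the independence of $gr^{\rm JH}(\mathcal{F})$ I would invoke the Jordan--H\"older theorem for finite-length objects in an abelian category (Schreier refinement, via the Zassenhaus lemma): any two composition series have the same length and isomorphic factors up to permutation, so $\bigoplus_i gr_i^{\rm JH}(\mathcal{F})$ is independent of the chosen filtration up to isomorphism. The argument then runs exactly as the classical case in \cite[\S 1.5]{HuLe}, paralleling the $\Lambda$-Harder--Narasimhan filtration of Proposition \ref{403}, the sole modification being the verification in the second paragraph that every construction respects the $\Lambda$-action, which is automatic. I expect that verification --- establishing that $\mathcal{C}$ is abelian --- to be the main obstacle, since everything afterward is the formal Jordan--H\"older machinery; but it follows from the additivity of $P_{\mathcal{E}}$ together with the $\Lambda$-linearity of the maps involved.
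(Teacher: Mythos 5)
Your argument is correct and is essentially the proof the paper intends: the paper simply defers to \cite[Proposition 1.5.2]{HuLe}, whose underlying mechanism is exactly the abelian-category/Jordan--H\"older argument you spell out (the subcategory of semistable objects with fixed reduced polynomial $p$ is abelian, stable objects are its simple objects, finite length follows from additivity of the rank). You have merely supplied the details that the citation suppresses, including the one genuinely $\Lambda$-specific check that kernels, images and cokernels of $\Lambda$-homomorphisms stay in the category.
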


\begin{proof}
The proof of the existence and uniqueness (up to isomorphism between the graded sheaves) is the same as \cite[Proposition 1.5.2]{HuLe}.
\end{proof}

Two $p$-semistable $\Lambda$-sheaves $\mathcal{F}_1$ and $\mathcal{F}_2$ with the same reduced modified Hilbert polynomial are called \emph{$S$-equivalent} if the graded sheaves of the $\Lambda$-Jordan-H\"older filtrations are isomorphic, i.e.
\begin{align*}
gr^{\rm JH}(\mathcal{F}_1) \cong gr^{\rm JH}(\mathcal{F}_2).
\end{align*}

\subsection{Boundedness of $\Lambda$-modules \Romannum{2}}
Let $\widetilde{\mathfrak{F}}^{ss}_{\Lambda}(P)$ be the set of $p$-semistable $\Lambda$-modules of pure dimension $d$ with the modified Hilbert polynomial $P$. In this subsection, we study the upper bound the following set
\begin{align*}
\{h^0(\mathcal{X},\mathcal{F}\otimes \mathcal{E}^{\vee} \otimes \pi^* \mathcal{O}_{\mathcal{X}}(m) ) \text{ } | \text{ } \mathcal{F} \in \widetilde{\mathfrak{F}}^{ss}_{\Lambda}(P)\}.
\end{align*}
We prove that there is an upper-bound for the above set (see Proposition \ref{407}).

\begin{lem}\label{405}
Let $\mathcal{F}$ be a $\Lambda$-module of pure dimension $d$ and rank $r$ over $\mathcal{X}$. Let $\mathcal{F}'$ be a subsheaf of $\mathcal{F}$, not necessarily preserved by $\Lambda$. Denote by $\mathcal{F}'_r$ the image of the morphism $\Lambda_r \otimes_{\mathcal{O}_{\mathcal{X}}} \mathcal{F}' \rightarrow \mathcal{F}$. Then $\mathcal{F}'_r$ is a $\Lambda$-subsheaf of $\mathcal{F}$.
\end{lem}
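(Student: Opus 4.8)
The plan is to exhibit $\mathcal{F}'_r$ as the term at which an increasing chain of subsheaves of $\mathcal{F}$ becomes stationary, and to bound the length of that chain by the rank $r$. For each $i \geq 0$ let $\mathcal{F}'_i$ denote the image of the action map $\Lambda_i \otimes_{\mathcal{O}_{\mathcal{X}}} \mathcal{F}' \to \mathcal{F}$. By condition $(3)$ we have $\Lambda_0 = \mathcal{O}_{\mathcal{X}}$, so $\mathcal{F}'_0 = \mathcal{F}'$, and since $\Lambda_i \subseteq \Lambda_{i+1}$ these fit into an increasing chain $\mathcal{F}' = \mathcal{F}'_0 \subseteq \mathcal{F}'_1 \subseteq \cdots \subseteq \mathcal{F}$. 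The first step is to reduce every step of the chain to a single application of $\Lambda_1$: condition $(6)$ (surjectivity of ${\rm Gr}_1(\Lambda)^{\otimes i} \to {\rm Gr}_i(\Lambda)$) together with condition $(2)$ and $\mathcal{O}_{\mathcal{X}} = \Lambda_0 \subseteq \Lambda_1$ gives $\Lambda_{i+1} = \Lambda_1 \cdot \Lambda_i$, and associativity of the $\Lambda$-action then yields $\mathcal{F}'_{i+1} = {\rm Im}(\Lambda_1 \otimes_{\mathcal{O}_{\mathcal{X}}} \mathcal{F}'_i \to \mathcal{F})$. Two consequences follow. Since $\Lambda$ is generated by $\Lambda_1$, being a $\Lambda$-subsheaf is equivalent to being stable under $\Lambda_1$, so $\mathcal{F}'_r$ is a $\Lambda$-subsheaf precisely when $\mathcal{F}'_{r+1} = \mathcal{F}'_r$; and the equality $\mathcal{F}'_i = \mathcal{F}'_{i+1}$ propagates, forcing $\mathcal{F}'_j = \mathcal{F}'_i$ for all $j \geq i$. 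Thus it suffices to prove that the chain is stationary from the $r$-th term on.

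Next I would bound the number of proper inclusions in the chain by passing to the generic point. Applying the exact functor $F_{\mathcal{E}}$ and localizing at the generic point $\eta$ of the coarse moduli space $X$ converts the chain into an increasing chain of finite-dimensional $\kappa(\eta)$-vector spaces $V_0 \subseteq V_1 \subseteq \cdots$, where $V_{i+1} = {\rm Gr}_1(\Lambda)_\eta \cdot V_i$. Because $\mathcal{F}$ is pure of dimension $d$, its generic rank is exactly $r$, so every $V_i$ is a subspace of the $r$-dimensional space $V_\infty$. A strictly increasing chain of subspaces of an $r$-dimensional vector space admits at most $r$ proper inclusions, so $V_r = V_{r+1}$; equivalently, the quotient $\mathcal{F}'_{r+1}/\mathcal{F}'_r$ is supported in dimension strictly less than $d$.

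The main obstacle, and the step I expect to be delicate, is upgrading this generic equality to an honest equality $\mathcal{F}'_{r+1} = \mathcal{F}'_r$ of subsheaves, which is what the definition of a $\Lambda$-subsheaf demands; a priori the chain could keep growing by lower-dimensional (torsion) pieces beyond the $r$-th step. This is exactly where purity of $\mathcal{F}$ must enter. My proposed mechanism is: by the ascending chain condition the chain stabilizes at some term $\mathcal{F}'_N = \bigcup_i \mathcal{F}'_i$, which by the propagation property is the genuine $\Lambda$-submodule generated by $\mathcal{F}'$ and is therefore $\Lambda$-stable; since $V_r = V_N$, the inclusion $\mathcal{F}'_r \subseteq \mathcal{F}'_N$ induces an isomorphism at $\eta$, so the two sheaves have the same saturation inside the pure sheaf $\mathcal{F}$, and one must rule out that $\mathcal{F}'_N / \mathcal{F}'_r$ is a nonzero torsion sheaf. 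Establishing this torsion-vanishing from purity is the crux; once it is in hand, $\mathcal{F}'_{r+1} = \mathcal{F}'_r$ follows and hence $\mathcal{F}'_r$ is a $\Lambda$-subsheaf. Everything else in the argument is the formal manipulation of the filtration of $\Lambda$ and the dimension count at the generic point.
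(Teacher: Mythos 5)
Your first two steps --- the chain $\mathcal{F}'_0\subseteq\mathcal{F}'_1\subseteq\cdots$ with $\mathcal{F}'_{i+1}={\rm Im}(\Lambda_1\otimes\mathcal{F}'_i\rightarrow\mathcal{F})$, the observation that a single equality propagates and yields $\Lambda$-stability, and the generic-rank count showing that $\mathcal{F}'_{r+1}/\mathcal{F}'_r$ has dimension $<d$ --- are correct, and they reproduce the intended argument (the paper gives no proof beyond citing Simpson's Lemma 3.2, so there is nothing finer to compare against). But the step you yourself single out as the crux is a genuine gap, and it cannot be filled: purity of $\mathcal{F}$ constrains subsheaves, whereas $\mathcal{F}'_{r+1}/\mathcal{F}'_r$ is only a subquotient, and nothing prevents the chain from growing by lower-dimensional pieces for more than $r$ steps. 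Concretely, take $\mathcal{X}=\mathbb{P}^1$, $\Lambda=\mathcal{D}_{\mathbb{P}^1}$, $\mathcal{F}=\mathcal{O}_{\mathbb{P}^1}$ with its standard $\mathcal{D}$-module structure (so $d=r=1$), and $\mathcal{F}'=\mathcal{I}_p^2$ for a closed point $p$. In a local coordinate $x$ at $p$ one has $\partial_x(x^2f)=2xf+x^2f'$, so $\mathcal{F}'_1=\mathcal{F}'_r=\mathcal{I}_p$, which is not preserved by $\partial_x$; the chain $\mathcal{I}_p^2\subsetneq\mathcal{I}_p\subsetneq\mathcal{O}_{\mathbb{P}^1}$ grows for two steps although $r=1$, and $\mathcal{F}'=\mathcal{I}_p^n$ shows that no bound depending only on $r$ exists. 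So the statement fails for the paper's own first example of a sheaf of graded algebras, and no argument along your lines (or any other) can establish it as written.

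What is true, and what the lemma's only application (Lemma \ref{406}) actually requires, is the saturated version. Set $\mathcal{H}_i=(\mathcal{F}'_i)_{\rm sat}$. A strict inclusion $\mathcal{H}_i\subsetneq\mathcal{H}_{i+1}$ forces $r(\mathcal{H}_{i+1})>r(\mathcal{H}_i)$, because $\mathcal{H}_{i+1}/\mathcal{H}_i$ is a nonzero subsheaf of the pure sheaf $\mathcal{F}/\mathcal{H}_i$ and hence has dimension $d$; this is where purity genuinely enters, and it replaces your generic-point count. Since the ranks are bounded by $r$, the chain $\mathcal{H}_\bullet$ is stationary from step $r$ on. Moreover $\Lambda_1\cdot\mathcal{H}_i\subseteq\mathcal{H}_{i+1}$, since $\Lambda_1\mathcal{H}_i/\mathcal{F}'_{i+1}$ is a quotient of $\Lambda_1\otimes(\mathcal{H}_i/\mathcal{F}'_i)$ and hence has dimension $<d$; therefore the stable value $(\mathcal{F}'_r)_{\rm sat}$ is a $\Lambda$-subsheaf of $\mathcal{F}$. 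In the proof of Lemma \ref{406} only the inequality $p_{\mathcal{E}}((\mathcal{G}_r)_{\rm sat})\leq p_{\mathcal{E}}(\mathcal{F})$ is used, and for that it suffices that $(\mathcal{G}_r)_{\rm sat}$ be a $\Lambda$-subsheaf, so the corrected statement carries the application; but neither your proposal nor the cited proof yields the lemma as literally stated.
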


\begin{proof}
The proof of this lemma is the same as \cite[Lemma 3.2]{Simp2}.
\end{proof}

\begin{lem}\label{406}
Suppose that there is an integer $m_0$ such that $\pi_*{\rm Gr}_1(\Lambda)(m_0)$ is generated by global sections. Then, there is an integer $m_1$ such that ${\rm Gr}_1(\Lambda)$ is $m_1$-regular. For any $p$-semistable $\Lambda$-module $\mathcal{F}$ of pure dimension $d$ and rank $r$, and any subsheaf $\mathcal{F}' \subseteq \mathcal{F}$, we have
\begin{align*}
\hat{\mu}_{\mathcal{E}}(\mathcal{F}') \leq \hat{\mu}_{\mathcal{E}}(\mathcal{F})+br,
\end{align*}
where $b$ is an integer depending on $m_1$ (or $m_0$) and the generating sheaf $\mathcal{E}$.
\end{lem}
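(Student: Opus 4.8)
The plan is to deduce the bound for an arbitrary subsheaf from a bound on the maximal destabilizing subsheaf, and to control the latter by comparing $\mathcal{G}$ with the $\Lambda$-submodule it generates, step by step along the filtration produced by Lemma \ref{405}. The existence of $m_1$ is immediate: $F_{\mathcal{E}}({\rm Gr}_1(\Lambda))$ is coherent on the projective scheme $X$, hence has finite Castelnuovo--Mumford regularity, and I set $m_1:={\rm reg}_{\mathcal{E}}({\rm Gr}_1(\Lambda))$; the hypothesis on $m_0$ together with Lemma \ref{309} makes this explicit. The role of $m_1$ is to produce a surjection of $\mathcal{O}_{\mathcal{X}}$-modules
\begin{align*}
\mathcal{E}\otimes\pi^*\mathcal{O}_X(-m_1)^{\oplus N}\twoheadrightarrow{\rm Gr}_1(\Lambda),
\end{align*}
obtained by composing the adjunction $\theta_{\mathcal{E}}:\pi^*F_{\mathcal{E}}({\rm Gr}_1(\Lambda))\otimes\mathcal{E}\to{\rm Gr}_1(\Lambda)$, which is surjective because $\mathcal{E}$ is a generating sheaf, with a surjection $\mathcal{O}_X(-m_1)^{\oplus N}\twoheadrightarrow F_{\mathcal{E}}({\rm Gr}_1(\Lambda))$ coming from the global generation of the $m_1$-regular sheaf $F_{\mathcal{E}}({\rm Gr}_1(\Lambda))(m_1)$.

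Next I would reduce to the maximal destabilizing subsheaf. Any subsheaf $\mathcal{F}'\subseteq\mathcal{F}$ is pure of dimension $d$, and $\hat{\mu}_{\mathcal{E}}(\mathcal{F}')\le\hat{\mu}_{\mathcal{E},\rm max}(\mathcal{F})$, where the maximal $\mathcal{E}$-slope among $\mathcal{O}_{\mathcal{X}}$-subsheaves is realized by the maximal destabilizing subsheaf $\mathcal{G}$ of the underlying $\mathcal{O}_{\mathcal{X}}$-module; since $\mathcal{G}$ is $\mathcal{O}_{\mathcal{X}}$-semistable, $\hat{\mu}_{\mathcal{E},\rm min}(\mathcal{G})=\hat{\mu}_{\mathcal{E}}(\mathcal{G})$. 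Hence it suffices to bound $\hat{\mu}_{\mathcal{E}}(\mathcal{G})$. I form the filtration $G_i:={\rm Im}(\Lambda_i\otimes\mathcal{G}\to\mathcal{F})$, so that
\begin{align*}
\mathcal{G}=G_0\subseteq G_1\subseteq\dots\subseteq G_r=\mathcal{G}_r,
\end{align*}
where $\mathcal{G}_r$ is a $\Lambda$-submodule by Lemma \ref{405} (so the chain stabilizes by the $r$-th step). Because $\Lambda$ is generated in degree one (condition $(6)$), each graded piece $G_i/G_{i-1}$ is a quotient of ${\rm Gr}_1(\Lambda)\otimes G_{i-1}$. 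As $\mathcal{G}_r$ is $\Lambda$-stable, $p$-semistability of $\mathcal{F}$ gives $p_{\mathcal{E}}(\mathcal{G}_r)\le p_{\mathcal{E}}(\mathcal{F})$, whence $\hat{\mu}_{\mathcal{E}}(\mathcal{G}_r)\le\hat{\mu}_{\mathcal{E}}(\mathcal{F})$.

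The heart of the argument is a uniform per-step estimate $\hat{\mu}_{\mathcal{E},\rm min}(G_i/G_{i-1})\ge\hat{\mu}_{\mathcal{E},\rm min}(G_{i-1})-b$. Tensoring the surjection above by $G_{i-1}$ and composing with the action map exhibits $G_i/G_{i-1}$ as an $\mathcal{O}_{\mathcal{X}}$-quotient of $(\mathcal{E}\otimes G_{i-1})^{\oplus N}\otimes\pi^*\mathcal{O}_X(-m_1)$; since minimal slope does not decrease under quotients and twisting by $\pi^*\mathcal{O}_X(-m_1)$ shifts it by $-m_1\deg(\mathcal{O}_X(1))$, it remains to compare $\hat{\mu}_{\mathcal{E},\rm min}(\mathcal{E}\otimes G_{i-1})$ with $\hat{\mu}_{\mathcal{E},\rm min}(G_{i-1})$. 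This comparison --- the shift of the minimal $\mathcal{E}$-slope caused by tensoring with the fixed bundle $\mathcal{E}$ --- is the main obstacle, precisely because the naive additivity of slopes under tensor products fails in positive characteristic; I would control it by the global generation of $\pi_*\mathcal{E}nd_{\mathcal{O}_{\mathcal{X}}}(\mathcal{E})(\widetilde{m})$ together with the slope inequality $\hat{\mu}_{\rm max}(F_{\mathcal{E}}(\bullet))\le\hat{\mu}_{\mathcal{E}}(\bullet)+\widetilde{m}\deg(\mathcal{O}_X(1))$ recalled in \S 3.5, absorbing all resulting constants into a single integer $b$ depending only on $m_1$, $\widetilde{m}$, $\deg(\mathcal{O}_X(1))$ and $\mathcal{E}$.

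Granting the per-step estimate, induction on $i$ --- with base case $\hat{\mu}_{\mathcal{E},\rm min}(G_0)=\hat{\mu}_{\mathcal{E}}(\mathcal{G})$ coming from the $\mathcal{O}_{\mathcal{X}}$-semistability of $\mathcal{G}$, and using that minimal slope of an extension is at least the minimum of those of its sub and quotient --- yields $\hat{\mu}_{\mathcal{E},\rm min}(\mathcal{G}_r)\ge\hat{\mu}_{\mathcal{E}}(\mathcal{G})-br$. Combining this with $\hat{\mu}_{\mathcal{E},\rm min}(\mathcal{G}_r)\le\hat{\mu}_{\mathcal{E}}(\mathcal{G}_r)\le\hat{\mu}_{\mathcal{E}}(\mathcal{F})$ gives $\hat{\mu}_{\mathcal{E}}(\mathcal{G})\le\hat{\mu}_{\mathcal{E}}(\mathcal{F})+br$, and since $\hat{\mu}_{\mathcal{E}}(\mathcal{F}')\le\hat{\mu}_{\mathcal{E}}(\mathcal{G})$ for every subsheaf $\mathcal{F}'$, the desired inequality follows.
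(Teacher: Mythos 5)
Your proposal follows the same overall route as the paper's proof: reduce to the maximal destabilizing $\mathcal{O}_{\mathcal{X}}$-subsheaf $\mathcal{G}$ of the underlying coherent sheaf, form the chain $\mathcal{G}_i={\rm Im}(\Lambda_i\otimes\mathcal{G}\to\mathcal{F})$ terminating in the $\Lambda$-submodule $\mathcal{G}_r$ supplied by Lemma \ref{405}, establish a uniform per-step slope loss of at most $b$, telescope over $r$ steps, and close with $p_{\mathcal{E}}(\mathcal{G}_r)\le p_{\mathcal{E}}((\mathcal{G}_r)_{\rm sat})\le p_{\mathcal{E}}(\mathcal{F})$ coming from $p$-semistability of the $\Lambda$-module. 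The one substantive divergence is at the step you yourself flag as ``the main obstacle,'' namely comparing $\hat{\mu}_{\mathcal{E},\rm min}(\mathcal{E}\otimes G_{i-1})$ with $\hat{\mu}_{\mathcal{E},\rm min}(G_{i-1})$: you leave this as a plan, proposing to route it through the global generation of $\pi_*\mathcal{E}nd_{\mathcal{O}_{\mathcal{X}}}(\mathcal{E})(\widetilde{m})$ and the inequality $\hat{\mu}_{\rm max}(F_{\mathcal{E}}(\bullet))\le\hat{\mu}_{\mathcal{E}}(\bullet)+\widetilde{m}\deg(\mathcal{O}_X(1))$, whereas the paper dispatches it in one line by choosing a surjection $\mathcal{O}_{\mathcal{X}}(-m_{\mathcal{E}})^{\oplus N}\twoheadrightarrow\mathcal{E}$ and using that minimal slope does not decrease under quotients, giving $b=m_1+m_{\mathcal{E}}$. (The paper also phrases the per-step estimate via the quotient $\mathcal{Q}_i$ of $\mathcal{G}_i$ of smallest reduced modified Hilbert polynomial, with a two-case analysis according to whether $\mathcal{Q}_{i+1}$ receives a nonzero map from $\mathcal{G}_i$ or is genuinely a quotient of $\mathcal{G}_{i+1}/\mathcal{G}_i$; this is equivalent to your bookkeeping with $\hat{\mu}_{\mathcal{E},\rm min}$ of the graded pieces together with the extension property of minimal slopes.) Your instinct to avoid the naive surjection onto $\mathcal{E}$ is not unreasonable --- a generating sheaf carries nontrivial stabilizer representations and so is not in general a quotient of copies of $\pi^*\mathcal{O}_X(-m_{\mathcal{E}})$, which makes the $\mathcal{E}nd(\mathcal{E})$-based mechanism you point to arguably the more robust one --- but as written your argument asserts rather than proves that all the resulting constants can be absorbed into a single $b$, so to turn the sketch into a proof you would need to carry out that estimate explicitly.
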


\begin{proof}
By assumption, we know  $\pi_*{\rm Gr}_1(\Lambda)$ is $m_0$-regular. Therefore, by Lemma \ref{309}, we can find an integer $m_1$ such that ${\rm Gr}_1(\Lambda)$ is $m_1$-regular. This finishes the proof of the first statement.

The proof of the second part of this lemma is similar to \cite[Lemma 3.3]{Simp2}. We include the proof here for completeness. Note that a $p$-semistable $\Lambda$-module may not be a $p$-semistable coherent sheaf. Denote by $\mathcal{G}$ the destabilizing sheaf of $\mathcal{F}$ (not the $\Lambda$-destabilizing sheaf). By the definition of the destabilizing sheaf, it suffices to find a positive integer $m$ and prove the inequality
\begin{align*}
\hat{\mu}_{\mathcal{E}}(\mathcal{G}) \leq \hat{\mu}_{\mathcal{E}}(\mathcal{F})+mr.
\end{align*}
Let $\mathcal{G}_i$ be the image of $\Lambda_i \otimes \mathcal{G}$ in $\mathcal{F}$ for $i \geq 0$. Note that $\mathcal{G}_0=\mathcal{G}$. By the definition of sheaves of graded algebras, we have the following surjections of coherent sheaves
\begin{align*}
\Lambda_1 \otimes \mathcal{G}_i \rightarrow \mathcal{G}_{i+1} \rightarrow 0
\end{align*}
for $0 \leq i \leq r$. The above surjections induce
\begin{align*}
{\rm Gr}_1(\Lambda) \otimes (\mathcal{G}_i / \mathcal{G}_{i-1}) \rightarrow \mathcal{G}_{i+1}/\mathcal{G}_i \rightarrow 0
\end{align*}
for $1 \leq i \leq r$. We know that the coherent sheaf $F_{\mathcal{E}}({\rm Gr}_1(\Lambda))(m_1)$ is generated by global sections. Thus we have the following surjective map
\begin{align*}
V \otimes \mathcal{E} \otimes \pi^* \mathcal{O}_X(-m_1) \rightarrow {\rm Gr}_1(\Lambda) \rightarrow 0,
\end{align*}
where
\begin{align*}
V=H^0(\mathcal{X}, {\rm Gr}_1(\Lambda) \otimes \mathcal{E}^{\vee} \otimes \pi^* \mathcal{O}_X(m_1))=H^0(X,F_{\mathcal{E}}({\rm Gr}_1(\Lambda))(m_1)).
\end{align*}
This surjection induces the following one
\begin{align*}
V \otimes \mathcal{E} \otimes (\mathcal{G}_i / \mathcal{G}_{i-1}) \otimes \pi^* \mathcal{O}_X(-m_1) \rightarrow \mathcal{G}_{i+1}/\mathcal{G}_i \rightarrow 0.
\end{align*}
We take the quotient $\mathcal{O}_{\mathcal{X}}$-module $\mathcal{Q}_i$ of $\mathcal{G}_i$ with smallest reduced modified Hilbert polynomial of any quotient $\mathcal{O}_{\mathcal{X}}$-module, $0 \leq i \leq r$. By definition of destabilizing sheaf, $\mathcal{Q}_0=\mathcal{G}_0=\mathcal{G}$.

We discuss the quotient $\mathcal{Q}_{i+1}$ in the following two cases.
\begin{enumerate}
\item If $\mathcal{Q}_{i+1}$ has a nontrivial subsheaf which is a quotient of $\mathcal{G}_{i}$, by the smallest property of $\mathcal{Q}_i$, we have
\begin{align*}
p_{\mathcal{E}}(\mathcal{Q}_{i}) \leq p_{\mathcal{E}}(\mathcal{Q}_{i+1}).
\end{align*}
This inequality induces that $\hat{\mu}_{\mathcal{E}}(\mathcal{Q}_{i}) \leq \hat{\mu}_{\mathcal{E}}(\mathcal{Q}_{i+1})$.
\item Otherwise, $\mathcal{Q}_{i+1}$ is a quotient of $\mathcal{G}_{i+1} / \mathcal{G}_{i}$. By the surjection
\begin{align*}
V \otimes \mathcal{E} \otimes (\mathcal{G}_i / \mathcal{G}_{i-1}) \otimes \pi^* \mathcal{O}_X(-m_1) \rightarrow \mathcal{G}_{i+1}/\mathcal{G}_i \rightarrow 0,
\end{align*}
we obtain that $\mathcal{Q}_{i+1}$ is a quotient of $V \otimes \mathcal{E} \otimes (\mathcal{G}_i / \mathcal{G}_{i-1})\otimes \pi^* \mathcal{O}_X(-m_1)$, we have
    \begin{align*}
    p_{\mathcal{E}}(V \otimes \mathcal{E} \otimes \mathcal{Q}_{i}\otimes \pi^* \mathcal{O}_X(-m_1) ) \leq p_{\mathcal{E}}(\mathcal{Q}_{i+1}).
    \end{align*}
    Equivalently,
    \begin{align*}
    p_{\mathcal{E}}(V \otimes \mathcal{E} \otimes \mathcal{Q}_{i},n-m_1) \leq p_{\mathcal{E}}(\mathcal{Q}_{i+1},n).
    \end{align*}
    Therefore,
    \begin{align*}
    \hat{\mu}_{\mathcal{E}}(\mathcal{E} \otimes \mathcal{Q}_{i})-m_1 \leq \hat{\mu}_{\mathcal{E}}(\mathcal{Q}_{i+1}).
    \end{align*}
    For the generating sheaf $\mathcal{E}$, we can find a surjection
    \begin{align*}
    \mathcal{O}_{\mathcal{X}}(-m_{\mathcal{E}})^{\oplus N} \twoheadrightarrow \mathcal{E}.
    \end{align*}
    Therefore, we have
    \begin{align*}
    \hat{\mu}_{\mathcal{E}}(\mathcal{Q}_{i}) -m_1 -m_{\mathcal{E}}=
    \hat{\mu}_{\mathcal{E}}(\mathcal{O}_{\mathcal{X}}(-m_{\mathcal{E}})^{\oplus N} \otimes \mathcal{Q}_{i}) -m_1 \leq \hat{\mu}_{\mathcal{E}}(\mathcal{E} \otimes \mathcal{Q}_{i})-m_1 \leq \hat{\mu}_{\mathcal{E}}(\mathcal{Q}_{i+1})
    \end{align*}
\end{enumerate}
Take $b=m_1+m_{\mathcal{E}}$. In conclusion, we always have
\begin{align*}
\hat{\mu}_{\mathcal{E}}(\mathcal{Q}_{i})-b \leq \hat{\mu}_{\mathcal{E}}(\mathcal{Q}_{i+1}).
\end{align*}
Taking the sum over $i$, we have
\begin{align*}
\hat{\mu}_{\mathcal{E}}(\mathcal{Q}_{0}) \leq \hat{\mu}_{\mathcal{E}}(\mathcal{Q}_{r})+br.
\end{align*}
Now we consider the polynomial $p_{\mathcal{E}}(\mathcal{Q}_{r})$. By the definition of $\mathcal{Q}_{r}$, we have $p_{\mathcal{E}}(\mathcal{Q}_{r}) \leq p_{\mathcal{E}}(\mathcal{G}_{r})$. Moreover, we have $p_{\mathcal{E}}(\mathcal{G}_{r}) \leq p_{\mathcal{E}}((\mathcal{G}_{r})_{\rm sat})$, where $(\mathcal{G}_{r})_{\rm sat}$ is the saturation of $\mathcal{G}_{r}$. By Lemma \ref{405}, the sheaf $\mathcal{G}_{r}$ is a $\Lambda$-module. Thus, we have
\begin{align*}
p_{\mathcal{E}}(\mathcal{Q}_{r}) \leq p_{\mathcal{E}}(\mathcal{G}_{r}) \leq p_{\mathcal{E}}((\mathcal{G}_{r})_{\rm sat}) \leq p_{\mathcal{E}}(\mathcal{F}).
\end{align*}
The above inequalities of reduced modified Hilbert polynomials imply the following inequalities of slopes
\begin{align*}
\hat{\mu}_{\mathcal{E}}(\mathcal{Q}_{r}) \leq \hat{\mu}_{\mathcal{E}}(\mathcal{G}_{r}) \leq \hat{\mu}_{\mathcal{E}}((\mathcal{G}_{r})_{\rm sat}) \leq \hat{\mu}_{\mathcal{E}}(\mathcal{F}).
\end{align*}
Note that $\hat{\mu}_{\mathcal{E}}(\mathcal{Q}_{0})$ is exactly $\hat{\mu}_{\mathcal{E}}(\mathcal{G})$. We have
\begin{align*}
\hat{\mu}_{\mathcal{E}}(\mathcal{G})=\hat{\mu}_{\mathcal{E}}(\mathcal{Q}_{0}) \leq \hat{\mu}_{\mathcal{E}}(\mathcal{Q}_{r})+br \leq \hat{\mu}_{\mathcal{E}}(\mathcal{F})+br.
\end{align*}
\end{proof}

\begin{prop}\label{407}
Let $\mathcal{X}$ be a projective stack with polarizations $\mathcal{E}$, $\mathcal{O}_X(1)$. Let $\widetilde{\mathfrak{F}}^{ss}_{\Lambda}(P)$ be the set of $p$-semistable $\Lambda$-sheaves of pure dimension $d$ with the modified Hilbert polynomial $P$. There is an upper-bound for the set
\begin{align*}
\{h^0(\mathcal{X},\mathcal{F}\otimes \mathcal{E}^{\vee} \otimes \pi^* \mathcal{O}_{\mathcal{X}}(m) ) \text{ } | \text{ } \mathcal{F} \in \widetilde{\mathfrak{F}}^{ss}_{\Lambda}(P)\}.
\end{align*}
The upper-bound depends on the following data
\begin{itemize}
\item $P$: the polynomial;
\item $\widetilde{m}$: the integer such that $\pi_* \mathcal{E}nd_{\mathcal{O}_{\mathcal{X}}}(\mathcal{E})(\widetilde{m})$ is generated by global sections;
\item $\deg(\mathcal{O}_X(1))$: the degree of $\mathcal{O}_X(1)$;
\item $b$: the integer found in Lemma \ref{406}.
\end{itemize}
\end{prop}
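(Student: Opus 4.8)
The plan is to reduce the bound on $\mathcal{X}$ to a bound on the coarse moduli space $X$ via the exact functor $F_{\mathcal{E}}$, and then to control the global sections by passing to an ordinary Harder-Narasimhan filtration and invoking the slope estimate of Lemma \ref{406}. First I would rewrite the quantity to be bounded: by the exactness of $\pi_*$,
\begin{align*}
h^0(\mathcal{X},\mathcal{F}\otimes \mathcal{E}^{\vee} \otimes \pi^* \mathcal{O}_{X}(m)) = h^0(X, F_{\mathcal{E}}(\mathcal{F})(m)).
\end{align*}
The essential difficulty is that a $p$-semistable $\Lambda$-module need not be a $p$-semistable coherent sheaf, so Corollary \ref{312} and Remark \ref{313} cannot be applied to $\mathcal{F}$ directly.

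To get around this, I would pass to the ordinary (non-$\Lambda$) Harder-Narasimhan filtration of the underlying pure sheaf $\mathcal{F}$, with factors $\mathcal{H}_i = gr_i^{\rm HN}(\mathcal{F})$. Each $\mathcal{H}_i$ is a pure $p$-semistable coherent sheaf of dimension $d$, so Corollary \ref{312} applies to each factor and gives
\begin{align*}
h^0(X, F_{\mathcal{E}}(\mathcal{H}_i)(m)) \leq r(\mathcal{H}_i)\frac{(\hat{\mu}_{\mathcal{E}}(\mathcal{H}_i)+B+m)^d}{d!},
\end{align*}
where $B$ depends only on $r$, $\widetilde{m}$, $\deg(\mathcal{O}_X(1))$ and $d$ (and the right-hand side remains a valid upper bound in the vanishing case of the corollary). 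By the exactness of $F_{\mathcal{E}}$ and additivity along the filtration, $h^0(X, F_{\mathcal{E}}(\mathcal{F})(m)) \leq \sum_i h^0(X, F_{\mathcal{E}}(\mathcal{H}_i)(m))$.

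The key step is to cap the slopes $\hat{\mu}_{\mathcal{E}}(\mathcal{H}_i)$ uniformly. Since ${\rm HN}_1(\mathcal{F})=\mathcal{H}_1$ is the destabilizing subsheaf, we have $\hat{\mu}_{\mathcal{E}}(\mathcal{H}_i)\leq \hat{\mu}_{\max}(\mathcal{F})=\hat{\mu}_{\mathcal{E}}(\mathcal{H}_1)$ for all $i$, and applying Lemma \ref{406} to the subsheaf $\mathcal{H}_1\subseteq \mathcal{F}$ yields $\hat{\mu}_{\mathcal{E}}(\mathcal{H}_1)\leq \hat{\mu}_{\mathcal{E}}(\mathcal{F})+br$. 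As $\sum_i r(\mathcal{H}_i)=r$, combining these estimates gives
\begin{align*}
h^0(X, F_{\mathcal{E}}(\mathcal{F})(m)) \leq r\,\frac{(\hat{\mu}_{\mathcal{E}}(\mathcal{F})+br+B+m)^d}{d!}.
\end{align*}
Finally, since $\mathcal{F}\in \widetilde{\mathfrak{F}}^{ss}_{\Lambda}(P)$, both the rank $r$ and the slope $\hat{\mu}_{\mathcal{E}}(\mathcal{F})$ are determined by $P$ (cf. Remark \ref{313}), so for each fixed $m$ the right-hand side is a single number depending only on $P$, $\widetilde{m}$, $\deg(\mathcal{O}_X(1))$ and $b$, as asserted. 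I expect the main obstacle to be precisely this passage through Lemma \ref{406}: without a slope bound valid for \emph{arbitrary} subsheaves of a $\Lambda$-semistable module, the Harder-Narasimhan factors of the underlying sheaf could in principle have unbounded slope and no uniform estimate would survive. Once Lemma \ref{406} is in hand, the remaining steps are routine applications of Corollary \ref{312} and the exactness of $F_{\mathcal{E}}$.
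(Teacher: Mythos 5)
Your proposal is correct and follows essentially the same route as the paper: take the ordinary Harder--Narasimhan filtration of the underlying sheaf, bound $h^0$ of each factor by Corollary \ref{312}, and control the slopes of the factors via Lemma \ref{406} before summing. Your handling of the slope cap is in fact slightly more careful than the paper's (you reduce to the genuine subsheaf ${\rm HN}_1(\mathcal{F})$ before invoking Lemma \ref{406}, whereas the paper applies it to the quotient factors directly), but this is a presentational refinement of the same argument.
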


Note that the dimension $d$ and the rank $r$ are determined by the given integer polynomial $P$.

\begin{proof}
Let $\mathcal{F}$ be an element in $\widetilde{\mathfrak{F}}^{ss}_{\Lambda}$. Although $\mathcal{F}$ is a $p$-semistable $\Lambda$-module, the coherent sheaf $\mathcal{F}$ may not be $p$-semistable as an $\mathcal{O}_{\mathcal{X}}$-module. Thus we take the Harder-Narasimhan filtration
\begin{align*}
0 = \mathcal{F}_0 \subseteq \mathcal{F}_1 \subseteq \cdots \subseteq \mathcal{F}_l =\mathcal{F}
\end{align*}
of $\mathcal{F}$. Denote by $gr_i^{\rm HN}(\mathcal{F})=\mathcal{F}_{i}/\mathcal{F}_{i-1}$ the quotient sheaf, where $1 \leq i \leq l$. Let $r_i$ be the rank of $F_{\mathcal{E}}(gr_i^{\rm HN}(\mathcal{F}))$. By Corollary \ref{312}, we know that there is an integer $B_i$ such that
\begin{align*}
h^0(\mathcal{X},\mathcal{F}_{i}/\mathcal{F}_{i-1} \otimes \mathcal{E}^{\vee} \otimes \pi^* \mathcal{O}_{\mathcal{X}}(m)) \leq r_i \frac{ (\hat{\mu}_{\mathcal{E}}(\mathcal{F}_{i}/\mathcal{F}_{i-1})+m+B_i)^d }{ d! }.
\end{align*}
By Lemma \ref{406}, we can find an integer $b_i$, which only depends on $\Lambda$, $\mathcal{E}$ and $r$, such that
\begin{align*}
\hat{\mu}_{\mathcal{E}}(\mathcal{F}_{i}/\mathcal{F}_{i-1}) \leq \hat{\mu}_{\mathcal{E}}(\mathcal{F})+b_i.
\end{align*}
Take $B={\rm sup}\{b_i+B_i, 1 \leq i \leq l\}$. We have the following inequality
\begin{align*}
h^0(\mathcal{X},\mathcal{F}_{i}/\mathcal{F}_{i-1} \otimes \mathcal{E}^{\vee}\otimes \pi^* \mathcal{O}_{\mathcal{X}}(m) ) \leq r_i \frac{ ( \hat{\mu}_{\mathcal{E}}(\mathcal{F})+m+B )^d }{d!}.
\end{align*}
Thus we have
\begin{align*}
h^0(\mathcal{X},\mathcal{F}\otimes \mathcal{E}^{\vee}\otimes \pi^* \mathcal{O}_{\mathcal{X}}(m) ) \leq \sum_{i=1}^{l} h^0(\mathcal{X},\mathcal{F}_{i}/\mathcal{F}_{i-1} \otimes \mathcal{E}^{\vee}\otimes \pi^* \mathcal{O}_{\mathcal{X}}(m)) \leq r \frac{ ( \hat{\mu}_{\mathcal{E}}(\mathcal{F})+m+B )^d }{d!},
\end{align*}
where $r$ is the rank of $F_{\mathcal{E}}(\mathcal{F})$. Note that $r$, $d$ and $\hat{\mu}_{\mathcal{E}}(\mathcal{F})$ are uniquely determined by the given polynomial $P$. This finishes the proof of this lemma.
\end{proof}

\section{$\Lambda$-Quot-Functors}
In this section, we define the $\Lambda$-quot-functor on Deligne-Mumford stacks and prove one of the main results in this paper that the $\Lambda$-quot-functor is represented by an algebraic space (Theorem \ref{501}). The method of proving this property is based on a theorem by Artin \cite[Theorem 5.3]{Art}. This theorem states that a functor is representable by an algebraic space if the functor satisfies a series of conditions. Therefore, proving the representability of the $\Lambda$-quot-functor is equivalent to check all conditions in the theorem by Artin. As an application of this result, we show that the $\Lambda$-quot-fucntor on a projective Deligne-Mumford stack is represented by a quasi-projective scheme (see Theorem \ref{511}).

In \S 5.1, we give the definitions, and state the main theorem (Theorem \ref{501}). In \S 5.2, we review the Artin's theorem (Theorem \ref{502}) and some necessary backgrounds about the conditions listed in the theorem. There are many good references about the background here. We refer the reader to \cite{Art,CasaWise,Hall} for more details.

The proof of the representability of the $\Lambda$-quot-functor is discussed in \S 5.3. We discuss the deformation and obstruction theory of $\Lambda$-quot-functors carefully in \S 5.3.4 and \S 5.3.5, and the deformation theory will help us to calculate the tangent space of the moduli space of $\Lambda$-modules in \S 6.

After we finish the proof of Theorem \ref{501}, we consider the case that $\mathcal{X}$ is a projective Deligne-Mumford stack on a scheme $S$ in \S 5.4. In this case, the $\Lambda$-quot-functor is represented by a quasi-projective scheme. Olsson and Starr considered these problems for quot-functors \cite[\S 6]{OlSt}, and we extend their approach to $\Lambda$-quot-functors. At the end of this section, we prove the boundedness of the family of $p$-semistable $\Lambda$-modules (Corollary \ref{514} in \S 5.5).

\subsection{Definitions and Results}
Let $S$ be an algebraic space, which is locally of finite type over an algebraically closed field $k$, and let $\mathcal{X}$ be a separated and locally finitely-presented Deligne-Mumford stack over $S$. Denote by $(\text{Sch}/S)$ the category of $S$-schemes with respect to the big \'etale topology or fppf topology. Let $\Lambda$ be a sheaf of graded algebras. We take a coherent $\mathcal{O}_{\mathcal{X}}$-module $\mathcal{G}$, which is not necessary to be a $\Lambda$-module, and define the functor
\begin{align*}
\widetilde{{\rm Quot}}_{\Lambda}(\mathcal{G},\mathcal{X}): (\text{Sch}/S)^{{\rm op}} \rightarrow \text{Set}
\end{align*}
as follows. For each $S$-scheme $T$, we define $\widetilde{{\rm Quot}}_{\Lambda}(\mathcal{G},\mathcal{X})(T)$ to be the set of $\mathcal{O}_{\mathcal{X}_T}$-module quotients $\mathcal{G}_T \rightarrow \mathcal{F}_T$ such that
\begin{enumerate}
\item $\mathcal{F}_T \in \widetilde{{\rm Quot}}(\mathcal{G})(T)$;
\item $\mathcal{F}_T$ is a $\Lambda_{T}$-module.
\end{enumerate}
The functor $\widetilde{{\rm Quot}}_{\Lambda}(\mathcal{G},\mathcal{X})$ is called the \emph{$\Lambda$-quot-functor}, and we will use the notation $\widetilde{{\rm Quot}}_{\Lambda}(\mathcal{G})$ for simplicity.

Recall that the moduli space of Higgs bundles over a smooth projective variety is an algebraic stack (see \cite[\S 7]{CasaWise}). With the same idea and proof, the $\Lambda$-quot-functor $\widetilde{{\rm Quot}}_{\Lambda}(\mathcal{G})$ has a natural stack structure. In other words, the $\Lambda$-quot-functor is a sheaf with respect to the big \'etale topology of $(\text{Sch}/S)$.

Given a coherent sheaf $\mathcal{F}$, $\mathcal{F}$ can be equipped with distinct $\Lambda$-structures, which are defined by the action of $\Lambda$ on $\mathcal{F}$. As an example, when $\Lambda$ acts trivially on $\mathcal{F}$, the action of $\Lambda$ on $\mathcal{F}$ is the same as $\mathcal{O}_{\mathcal{X}}$ on $\mathcal{F}$. In this case, the $\Lambda$-module $\mathcal{F}$ is exactly the same as its $\mathcal{O}_{\mathcal{X}}$-module structure. A $\Lambda$-structure on $\mathcal{F}$ is given by a morphism $\Lambda \rightarrow \mathcal{E}nd(\mathcal{F})$. The set of all morphisms
\begin{align*}
{\rm Hom}_{\mathcal{O}_{\mathcal{X}}}(\Lambda,\mathcal{E}nd(\mathcal{F})) \cong {\rm Hom}_{\mathcal{O}_{\mathcal{X}}}(\Lambda \otimes \mathcal{F},\mathcal{F})
\end{align*}
gives us all possible $\Lambda$-structures on $\mathcal{F}$. Thus a $\Lambda$-module $\mathcal{F}$ is a pair $(\mathcal{F},\Phi)$, where $\mathcal{F}$ is a coherent sheaf and $\Phi: \Lambda \otimes \mathcal{F} \rightarrow \mathcal{F}$ is a morphism. Based on the discussion above, $\widetilde{{\rm Quot}}_{\Lambda}(\mathcal{G})(T)$ is the set of pairs $(\mathcal{F}_T,\Phi_T)$ such that
\begin{enumerate}
\item $\mathcal{G}_T \rightarrow \mathcal{F}_T \in \widetilde{{\rm Quot}}(\mathcal{G})(T)$;
\item $\Phi_T: \Lambda_T \otimes \mathcal{F}_T \rightarrow \mathcal{F}_T$ is an $\mathcal{O}_{\mathcal{X}_T}$-morphism.
\end{enumerate}

By definition, we know that $\Lambda$ is a sheaf of graded algebras, which may not be a coherent sheaf. When constructing the moduli space and proving some properties, a coherent sheaf is a better option. Note that ${\rm Gr}_1(\Lambda)$ is a coherent sheaf and ${\rm Gr}_1(\Lambda)$ contains all generators of $\Lambda$. Therefore, a $\Lambda$-structure on $\mathcal{F}$ induces a ${\rm Gr}_1(\Lambda)$-structure on $\mathcal{F}$. We have the following injective map
\begin{align*}
{\rm Hom}_{\mathcal{O}_{\mathcal{X}}}(\Lambda,\mathcal{E}nd(\mathcal{F})) \hookrightarrow {\rm Hom}_{\mathcal{O}_{\mathcal{X}}}({\rm Gr}_1(\Lambda),\mathcal{E}nd(\mathcal{F}))
\end{align*}
as we discussed in \S 4.1. Thus we work on the morphism ${\rm Gr}_1(\Lambda) \rightarrow \mathcal{E}nd(\mathcal{F})$ in some special situations.

The goal of this section is to prove the following theorem.
\begin{thm}\label{501}
The $\Lambda$-quot-functor $\widetilde{{\rm Quot}}_{\Lambda}(\mathcal{G})$ is represented by a separated and locally finitely presented algebraic space.
\end{thm}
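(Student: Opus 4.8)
The plan is to verify the hypotheses of Artin's representability criterion (Theorem \ref{502}) for the functor $\widetilde{{\rm Quot}}_\Lambda(\mathcal{G})$. Since the excerpt already records that $\widetilde{{\rm Quot}}(\mathcal{G})$ is representable by a separated, locally finitely-presented algebraic space (Theorem \ref{301}), I would exploit the forgetful natural transformation $\widetilde{{\rm Quot}}_\Lambda(\mathcal{G}) \to \widetilde{{\rm Quot}}(\mathcal{G})$ sending a pair $(\mathcal{F}_T, \Phi_T)$ to the underlying quotient $[\mathcal{G}_T \to \mathcal{F}_T]$. The fiber over a fixed quotient is the set of $\Lambda_T$-module structures on $\mathcal{F}_T$, i.e. morphisms $\Phi_T \colon \Lambda_T \otimes \mathcal{F}_T \to \mathcal{F}_T$ satisfying the unit and associativity axioms. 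Because $\Lambda$ is generated in degree one by the coherent sheaf ${\rm Gr}_1(\Lambda)$ (condition $(6)$ of \S 4.1), such a structure is determined by a single morphism ${\rm Gr}_1(\Lambda)_T \otimes \mathcal{F}_T \to \mathcal{F}_T$ subject to finitely many relations, so the extra datum is of a coherent, finite-type nature throughout, and the representability of $\widetilde{{\rm Quot}}(\mathcal{G})$ can be leveraged at each step.

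For the first three of Artin's conditions I would argue as follows. \emph{Local finite presentation}: the plain quot-functor is limit-preserving, and the locus of admissible $\Phi$ is cut out inside a $\mathrm{Hom}$-module by the algebra relations, both of which commute with filtered colimits of rings; hence $\widetilde{{\rm Quot}}_\Lambda(\mathcal{G})$ is locally of finite presentation. \emph{Effectivity (integrability)}: given a complete local noetherian $S$-ring $A = \varprojlim A/\mathfrak{m}^{n}$ and a compatible system $\{(\mathcal{F}_n, \Phi_n)\}$, the quotients $\mathcal{F}_n$ algebraize to a quotient over $A$ by effectivity for $\widetilde{{\rm Quot}}(\mathcal{G})$, while the inverse system of morphisms $\Phi_n$ assembles to a morphism $\Phi$ over $A$ because $\mathrm{Hom}_{\mathcal{O}_{\mathcal{X}_A}}(\Lambda_A \otimes \mathcal{F}, \mathcal{F})$ is $\mathfrak{m}$-adically complete by Grothendieck existence applied to the proper support of $\mathcal{F}$; the algebra axioms pass to the limit, so the effective $\Lambda$-structure exists. \emph{Separation}: the diagonal of $\widetilde{{\rm Quot}}(\mathcal{G})$ is a closed immersion, and the condition that two $\Lambda$-structures on a fixed quotient agree is a closed condition, so the diagonal of the $\Lambda$-quot-functor is again a closed immersion.

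The substantive step is the deformation and obstruction theory. Following the idea of Biswas and Ramanan \cite{BisRam}, I would attach to a pair $(\mathcal{F}, \Phi)$ a two-term complex whose degree-zero term governs infinitesimal deformations of the quotient, controlled by $\mathcal{H}om(\mathcal{K}, \mathcal{F})$ for $\mathcal{K} = \ker(\mathcal{G} \to \mathcal{F})$, and whose degree-one term records the compatibility of the deformed $\Lambda$-action, with differential built from the commutator with $\Phi$. The first hypercohomology of this complex would serve as the tangent space and the second as the obstruction space. I would then check that these hypercohomology groups are coherent and finitely generated, depend functorially and linearly on the test square-zero extension, and are compatible with the base changes and localizations required by the criterion. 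The main obstacle I expect is precisely this last verification: one must show that the deformation of the quotient and the deformation of the $\Lambda$-action are coupled correctly, that an obstruction class in the second hypercohomology vanishes exactly when a lift of $(\mathcal{F}, \Phi)$ exists, and that all of these coherent-cohomology groups satisfy the homogeneity and compatibility requirements of Artin's theorem over arbitrary base rings rather than only over a field.
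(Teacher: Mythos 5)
Your overall strategy coincides with the paper's: verify Artin's criterion (Theorem \ref{502}), piggyback on the known representability of $\widetilde{{\rm Quot}}(\mathcal{G})$ for the formal conditions, and control deformations by a Biswas--Ramanan two-term complex. Your treatment of conditions (1)--(4) is essentially what the paper does in \S 5.3.1--5.3.3: local finite presentation via the compatibility of $\mathrm{Hom}(\Lambda\otimes\mathcal{F},\mathcal{F})$ with filtered colimits, effectivity via the comparison of $\mathrm{Ext}$ (resp.\ $\mathrm{Hom}$) groups between $\mathcal{X}$ and its formal completion (the paper cites Lemma \ref{503} rather than invoking Grothendieck existence directly, but the content is the same), and separation by reduction to the quot-functor together with the rigidity of the $\Lambda$-structure once the underlying quotient is fixed.

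The genuine gap is that the deformation--obstruction theory, which is the substance of the proof, is only announced. You name the coupling of the quotient deformation with the deformation of the $\Lambda$-action as ``the main obstacle you expect'' but do not resolve it, whereas this is exactly where the paper does its work. Concretely, the paper's complex is
\begin{align*}
C^{\bullet}: \ C^0=\mathcal{E}nd(\mathcal{F})[M]\ \xrightarrow{\ e(\Phi)\ }\ C^1=\mathcal{H}om(\Lambda,\mathcal{E}nd(\mathcal{F})[M]),\qquad \bigl(e(\Phi)(s)\bigr)(\lambda)=-\rho(s)(\Phi(\lambda)),
\end{align*}
and Proposition \ref{504} identifies $\mathcal{M}_{\xi}(A_0[M])$ with $\mathbb{H}^1(C^{\bullet})$ by an explicit \v{C}ech cocycle computation with pairs $(s_{ij},t_i)$; the obstruction lives in $\mathbb{H}^2$. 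Your proposed complex, with $\mathcal{H}om(\mathcal{K},\mathcal{F})$ in degree zero for $\mathcal{K}=\ker(\mathcal{G}\to\mathcal{F})$, is a different object --- it builds in the quotient datum that the paper's complex does not --- and you never write down its differential, prove that its first (hyper)cohomology computes $\mathcal{M}_{\xi}(A_0[M])$, or verify Artin's conditions (5a)--(5c) and (6a)--(6c) for it: finiteness of $D$ via the finiteness theorem for coherent cohomology on Deligne--Mumford stacks, compatibility with localization and generic base change (the paper uses Artin's Lemmas 6.8--6.9, quoted as Lemma \ref{507}), the free action of $D$ on $\widetilde{F}_{\xi}(A')$, homogeneity via relative homogeneity of the forgetful map to $\widetilde{{\rm Quot}}(\mathcal{G})$ using the representability of $\underline{\mathscr{H}om}$, and the generic obstructedness statement (6c). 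Until the complex is pinned down and these verifications are carried out, the proposal is a plan rather than a proof.
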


Denote by ${\rm Quot}_{\Lambda}(\mathcal{G})$ the algebraic space representing $\widetilde{{\rm Quot}}_{\Lambda}(\mathcal{G})$.

\subsection{A Theorem by Artin}
Before we prove Theorem \ref{501}, we review some properties of a moduli problem and a theorem by Artin \cite[Theorem 5.3]{Art}. We will use the Artin's theorem to prove Theorem \ref{501}. In this subsection, we always assume that $S$ is an algebraic space, which is locally of finite type over an algebraically closed field $k$, and a moduli problem
\begin{align*}
\widetilde{F}: (\text{Sch}/S)^{\rm op} \rightarrow \text{Set}
\end{align*}
is a presheaf over $(\text{Sch}/S)$ with respect to the big \'etale topology or fppf topology, which can be considered as a category fibered in groupoids.

\subsubsection{\textbf{Locally of Finite Presentation}}
Let $\widetilde{F} \rightarrow \widetilde{G}$ be a morphism of moduli problems, which is considered as a morphism between CFG. We say that the morphism is \emph{locally of finite presentation}, if for every filtered colimit of $\mathcal{O}_S$-algebras $A= \lim\limits_{\longrightarrow} A_i$ and every commutative diagram
\begin{center}
\begin{tikzcd}
\Spec A \arrow[d] \arrow[r]   & \widetilde{F} \arrow[d]   \\
\lim\limits_{\longrightarrow} \Spec A_i \arrow[r] \arrow[ru, dashrightarrow, "\exists !"]& \widetilde{G}
\end{tikzcd},
\end{center}
there exists a unique dashed arrow lifting the morphism $\lim\limits_{\longrightarrow} \Spec A_i \rightarrow \widetilde{G}$. A moduli problem $\widetilde{F}$ is \emph{locally of finite presentation} if the morphism $\widetilde{F} \rightarrow S$ is locally of finite presentation.

\subsubsection{\textbf{Integrability (Effectivity)}}
There are various definitions of \emph{integrability}, which is also called \emph{effectivity}. Most of the definitions of integrability are similar, and the difference comes the condition on the map
\begin{align*}
\widetilde{F}(\bar{A})\rightarrow \lim\limits_{\longleftarrow}\widetilde{F}(\bar{A}/\mathfrak{m}^{n+1}), \quad n \geq 1.
\end{align*}
The one we take in this paper comes from \cite{Art}. We refer the readers to \cite{CasaWise,Hall} for the other definitions of integrability.

Let $\widetilde{F}: (\text{Sch}/S)^{\rm op} \rightarrow \text{Set}$ be a moduli problem. Let $\bar{A}$ be a complete noetherian local $\mathcal{O}_S$-algebra and denote by $\mathfrak{m}$ the maximal ideal of $\bar{A}$. We prefer to use the notation $F(\bar{A})$ instead of $F(\Spec (\bar{A}))$. Given a positive integer $n$, as a contravariant functor (for $S$-schemes), we have a natural map $\widetilde{F}(\bar{A})\rightarrow \widetilde{F}(\bar{A}/\mathfrak{m}^{n+1})$. Thus we have a canonical map
\begin{align*}
\widetilde{F}(\bar{A})\rightarrow \lim\limits_{\longleftarrow}\widetilde{F}(\bar{A}/\mathfrak{m}^{n+1}).
\end{align*}
Let $(\mathcal{F}_{n})_{n \geq 1}$ be an element in $\lim\limits_{\longleftarrow}\widetilde{F}(\bar{A}/\mathfrak{m}^{n+1})$. If there is an element $\mathcal{F}' \in \widetilde{F}(\bar{A})$ such that $\mathcal{F}'$ induces $\mathcal{F}_1 \in \widetilde{F}(\bar{A}/\mathfrak{m}^2)$, then we say that the map $\widetilde{F}(\bar{A})\rightarrow \lim\limits_{\longleftarrow}\widetilde{F}(\bar{A}/\mathfrak{m}^{n+1})$ has a \emph{dense image}.

The moduli problem $\widetilde{F}$ is \emph{integrable} if for every complete noetherian local ring $\bar{A}$, the canonical map $\widetilde{F}(\bar{A})\rightarrow \lim\limits_{\longleftarrow}\widetilde{F}(\bar{A}/\mathfrak{m}^{n+1})$ is injective and the image is dense in $\lim\limits_{\longleftarrow}\widetilde{F}(\bar{A}/\mathfrak{m}^{n+1})$.

\subsubsection{\textbf{Homogeneity}}
An \emph{infinitesimal extension} of $S$-schemes is a closed embedding $T \hookrightarrow T'$ such that the ideal sheaf $I_{T/T'}$ is nilpotent. A well-known example is the ring of dual numbers $\mathbb{D}=\mathbb{Z}[\epsilon]/(\epsilon^2)$. There is a natural embedding $T \hookrightarrow T[\epsilon]:=T \times_{\Spec (\mathbb{Z})} \Spec (\mathbb{D})$, of which the ideal is nilpotent.

Let $T \hookrightarrow T'$ be an infinitesimal extension of $S$-schemes and let $f: T \rightarrow R$ be an affine $S$-morphism. Then there is a universal $S$-scheme $R'$ completing the following diagram \cite[\S 2.1]{Wise}
\begin{center}
\begin{tikzcd}
T \arrow[d, "f"] \arrow[r, hook]   & T' \arrow[d, dashrightarrow]   \\
R \arrow[r, dashrightarrow] & R'
\end{tikzcd}.
\end{center}
A CFG $\widetilde{F}$ is \emph{homogeneous} if for each diagram above, the natural morphism
\begin{align*}
\widetilde{F}(T) \rightarrow \widetilde{F}(T') \times_{\widetilde{F}(R')} \widetilde{F}(R)
\end{align*}
is an equivalence of categories. The \emph{homogeneity} is also called the \emph{Schlessinger's condition} \cite{Sch}.

Let $f: \widetilde{F} \rightarrow \widetilde{F}'$ be a morphism of moduli problems. We say that $f$ is \emph{homogeneous}, or that $\mathcal{X}$ is \emph{homogeneous} over $\mathcal{Y}$, if for any $S$-scheme $T$ and any morphism $T \rightarrow \widetilde{F}'$, the fiber product $\widetilde{F} \times_{\widetilde{F}'} T$ is homogeneous. This property is also called the \emph{relative homogeneity}.

Now we consider an example. Let $S$ be an algebraic space, which is locally of finite type over an algebraically closed field $k$ and let $\mathcal{X}$ be a separated and locally finitely-presented Deligne-Mumford stack over $S$. Let $\mathcal{E}$ and $\mathcal{F}$ be two quasi-coherent sheaves over $\mathcal{X}$. Denote by
\begin{align*}
\underline{\mathscr{H}om}(\mathcal{E},\mathcal{F}): (\text{Sch}/S)^{\rm op} \rightarrow {\rm Set}
\end{align*}
the moduli problem of homomorphisms between $\mathcal{E}$ and $\mathcal{F}$ such that
\begin{align*}
\underline{\mathscr{H}om}(\mathcal{E},\mathcal{F})(T):={\rm Hom}(\mathcal{E}_T,\mathcal{F}_T)
\end{align*}
for every $S$-scheme $T$. The moduli problem $\underline{\mathscr{H}om}(\mathcal{E},\mathcal{F})$ is representable by algebraic spaces locally of finite type \cite[Proposition 2.3]{Lie}, which also implies that $\underline{\mathscr{H}om}(\mathcal{E},\mathcal{F})$ satisfies the Schlessinger's condition.

\subsubsection{\textbf{Deformation Theory}}
An \emph{infinitesimal extension} of an $\mathcal{O}_{S}$-algebra $A$ is a surjective map of $\mathcal{O}_{S}$-algebras $A' \twoheadrightarrow A$ such that the kernel $M=\text{ker}(A' \rightarrow A)$ is a finitely generated nilpotent ideal.

Let $\widetilde{F}: (\text{Sch}/S)^{\rm op} \rightarrow \text{Set}$ be a functor. Let $A_0$ be a noetherian $\mathcal{O}_{S}$-domain. A \emph{deformation situation} is defined as a triple
\begin{align*}
(A' \rightarrow A \rightarrow A_0, M, \xi)
\end{align*}
where $A' \rightarrow A \rightarrow A_0$ is a diagram of infinitesimal extension, $M=\text{ker}(A' \rightarrow A)$ a finite $A_0$-module and $\xi \in \widetilde{F}(A_0)$. We have a natural map $\widetilde{F}(A) \rightarrow \widetilde{F}(A_0)$. Let $\xi$ be an element in $\widetilde{F}(A_0)$. Denote by $\widetilde{F}_{\xi}(A)$ the set of elements in $\widetilde{F}(A)$ whose image is $\xi \in \widetilde{F}(A_0)$.

The \emph{deformation theory} we consider in this paper is described in \cite[Definition 5.2]{Art}. A \emph{deformation theory} for $\widetilde{F}$ consists of the following data and conditions
\begin{enumerate}
\item A functor associates to every triple $(A_0,M,\xi)$ an $A_0$-module $D=D(A_0,M,\xi)$, and to every map of triples $(A_0,M,\xi) \rightarrow (B_0,N,\eta)$ an linear map $D(A_0,M,\xi)\rightarrow D(B_0,N,\eta)$.
\item For every deformation situation, there is an operation of the additive group of $D(A_0,M,\xi)$ on $\widetilde{F}_{\xi}(A')$ such that two elements are in the same orbit under the operation if and only if they have the same image in $\widetilde{F}_{\xi}(A)$, where $\widetilde{F}_{\xi}(A')$ is the subset of $\widetilde{F}(A')$ of elements whose image in $\widetilde{F}(A_0)$ is $\xi$.
\end{enumerate}

\subsubsection{\textbf{Artin's Theorem}}
\begin{thm}[Theorem 5.3 in \cite{Art}]\label{502}
Let $\widetilde{F}$ be a functor on $({\rm Sch/S})^{\rm op}$. Given a deformation theory for $\widetilde{F}$, then $\widetilde{F}$ is represented by a separated and locally of finite type algebraic space over $S$, if the following conditions hold:
\begin{enumerate}
\item $\widetilde{F}$ is a sheaf with respect to the fppf-topology (or big \'etale topology).
\item $\widetilde{F}$ is locally of finite presentation.
\item $\widetilde{F}$ is integrable.
\item $\widetilde{F}$ satisfies the following conditions of separation.
\begin{enumerate}
\item[{\rm (a)}] Let $A_0$ be a \emph{geometric discrete valuation ring}, which is a localization of a finite type $\mathcal{O}_S$-algebra with residue field of finite type over $\mathcal{O}_S$. Let $K,k$ be its fraction field and residue field respectively. If $\xi,\eta \in \widetilde{F}(A_0)$ induce the same element in $\widetilde{F}(K)$ and $\widetilde{F}(k)$, then $\xi=\eta$.
\item[{\rm (b)}] Let $A_0$ be an $\mathcal{O}_S$-integral domain of finite type. Let $\xi,\eta \in \widetilde{F}(A_0)$. Suppose that there is a dense set $\mathcal{S}$ in $\text{Spec}(A_0)$ such that $\xi=\eta$ in $\widetilde{F}(k(s))$ for all $s \in \mathcal{S}$. Then $\xi=\eta$ on a non-empty open subset of $\text{Spec}(A_0)$.
\end{enumerate}
\item The deformation theory satisfies the following conditions.
\begin{enumerate}
\item[{\rm (a)}] The module $D=D(A_0,M,\xi)$ commutes with localization in $A_0$ and is a finite module when $M$ is free of rank one.
\item[{\rm (b)}] The module operates freely on $F_\xi(A')$ when $M$ is of length one.
\item[{\rm (c)}] Let $A_0$ be an $\mathcal{O}_S$-integral domain of finite type. There is a non-empty open set $U$ of $\text{Spec}(A_0)$ such that for every closed point $s \in U$, we have $$D \otimes_{A_0} k(s)=D(k,M\otimes_{A_0}k(s), \xi_s)$$.
\end{enumerate}

\item Suppose that we have a deformation situation $(A' \rightarrow A \rightarrow A_0,M,\xi)$.
\begin{enumerate}
\item[{\rm (a)}] Let $A_0$ be of finite type and $M$ of length one. Let
\begin{center}
\begin{tikzcd}
B' \arrow[r] \arrow[d]  & B \arrow[r] \arrow[d] & A_0 \arrow[d] \\
A' \arrow[r] & A \arrow[r] & A_0
\end{tikzcd}
\end{center}
be a diagram of infinitesimal extensions of $A_0$ with $B'=A' \times_A B$. If $b \in \widetilde{F}(B)$ is an element lying over $\xi$ whose image $a \in \widetilde{F}(A)$ can be lifted to $\widetilde{F}(A')$, then $b$ can be lifted to $\widetilde{F}(B')$. This condition is the homogeneity.

\item[{\rm (b)}] $A_0$ is a geometric discrete valuation ring with fraction field $K$ and $M$ free of rank one. Denote by $A_K, A'_K$ the localizations of $A,A'$ respectively. If the image of $\xi$ in $\widetilde{F}(A_K)$ can be lifted to $\widetilde{F}(A'_K)$, then its image in $\widetilde{F}(A_0 \times_K A_K)$ can be lifted to $\widetilde{F}(A_0 \times_K A'_K)$.

\item[{\rm (c)}] With the same notations as $6(b)$, let $M$ be a free module of rank $n$ and $\xi \in \widetilde{F}(A)$. Suppose that for every one-dimensional quotient $M^*_K$ of $M_K$ the lifting of $\xi_K$ to $\widetilde{F}(A^*_K)$ is obstructed, where $A'_K \rightarrow A^*_K \rightarrow A_K$ is the extension determined by $M^*_K$. Then there is a non-empty open set $U$ of ${\rm Spec}(A_0)$ such that for every quotient $\epsilon: M  \rightarrow M^*$ of length one with support in $U$,  the lifting of $\xi$ to $\widetilde{F}(A^*)$ is obstructed, where $A' \rightarrow A^* \rightarrow A$ denotes the resulting extension.
\end{enumerate}

\end{enumerate}
\end{thm}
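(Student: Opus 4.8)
The plan is to prove the criterion by constructing, near every finite-type point of $\widetilde{F}$, a smooth presentation by a scheme, and then assembling these presentations into a groupoid whose quotient sheaf is $\widetilde{F}$. Since $\widetilde{F}$ takes values in $\mathrm{Set}$ and is a sheaf for the fppf (or big \'etale) topology by condition $(1)$, its diagonal is automatically a monomorphism; thus to exhibit $\widetilde{F}$ as an algebraic space it suffices to produce a scheme $U$, locally of finite type over $S$, together with a morphism $u\colon U\to\widetilde{F}$ that is formally smooth and locally of finite presentation and jointly surjective onto the finite-type points, and then to show that $R:=U\times_{\widetilde{F}}U$ is representable. The separatedness of the resulting algebraic space will come from condition $(4)$, and the ``locally of finite type'' from condition $(2)$.

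First I would produce formal versal deformations. Fix a point $\xi_0\in\widetilde{F}(k_0)$ with $k_0$ a field of finite type over $\mathcal{O}_S$. The deformation theory attached to $\widetilde{F}$, together with the homogeneity condition $6(a)$, supplies exactly the data needed to apply Schlessinger's criterion \cite{Sch}: the functor of infinitesimal deformations of $\xi_0$ admits a hull, that is, a complete noetherian local $\mathcal{O}_S$-algebra $\bar{A}$ with maximal ideal $\mathfrak{m}$ and a formal element of $\lim\limits_{\longleftarrow}\widetilde{F}(\bar{A}/\mathfrak{m}^{n+1})$ that is versal. Here the finiteness of $D(A_0,M,\xi)$ when $M$ is free of rank one, condition $5(a)$, guarantees that the tangent space of the deformation functor is finite-dimensional, so the hull is noetherian. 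Integrability, condition $(3)$, then upgrades the formal element to an honest $\xi\in\widetilde{F}(\bar{A})$ inducing it.

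Next I would algebraize. Because $\widetilde{F}$ is locally of finite presentation, condition $(2)$, Artin's approximation theorem \cite{Art} applies to the pair $(\bar{A},\xi)$: there is an affine scheme $U=\Spec(B)$ of finite type over $S$, a point $u\in U$ with residue field $k_0$, and $\xi_U\in\widetilde{F}(U)$ agreeing with $\xi$ to arbitrarily high order at $u$. Approximating to sufficiently high order preserves versality, so the morphism $U\to\widetilde{F}$ is formally versal at $u$. The main obstacle is now \emph{openness of versality}: one must show that the locus in $U$ where $U\to\widetilde{F}$ is formally smooth is open, so that after shrinking $U$ the morphism is smooth everywhere. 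This is the technical heart of the argument, and it is precisely where conditions $(5)$ and $(6)$ are consumed. The constructibility statement $5(c)$ identifies $D$ with its fibrewise analogue over an open set, while the obstruction conditions $6(b)$ and $6(c)$ control how obstructions to lifting vary in families; combining these as in \cite{Art}, versality, detected pointwise by the vanishing of a coherent obstruction class, becomes an open condition.

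Finally I would assemble the presentation. Repeating the construction over all finite-type points yields a scheme $U$, locally of finite type over $S$, and a smooth morphism $u\colon U\to\widetilde{F}$ surjective on finite-type points. Set $R:=U\times_{\widetilde{F}}U$. Homogeneity together with the separation conditions $4(a)$ and $4(b)$ shows that $R$ is representable: condition $4(a)$, a valuative-type comparison of generic and special fibres, forces the diagonal of $\widetilde{F}$ to be separated, while condition $4(b)$ provides the constructibility needed to control the locus where two sections of $\widetilde{F}$ coincide, so that $R$ is a separated algebraic space smooth over $U$ via both projections. Then $R\rightrightarrows U$ is a smooth groupoid in algebraic spaces whose quotient sheaf is $\widetilde{F}$, whence $\widetilde{F}$ is an algebraic space. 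The smoothness and surjectivity of $u$ give that $\widetilde{F}$ is locally of finite type over $S$, and condition $(4)$ gives separatedness, completing the proof.
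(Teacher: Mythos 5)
This theorem is imported, not proved: the paper states it as a verbatim quotation of Theorem 5.3 of Artin's \emph{Algebraization of formal moduli I} (its reference \cite{Art}) and offers no argument, so there is no internal proof to compare yours against; the only fair benchmark is Artin's original proof, and your outline reconstructs exactly that strategy — versal hulls from Schlessinger's criterion (homogeneity 6(a) together with the finiteness in 5(a)), effectivization via integrability (3), algebraization by Artin approximation using local finite presentation (2), openness of versality as the technical core consuming 5(c) and 6(b)--(c), and a smooth presentation whose equivalence relation $R = U \times_{\widetilde{F}} U$ is made representable and closed in $U \times U$ by the separation conditions (4), with the slicing/étale-local-section lemma converting the smooth groupoid into an algebraic space. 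One point you would need to tighten in a full write-up: integrability as stated here gives injectivity plus only a \emph{dense} image, i.e.\ the effective element $\xi \in \widetilde{F}(\bar{A})$ is guaranteed to agree with the formal versal family only modulo $\mathfrak{m}^{2}$, so your phrase that integrability ``upgrades the formal element to an honest $\xi$ inducing it'' elides a genuine step — one must argue, as Artin does using the deformation and obstruction conditions, that first-order agreement already preserves formal versality before approximation is applied.
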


We want to remind the reader that Olsson and Starr applied Theorem \ref{502} to prove that the quot-functor $\widetilde{{\rm Quot}}(\mathcal{G})$ is represented by an algebraic space (see Theorem \ref{301} or \cite[Theorem 1.1]{OlSt}). Therefore, the quot-functor $\widetilde{{\rm Quot}}(\mathcal{G})$ satisfies all of the properties in Theorem \ref{502}. To prove that the $\Lambda$-quot-functor is represented by an algebraic space, we have to check that the functor $\widetilde{{\rm Quot}}_{\Lambda}(\mathcal{G})$ satisfies all of the conditions in the Artin's theorem.

\subsection{Proof of Theorem \ref{501}}
We use the notation $\mathcal{M}:=\widetilde{{\rm Quot}}_{\Lambda}(\mathcal{G})$ for the $\Lambda$-quot-functor and $\mathcal{M}':=\widetilde{{\rm Quot}}(\mathcal{G})$ for the quot-functor. It is easy to check that $\mathcal{M}$ is a sheaf with respect to the fppf (or big \'etale topology), and we omit the proof here. We refer the reader to \cite{CasaWise} for details.

The quot-functor $\mathcal{M}'$ is represented by an algebraic space, and $\mathcal{M}'$ satisfies all conditions in Theorem \ref{502} (see Proof of \cite[Theorem 1.1]{OlSt}). The proofs of \emph{locally of finite presentation} (\S 5.3.1), \emph{integrability} (\S 5.3.2) and \emph{separation} (\S 5.3.3) depend on the corresponding properties of $\mathcal{M}'$. In \S 5.3.4, we construct the deformation theory of $\mathcal{M}$ and prove that the deformation theory satisfies the conditions listed in Theorem \ref{502}(4). The obstruction theory is discussed in \S 5.3.5.

\subsubsection{\textbf{Locally of Finite Presentation}}
Let $A:=\lim\limits_{\longrightarrow}A_n$ be the colimit of $\mathcal{O}_S$-algebras $A_n$. There are natural maps $\mathcal{M}(A_n) \rightarrow \mathcal{M}(A)$ for $n \geq 1$. These maps induce the following one
\begin{align*}
\lim\limits_{\longrightarrow} \mathcal{M}(A_n) \rightarrow \mathcal{M}(A).
\end{align*}
To prove that $\mathcal{M}$ is of locally of finite presentation, we have to show that the above map is bijective. Let $(\mathcal{F}_n,\Phi_n)_{n \geq 1}$ be an element in $\lim\limits_{\longrightarrow} \mathcal{M}(A_n)$, where $\mathcal{F}_n \in \mathcal{M}'(A_n)$ and $\Phi_n : \Lambda \otimes \mathcal{F}_n \rightarrow \mathcal{F}_n$. Recall that $\mathcal{M}'$ is locally of finite presentation. Thus there exists a unique $\mathcal{F} \in \mathcal{M}'(A)$ corresponding to $(\mathcal{F}_n)_{n \geq 1}$. By \cite[(8.2.5)]{Groth}, the map
\begin{align*}
\lim\limits_{\longrightarrow} {\rm Hom}(\Lambda\otimes \mathcal{F}_n, \mathcal{F}_n) \rightarrow {\rm Hom}(\Lambda\otimes \mathcal{F}, \mathcal{F})
\end{align*}
is also bijective. Therefore we can find a unique element $(\mathcal{F},\Phi)$ corresponding to the given element $(\mathcal{F}_n,\Phi_n)_{n \geq 1} \in \lim\limits_{\longrightarrow} \mathcal{M}(A_n)$. This finishes the proof.

\subsubsection{\textbf{Integrability}}
Let $\bar{A}$ be a complete noetherian local $\mathcal{O}_S$-algebra and denote by $\mathfrak{m}$ the maximal ideal of $\bar{A}$. After changing the base, we consider $\mathcal{X}$ as a stack over $\bar{A}$. Let $\hat{\mathcal{X}}$ be the stack $\hat{\mathcal{X}}:=\lim\limits_{\longleftarrow}\mathcal{X}(\bar{A}/\mathfrak{m}^{n+1})$. There is a natural morphism $j: \hat{\mathcal{X}}\rightarrow \mathcal{X}$. Let $\mathcal{F}$ be a coherent sheaf over $\mathcal{X}$. Denote by $\hat{\mathcal{F}}$ the sheaf $j^*\mathcal{F}$. Before we prove the integrability, we first review the following lemma.

\begin{lem}[Lemma 2.2 in \cite{OlSt}]\label{503}
Let $\mathcal{F}_1$ and $\mathcal{F}_2$ be coherent sheaves over $\mathcal{X}$ with proper support over $\bar{A}$. For every integer $n$, the map
\begin{align*}
{\rm Ext}^n_{\mathcal{O}_{\mathcal{X}}}(\mathcal{F}_1,\mathcal{F}_2) \rightarrow {\rm Ext}^n_{\mathcal{O}_{ \hat{\mathcal{X}} }}(\hat{\mathcal{F}}_1,\hat{\mathcal{F}}_2)
\end{align*}
is an isomorphism.
\end{lem}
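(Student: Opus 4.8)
The plan is to reduce the asserted comparison of $\mathrm{Ext}$ groups to a comparison of ordinary sheaf cohomology for a properly supported coherent sheaf, and then to settle that cohomology comparison using the theorem on formal functions together with the completeness of $\bar{A}$. First I would record the local-to-global $\mathrm{Ext}$ spectral sequences
\[
E_2^{p,q}=H^p\bigl(\mathcal{X},\mathcal{E}xt^q_{\mathcal{O}_{\mathcal{X}}}(\mathcal{F}_1,\mathcal{F}_2)\bigr)\Longrightarrow {\rm Ext}^{p+q}_{\mathcal{O}_{\mathcal{X}}}(\mathcal{F}_1,\mathcal{F}_2)
\]
on $\mathcal{X}$ and its analogue on $\hat{\mathcal{X}}$. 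The morphism $j:\hat{\mathcal{X}}\to\mathcal{X}$ induces a map between the two spectral sequences; since $\mathcal{X}$ is quasi-compact and the supports involved are quasi-compact, both spectral sequences are concentrated in finitely many total degrees and converge, so it suffices to prove that $j$ induces an isomorphism on the $E_2$-terms.

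Second, I would split the $E_2$ comparison into two independent statements. For the local $\mathrm{Ext}$ sheaves I would use that $j$ is flat (formal completion is flat over the noetherian ring $\bar{A}$). Working on an \'etale chart and replacing $\mathcal{F}_1$ by a local resolution by finite free $\mathcal{O}_{\mathcal{X}}$-modules, the sheaves $\mathcal{E}xt^q$ are computed by a finite complex of $\mathcal{H}om$'s, and flatness of $j^*$ yields
\[
j^*\,\mathcal{E}xt^q_{\mathcal{O}_{\mathcal{X}}}(\mathcal{F}_1,\mathcal{F}_2)\;\cong\;\mathcal{E}xt^q_{\mathcal{O}_{\hat{\mathcal{X}}}}(\hat{\mathcal{F}}_1,\hat{\mathcal{F}}_2).
\]
This reduces everything to proving, for each coherent sheaf $\mathcal{G}:=\mathcal{E}xt^q_{\mathcal{O}_{\mathcal{X}}}(\mathcal{F}_1,\mathcal{F}_2)$, that the natural map $H^p(\mathcal{X},\mathcal{G})\to H^p(\hat{\mathcal{X}},\hat{\mathcal{G}})$ is an isomorphism. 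Here $\mathcal{G}$ is coherent and supported on the intersection of the supports of $\mathcal{F}_1$ and $\mathcal{F}_2$, hence on a closed substack proper over $\bar{A}$.

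Third, for this cohomology comparison I would set $\mathcal{X}_n:=\mathcal{X}\times_{\bar{A}}\bar{A}/\mathfrak{m}^{n+1}$ and $\mathcal{G}_n:=\mathcal{G}\otimes_{\bar{A}}\bar{A}/\mathfrak{m}^{n+1}$, and assemble the chain
\[
H^p(\mathcal{X},\mathcal{G})\;\xrightarrow{\ \sim\ }\;H^p(\mathcal{X},\mathcal{G})^{\wedge}\;\xrightarrow{\ \sim\ }\;\varprojlim_n H^p(\mathcal{X}_n,\mathcal{G}_n)\;\xrightarrow{\ \sim\ }\;H^p(\hat{\mathcal{X}},\hat{\mathcal{G}}).
\]
The first isomorphism is completeness: since the support of $\mathcal{G}$ is proper over $\bar{A}$, the stacky coherence theorem for cohomology shows $H^p(\mathcal{X},\mathcal{G})$ is a finite $\bar{A}$-module, and $\bar{A}$ is complete, so $\mathfrak{m}$-adic completion leaves it unchanged. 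The second isomorphism is the theorem on formal functions for the properly supported morphism $\mathcal{X}\to\Spec(\bar{A})$. The third is the computation of cohomology on the formal stack as the inverse limit over the infinitesimal thickenings; the accompanying $\varprojlim^{1}$ term vanishes because finiteness together with the Artin--Rees lemma makes the inverse system $\{H^p(\mathcal{X}_n,\mathcal{G}_n)\}_n$ satisfy the Mittag--Leffler condition.

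The main obstacle I anticipate is not the formal bookkeeping but the input of the two deep cohomological facts in the stacky setting: coherence of $H^p(\mathcal{X},\mathcal{G})$ and the theorem on formal functions for a Deligne--Mumford stack whose structure map need not be tame (recall that at this stage of the paper $\mathcal{X}$ is only assumed separated and locally finitely presented). For schemes these are the contents of EGA~III, and the corresponding statements for Deligne--Mumford stacks proper over a noetherian base are available in the literature; once they are in hand, the spectral-sequence comparison and the inverse-limit argument are routine. A secondary point requiring care is the convergence and boundedness of the two $\mathrm{Ext}$ spectral sequences, which rests on the finiteness of the cohomological dimension of $\mathcal{X}$ for coherent sheaves with the given proper support.
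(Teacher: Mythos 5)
The paper does not actually prove this lemma: it is imported verbatim from \cite{OlSt} (their Lemma 2.2), so there is no in-paper argument to compare against. That said, your proposal is correct and is essentially the expected proof: compare the two local-to-global Ext spectral sequences, identify the $E_2$-terms using flatness of the completion morphism $j$ together with local finite free resolutions of $\mathcal{F}_1$, and settle the remaining cohomology comparison by combining the coherence theorem, completeness of $\bar A$, the theorem on formal functions, and a Mittag--Leffler argument killing the ${\varprojlim}^{1}$ term. Two points deserve to be made explicit. First, the reduction to a proper morphism: $\mathcal{X}$ is only separated and locally of finite presentation over $\bar A$, so before invoking coherence and formal functions you should replace $\mathcal{X}$ by the scheme-theoretic support of $\mathcal{G}=\mathcal{E}xt^q_{\mathcal{O}_{\mathcal{X}}}(\mathcal{F}_1,\mathcal{F}_2)$ --- a closed substack proper over $\bar A$, since it lies in the intersection of the supports of $\mathcal{F}_1$ and $\mathcal{F}_2$ --- noting that cohomology is unchanged under this replacement, and do the same on each thickening $\mathcal{X}_n$. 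Second, the convergence issue you flag at the end is not actually a concern: both spectral sequences are first-quadrant, so a morphism inducing an isomorphism on $E_2$ automatically induces an isomorphism on the abutments via the finite filtration in each total degree; no bound on the cohomological dimension is needed.
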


Now we will prove that $\mathcal{M}$ is integrable, i.e., the map
\begin{align*}
\mathcal{M}(\bar{A})\rightarrow \lim\limits_{\longleftarrow}\mathcal{M}(\bar{A}/\mathfrak{m}^{n+1})
\end{align*}
is injective and has a dense image.

Let $(\mathcal{F},\Phi) \in \mathcal{M}(\bar{A})$, where $\Phi:\Lambda \otimes \mathcal{F} \rightarrow \mathcal{F}$. We will find a unique element $(\mathcal{F}_n,\Phi_n)_{n \geq 1}$ in $\lim\limits_{\longleftarrow}\mathcal{M}(\bar{A}/\mathfrak{m}^{n+1})$ corresponding to the given pair $(\mathcal{F},\Phi)$. Recall that $\mathcal{M}'$ satisfies all of the conditions in Theorem \ref{502}. Thus the morphism
\begin{align*}
\mathcal{M}'(\bar{A})\rightarrow \lim\limits_{\longleftarrow}\mathcal{M}'(\bar{A}/\mathfrak{m}^{n+1})
\end{align*}
is an injection. We take an element $\mathcal{F} \in \mathcal{M}'(\bar{A})$, which corresponds to a unique element $(\mathcal{F}_n) \in \lim\limits_{\longleftarrow}\mathcal{M}'(\bar{A}/\mathfrak{m}^{n+1})$. By Lemma \ref{503}, we have
\begin{align*}
{\rm Hom}({\rm Gr}_1(\Lambda) \otimes \mathcal{F},\mathcal{F}) \rightarrow \lim\limits_{\longleftarrow}{\rm Hom}({\rm Gr}_1(\Lambda)\otimes \mathcal{F}_n,\mathcal{F}_n)
\end{align*}
is bijective. This bijection induces that the map
\begin{align*}
{\rm Hom}(\Lambda \otimes \mathcal{F},\mathcal{F}) \rightarrow \lim\limits_{\longleftarrow}{\rm Hom}(\Lambda\otimes \mathcal{F}_n,\mathcal{F}_n)
\end{align*}
is also a bijection. As a result, $\Phi$ corresponds to a unique map $(\Phi_n)_{n \geq 1} \in \lim\limits_{\longleftarrow}{\rm Hom}(\Lambda\otimes \mathcal{F}_n,\mathcal{F}_n)$. Therefore the natural map
\begin{align*}
\mathcal{M}(\bar{A})\rightarrow \lim\limits_{\longleftarrow}\mathcal{M}(\bar{A}/\mathfrak{m}^{n+1})
\end{align*}
is injective.

Now we will prove that the map $\mathcal{M}(\bar{A})\rightarrow \lim\limits_{\longleftarrow}\mathcal{M}(\bar{A}/\mathfrak{m}^{n+1})$ has a dense image. The proof is similar to that of the injectivity. We take an element
\begin{align*}
\left( (\mathcal{F}_n,\Phi_n) \right)_{n \geq 1} \in \lim\limits_{\longleftarrow} \mathcal{M}(\bar{A}/\mathfrak{m}^{n+1}),
\end{align*}
where $\mathcal{F}_n \in \mathcal{M}'(\bar{A}/\mathfrak{m}^{n+1})$ and $\Phi_n: \Lambda \otimes \mathcal{F}_n \rightarrow \mathcal{F}_n$. Since $\mathcal{M}'$ is integrable, we can find an element $\mathcal{F} \in \mathcal{M}'(\bar{A})$ such that $\mathcal{F}$ induces $\mathcal{F}_1 \in \mathcal{M}'(\bar{A}/\mathfrak{m}^{2})$. Let $(\mathcal{F}'_n)_{n \geq 1} \in \lim\limits_{\longleftarrow} \mathcal{M}'(\bar{A}/\mathfrak{m}^{n+1})$ be the element corresponding to $\mathcal{F}$, where $\mathcal{F}'_1=\mathcal{F}_1$. By Lemma \ref{503}, we know that the map
\begin{align*}
{\rm Hom}(\Lambda \otimes \mathcal{F},\mathcal{F}) \rightarrow \lim\limits_{\longleftarrow}{\rm Hom}(\Lambda\otimes \mathcal{F}'_n,\mathcal{F}'_n)
\end{align*}
is bijective. Let $(\Phi'_n)_{n \geq 1}$ be an element in $\lim\limits_{\longleftarrow}{\rm Hom}(\Lambda\otimes \mathcal{F}'_n,\mathcal{F}'_n)$ such that $\Phi'_1=\Phi_1$. The element $(\Phi'_n)_{n \geq 1}$ corresponds to a unique map $\Phi: \Lambda \otimes \mathcal{F}' \rightarrow \mathcal{F}'$. In conclusion, given an element $\left( (\mathcal{F}_n,\Phi_n) \right)_{n \geq 1}$, we find a pair $(\mathcal{F},\Phi)$ which induces $(\mathcal{F}_1,\Phi_1)$.

\subsubsection{\textbf{Separation}}
The quot-functor $\mathcal{M}'$ satisfies the separation condition ${\rm (4)}$. If $\mathcal{F}_1, \mathcal{F}_2 \in \mathcal{M}'(A_0)$ induce the same element in $\mathcal{M}'(K)$ and $\mathcal{M}'(k)$, then $\mathcal{F}_1=\mathcal{F}_2$. Let $\xi=(\mathcal{F}_1,\Phi_1), \eta=(\mathcal{F}_2,\Phi_2)$ be two elements in $\mathcal{M}$. If they induce the same element in $\mathcal{M}(K)$ and $\mathcal{M}(k)$, we have $\mathcal{F}_1=\mathcal{F}_2$. This also implies that ${\rm Hom}(\Lambda,\mathcal{E}nd(\mathcal{F}_1)) = {\rm Hom}(\Lambda,\mathcal{E}nd(\mathcal{F}_2))$. Therefore, $\Phi_1=\Phi_2$, and $\xi=\eta$. This finishes the proof for condition {\rm (4a)}.

The proof of condition {\rm (4b)} is the same, and we omit the proof here.

\subsubsection{\textbf{Deformation Theory}}
In this section, we calculate the $A_0$-module $\mathcal{M}_{\xi}(A_0[M])$ and prove that this module is the deformation theory for $\mathcal{M}$.

Let us consider a special case first. Let $A'=A_0[M]:=A_0 \oplus M$. Let $\mathcal{X}_{A_0}:= \mathcal{X} \times_S \Spec(A_0)$ and $\mathcal{X}_{A_0[M]}:= \mathcal{X} \times_S \Spec(A_0[M])$. Denote by $\pi: \mathcal{X}_{A_0[M]} \rightarrow \mathcal{X}_{A_0}$ the natural map. Let $(\mathcal{F},\Phi)$ be an element in $\mathcal{M}(A_0)$, where $\Phi: \Lambda_{\text{Spec}(A_0)} \otimes \mathcal{F}  \rightarrow \mathcal{F}$. Equivalently, $\Phi$ can be considered as a morphism $\Phi: \Lambda_{\text{Spec}(A_0)} \rightarrow \mathcal{E}nd(\mathcal{F})$. For simplicity, we use $\Lambda$ for the sheaf $\Lambda_{\Spec (A_0)}$ in this section. With respect to the above notation, the morphism $\Phi$ is
\begin{align*}
\Phi: \Lambda \otimes \mathcal{F}  \rightarrow \mathcal{F} \text{ or } \Phi: \Lambda \rightarrow \mathcal{E}nd(\mathcal{F}).
\end{align*}
Define $\mathcal{F}'=\mathcal{F} \times_{\text{Spec}(A_0)} \text{Spec} (A')$. Abusing the notation, we write $\mathcal{F}'$ as $\mathcal{F}\oplus\mathcal{F}[M]$. For a section $s$ of $\mathcal{E}nd(\mathcal{F})[M]$, the corresponding automorphism of $\mathcal{F}'$ is denoted by $1+s$. Moreover, if $v+w$ is a section of $\mathcal{E}nd(\mathcal{F}')$, where $v$ is a section of $\mathcal{E}nd(\mathcal{F})$ and $w$ is a section of $\mathcal{E}nd(\mathcal{F})[M]$, we have
\begin{align*}
\rho(1+s)(v+w)=v+w+\rho(s)(v),
\end{align*}
where $\rho$ is the natural action of $\mathcal{E}nd(\mathcal{F})$ on itself.

The \emph{deformation complex} $C_M^{\bullet}(\mathcal{F},\Phi)$ is defined as follows
\begin{align*}
C_M^{\bullet}(\mathcal{F},\Phi): C_M^0(\mathcal{F}) = \mathcal{E}nd(\mathcal{F})[M] \xrightarrow{e(\Phi)} C_M^1(\mathcal{F}) = \mathcal{H}om(\Lambda,\mathcal{E}nd(\mathcal{F})[M]),
\end{align*}
and the map $e(\Phi)$ is given by
\begin{align*}
\left( e(\Phi)(s) \right)(\lambda)=-\rho(s)(\Phi(\lambda)),
\end{align*}
where $s \in \mathcal{E}nd(\mathcal{F})[M]$ and $\lambda \in \Lambda$ are sections. If there is no ambiguity, we omit the notations $M$, $\mathcal{F}$, $\Phi$ in the complex $C_M^{\bullet}(\mathcal{F},\Phi)$ and use the following notation
\begin{align*}
C^{\bullet}: C^0 = \mathcal{E}nd(\mathcal{F})[M] \xrightarrow{e(\Phi)} C^1 = \mathcal{H}om(\Lambda,\mathcal{E}nd(\mathcal{F})[M])
\end{align*}
for the deformation complex.

Now we are ready to calculate $\mathcal{M}_{\xi}(A_0[M])$. The following proposition is a generalization of Theorem 2.3 in \cite{BisRam}.
\begin{prop}\label{504}
Let $\xi =(\mathcal{F},\Phi)$ be a $\Lambda$-module in $\mathcal{M}(A_0)$. The set $\mathcal{M}_{\xi}(A_0[M])$ is isomorphic to the hypercohomology group $\mathbb{H}^1(C^{\bullet})$, where $C^{\bullet}$ is the complex
\begin{align*}
C^{\bullet}:C^0 =\mathcal{E}nd(\mathcal{F})[M] \xrightarrow{e(\Phi)} C^1 = \mathcal{H}om(\Lambda,\mathcal{E}nd(\mathcal{F})[M]),
\end{align*}
where the map $e(\Phi)$ is defined as above.
\end{prop}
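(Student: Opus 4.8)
The plan is to unwind the functor-of-points definition of $\mathcal{M}_\xi(A_0[M])$ and then match it, term by term, with a Čech description of $\mathbb{H}^1(C^\bullet)$, following the template of \cite[Theorem 2.3]{BisRam} but carried out over the Deligne--Mumford stack $\mathcal{X}_{A_0}$. By definition an element of $\mathcal{M}_\xi(A_0[M])$ is a flat $\Lambda_{A_0[M]}$-quotient $[\mathcal{G}_{A_0[M]} \to \mathcal{F}']$, equivalently a pair $(\mathcal{F}',\Phi')$, whose restriction along $A_0[M] \to A_0$ is the given $\xi = (\mathcal{F},\Phi)$. Since $A_0[M] = A_0 \oplus M$ is a square-zero extension, every such $\mathcal{F}'$ is, étale-locally on $\mathcal{X}_{A_0}$, isomorphic to the trivial deformation $\mathcal{F} \oplus \mathcal{F}[M]$, and any automorphism of the trivial deformation restricting to the identity on $\mathcal{F}$ has the form $1 + s$ with $s$ a section of $C^0 = \mathcal{E}nd(\mathcal{F})[M]$. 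The first step is therefore to fix an étale presentation of $\mathcal{X}_{A_0}$ and record the resulting transition data.

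First I would choose an étale cover $\{U_\alpha\}$ trivializing the deformation and write the gluing isomorphisms of $\mathcal{F}'$ as $1 + s_{\alpha\beta}$ with $s_{\alpha\beta} \in C^0(U_{\alpha\beta})$; the cocycle condition for these isomorphisms says exactly that $\{s_{\alpha\beta}\}$ is a Čech $1$-cochain for $C^0$. Next, on each $U_\alpha$ the lifted action $\Phi'$ may be written as $\Phi + \phi_\alpha$ with $\phi_\alpha \in C^1(U_\alpha) = \mathcal{H}om(\Lambda,\mathcal{E}nd(\mathcal{F}))[M](U_\alpha)$. The compatibility of $\Phi'$ with the gluing maps $1 + s_{\alpha\beta}$ is where the differential enters: using the formula $\rho(1+s)(v+w) = v+w+\rho(s)(v)$ recorded above, conjugating $\Phi$ by $1 + s_{\alpha\beta}$ changes it infinitesimally by the rule $(e(\Phi)(s_{\alpha\beta}))(\lambda) = -\rho(s_{\alpha\beta})(\Phi(\lambda))$, so compatibility reads $\phi_\beta - \phi_\alpha = e(\Phi)(s_{\alpha\beta})$. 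This is precisely the statement that $\{(s_{\alpha\beta},\phi_\alpha)\}$ is a $1$-hypercocycle for the total complex of $C^\bullet$, and passing to the hypercohomology long exact sequence $0 \to {\rm coker}(H^0(C^0)\to H^0(C^1)) \to \mathbb{H}^1(C^\bullet) \to \ker(H^1(C^0)\to H^1(C^1)) \to 0$ exhibits the expected splitting into a deformation of $\Phi$ (modulo gauge) and a deformation of $\mathcal{F}$ admitting a compatible $\Phi$-lift.

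I would then check that two such deformations agree in $\mathcal{M}_\xi(A_0[M])$ --- that is, are related by an isomorphism of $\Lambda$-quotients restricting to the identity over $A_0$ --- if and only if their hypercocycles differ by a hypercoboundary: a global section $t = \{t_\alpha\}$ of $C^0$ modifies $s_{\alpha\beta} \mapsto s_{\alpha\beta} + t_\beta - t_\alpha$ and $\phi_\alpha \mapsto \phi_\alpha - e(\Phi)(t_\alpha)$. This yields a well-defined map $\mathcal{M}_\xi(A_0[M]) \to \mathbb{H}^1(C^\bullet)$, with surjectivity obtained by reversing the construction: gluing the local models $\mathcal{F}\oplus\mathcal{F}[M]$ equipped with action $\Phi + \phi_\alpha$ along the isomorphisms $1 + s_{\alpha\beta}$ produces a genuine flat $\Lambda_{A_0[M]}$-module. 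Throughout, the cohomological computations on the stack are justified by working on the étale presentation and by Lemma \ref{503}, which identifies the relevant ${\rm Ext}$- and ${\rm Hom}$-groups computed on $\mathcal{X}_{A_0}$ with those on its formal completion, so that the Čech groups assemble into the hypercohomology of $C^\bullet$.

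The hard part will be the bookkeeping that makes the local-to-global passage rigorous in the stacky, non-locally-free setting: verifying that the local triviality and gluing are compatible with the \emph{fixed} quotient $\mathcal{G}_{A_0[M]} \to \mathcal{F}'$, so that the objects produced genuinely lie in the $\Lambda$-quot-functor and carry no spurious automorphisms, and confirming that the algebra-action constraints defining a $\Lambda$-module impose no conditions beyond the first-order hypercocycle identity --- equivalently, that at square-zero order the differential $e(\Phi)$ captures the entire interaction between deforming $\mathcal{F}$ and deforming $\Phi$. Once the differential $e(\Phi)$ is identified with this infinitesimal conjugation and descent along the étale presentation is in place, the remaining verifications are the routine cocycle/coboundary manipulations of \cite[Theorem 2.3]{BisRam}.
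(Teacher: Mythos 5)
Your proposal follows essentially the same route as the paper's proof: fix an \'etale cover of $\mathcal{X}_{A_0}$, encode the deformation as a \v{C}ech $1$-hypercocycle $(s_{\alpha\beta},\phi_\alpha)$ with $s_{\alpha\beta}+s_{\beta\gamma}=s_{\alpha\gamma}$ and the compatibility $\phi_\alpha-\phi_\beta=e(\Phi)(s_{\alpha\beta})$ (your sign is the opposite convention but harmless), identify coboundaries with trivial deformations, and reverse the construction by gluing $\mathcal{F}\oplus\mathcal{F}[M]$ with action $\Phi+\phi_\alpha$ along $1+s_{\alpha\beta}$, exactly as in the generalization of \cite[Theorem 2.3]{BisRam} that the paper carries out. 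The one issue you flag --- compatibility with the fixed quotient $\mathcal{G}_{A_0[M]}\to\mathcal{F}'$ --- is in fact also left implicit in the paper's proof, which likewise works only with the pair $(\mathcal{F}',\Phi')$, so your treatment is no less complete than the original.
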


\begin{proof}
Let $\mathcal{U}=\{U_i=\text{Spec}(A_i)\}_{i \in I}$ be an \'etale covering of $\mathcal{X}$ by affine schemes, where $I$ is the index set. The covering $\mathcal{U}$ of $\mathcal{X}$ also gives an \'etale covering $\{U_i \times_S \text{Spec}(A_0)\}$ of $\mathcal{X}_{A_0}$. Define $U_i[M]=U_i \times_S \text{Spec}(A_0[M])$. Set
\begin{align*}
\mathcal{E}nd(\mathcal{F})[M]|_{U_i[M]}=C^0_i, \quad \mathcal{H}om(\Lambda,\mathcal{E}nd(\mathcal{F})[M]) |_{U_i[M]}=C^1_{i},
\end{align*}
where $C^0_i$ and $C^1_i$ are $A_0$-modules. Similarly, modules $C^0_{ij}$ (resp. $C^1_{ij}$) are restrictions of $C^0$ (resp. $C^1$) to $U_{ij}[M]=U_i[M] \bigcap U_j[M]$. We consider the following $\hat{C}$ech resolution of $C^{\bullet}$:
\begin{center}
\begin{tikzcd}
& 0 \arrow[d] & 0 \arrow[d] & \\
0 \arrow[r] & C^0 \arrow[r,"e(\Phi)"] \arrow[d,"d^0_0"] & C^1 \arrow[r] \arrow[d,"d^1_0"] & 0 \\
0 \arrow[r] & \sum C_i^0 \arrow[r,"e(\Phi)"] \arrow[d,"d^0_1"] & \sum C_i^1 \arrow[r] \arrow[d,"d^1_1"] & 0 \\
0 \arrow[r] & \sum C_{ij}^0 \arrow[r,"e(\Phi)"] \arrow[d,"d^0_2"] & \sum C_{ij}^1 \arrow[r] \arrow[d,"d^1_2"] & 0 \\
& \vdots  & \vdots  &
\end{tikzcd}
\end{center}
We calculate the first hypercohomology $\mathbb{H}^1(C^{\bullet})$ from the above diagram. Let $Z$ be the set of pairs $(s_{ij}, t_i)$, where $s_{ij} \in C^0_{ij}$ and $t_i \in C^1_i$ satisfying the following conditions:
\begin{enumerate}
\item $s_{ij}+s_{jk}=s_{ik}$ as elements of $C^0_{ijk}$.
\item $t_i-t_j=e(\Phi)(s_{ij})$ as elements of $C^1_{ij}$.
\end{enumerate}
Let $B$ be the subset of $Z$ consisting of elements $(s_i-s_j,e(\Phi)(s_i))$, where $s_i \in C^0_i$. By the definition of the hypercohomology, we have
\begin{align*}
\mathbb{H}^1(C^{\bullet}) = Z/B.
\end{align*}
We will prove that for each element in $\mathbb{H}^1(C^{\bullet})$, it corresponds to a unique $\Lambda$-module on $\mathcal{X}_{A_0[M]}$, of which the restriction to $\mathcal{X}_{A_0}$ is $(\mathcal{F},\Phi)$. In other words, there is a bijective map between $\mathbb{H}^1(C^{\bullet})$ and $\mathcal{M}_{\xi}(A_0[M])$.

We first prove that there is a natural map $\mathbb{H}^1(C^{\bullet})$ to $\mathcal{M}_{\xi}(A_0[M])$. Given an element $(s_{ij},t_i) \in Z$, we want to construct a $\Lambda$-module $(\mathcal{F}',\Phi')$ on $\mathcal{X}_{A_0[M]}$ such that
\begin{align*}
\mathcal{F}'|_{\mathcal{X}_{A_0}} \cong \mathcal{F}, \quad \Phi' |_{\mathcal{X}_{A_0}} \cong \Phi.
\end{align*}
We first give the construction of $\mathcal{F}'$. For each $U_i[M]$, there is a natural projection $\pi: U_i[M] \rightarrow U_i \times_S \text{Spec}(A_0)$. Take the sheaf $\mathcal{F}'_i=\pi^*(\mathcal{F}|_{U_i \times_S \text{Spec}(A_0)})$. By the first condition of $Z$, we can identify the restrictions of $\mathcal{F}'_i$ and $\mathcal{F}'_j$ to $U_{ij}[M]$ by the isomorphism $1+s_{ij}$ of $\mathcal{F}'_{ij}$. Therefore we get a well-defined quasi-coherent sheaf $\mathcal{F}'$ on $\mathcal{X}_{A_0[M]}$ such that the restriction of $\mathcal{F}'$ to $\mathcal{X}_{A_0}$ is $\mathcal{F}$.

Now we want to construct a morphism $\Phi' : \Lambda \rightarrow {\rm End}(\mathcal{F}')$. Note that
\begin{align*}
\mathcal{E}nd(\mathcal{F}') \cong \mathcal{E}nd(\mathcal{F}) \oplus \mathcal{E}nd(\mathcal{F}[M]).
\end{align*}
We also know that the morphism $\Phi'$ satisfies
\begin{align*}
\Phi'|_{\mathcal{X}_{A_0}}=\Phi.
\end{align*}
Thus on each affine set $U_i[M]$, we define the following morphism
\begin{align*}
\Phi_i+t_i : \Lambda \rightarrow \mathcal{E}nd(\mathcal{F}'_i),
\end{align*}
where $\Phi_i$ is the restriction of $\Phi$ to the open set $U_i \times_S \text{Spec}(A_0)$. By the second condition of the pair $(s_{ij}, t_i)$, i.e. $t_i-t_j=e(\Phi)(s_{ij})$, we have \begin{align*}
e(\Phi_i + t_i)(1+s_{ij})=\Phi_j+t_j.
\end{align*}
Therefore $\{\Phi_i+t_i\}_{i \in I}$ can be glued together to give a global homomorphism $\Phi':\Lambda \rightarrow  \mathcal{E}nd(\mathcal{F}')$. Given an element $(s_{ij},t_i)$ in $Z$, we construct a $\Lambda$-module $(\mathcal{F}',\Phi')$ in $\mathcal{M}_{\xi}(A_0[M])$.

Let $(s_{ij},t_i)$ be an element in $B$. In other words, $s_{ij}=s_i-s_j$ and $t_i=e(\Phi)(s_i)$. The identification of $\mathcal{F}'_i \cong \mathcal{F}'_j$ on $U_{ij}[M]$ is given by the isomorphism $$1+s_{ij}=1+(s_i-s_j).$$ Consider the following diagram
\begin{center}
\begin{tikzcd}
\mathcal{F}'_{ij} \arrow[d,"1+s_{ij}"] \arrow[r,"1+s_{i}"] & \mathcal{F}'_{ij} \arrow[d, "\text{Id}"]  \\
\mathcal{F}'_{ij} \arrow[r,"1+s_{j}"] & \mathcal{F}'_{ij}
\end{tikzcd}
\end{center}
The commutativity of the above diagram implies that $E'$ is trivial. Similarly, we have $$e(\Phi_i +t_i)(1+s_i)=\Phi_i.$$
Therefore the associated Hitchin pair $(\mathcal{F}',\Phi')$ is isomorphic to $(\pi^*\mathcal{F},\pi^* \Phi)$. In other words, for any element in $B$, the corresponding $\Lambda$-structure is trivial.

The above construction gives a well-defined map from $\mathbb{H}^1(C^{\bullet})$ to $\mathcal{M}_{\xi}(A_0[M])$.

Now we will construct the inverse map from $\mathcal{M}_{\xi}(A_0[M])$ to $\mathbb{H}^1(C^{\bullet})$. Let $(\mathcal{F}',\Phi') \in \mathcal{M}_{\xi}(A_0[M])$ be a $\Lambda$-module over $\mathcal{X}_{A_0[M]}$ such that
\begin{align*}
(\mathcal{F}'|_{\mathcal{X}_{A_0}},\Phi'|_{\mathcal{X}_{A_0}})=\xi=(\mathcal{F},\Phi).
\end{align*}
We still use the covering $\{U_i[M]\}_{i \in I}$ of $\mathcal{X}_{A_0[M]}$ to work on this problem locally. Clearly, $\mathcal{F}'_i=\mathcal{F}'|_{U_i[M]}$ is the pull-back of $\mathcal{F}'|_{\mathcal{X}_{A_0}}$. The coherent sheaf $\mathcal{F}'$ can be obtained by gluing $\mathcal{F}'_i$ together. Thus the automorphism $1+s_{ij}$ of $\mathcal{F}'_{ij}$ over the intersection $U_{ij}[M]$ should satisfy the condition
\begin{align*}
s_{ij}+s_{jk}=s_{ik}
\end{align*}
on $U_{ijk}[M]$, where $s_{ij}$ is an element in $\mathcal{E}nd(\mathcal{F})[M]|_{U_{ij}[M]}$. Now we consider the morphism
\begin{align*}
\Phi': \Lambda \rightarrow \mathcal{E}nd(\mathcal{F}') \cong \mathcal{E}nd(\mathcal{F}) \oplus \mathcal{E}nd(\mathcal{F})[M],
\end{align*}
which is given by $\Phi_i+t_i$ on the local chart $U_i \times_S \text{Spec}(A_0)$, where \begin{align*}
\Phi_i=\Phi|_{U_i \times_S \text{Spec}(A_0)} : \Lambda \rightarrow \mathcal{E}nd(\mathcal{F})|_{U_i \times_S \text{Spec}(A_0)}
\end{align*}
and
\begin{align*}
t_i : \Lambda \rightarrow \mathcal{E}nd(\mathcal{F})[M]|_{U_i \times_S \text{Spec}(A_0)}.
\end{align*}
By the compatibility condition of $\Phi_i+t_i$ on $U_{ij}[M]$, we have
\begin{align*}
e(\Phi_i+t_i)(1+s_{ij})=\Phi_j+t_j.
\end{align*}
This gives us
\begin{align*}
e(\Phi)(s_{ij})=t_i-t_j.
\end{align*}
Therefore, $(s_{ij},t_i) \in Z$.

The above discussion gives us a well-defined map from $\mathcal{M}_{\xi}(A_0[M])$ to $\mathbb{H}^1(C^{\bullet})$. It is easy to check that these two maps are inverse to each other. We finish the proof of this proposition.
\end{proof}

\begin{cor}\label{505}
The deformation complex
\begin{align*}
C^{\bullet}: C^0 = \mathcal{E}nd(\mathcal{F})[M] \xrightarrow{e(\Phi)} C^1 = {\rm Hom}(\Lambda, \mathcal{E}nd(\mathcal{F})[M])
\end{align*}
has the following long exact sequence
\begin{align*}
0 & \rightarrow \mathbb{H}^0(C^{\bullet}) \rightarrow H^0(\mathcal{X}_{A_0},C^0) \rightarrow H^0(\mathcal{X}_{A_0},C^1) \\
 & \rightarrow \mathbb{H}^1(C^{\bullet}) \rightarrow H^1(\mathcal{X}_{A_0},C^0) \rightarrow H^1(\mathcal{X}_{A_0},C^1) \rightarrow \mathbb{H}^2(C^{\bullet}) \rightarrow \cdots \quad .
\end{align*}
\end{cor}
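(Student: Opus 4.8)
The plan is to exhibit $C^{\bullet}$ as an extension of its two terms, each viewed as a one-term complex, and to feed the resulting short exact sequence of complexes into the standard long exact sequence for hypercohomology. Since $C^{\bullet}$ is concentrated in degrees $0$ and $1$, this is a purely formal matter and requires no input specific to $\Lambda$-modules beyond the definition of the map $e(\Phi)$. Concretely, I would first write down the naive (``stupid'') short exact sequence of complexes of $\mathcal{O}_{\mathcal{X}_{A_0}}$-modules
\begin{align*}
0 \rightarrow C^1[-1] \rightarrow C^{\bullet} \rightarrow C^0 \rightarrow 0,
\end{align*}
where $C^0$ denotes the sheaf $\mathcal{E}nd(\mathcal{F})[M]$ regarded as a complex concentrated in degree $0$, and $C^1[-1]$ denotes the sheaf ${\rm Hom}(\Lambda,\mathcal{E}nd(\mathcal{F})[M])$ regarded as a complex concentrated in degree $1$. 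The map $C^1[-1] \rightarrow C^{\bullet}$ is the identity in degree $1$, the map $C^{\bullet} \rightarrow C^0$ is the identity in degree $0$, and exactness holds degreewise and is immediate.

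Next I would apply the hypercohomology functor on $\mathcal{X}_{A_0}$, which carries a short exact sequence of bounded complexes of quasi-coherent sheaves to a long exact sequence in hypercohomology; the relevant cohomology theory of quasi-coherent sheaves on the Deligne-Mumford stack $\mathcal{X}_{A_0}$ is the one already used, through the \v{C}ech computation, in the proof of Proposition \ref{504}. Since $C^0$ is concentrated in degree $0$ we have $\mathbb{H}^i(C^0) = H^i(\mathcal{X}_{A_0},C^0)$, and since $C^1[-1]$ is $C^1$ placed in degree $1$ we have $\mathbb{H}^i(C^1[-1]) = H^{i-1}(\mathcal{X}_{A_0},C^1)$. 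Substituting these identifications into the long exact sequence, and using $H^{-1}(\mathcal{X}_{A_0},C^1)=0$ to produce the initial arrow $0 \rightarrow \mathbb{H}^0(C^{\bullet})$, yields precisely the asserted sequence, with each map $H^i(\mathcal{X}_{A_0},C^0) \rightarrow H^i(\mathcal{X}_{A_0},C^1)$ being the one induced on cohomology by $e(\Phi)$.

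The only point deserving care is the identification of the connecting homomorphism with the map induced by $e(\Phi)$, which is where the degree and shift bookkeeping must be carried out honestly. It follows by unwinding the snake lemma for the displayed short exact sequence of complexes, or, equivalently, by reading the sequence off the two spectral sequences of the \v{C}ech double complex of $C^{\bullet}$ constructed in the proof of Proposition \ref{504}: the column filtration there already produces the $E_1$-page whose associated exact sequence is the one claimed. I expect no genuine obstacle here, only the routine verification of signs and indices; the substance of the computation lies entirely in Proposition \ref{504}, and this corollary merely organizes its ingredients.
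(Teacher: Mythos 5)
Your proof is correct and is essentially the paper's own argument: the paper simply asserts that the sequence ``follows directly from the definition of hypercohomology'' (citing Biswas--Ramanan), and what you have written out --- the naive short exact sequence $0 \to C^1[-1] \to C^{\bullet} \to C^0 \to 0$ together with the hypercohomology long exact sequence and the identification of the connecting map with the map induced by $e(\Phi)$ --- is precisely the standard verification being invoked there.
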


\begin{proof}
This long exact sequence follows directly from the definition of hypercohomology (see \cite{BisRam}).
\end{proof}

\begin{cor}\label{506}
Let $0 \rightarrow M_1 \rightarrow M_2 \rightarrow M_3 \rightarrow 0$ be a short exact sequence for finitely generated $A_0$-modules. We have a long exact sequence for hypercohomology
\begin{align*}
\dots \rightarrow \mathbb{H}^i(C_{M_1}^{\bullet}) \rightarrow \mathbb{H}^i(C_{M_2}^{\bullet}) \rightarrow \mathbb{H}^i(C_{M_3}^{\bullet}) \rightarrow \mathbb{H}^{i+1}(C_{M_1}^{\bullet}) \rightarrow \cdots \quad .
\end{align*}
\end{cor}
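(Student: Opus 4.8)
The plan is to deduce the long exact sequence from a short exact sequence of complexes of sheaves on $\mathcal{X}_{A_0}$, and then to invoke the standard connection between hypercohomology and short exact sequences of complexes (the hypercohomology analogue of the mechanism behind Corollary \ref{505}). So the work splits into producing the short exact sequence of complexes and then feeding it through the hypercohomology machine.

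First I would record the functoriality of the deformation complex in its module argument. Inspecting the construction in Proposition \ref{504}, the differential $e(\Phi)$, given by $(e(\Phi)(s))(\lambda)=-\rho(s)(\Phi(\lambda))$, is $A_0$-linear and does not involve the module $M$; the dependence on $M$ enters only through the coefficient sheaves $\mathcal{E}nd(\mathcal{F})[M]=\mathcal{E}nd(\mathcal{F})\otimes_{A_0}M$ and $\mathcal{H}om(\Lambda,\mathcal{E}nd(\mathcal{F}))[M]$. Writing $\mathcal{A}^{\bullet}$ for the two-term complex of sheaves obtained by taking $M=A_0$, one therefore has a natural identification $C_M^{\bullet}\cong \mathcal{A}^{\bullet}\otimes_{A_0}M$, compatible with morphisms of $A_0$-modules. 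Applying $\mathcal{A}^{\bullet}\otimes_{A_0}(-)$ to the given sequence $0\to M_1\to M_2\to M_3\to 0$ then yields the candidate sequence of complexes
\[
0 \to C_{M_1}^{\bullet} \to C_{M_2}^{\bullet} \to C_{M_3}^{\bullet} \to 0 .
\]

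The crux, and the step I expect to be the main obstacle, is to verify that this candidate sequence is genuinely short exact in each degree. Since tensoring is always right exact, exactness in the middle and on the right is automatic, and the only thing to check is injectivity of $\mathcal{A}^i\otimes_{A_0}M_1\to \mathcal{A}^i\otimes_{A_0}M_2$ for $i=0,1$, equivalently the vanishing of ${\rm Tor}_1^{A_0}(\mathcal{A}^i,M_3)$. I would extract this from the flatness of $\mathcal{F}$ over $A_0$: the question is local, so one passes to an \'etale affine chart $U=\Spec(A)$ of $\mathcal{X}_{A_0}$ as in the proof of Proposition \ref{504}, where the terms of $\mathcal{A}^{\bullet}$ become $A$-modules, and one argues that the $A_0$-flatness of $\mathcal{F}$ forces the required ${\rm Tor}$-vanishing. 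This is the only genuinely non-formal point, and it is exactly where the hypotheses on $\mathcal{F}$ are used.

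Granting the short exact sequence of complexes, the long exact sequence is then purely formal. Concretely, I would use the $\hat{C}$ech double complex already employed to compute $\mathbb{H}^{\bullet}$ in Proposition \ref{504}: the short exact sequence of complexes induces a short exact sequence of total $\hat{C}$ech complexes, and the snake lemma produces the connecting homomorphisms $\mathbb{H}^i(C_{M_3}^{\bullet})\to \mathbb{H}^{i+1}(C_{M_1}^{\bullet})$ together with the exactness of
\[
\cdots \to \mathbb{H}^i(C_{M_1}^{\bullet}) \to \mathbb{H}^i(C_{M_2}^{\bullet}) \to \mathbb{H}^i(C_{M_3}^{\bullet}) \to \mathbb{H}^{i+1}(C_{M_1}^{\bullet}) \to \cdots .
\]
Everything after the flatness step is bookkeeping, so the proof reduces to establishing the short exact sequence of complexes.
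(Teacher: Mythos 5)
The paper states Corollary \ref{506} with no proof at all (unlike Corollary \ref{505}, which at least carries a one-line justification), so there is no argument of the author's to compare yours against. Your architecture --- realize $M\mapsto C_M^{\bullet}$ as a functor in the module argument, produce a degreewise short exact sequence of complexes, and run the snake lemma on the \v{C}ech total complexes from Proposition \ref{504} --- is the evident intended route, and the formal half of it (the passage from a short exact sequence of complexes to the long exact hypercohomology sequence) is fine.

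The gap is exactly at the point you flag as ``the only genuinely non-formal point,'' and your proposed resolution does not close it. You assert that the $A_0$-flatness of $\mathcal{F}$ forces $\mathrm{Tor}_1^{A_0}(\mathcal{A}^i,M_3)=0$, but flatness is not inherited by $\mathcal{H}om$: $\mathcal{A}^0=\mathcal{E}nd_{\mathcal{O}_{\mathcal{X}}}(\mathcal{F})=\mathcal{H}om(\mathcal{F},\mathcal{F})$ need not be flat over $A_0$ when $\mathcal{F}$ is, since $\mathcal{F}$ is only assumed flat over the base $A_0$, not locally free over $\mathcal{O}_{\mathcal{X}_{A_0}}$. (If one instead reads $\mathcal{E}nd(\mathcal{F})[M]$ as $\mathcal{H}om(\mathcal{F},\mathcal{F}\otimes_{A_0}M)$, left-exactness in $M$ does follow from flatness of $\mathcal{F}$, but then right-exactness fails in general, so the degreewise sequence is again not short exact.) To make the step honest you need one of: an additional hypothesis that $\mathcal{F}$ is locally free (or that the $M_i$ are flat); a restriction to the situations where the corollary is actually invoked in \S 5.3.5; or a replacement of the naive two-term complex by a derived $\mathcal{H}om(\mathcal{F},\mathcal{F}\otimes^{L}M)$-type object, for which the long exact sequence comes from the exact triangle $\mathcal{F}\otimes M_1\to\mathcal{F}\otimes M_2\to\mathcal{F}\otimes M_3$ and flatness of $\mathcal{F}$ genuinely suffices.

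A second, smaller unaddressed point: your identification $C_M^{1}\cong\mathcal{A}^{1}\otimes_{A_0}M$ silently commutes $-\otimes_{A_0}M$ past $\mathcal{H}om(\Lambda,-)$. Since $\Lambda$ is a sheaf of graded algebras and in general not coherent, this commutation is not automatic; the paper itself sidesteps such issues elsewhere by replacing $\Lambda$ with the coherent sheaf $\mathrm{Gr}_1(\Lambda)$, and you would need to do the same (plus a finite-presentation and flatness argument) or else work with the literal definition of $C^1_M$ and confront the exactness of $M\mapsto\mathcal{H}om(\Lambda,\mathcal{E}nd(\mathcal{F})[M])$ directly.
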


Now we fix a $\Lambda$-module $\xi=(\mathcal{F},\Phi) \in \mathcal{M}_{\xi}(A_0[M])$ and consider the corresponding deformation complex
\begin{align*}
C^{\bullet}: C^0 = \mathcal{E}nd(\mathcal{F})[M]  \xrightarrow{e(\Phi)} C^1 = \text{Hom}(\Lambda,\mathcal{E}nd(\mathcal{F})[M]).
\end{align*}
We will check that the deformation theory of $\mathcal{M}$ with respect to the triple $(A_0,M,\xi)$ is given by the $A_0$-module $\mathbb{H}^1(C^{\bullet})$.

\begin{enumerate}
\item[{\rm (5a)}] It is well-known that the $A_0$-module $H^i(\mathcal{X}_{A_0},\bullet)$ commutes with localization in $A_0$, where $\bullet$ is a coherent sheaf. Thus the $A_0$-module $\mathbb{H}^1(C^{\bullet})$ also commutes with localization in $A_0$ by applying the \emph{Five Lemma} to the long exact sequence in Corollary \ref{505}. Now let $M$ be a free $A_0$-module of rank one. This case is exactly the \emph{infinitesimal deformation} and we use the notation $A_0[\varepsilon]:=A_0[M]$. By the \emph{finiteness theorem} of cohomology over Deligne-Mumford stacks \cite[\S 11.6]{Ol}, the modules $H^i(\mathcal{X}_{A_0},C^j)$ are finitely generated for $0 \leq i,j \leq 1$. Thus, $\mathbb{H}^1(C^{\bullet})\cong \mathcal{M}_{\xi}(A_0[\varepsilon])$ is also a finitely generated module by the long exact sequence in Corollary \ref{505}.

\item[{\rm (5b)}] We assume that $A=A_0$ and $A'=A_0[M]$. We will define an action $D=\mathcal{M}_{\xi}(A_0[M])$ on itself and show that this action is free. By Proposition \ref{504}, we know that
\begin{align*}
\mathcal{M}_{\xi}(A_0[M]) \cong \mathbb{H}^1(C^{\bullet}) = Z/B,
\end{align*}
where $Z$ is the set of pairs $(s_{ij}, t_i)$ such that $s_{ij} \in C^0_{ij}$ and $t_i \in C^1_i$ satisfy the following conditions
\begin{enumerate}
\item $s_{ij}+s_{jk}=s_{ik}$ as elements of $C^0_{ijk}$,
\item $t_i-t_j=e(\Phi)(s_{ij})$ as elements of $C^1_{ij}$.
\end{enumerate}
There is a natural action of $Z$ on itself
\begin{align*}
(s'_{ij},t'_i)(s_{ij},t_i):=(s'_{ij}+s_{ij},t'_i+t_i),
\end{align*}
where $(s'_{ij},t'_i)$, $(s_{ij},t_i) \in Z$. This action can be naturally extended to a well-defined action of $Z/B$ on itself, which is also a free action. Therefore we define a free action $D=\mathcal{M}_{\xi}(A_0[M])$ on itself.

\item[{\rm (5c)}] The condition of ${\rm (5c)}$ is a local property. We may assume that $\mathcal{X}$ is an algebraic space and $S$ is an affine scheme $\Spec(A_0)$. Before prove the condition ${\rm (5c)}$, we first review the following lemma.

\begin{lem}[Lemma 6.8, 6.9 in \cite{Art}]\label{507}
Let $\mathcal{X}$ be an algebraic space of finite type over an affine scheme $S=\Spec(A_0)$, where $A_0$ is an integral domain. Let $\mathcal{F}$, $\mathcal{G}$ be two coherent sheaves on $\mathcal{X}$, and we fix a non-negative integer $q$. Then there is a non-empty open set $U$ of $S$ such that for each $s \in U$, the canonical map is an isomorphism
\begin{align*}
{\rm Ext}_{X}^{q}(\mathcal{F},\mathcal{G})_s \xrightarrow{\cong} {\rm Ext}_{X_s}^q(\mathcal{F}_s,\mathcal{G}_s),
\end{align*}
and
\begin{align*}
H^q(\mathcal{X},\mathcal{F})_s \xrightarrow{\cong} H^q(\mathcal{X}_s,\mathcal{F}_s).
\end{align*}
\end{lem}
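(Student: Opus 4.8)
The plan is to reduce both isomorphisms to a single statement about a bounded complex of finitely generated $A_0$-modules and then to exploit generic freeness. Since $A_0$ is a noetherian integral domain and $\mathcal{X}$ is of finite type over $S=\Spec(A_0)$, I would first invoke generic flatness: after shrinking $S$ to a nonempty open subset I may assume that $\mathcal{F}$ and $\mathcal{G}$ are flat over $A_0$. In the situations where this lemma is applied the supports of the relevant sheaves are proper over $S$, so all the cohomology and Ext modules appearing below are finitely generated over $A_0$; this finiteness is what makes the generic arguments available.

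First I would treat the cohomology isomorphism $H^q(\mathcal{X},\mathcal{F})_s \to H^q(\mathcal{X}_s,\mathcal{F}_s)$. After the flatness reduction, the theorem on cohomology and base change (and its extension to algebraic spaces with proper support) provides a bounded complex $K^{\bullet}$ of finite free $A_0$-modules such that $H^q(\mathcal{X}_B,\mathcal{F}_B)\cong H^q(K^{\bullet}\otimes_{A_0}B)$ functorially in every $A_0$-algebra $B$. The problem then becomes purely homological over $A_0$: apply generic freeness to the finitely many cohomology modules $H^j(K^{\bullet})$, as well as to the cocycles $Z^j=\ker d^j$ and coboundaries $B^j={\rm im}\,d^{j-1}$. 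On the finite intersection $U$ of the resulting nonempty opens all of these are free, hence flat, so the short exact sequences
\begin{align*}
0\to Z^j\to K^j\to B^{j+1}\to 0,\qquad 0\to B^j\to Z^j\to H^j\to 0
\end{align*}
remain exact after $-\otimes_{A_0}k(s)$ for $s\in U$, and chasing them yields $H^q(K^{\bullet})\otimes_{A_0}k(s)\cong H^q(K^{\bullet}\otimes_{A_0}k(s))$. This is exactly the desired base-change isomorphism.

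Next I would deduce the Ext isomorphism from the cohomology case together with the local-to-global spectral sequence
\begin{align*}
E_2^{p,q}=H^p(\mathcal{X},\mathcal{E}xt^q_{\mathcal{O}_{\mathcal{X}}}(\mathcal{F},\mathcal{G}))\Rightarrow {\rm Ext}^{p+q}_{\mathcal{X}}(\mathcal{F},\mathcal{G}).
\end{align*}
Only finitely many sheaves $\mathcal{E}xt^q_{\mathcal{O}_{\mathcal{X}}}(\mathcal{F},\mathcal{G})$ contribute up to a fixed total degree, so after applying generic flatness to each of them and the cohomology base-change result of the previous paragraph to each $E_2$-term, I can arrange on a common nonempty open $U$ that every $E_2^{p,q}$ is free and agrees, after $-\otimes_{A_0}k(s)$, with the corresponding term of the fiberwise spectral sequence on $X_s$. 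Because over $U$ the differentials are then $A_0$-linear maps between free modules, they commute with $-\otimes_{A_0}k(s)$; hence the $E_\infty$-terms and the whole filtration on the abutment base-change correctly, giving ${\rm Ext}^q_{\mathcal{X}}(\mathcal{F},\mathcal{G})_s\cong {\rm Ext}^q_{X_s}(\mathcal{F}_s,\mathcal{G}_s)$ for $s\in U$.

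The main obstacle is controlling base change through these two layers of homological algebra \emph{uniformly} in $s$: each single instance of generic freeness only produces a nonempty open away from which base change may fail, and one must check that the finitely many conditions involved (the cohomology, cycles and boundaries of the base-change complex, together with every relevant $\mathcal{E}xt$-sheaf and $E_2$-term of the spectral sequence) can be imposed simultaneously. Since $A_0$ is noetherian and only finitely many modules occur in each fixed total degree, the finite intersection of the associated opens is again a nonempty open, and this is precisely what guarantees the existence of the single uniform set $U$ asserted in the statement.
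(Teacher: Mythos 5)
The paper does not actually prove this lemma; it is quoted directly from Artin's Lemmas 6.8 and 6.9, so there is no in-paper argument to compare yours against. Your first half --- generic flatness, a bounded complex $K^{\bullet}$ of finite free $A_0$-modules computing $H^q(\mathcal{X}_B,\mathcal{F}_B)$ functorially in $B$, and generic freeness applied to $Z^j$, $B^j$, $H^j$ --- is the standard and correct proof of the cohomology statement, with the caveat (which you flag) that the existence of such a $K^{\bullet}$ uses properness of the support of $\mathcal{F}$ over $S$; as stated the lemma only assumes finite type, and in that generality one should instead take the \v{C}ech complex of a finite affine \'etale cover, whose terms are finite modules over finitely generated $A_0$-algebras, to which generic freeness still applies.

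The Ext half has a genuine gap. You assert that after generic flatness each $E_2^{p,q}=H^p(\mathcal{X},\mathcal{E}xt^q_{\mathcal{O}_{\mathcal{X}}}(\mathcal{F},\mathcal{G}))$ ``agrees, after $-\otimes_{A_0}k(s)$, with the corresponding term of the fiberwise spectral sequence.'' Flatness of $\mathcal{E}xt^q_{\mathcal{O}_{\mathcal{X}}}(\mathcal{F},\mathcal{G})$ over $A_0$ does not give the identification $\mathcal{E}xt^q_{\mathcal{O}_{\mathcal{X}}}(\mathcal{F},\mathcal{G})_s\cong\mathcal{E}xt^q_{\mathcal{O}_{\mathcal{X}_s}}(\mathcal{F}_s,\mathcal{G}_s)$; that is itself a generic base-change statement for the local Ext sheaves and must be proved separately (affine-locally, take a truncated finite free resolution $P_{\bullet}\to\mathcal{F}$ and run the same cycles/boundaries/cohomology generic-freeness argument on $\mathrm{Hom}(P_{\bullet},\mathcal{G})$). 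Likewise, freeness of the $E_2$-terms only guarantees that $E_2$ base-changes; to push the isomorphism through to $E_{\infty}$ you must apply generic freeness again to the kernels, images and cohomologies of the differentials on each of the finitely many relevant pages, since kernels and cokernels of maps of free modules need not commute with $-\otimes_{A_0}k(s)$ without further flatness. Both points are fillable by the same technique, and the cleaner route is to do everything in one pass: compute $\mathrm{Ext}^{q}$ as the cohomology of the total complex of the \v{C}ech--$\mathcal{H}om(P_{\bullet},\mathcal{G})$ double complex over a finite affine cover and apply generic freeness once to the finitely many modules occurring there, which avoids stacking two layers of base change.
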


By the above lemma, we can find a non-empty open set $U$ of $\Spec(A_0)$ such that
\begin{align*}
H^i(\mathcal{X},C^j)_s \cong H^i(\mathcal{X}_s,C_s^j), \quad i \geq 0, j=1,2,
\end{align*}
for $s \in U$. Thus we have
\begin{align*}
\mathbb{H}^1(C^{\bullet})_s \cong \mathbb{H}^1(C^{\bullet}_s)
\end{align*}
by applying \emph{Five Lemma} to the long exact sequence in Corollary \ref{505}, where $C^{\bullet}_s$ is the restriction of the complex $C^{\bullet}$ to the point $s$. We finish the proof of the condition ${\rm (5c)}$.
\end{enumerate}

\subsubsection{\textbf{Obstruction Theory}}
Fix a deformation situation $(A'\rightarrow A \rightarrow A_0,M,\xi)$, where $M$ is a free $A_0$-module of rank $n$. For any quotient $\epsilon:M \rightarrow M^*$, let $A' \rightarrow A^*$ be the quotient of $A'$ defined by $M^*$.
\begin{center}
\begin{tikzcd}
M  \arrow[r] \arrow[d, "\epsilon"] & A' \arrow[r] \arrow[d] & A \arrow[d] \\
M^* \arrow[r] & A^* \arrow[r] & A
\end{tikzcd}
\end{center}
We can define the deformation situation $(A^* \rightarrow A \rightarrow A_0, M^*,\xi)$ (see \S 5.2.4). For any element $(\mathcal{F}^*,\Phi^*) \in \mathcal{M}_{\xi}(A^*)$, we want to lift it to a well-defined element in $\mathcal{M}_{\xi}(A')$. The obstruction for this lifting property comes from the vanishing of the second hypercohomology group $\mathbb{H}^2(C_{ {\rm ker}\epsilon }^{\bullet})$. By Corollary \ref{506}, we have a long exact sequence for the hypercohomology groups
\begin{align*}
\dots \rightarrow \mathbb{H}^1(C_{ {\rm ker}\epsilon}^{\bullet}) \rightarrow \mathbb{H}^1(C_{M}^{\bullet}) \rightarrow \mathbb{H}^1(C_{M^*}^{\bullet}) \rightarrow \mathbb{H}^2(C_{{\rm ker}\epsilon}^{\bullet}) \rightarrow \dots \quad .
\end{align*}
Such a lifting exists if and only if the morphism $\mathbb{H}^1(C_{M}^{\bullet}) \rightarrow \mathbb{H}^1(C_{M^*}^{\bullet})$ is surjective. Thus the vanishing of the second hypercohomology $\mathbb{H}^2(C_{{\rm ker}\epsilon}^{\bullet})$ is necessary and sufficient for the existence of such a lifting.
\begin{enumerate}
\item[{\rm (6a)}] The condition ${\rm (6a)}$ is exactly the \emph{homogeneity} of the functor $\mathcal{M}$. Note that there is a natural forgetful functor
    \begin{align*}
    \mathcal{M} \rightarrow \mathcal{M}', \quad (\mathcal{F}, \Phi) \rightarrow \mathcal{F}.
    \end{align*}
    The quot-functor $\mathcal{M}'$ is homogeneous \cite{OlSt}. If the forgetful functor is relatively homogeneous, we can prove that the functor $\mathcal{M}$ is homogeneous \cite[Lemma 10.18]{CasaWise}. We know that the fiber of the forgetful functor at a sheaf $\mathcal{F}$ is $\underline{\mathscr{H}om}(\Lambda \otimes \mathcal{F},\mathcal{F})$, which is homogeneous as we discussed in \S 5.2.3. Therefore the forgetful functor is relatively homogeneous, and the moduli problem $\mathcal{M}$ is homogeneous.

\item[{\rm (6b)}] There is a natural map
    \begin{align*}
    i:\mathcal{X}_{A'_K} \rightarrow \mathcal{X}_{A_0 \times_K A'_K},
    \end{align*}
    which is induced by the natural inclusion $A_0 \times_K A'_K \rightarrow A'_K$. The induced functor $i_*$ is left exact on the category of quasi-coherent sheaves, and denote by $i^*$ the left adjoint of the functor $i_*$. We have the following isomorphisms
    \begin{align*}
        \text{Ext}^q_{A'_K}(i^* \mathcal{F}_1,\mathcal{F}_2) \cong \text{Ext}^q_{A_0 \times_K A'_K}(\mathcal{F}_1,i_* \mathcal{F}_2), \quad q \geq 0,
    \end{align*}
    for quasi-coherent sheaves $\mathcal{F}_1$ over $\mathcal{X}_{A_0 \times_K A'_K}$ and $\mathcal{F}_2$ over $\mathcal{X}_{A'_K}$. Given a coherent sheaf $\mathcal{F}$ over $\mathcal{X}$, we consider the deformation complex
    \begin{align*}
        C^{\bullet}: C^0 = \mathcal{E}nd(\mathcal{F})[M]  \xrightarrow{e(\Phi)} C^1 = \text{Hom}(\Lambda,\mathcal{E}nd(\mathcal{F})[M]).
    \end{align*}
    We have
    \begin{align*}
    & H^q(\mathcal{X}_{A'_K},C^0_{A'_K}) \cong H^q(\mathcal{X}_{A_0 \times_K A'_K},C^0_{A_0 \times_K A'_K}),\\
    & H^q(\mathcal{X}_{A'_K},C^1_{A'_K}) \cong H^q(\mathcal{X}_{A_0 \times_K A'_K},C^1_{A_0 \times_K A'_K}).
    \end{align*}
    Therefore,
    \begin{align*}
    \mathbb{H}^2(C^{\bullet}_{A'_K}) \cong \mathbb{H}^2(C^{\bullet}_{A_0 \times_K A'_K})
    \end{align*}
    by applying \emph{Five Lemma} to the long exact sequence in Corollary \ref{505}. It follows that the obstructions of lifting elements to $\mathcal{M}(A_0 \times_K A'_K)$ and lifting elements to $\mathcal{M}(A'_K)$ are the same.

\item[{\rm (6c)}] The proof of the condition {\rm (6c)} is similar to that of the condition {\rm (5c)}. The difference is that we worked on the deformation $\mathbb{H}^1(C^{\bullet})$ in the condition {\rm (5c)}, while the condition {\rm (6c)} focuses on the obstruction $\mathbb{H}^2(C^{\bullet})$. We use the same notation as in the statement of {\rm (5c)}. Let $\xi \in \mathcal{M}(A)$. Suppost that for every one-dimensional quotient $M_K \rightarrow M^*_K$, there is a non-trivial obstruction to lift $\xi_K \in \mathcal{M}(A_K)$ to $\mathcal{M}(A^*_K)$, where $A^*_K$ is the extension defined by $M_K^*$. We want to prove that there exists an open subset $U \subseteq \Spec(A_0)$ such that $\xi$ cannot be lifted to $\mathcal{M}(A^*)$.

    Let $N$ be the kernel of $M \rightarrow M^*$, and we consider the deformation complex $C^{\bullet}_N$
    \begin{align*}
        C^{\bullet}_N: C_N^0 = \mathcal{E}nd(\mathcal{F})[N]  \xrightarrow{e(\Phi)} C_N^1 = \text{Hom}(\Lambda,\mathcal{E}nd(\mathcal{F})[N]).
    \end{align*}
    By Lemma \ref{507}, we can choose an open set $U$ of $S$ such that
    \begin{align*}
        H^q(\mathcal{X},C_N^0)_s \cong H^q(\mathcal{X}_s,(C_N^0)_s), \quad q \geq 0,
    \end{align*}
    for $s \in U$. This implies that
    \begin{align*}
    \mathbb{H}^2(C_{N}^{\bullet})_s \cong \mathbb{H}^2((C_N^{\bullet})_s)
    \end{align*}
    for $s \in U$. Thus for every quotient $M \rightarrow M^*$ of length one with support in $U$, the lifting of $\xi$ to $F(A^*)$ is obstructed. This finishes the proof of the condition {\rm (6c)}.
\end{enumerate}

\subsection{$\Lambda$-Quot-Functors on Projective Deligne-Mumford Stacks}
Let $S$ be an affine scheme or a noetherian scheme of finite type. Suppose that $\mathcal{X}$ is a projective Deligne-Mumford stack over $S$. Let $\mathcal{E}$ be a generating sheaf of $\mathcal{X}$. We fix a polynomial $P$, which is considered as a modified Hilbert polynomial, and a coherent sheaf $\mathcal{G}$ over $\mathcal{X}$. We define the quot-functor
\begin{align*}
\widetilde{\rm Quot}_{\Lambda}(\mathcal{G},P) : (\text{Sch}/S)^{\rm op} \rightarrow \text{Set}
\end{align*}
for $\Lambda$-modules with respect to the given polynomial $P$ as follows. Let $T \in (\text{Sch}/S)$ be an $S$-scheme. The set $\widetilde{\rm Quot}_{\Lambda}(\mathcal{G},P)(T)$ contains the coherent sheaves $\mathcal{F}_T$ such that
\begin{enumerate}
\item $\mathcal{F}_T \in \widetilde{\rm Quot}(\mathcal{G},P)(T)$ (see \S 3.3),
\item $\mathcal{F}_T$ is a $\Lambda_T$-module.
\item The modified Hilbert polynomial of $\mathcal{F}_T$ is $P$.
\end{enumerate}
We will prove that the functor $\widetilde{\rm Quot}_{\Lambda}(\mathcal{G},P)$ is represented by a quasi-projective $S$-scheme in this subsection (see Theorem \ref{511}). We first show that the functor $F_{\mathcal{E}}$ induces a closed immersion ${\rm Quot}_{\Lambda}(\mathcal{G},\mathcal{X},P) \rightarrow {\rm Quot}_{\Lambda}(F_{\mathcal{E}}(\mathcal{G}),X,P)$ (see Lemma \ref{508} and \ref{510}). Next, we prove that the ${\rm Quot}_{\Lambda}(F_{\mathcal{E}}(\mathcal{G}),X,P)$ is a quasi-projective scheme (see Proposition \ref{5101}). The construction of this proposition will be used in \S 6 (Proposition \ref{603}). Finally, these results give the main theorem (Theorem \ref{511}).

Recall that the functor $F_{\mathcal{E}} : {\rm QCoh}(\mathcal{X}) \rightarrow {\rm QCoh}(X)$ is an exact functor (see \S 2.2).  This functor can be generalized to a natural transformation \begin{align*}
\widetilde{{\rm Quot}}_{\Lambda}(\mathcal{G},\mathcal{X},P) \rightarrow \widetilde{{\rm Quot}}_{F_{\mathcal{E}}(\Lambda)}(F_{\mathcal{E}}(\mathcal{G}),X, P),
\end{align*}
where $\widetilde{{\rm Quot}}_{\Lambda}(\mathcal{G},\mathcal{X},P)$ is the quot-functor of $\Lambda$-modules with modified Hilbert polynomial $P$ over $\mathcal{X}$ and $\widetilde{{\rm Quot}}_{F_{\mathcal{E}}(\Lambda)}(F_{\mathcal{E}}(\mathcal{G}),X, P)$ is the quot-functor of $F_{\mathcal{E}}(\Lambda)$-modules with Hilbert polynomial $P$ over $X$. Note that $F_{\mathcal{E}}(\Lambda)$ is still a sheaf of graded algebras, and the modified Hilbert polynomial is fixed under the functor $F_{\mathcal{E}}$. We still use the same notation $F_{\mathcal{E}}$ for this natural transformation.

\begin{lem}\label{508}
The natural transformation
\begin{align*}
F_{\mathcal{E}}:\widetilde{{\rm Quot}}_{\Lambda}(\mathcal{G},\mathcal{X},P) \rightarrow \widetilde{{\rm Quot}}_{F_{\mathcal{E}}(\Lambda)}(F_{\mathcal{E}}(\mathcal{G}),X, P)
\end{align*}
is a monomorphism.
\end{lem}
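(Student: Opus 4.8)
The plan is to prove that the natural transformation is a monomorphism by checking that the induced map $F_{\mathcal{E}}(T)$ on $T$-points is injective for every $S$-scheme $T$; for set-valued functors this pointwise injectivity is equivalent to being a monomorphism. First I would unpack an element of $\widetilde{{\rm Quot}}_{\Lambda}(\mathcal{G},\mathcal{X},P)(T)$: by the discussion in \S 5.1 it is a pair $(\mathcal{F}_T,\Phi_T)$ made of a quotient $[\mathcal{G}_T \to \mathcal{F}_T]\in\widetilde{{\rm Quot}}(\mathcal{G},\mathcal{X})(T)$ together with a $\Lambda_T$-action $\Phi_T:\Lambda_T \otimes \mathcal{F}_T \to \mathcal{F}_T$. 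Its image under $F_{\mathcal{E}}$ is the quotient $[F_{\mathcal{E}}(\mathcal{G})_T \to F_{\mathcal{E}}(\mathcal{F}_T)]$, which makes sense because $F_{\mathcal{E}}$ is exact and hence preserves surjections, equipped with the $F_{\mathcal{E}}(\Lambda)$-action $F_{\mathcal{E}}(\Phi_T)$.

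Next I would use Lemma \ref{302} to pin down the underlying quotient. Suppose $(\mathcal{F}_T,\Phi_T)$ and $(\mathcal{F}'_T,\Phi'_T)$ have the same image under $F_{\mathcal{E}}(T)$. Forgetting the module structures, their images agree in $\widetilde{{\rm Quot}}(F_{\mathcal{E}}(\mathcal{G}),X)(T)$, and these are exactly the images of $[\mathcal{G}_T \to \mathcal{F}_T]$ and $[\mathcal{G}_T \to \mathcal{F}'_T]$ under the monomorphism of Lemma \ref{302}. Hence the underlying quotients coincide, and I may identify them along the resulting canonical isomorphism $\mathcal{F}_T \cong \mathcal{F}'_T$ compatible with the surjections from $\mathcal{G}_T$. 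Writing $\mathcal{F}_T$ for this common quotient, the problem reduces to showing that two $\Lambda_T$-actions $\Phi_T,\Phi'_T$ on $\mathcal{F}_T$ with $F_{\mathcal{E}}(\Phi_T)=F_{\mathcal{E}}(\Phi'_T)$ must coincide.

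This last reduction is where the generating hypothesis on $\mathcal{E}$ enters, and I expect it to be the crux. The key input is that $F_{\mathcal{E}}$ is a \emph{faithful} functor on quasi-coherent sheaves: for any morphism $f$ the naturality square for the counit $\theta_{\mathcal{E}}$ of \S 2.2 gives $f \circ \theta_{\mathcal{E}}(\mathcal{F}_1)=\theta_{\mathcal{E}}(\mathcal{F}_2)\circ G_{\mathcal{E}}(F_{\mathcal{E}}(f))$, and since $\theta_{\mathcal{E}}(\mathcal{F}_1)$ is surjective for a generating sheaf (\S 2.3), $F_{\mathcal{E}}(f)=0$ forces $f=0$. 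Applying this to $f=\Phi_T-\Phi'_T$, viewed as morphisms $\Lambda_T \otimes \mathcal{F}_T \to \mathcal{F}_T$ with $\mathcal{E}$ a generator for the quasi-coherent source $\Lambda_T\otimes\mathcal{F}_T$, the equality $F_{\mathcal{E}}(\Phi_T)=F_{\mathcal{E}}(\Phi'_T)$ yields $\Phi_T=\Phi'_T$, so the two pairs agree and $F_{\mathcal{E}}(T)$ is injective. The main obstacle is to make the middle of this argument precise: I must verify that the $F_{\mathcal{E}}(\Lambda)$-action attached to the image is genuinely $F_{\mathcal{E}}$ applied to the morphism $\Phi_T$, so that equality of the two transported module structures really does give $F_{\mathcal{E}}(\Phi_T)=F_{\mathcal{E}}(\Phi'_T)$ in the relevant ${\rm Hom}$-group. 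This amounts to tracking how the natural transformation $F_{\mathcal{E}}$ carries a $\Lambda$-action to an $F_{\mathcal{E}}(\Lambda)$-action, and here I would exploit the reduction to the coherent generating piece ${\rm Gr}_1(\Lambda)$ together with the injection ${\rm Hom}_{\mathcal{O}_{\mathcal{X}}}(\Lambda,\mathcal{E}nd(\mathcal{F})) \hookrightarrow {\rm Hom}_{\mathcal{O}_{\mathcal{X}}}({\rm Gr}_1(\Lambda),\mathcal{E}nd(\mathcal{F}))$ recorded in \S 5.1, so that the comparison only ever involves morphisms between honest coherent sheaves.
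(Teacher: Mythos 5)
Your proposal is correct and follows essentially the same route as the paper: reduce the underlying quotient to Lemma \ref{302}, then show $F_{\mathcal{E}}$ is faithful on the $\Lambda$-action morphisms. The only difference is cosmetic: you deduce faithfulness from the naturality and surjectivity of the counit $\theta_{\mathcal{E}}$ for a generating sheaf, whereas the paper runs the equivalent argument through exactness of $F_{\mathcal{E}}$ applied to the kernel sequence $0 \to \ker\Phi \to \Lambda\otimes\mathcal{F} \to \mathcal{F}$; both hinge on the same generating-sheaf property.
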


\begin{proof}
We have to show that for each $S$-scheme $T$, the morphism
\begin{align*}
F_{\mathcal{E}}(T): \widetilde{{\rm Quot}}_{\Lambda}(\mathcal{G},\mathcal{X},P)(T) \rightarrow \widetilde{{\rm Quot}}_{F_{\mathcal{E}}(\Lambda)}(F_{\mathcal{E}}(\mathcal{G}),X, P)(T)
\end{align*}
is an injection. We will omit $T$ for simplicity.

Let $(\mathcal{F},\Phi: \Lambda \otimes \mathcal{F}  \rightarrow \mathcal{F})$ be an element in $\widetilde{{\rm Quot}}_{\Lambda}(\mathcal{G},\mathcal{X},P)$. We have
\begin{align*}
(F_{\mathcal{E}}(\mathcal{F}),F_{\mathcal{E}}(\Phi): F_{\mathcal{E}}(\Lambda \otimes \mathcal{F}) \rightarrow F_{\mathcal{E}}(\mathcal{F})) \in \widetilde{{\rm Quot}}_{F_{\mathcal{E}}(\Lambda)}(F_{\mathcal{E}}(\mathcal{G}),X, P)
\end{align*}
under the transformation $F_{\mathcal{E}}$. The natural transformation $F_{\mathcal{E}}$ is a monomorphism when restricting to the quot-functor $\widetilde{{\rm Quot}}(\mathcal{G},\mathcal{X},P)$, i.e. the morphism
\begin{align*}
F_{\mathcal{E}}: \widetilde{{\rm Quot}}(\mathcal{G},\mathcal{X},P) \rightarrow \widetilde{{\rm Quot}}(F_{\mathcal{E}}(\mathcal{G}),X, P)
\end{align*}
is an injection \cite[Lemma 6.1]{OlSt}. Thus the coherent sheaf $\mathcal{F}$ corresponds to a unique coherent sheaf $F_{\mathcal{E}}(\mathcal{F})$. Now we will show that $F_{\mathcal{E}}$ is also an injection for the morphism $\Phi: \Lambda \otimes \mathcal{F}  \rightarrow \mathcal{F}$, and we only have to prove that if
\begin{align*}
F_{\mathcal{E}}(\Phi): F_{\mathcal{E}}(\Lambda \otimes F ) \rightarrow F_{\mathcal{E}}(F)
\end{align*}
is trivial, i.e. ${\rm ker}(F_{\mathcal{E}}(\Phi)) \cong F_{\mathcal{E}}(\Lambda \otimes F )$, then $\Phi$ is also a trivial morphism. Note that there is a short exact sequence
\begin{align*}
0 \rightarrow {\rm ker}(\Phi) \rightarrow \Lambda \otimes \mathcal{F}  \xrightarrow{\Phi} \mathcal{F}.
\end{align*}
Applying the exact functor $F_{\mathcal{E}}$ to the above sequence, we have
\begin{align*}
0 \rightarrow F_{\mathcal{E}}({\rm ker}(\Phi)) \cong {\rm ker}(F_{\mathcal{E}}(\Phi)) \rightarrow F_{\mathcal{E}}(\mathcal{F} \otimes \Lambda) \rightarrow F_{\mathcal{E}}(\mathcal{F}).
\end{align*}
Since $F_{\mathcal{E}}$ is an exact functor, ${\rm ker}(F_{\mathcal{E}}(\Phi)) \cong F_{\mathcal{E}}(\mathcal{F} \otimes \Lambda)$ if and only if ${\rm ker}(\Phi) \cong \mathcal{F} \otimes \Lambda$. Therefore the functor $F_{\mathcal{E}}$ is also an injection for the morphism.
\end{proof}

\begin{rem}\label{509}
Recall that $\pi: \mathcal{X} \rightarrow X$ is the natural map to the coarse moduli space $X$, and $\pi_*: {\rm QCoh}(\mathcal{X}) \rightarrow {\rm QCoh}(X)$ is an exact functor. The exact functor induces a natural transformation $\pi_*: \widetilde{{\rm Quot}}(\mathcal{G},\mathcal{X},P) \rightarrow \widetilde{{\rm Quot}}(\pi_*(\mathcal{G}),X, P)$. Note that this natural transformation is not injective in general. This is also one of the reasons why people introduces the generating sheaf $\mathcal{E}$ and define the functor $F_{\mathcal{E}}$.
\end{rem}

\begin{lem}\label{510}
The monomorphism $F_{\mathcal{E}}$ is relatively representable by schemes, and $F_{\mathcal{E}}$ is a $F_{\mathcal{E}}$ is a finitely-presented finite monomorphism.
\end{lem}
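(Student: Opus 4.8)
The plan is to factor the natural transformation $F_{\mathcal{E}}$ into two pieces, each relatively representable by a finitely-presented finite monomorphism (in fact a closed immersion), and then invoke the stability of these properties under composition. Abbreviate $\mathcal{Q}_\Lambda=\widetilde{{\rm Quot}}_{\Lambda}(\mathcal{G},\mathcal{X},P)$ and $\mathcal{Q}'_\Lambda=\widetilde{{\rm Quot}}_{F_{\mathcal{E}}(\Lambda)}(F_{\mathcal{E}}(\mathcal{G}),X,P)$, and write $\mathcal{Q}=\widetilde{{\rm Quot}}(\mathcal{G},\mathcal{X},P)$, $\mathcal{Q}'=\widetilde{{\rm Quot}}(F_{\mathcal{E}}(\mathcal{G}),X,P)$ for the underlying quot-functors obtained by forgetting the $\Lambda$-structure, with forgetful transformations $u:\mathcal{Q}_\Lambda\to\mathcal{Q}$ and $v:\mathcal{Q}'_\Lambda\to\mathcal{Q}'$. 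These fit into a commutative square whose horizontal arrows are the two copies of $F_{\mathcal{E}}$. Setting $W:=\mathcal{Q}\times_{\mathcal{Q}'}\mathcal{Q}'_\Lambda$ (fiber product along the bottom $F_{\mathcal{E}}$ and $v$), the upper transformation factors as $\mathcal{Q}_\Lambda\xrightarrow{a}W\xrightarrow{b}\mathcal{Q}'_\Lambda$, where $a$ records a $\Lambda$-module structure and $b$ is the projection.

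The second factor $b$ is, by construction of $W$, the base change along $v$ of the bottom transformation $F_{\mathcal{E}}:\mathcal{Q}\to\mathcal{Q}'$. By Olsson--Starr \cite[Theorem 4.4]{OlSt} (recalled in \S 2.3) the latter is relatively representable by the closed immersion ${\rm Quot}(\mathcal{G},\mathcal{X})\hookrightarrow{\rm Quot}(F_{\mathcal{E}}(\mathcal{G}),X)$, hence by a finitely-presented finite monomorphism. Since relative representability by schemes, finiteness, finite presentation and the monomorphism property are all stable under base change, $b$ inherits them, and it remains to treat $a$.

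For $a$, I would fix a scheme $T$ and a $T$-point of $W$, i.e. a quotient $[\mathcal{G}_T\to\mathcal{F}_T]\in\mathcal{Q}(T)$ together with an $F_{\mathcal{E}}(\Lambda)$-module structure $\Psi$ on $F_{\mathcal{E}}(\mathcal{F}_T)$ (transported from $\mathcal{Q}'_\Lambda$ through the canonical identification $F_{\mathcal{E}}(\mathcal{F}_T)=H_T$). A lift to $\mathcal{Q}_\Lambda$ is a $\Lambda$-structure $\Phi$ on $\mathcal{F}_T$ with $F_{\mathcal{E}}(\Phi)=\Psi$; by Lemma \ref{508} there is at most one, so $a$ is a monomorphism and it suffices to produce a closed subscheme of $T$ representing the fiber. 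Here I would use that a $\Lambda$-structure $\Lambda\to\mathcal{E}nd(\mathcal{F}_T)$ is determined by, and is a closed condition on, the induced ${\rm Gr}_1(\Lambda)$-structure (cf. \S 4.1, \S 5.1), with ${\rm Gr}_1(\Lambda)$ coherent. Consequently the relevant homomorphism functors ${\rm Hom}({\rm Gr}_1(\Lambda)\otimes\mathcal{F}_T,\mathcal{F}_T)$ and ${\rm Hom}(F_{\mathcal{E}}({\rm Gr}_1(\Lambda)\otimes\mathcal{F}_T),F_{\mathcal{E}}(\mathcal{F}_T))$ are representable by $T$-schemes that are affine and of finite presentation (linear schemes; cf. \S 5.2.3 and \cite{Lie}), and $F_{\mathcal{E}}$ induces between them a monomorphism by Lemma \ref{508}. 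The locus of $T$ over which $\Psi$ both factors through this comparison monomorphism and has its preimage satisfying the graded-algebra (associativity and unit) relations is then a closed subscheme $T_0\hookrightarrow T$, finite and of finite presentation over $T$, which represents the fiber of $a$. Thus $a$ is relatively representable by closed immersions, and composing with $b$ shows $F_{\mathcal{E}}=b\circ a$ is a finitely-presented finite monomorphism.

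The main obstacle is the analysis of $a$. The essential difficulty is that $F_{\mathcal{E}}$ does not commute with tensor products, so one cannot directly compare an $F_{\mathcal{E}}(\Lambda)$-structure on $F_{\mathcal{E}}(\mathcal{F})$ with morphisms out of $F_{\mathcal{E}}(\Lambda)\otimes F_{\mathcal{E}}(\mathcal{F})$; one must instead work throughout with $F_{\mathcal{E}}(\Lambda\otimes\mathcal{F}_T)$ and lean on the injectivity of $F_{\mathcal{E}}$ on the pertinent ${\rm Hom}$-modules proved in Lemma \ref{508}. Two points require care: reducing from the non-coherent $\Lambda$ to the coherent ${\rm Gr}_1(\Lambda)$ so that the homomorphism functors are of finite presentation, and verifying that ``membership in the image of the injective comparison of ${\rm Hom}$-schemes'' together with ``compatibility with the graded-algebra relations'' are genuinely closed conditions that commute with arbitrary base change $T'\to T$. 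Granting these, the closed subscheme $T_0$ exists functorially in $T$ and the lemma follows.
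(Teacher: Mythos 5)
Your route is genuinely different from the paper's. The paper disposes of this lemma in one line: it declares that the proof of \cite[Proposition 6.2]{OlSt} (the analogous statement for ordinary quot-functors) goes through verbatim once the monomorphism property is supplied by Lemma \ref{508}; all the representability, finiteness and finite-presentation content is thus imported wholesale from Olsson--Starr. You instead factor $F_{\mathcal{E}}$ as $b\circ a$ through the fiber product $W=\mathcal{Q}\times_{\mathcal{Q}'}\mathcal{Q}'_\Lambda$, obtain $b$ by base change from the Olsson--Starr closed immersion of sheaf-theoretic quot-functors, and reduce the whole lemma to showing that the ``lift the module structure'' map $a$ is a finitely-presented closed monomorphism. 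This is a cleaner division of labor than the paper offers, and the closed-condition analysis you sketch for $a$ is essentially the same device the paper itself uses later (the subschemes $Q_3$, $Q_{4,j}$, $Q_{4,\infty}$, $Q_5$ in Propositions \ref{5101} and \ref{603}), so your argument makes explicit what the paper's citation hides.

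Two points in your treatment of $a$ need to be closed, though both can be. First, you assert that membership in the image of the comparison monomorphism of Hom-schemes is a closed condition; a monomorphism of affine $T$-schemes is not automatically a closed immersion, so this requires justification. It does hold here: since $\mathcal{F}_T$ is $T$-flat with proper support and ${\rm Gr}_1(\Lambda)$ is coherent, each Hom-functor is the linear scheme $\mathbb{V}(\mathcal{H})$ of a coherent sheaf $\mathcal{H}$ on $T$, the comparison is induced by a map $\mathcal{H}'\rightarrow\mathcal{H}$ of these sheaves, and injectivity of the functor map on all points and all twists (which follows from exactness and faithfulness of $F_{\mathcal{E}}$, as in the proof of Lemma \ref{508}) forces $\mathcal{H}'\rightarrow\mathcal{H}$ to be surjective, hence the map of linear schemes to be a closed immersion. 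Second, the graded-algebra relations impose infinitely many closed conditions (one for each kernel $\mathcal{K}_j$ of $\Lambda_1^{\otimes j}\rightarrow\Lambda_j$), so you need the noetherian hypothesis on $T$ (or the descending chain argument implicit in the paper's $Q_{4,\infty}$) to conclude that their intersection is a closed subscheme of finite presentation. With these two additions your proof is complete and, arguably, more informative than the one in the paper.
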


\begin{proof}
The proof of this lemma is the same as \cite[Proposition 6.2]{OlSt} by applying Lemma \ref{508}.
\end{proof}

\begin{prop}\label{5101}
Let $X$ be a projective scheme, and let $G$ be a coherent sheaf on $X$. We fix a polynomial $P$. The $\Lambda$-quot-functor $\widetilde{{\rm Quot}}_{\Lambda}(G,X, P)$ is represented by a quasi-projective scheme.
\end{prop}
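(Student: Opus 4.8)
The plan is to realize $\widetilde{{\rm Quot}}_{\Lambda}(G,X,P)$ as a subscheme of a linear (affine) scheme lying over the ordinary quot-scheme, and then to read off quasi-projectivity from the projectivity of the latter. First I would invoke Grothendieck's theory (cf. Theorem \ref{3021}) to get that the underlying quot-functor $\widetilde{{\rm Quot}}(G,X,P)$ is represented by a projective $k$-scheme $Q:={\rm Quot}(G,X,P)$, carrying a universal quotient $G_Q \twoheadrightarrow \mathcal{U}$ on $X\times Q$ with $\mathcal{U}$ flat over $Q$ and of Hilbert polynomial $P$ on each fibre. The forgetful natural transformation $(\mathcal{F},\Phi)\mapsto \mathcal{F}$ induces a morphism of functors $\widetilde{{\rm Quot}}_{\Lambda}(G,X,P)\to Q$, and it suffices to show that this morphism is representable by a scheme that is affine and of finite type over $Q$. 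Indeed, an affine morphism pulls an ample invertible sheaf back to an ample one, so the total space would then carry an ample line bundle and be quasi-projective over $k$; note also that the total space need not be proper, which is why the statement only claims quasi-projectivity.

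Next I would analyze the fibres. By \S 5.1 a $\Lambda$-structure on $\mathcal{F}$ is a morphism $\Phi:\Lambda\otimes\mathcal{F}\to\mathcal{F}$, and since ${\rm Gr}(\Lambda)$ is generated in degree one (\S 4.1) the restriction map ${\rm Hom}_{\mathcal{O}_X}(\Lambda,\mathcal{E}nd(\mathcal{F}))\hookrightarrow {\rm Hom}_{\mathcal{O}_X}({\rm Gr}_1(\Lambda),\mathcal{E}nd(\mathcal{F}))$ is injective. Thus a $\Lambda$-structure is uniquely determined by its degree-one component $\phi:{\rm Gr}_1(\Lambda)\otimes\mathcal{F}\to\mathcal{F}$, so $\widetilde{{\rm Quot}}_{\Lambda}(G,X,P)$ embeds into the functor of degree-one homomorphisms on the universal sheaf. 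The key representability input is that the homomorphism functor $T\mapsto {\rm Hom}_{X_T}({\rm Gr}_1(\Lambda)_T\otimes\mathcal{U}_T,\mathcal{U}_T)$ is represented over $Q$ by a linear scheme $\mathbb{V}\to Q$ that is affine and of finite type: this is the relative $\underline{\mathscr{H}om}$-scheme of two coherent sheaves with flat target over a projective base (cf. \cite[Proposition 2.3]{Lie} and the classical $\mathbf{Spec}\,{\rm Sym}$ construction of Hom-schemes).

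It then remains to carve out, inside $\mathbb{V}$, precisely those $\phi$ arising from genuine $\Lambda$-module structures. A section $\phi$ comes from such a structure exactly when it is compatible with the unit $\mathcal{O}_X=\Lambda_0\to\Lambda$ and with the multiplicative structure of $\Lambda$, i.e. when the induced action of the free object on ${\rm Gr}_1(\Lambda)$ descends along ${\rm Gr}_1(\Lambda)^{\otimes i}\twoheadrightarrow{\rm Gr}_i(\Lambda)$. Each such compatibility is the vanishing of a morphism of coherent sheaves on $X_T$ whose target is built from the flat sheaf $\mathcal{U}$, and over the base the vanishing of such a morphism is a closed condition, being the zero locus of a section of a linear scheme over $\mathbb{V}$. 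Imposing these conditions simultaneously defines a closed subscheme $Z\subseteq\mathbb{V}$ which represents $\widetilde{{\rm Quot}}_{\Lambda}(G,X,P)$. Being closed in $\mathbb{V}$, the scheme $Z$ is affine and of finite type over the projective scheme $Q$, hence quasi-projective over $k$, as asserted. (This explicit presentation of $Z$ as a zero locus inside a Hom-scheme is also what will be reused in Proposition \ref{603}.)

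The hard part, and the step I would treat most carefully, is the last one: verifying that \emph{being a $\Lambda$-action} is genuinely a closed condition and that the degree-one datum $\phi$ recovers the full structure. The subtlety is that $\Lambda$ is a filtered algebra with a priori infinitely many relations, so I must argue that the compatibility conditions define a closed subscheme — for instance by using that, under the hypotheses of \S 4.1 following \cite[\S 2]{Simp2}, the relations of $\Lambda$ are controlled in bounded filtered degree, so that finitely many closed conditions already suffice to force descent along ${\rm Gr}_1(\Lambda)^{\otimes i}\twoheadrightarrow{\rm Gr}_i(\Lambda)$. Making this passage from the degree-one datum to the full filtered action precise, while keeping track of the unit condition, is where the graded-algebra structure does the real work and where the main care is needed.
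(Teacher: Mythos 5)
Your proof is correct, but it takes a genuinely different route from the paper's. The paper (following Simpson's original construction in \cite[\S 3]{Simp2}) realizes $\widetilde{{\rm Quot}}_{\Lambda}(G,X,P)$ as a \emph{locally closed} subscheme of the single projective quot-scheme $Q_1={\rm Quot}(\Lambda_k\otimes G,X,P)$: it first passes to the open locus $Q_2$ where the universal quotient factors through $\Lambda_k\otimes\mathcal{F}\to\mathcal{F}$ (so that one point of $Q_1$ simultaneously encodes the quotient $G\to\mathcal{F}$ and a $\Lambda_k$-action), and then imposes closed conditions $Q_3$, $Q_{4,j}$, $Q_5$ forcing that action to come from, and agree with, an honest $\Lambda$-structure. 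You instead fiber the action datum over $Q={\rm Quot}(G,X,P)$ via a linear $\underline{\mathscr{H}om}$-scheme and cut out a closed subscheme there. Each approach has a payoff: yours cleanly separates the sheaf from the action and entirely avoids the openness of the factorization locus $Q_2$, which the paper asserts without proof; the paper's keeps everything inside a projective ambient scheme, and that is precisely what is linearized for the GIT construction in \S 6 --- Proposition \ref{603} and Lemmas \ref{605}--\ref{606} run on ${\rm Quot}(\Lambda_k\otimes V\otimes\mathcal{G},P)$ with the line bundle $\mathscr{L}_N$, so your closing parenthetical (that your presentation is what gets reused in Proposition \ref{603}) does not match this paper. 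Two small remarks on the step you flag as hardest: the infinitude of relations is harmless without any boundedness hypothesis on $\Lambda$, since an arbitrary intersection of closed subschemes is closed (the paper simply takes $Q_{4,\infty}=\bigcap_j Q_{4,j}$); and if you work with ${\rm Gr}_1(\Lambda)$ rather than the filtered piece $\Lambda_1$ you should say how the degree-one datum interacts with the $\Lambda_0=\mathcal{O}_X$-action --- working with $\Lambda_1\otimes\mathcal{U}\to\mathcal{U}$ and imposing the unit condition as one more closed (vanishing-of-a-difference) condition sidesteps this.
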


\begin{proof}
In this proposition, we will construct the scheme representing $\widetilde{{\rm Quot}}_{\Lambda}(G,X, P)$.

We fix a positive integer $k$. Denote by $Q_1$ the quot-scheme ${\rm Quot}(\Lambda_k \otimes G,X,P)$. For each $S$-scheme $f:T \rightarrow S$, the set $Q_1(T)$ parameterizes the isomorphism classes of quotients
\begin{align*}
f^*(\Lambda_k \otimes G) \rightarrow \mathcal{F}_T \rightarrow 0
\end{align*}
with modified Hilbert polynomial $P$, where $\mathcal{F}_T$ is a coherent sheaf over $X_T$.

There exists an open subscheme $Q_2 \subseteq Q_1$ such that any quotient $[\rho_T] \in Q_2(T)$ has the following factorization.
\begin{center}
\begin{tikzcd}
f^*(\Lambda_k \otimes G) \arrow[rd, "1_{f^*(\Lambda_k)} \otimes \rho'_T"] \arrow[rr, "\rho_T"] &  & \mathcal{F}_T  \\
& f^*(\Lambda_k) \otimes \mathcal{F}_T \arrow[ru,"\Phi_k"]&
\end{tikzcd}
\end{center}
where the induced map $\rho'_T: f^*G \rightarrow \mathcal{F}_T$ is a quotient in ${\rm Quot}(G,P)(T)$.

If a quotient map is in $Q_2$, the coherent sheaf has a $\Lambda_k$-structure. Next, we will construct a scheme of $Q_3$ such that the coherent sheaf has a $\Lambda$-structure.

Let $[\rho_T: f^*(\Lambda_k \otimes G) \rightarrow \mathcal{F}_T]$ be a point in $Q_2(T)$. Denote by $\rho'_T: f^*(G) \rightarrow \mathcal{F}_T$ the quotient map in the factorization of $\rho_T$. Let $\mathcal{K}$ be the kernel of the quotient map
\begin{align*}
0 \rightarrow \mathcal{K} \rightarrow f^*(G) \xrightarrow{\rho'_T} \mathcal{F}_T \rightarrow 0.
\end{align*}
The quotient map $\rho_T$ induces the morphism $f^*(\Lambda_1 \otimes G) \rightarrow \mathcal{F}_T$, which gives us the following map
\begin{align*}
f^*(\Lambda_1) \otimes \mathcal{K} \rightarrow f^*(\Lambda_1 \otimes G) \rightarrow \mathcal{F}_T.
\end{align*}
There exists a closed subscheme $Q_3 \subseteq Q_2$ such that the induced map $f^*(\Lambda_1) \otimes \mathcal{K} \rightarrow \mathcal{F}_T$ is trivial.

Now let $[\rho_T]$ be a quotient in $Q_3$. By the discussion above, the quotient map $\rho_T$ induces the following one
\begin{align*}
f^*(\Lambda_1 \otimes G) \rightarrow \mathcal{F}_T.
\end{align*}
Therefore we have the following factorization
\begin{center}
\begin{tikzcd}
f^*(\Lambda_1 \otimes V \otimes G) \arrow[rd] \arrow[rr] &  & \mathcal{F}_T  \\
& f^*(\Lambda_1) \otimes \mathcal{F}_T \arrow[ru, "\Phi_1"]&
\end{tikzcd}
\end{center}
For each positive integer $j$, we have a morphism
\begin{align*}
f^*(\overbrace{\Lambda_1 \otimes \dots \otimes \Lambda_1}^{j}) \otimes \mathcal{F}_T \rightarrow \mathcal{F}_T,
\end{align*}
which is induced by the morphism $\Phi_1:f^*(\Lambda_1) \otimes \mathcal{F}_T \rightarrow \mathcal{F}_T$. Denote by $\mathcal{K}_j$ the kernel of the surjection
\begin{align*}
\overbrace{\Lambda_1 \otimes \dots \otimes \Lambda_1}^{j} \rightarrow \Lambda_j \rightarrow 0.
\end{align*}
This gives us a well-defined map
\begin{align*}
f^*(\mathcal{K}_j) \otimes \mathcal{F}_T \rightarrow \mathcal{F}_T.
\end{align*}
Given a positive integer $j$, there exists a closed subscheme $Q_{4,j} \subseteq Q_3$ such that $[\rho_T] \in Q_{4,j}(T)$ if the corresponding map $f^*(\mathcal{K}_j) \otimes \mathcal{F}_T \rightarrow \mathcal{F}_T$ is trivial. Denote by $Q_{4, \infty}$ the intersection of all of these closed subschemes $Q_{4,j}$, $j \geq 1$. The conditions for $Q_3$ and $Q_{4,\infty}$ guarantee that a coherent sheaf $\mathcal{F}$ with a $\Lambda_k$-structure is also a $\Lambda$-module.

Now a quotient $[\rho_T: f^*(\Lambda_k \otimes G) \rightarrow \mathcal{F}_T] \in Q_{4,\infty}(T)$ gives a $f^{*}(\Lambda_k)$-structure on $\mathcal{F}_T$. This structure induces a $f^*(\Lambda_1)$-structure on $\mathcal{F}_T$. We know that $\Lambda_1$ generates $\Lambda$. Thus, a $f^*(\Lambda_1)$-structure will give us a $f^*(\Lambda)$-structure on $\mathcal{F}_T$, which will induce a $f^*(\Lambda_k)$-structure. Note that this $f^*(\Lambda_k)$-structure may not be the same as the previous one. However, there is a closed subset $Q_5 \subseteq Q_{4,\infty}$ such that these two structures are the same.

The locally-closed subset $Q_5 \subseteq Q_1={\rm Quot}(\Lambda_k \otimes G,X,P)$ is the quasi-projective scheme representing $\widetilde{{\rm Quot}}_{\Lambda}(G,X,P)$.

\end{proof}

By Lemma \ref{510}, the functor
\begin{align*}
F_{\mathcal{E}}:\widetilde{{\rm Quot}}_{\Lambda}(\mathcal{G},\mathcal{X},P) \rightarrow \widetilde{{\rm Quot}}_{F_{\mathcal{E}}(\Lambda)}(F_{\mathcal{E}}(\mathcal{G}),X, P)
\end{align*}
is relatively representable by schemes, and the moduli problem $\widetilde{{\rm Quot}}_{F_{\mathcal{E}}(\Lambda)}(F_{\mathcal{E}}(\mathcal{G}),X, P)$ is representable by quasi-projective scheme by Proposition \ref{5101}. Therefore, we have the following theorem.

\begin{thm}\label{511}
Let $S$ be an affine scheme (or a noetherian scheme of finite type) and let $\mathcal{X}$ be a projective Deligne-Mumford stack over $S$. The $\Lambda$-quot-functor $\widetilde{\rm Quot}_{\Lambda}(\mathcal{G},P)$ is represented by a quasi-projective $S$-scheme.
\end{thm}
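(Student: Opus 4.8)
The plan is to realize $\widetilde{\rm Quot}_{\Lambda}(\mathcal{G},P)$ as a closed subscheme of a quasi-projective scheme by transporting the problem to the coarse moduli space via the monomorphism $F_{\mathcal{E}}$. First I would observe that, since $\mathcal{X}$ is a projective Deligne-Mumford stack over $S$, its coarse moduli space $X$ is projective over $S$ via $\rho: X \to S$, and $F_{\mathcal{E}}(\mathcal{G})$ is a coherent sheaf on $X$ while $F_{\mathcal{E}}(\Lambda)$ remains a sheaf of graded algebras. Applying Proposition \ref{5101} (in its relative form over $S$) to the projective scheme $X$, the algebra $F_{\mathcal{E}}(\Lambda)$ and the coherent sheaf $F_{\mathcal{E}}(\mathcal{G})$, the target functor $\widetilde{{\rm Quot}}_{F_{\mathcal{E}}(\Lambda)}(F_{\mathcal{E}}(\mathcal{G}),X,P)$ is represented by a quasi-projective $S$-scheme, which I will call $Y$.

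Next I would invoke Lemma \ref{510}, which asserts that the natural transformation $F_{\mathcal{E}}:\widetilde{{\rm Quot}}_{\Lambda}(\mathcal{G},\mathcal{X},P) \to \widetilde{{\rm Quot}}_{F_{\mathcal{E}}(\Lambda)}(F_{\mathcal{E}}(\mathcal{G}),X,P)$ is relatively representable by schemes and is a finitely-presented finite monomorphism; the underlying injectivity of $F_{\mathcal{E}}$ is Lemma \ref{508}. Forming the fiber product along the identity morphism of $Y$, relative representability then shows that $\widetilde{{\rm Quot}}_{\Lambda}(\mathcal{G},P)$ is itself represented by a scheme $Q$ carrying a finite monomorphism $Q \to Y$.

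The one genuinely substantive point, and hence the only place I expect any care to be needed, is the passage from ``finite monomorphism'' to ``closed immersion'': a finite morphism is proper, and a proper monomorphism of schemes is automatically a closed immersion. Consequently $Q \hookrightarrow Y$ is a closed immersion, so $Q$ is a closed subscheme of the quasi-projective $S$-scheme $Y$ and is therefore itself a quasi-projective $S$-scheme, which is exactly the assertion. I do not anticipate a serious obstacle, since all the real content has been pre-packaged in Proposition \ref{5101} and Lemma \ref{510}; the only things to verify are that the absolute statement of Proposition \ref{5101} matches the relative situation over $S$, and that ``finitely-presented finite monomorphism'' is read as yielding a closed immersion into $Y$.
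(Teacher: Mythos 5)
Your proposal is correct and follows essentially the same route as the paper: the paper likewise deduces the theorem immediately from Lemma \ref{510} (relative representability of $F_{\mathcal{E}}$ as a finitely-presented finite monomorphism) together with Proposition \ref{5101} (representability of $\widetilde{{\rm Quot}}_{F_{\mathcal{E}}(\Lambda)}(F_{\mathcal{E}}(\mathcal{G}),X,P)$ by a quasi-projective scheme). The only difference is that you spell out the final step (a finite monomorphism is a proper monomorphism, hence a closed immersion, so the representing scheme is a closed subscheme of a quasi-projective scheme), which the paper leaves implicit.
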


Denote by ${\rm Quot}_{\Lambda}(\mathcal{G},P)$ the space representing $\widetilde{{\rm Quot}}_{\Lambda}(\mathcal{G},P)$.

\begin{cor}\label{512}
Let $S$ be an affine scheme (or a noetherian scheme of finite type) and let $\mathcal{X}$ be a projective stack over $S$. Then the connected components of ${\rm Quot}_{\Lambda}(\mathcal{G})$ are quasi-projective $S$-schemes.
\end{cor}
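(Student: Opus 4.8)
The plan is to deduce the corollary from Theorem \ref{511} by stratifying ${\rm Quot}_{\Lambda}(\mathcal{G})$ according to the modified Hilbert polynomial and then observing that each connected component is a quasi-projective piece of a single stratum. First I would record the stratification. By Theorem \ref{501} the algebraic space ${\rm Quot}_{\Lambda}(\mathcal{G})$ carries a universal quotient whose target is a $\Lambda$-module, flat over ${\rm Quot}_{\Lambda}(\mathcal{G})$ with proper support. By flatness together with the stacky semicontinuity used in the proof of Lemma \ref{3182}, the function sending a point to the modified Hilbert polynomial of the corresponding fiber is locally constant. Hence ${\rm Quot}_{\Lambda}(\mathcal{G})$ is the disjoint union, over integer polynomials $P$, of the open-and-closed subspaces on which this polynomial equals $P$; by the definition of the subfunctor $\widetilde{{\rm Quot}}_{\Lambda}(\mathcal{G},P)$ in \S 5.4, such a subspace represents $\widetilde{{\rm Quot}}_{\Lambda}(\mathcal{G},P)$ and therefore equals ${\rm Quot}_{\Lambda}(\mathcal{G},P)$. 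This yields the decomposition
\[
{\rm Quot}_{\Lambda}(\mathcal{G}) = \coprod_P {\rm Quot}_{\Lambda}(\mathcal{G},P).
\]

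Next I would argue about a single connected component. Let $Z$ be a connected component of ${\rm Quot}_{\Lambda}(\mathcal{G})$. Since each ${\rm Quot}_{\Lambda}(\mathcal{G},P)$ is open and closed in ${\rm Quot}_{\Lambda}(\mathcal{G})$ and $Z$ is connected, $Z$ is contained in exactly one stratum ${\rm Quot}_{\Lambda}(\mathcal{G},P)$ and is in fact a connected component of that stratum. By Theorem \ref{511}, ${\rm Quot}_{\Lambda}(\mathcal{G},P)$ is a quasi-projective $S$-scheme; as $S$ is affine (or noetherian of finite type) and this scheme is of finite type over $S$, it is noetherian, hence has only finitely many connected components, each of which is open and closed. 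An open subscheme of a quasi-projective $S$-scheme is again quasi-projective over $S$, so $Z$ is a quasi-projective $S$-scheme, as desired.

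The only point requiring genuine care is the first step: justifying that the modified Hilbert polynomial is locally constant on ${\rm Quot}_{\Lambda}(\mathcal{G})$, so that the decomposition into open-closed strata is legitimate and each stratum is the representing scheme supplied by Theorem \ref{511}. I do not expect this to be a real obstacle, because the modified Hilbert polynomial of a $\Lambda$-module depends only on its underlying coherent sheaf through $F_{\mathcal{E}}$ (cf. \S 3.3), and the forgetful map ${\rm Quot}_{\Lambda}(\mathcal{G}) \to {\rm Quot}(\mathcal{G})$ is compatible with the corresponding decomposition already available for coherent sheaves (Theorem \ref{3021} and the remark that every connected component of ${\rm Quot}(\mathcal{G})$ has the form ${\rm Quot}(\mathcal{G},P)$). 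Once this local constancy is in place, everything else is formal, and the corollary follows immediately from Theorem \ref{511}.
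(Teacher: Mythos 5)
Your proposal is correct and follows the same route as the paper: the paper's proof simply observes that the connected components of ${\rm Quot}_{\Lambda}(\mathcal{G})$ are parameterized by the modified Hilbert polynomials and invokes Theorem \ref{511}. You have merely spelled out the two implicit steps (local constancy of the modified Hilbert polynomial in a flat family with proper support, and the fact that a connected component of a noetherian quasi-projective $S$-scheme is open-and-closed, hence quasi-projective), both of which are sound.
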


\begin{proof}
The connected components of $\widetilde{\rm Quot}_{\Lambda}(\mathcal{G})$ are parameterized by integer polynomials (as modified Hilbert polynomials). This corollary follows from Theorem \ref{511} immediately.
\end{proof}

\subsection{Boundedness of $\Lambda$-modules \Romannum{1}}
In this subsection, we will prove that the family of $p$-semistable $\Lambda$-modules of pure dimension $d$ with a given modified Hilbert polynomial $P$ is bounded.

Let $f: \mathcal{X} \rightarrow T$ be a family of projective stacks with a family of moduli spaces $X \rightarrow T$ over an algebraically closed field $k$, where $T$ is a scheme. Let $\mathcal{E}$ be a generating sheaf of $\mathcal{X}$, and let $\mathcal{O}_{X}(1)$ be an $f$-ample line bundle. We fix an integer polynomial $P$ of degree $d$ and a rational number $\mu_0$.

\begin{cor}\label{513}
Let $\widetilde{\mathfrak{F}}$ be the family of purely $d$-dimensional $\Lambda$-modules with modified Hilbert polynomial $P$ on the fibers of $f:\mathcal{X} \rightarrow T$ such that the maximal slope $\hat{\mu}_{\rm max}(F_{\mathcal{E}}(\mathcal{F})) \leq \mu_0$. The family $\widetilde{\mathfrak{F}}$ is bounded.
\end{cor}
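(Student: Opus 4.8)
The plan is to deduce this from Theorem \ref{310} by forgetting the $\Lambda$-action. The guiding observation is that boundedness, in the sense of \S 3.5, is a condition on the underlying $\mathcal{O}_{\mathcal{X}}$-module structure alone: a family of $\Lambda$-modules is bounded exactly when the family of coherent sheaves obtained by discarding the morphisms $\Phi \colon \Lambda \otimes \mathcal{F} \to \mathcal{F}$ is bounded, since the parametrizing sheaf $\mathfrak{F}$ in the definition records no information about the $\Lambda$-structure. Thus the $\Lambda$-structure is irrelevant to the conclusion.

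First I would apply the forgetful assignment $(\mathcal{F}, \Phi) \mapsto \mathcal{F}$ to every member of $\widetilde{\mathfrak{F}}$, obtaining a family $\widetilde{\mathfrak{F}}_0$ of coherent sheaves on the fibers of $f \colon \mathcal{X} \to T$. By hypothesis each such $\mathcal{F}$ is purely $d$-dimensional, has modified Hilbert polynomial $P$, and satisfies $\hat{\mu}_{\rm max}(F_{\mathcal{E}}(\mathcal{F})) \leq \mu_0$. These are precisely the hypotheses governing the boundedness statement of Theorem \ref{310}; consequently $\widetilde{\mathfrak{F}}_0$ is bounded, i.e. there exist a $T$-scheme $T'$ of finite type and a coherent sheaf $\mathfrak{F}$ on $\mathcal{X} \times_T T'$ whose closed fibers contain every sheaf appearing in $\widetilde{\mathfrak{F}}_0$. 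Since $\widetilde{\mathfrak{F}}$, regarded as a family of coherent sheaves, coincides with $\widetilde{\mathfrak{F}}_0$, the corollary follows at once.

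I do not expect a genuine obstacle here; the single point needing verification is that the maximal-slope hypothesis is the correct invariant to feed into Theorem \ref{310}. This is transparent from the equivalent characterizations of boundedness over stacks in \S 3.5: boundedness of $\widetilde{\mathfrak{F}}_0$ is equivalent to boundedness of the image family $F_{\mathcal{E}}(\widetilde{\mathfrak{F}}_0)$ over $X$, and the relevant numerical invariant controlling the latter, through the Kleiman criterion of Theorem \ref{308}, is exactly $\hat{\mu}_{\rm max}(F_{\mathcal{E}}(\mathcal{F}))$. Pureness and the Hilbert polynomial transfer to $F_{\mathcal{E}}(\mathcal{F})$ by Corollary \ref{303} together with the identity $P_{\mathcal{E}}(\mathcal{F}) = \chi(X, F_{\mathcal{E}}(\mathcal{F}))$, so no further bookkeeping is required. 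The content of the corollary is therefore precisely that the extra datum $\Phi$ does not enlarge the bounded family, which the definition of boundedness makes automatic.
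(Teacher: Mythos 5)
Your proposal is correct and follows essentially the same route as the paper: forget the $\Lambda$-structure, pass to the coarse moduli space via $F_{\mathcal{E}}$ (using the equivalence of boundedness on $\mathcal{X}$ with boundedness of $F_{\mathcal{E}}(\widetilde{\mathfrak{F}})$ on $X$ from \S 3.5), and invoke the classical boundedness for pure sheaves of fixed Hilbert polynomial and bounded maximal slope. The only difference is cosmetic: the paper cites Simpson's \S 3 for the boundedness of $F_{\mathcal{E}}(\widetilde{\mathfrak{F}})$ on $X$, while you route it through Theorem \ref{308}; and your care in noting that the hypothesis bounds $\hat{\mu}_{\rm max}(F_{\mathcal{E}}(\mathcal{F}))$ rather than $\hat{\mu}_{\rm max}(\mathcal{F})$ is exactly what makes the direct appeal to the scheme-level criterion legitimate.
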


\begin{proof}
By Lemma \ref{510}, it is equivalent to consider the family $F_{\mathcal{E}}(\widetilde{\mathfrak{F}})$ over $X$. The family $F_{\mathcal{E}}(\widetilde{\mathfrak{F}})$ is bounded by \cite[\S 3]{Simp2}. Therefore the family $\widetilde{\mathfrak{F}}$ is bounded.
\end{proof}

\begin{cor}\label{514}
The family of $p$-semistable $\Lambda$-modules of pure dimension $d$ with a given modified Hilbert polynomial $P$ on $\mathcal{X}$ is bounded.
\end{cor}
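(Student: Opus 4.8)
The plan is to reduce the statement to Corollary \ref{513} by producing a uniform upper bound on the maximal slope $\hat{\mu}_{\rm max}(F_{\mathcal{E}}(\mathcal{F}))$ as $\mathcal{F}$ ranges over the family $\widetilde{\mathfrak{F}}^{ss}_{\Lambda}(P)$ of $p$-semistable $\Lambda$-modules of pure dimension $d$ with modified Hilbert polynomial $P$. The essential difficulty, already isolated in \S 4.3, is that a $p$-semistable $\Lambda$-module need not be $p$-semistable as an $\mathcal{O}_{\mathcal{X}}$-module, so Theorem \ref{310} and Corollary \ref{513} cannot be applied to $\mathcal{F}$ directly; it is exactly the $\Lambda$-structure that keeps the underlying coherent sheaf from being too unstable.

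First I would bound the instability of the underlying coherent sheaf. Fix $\mathcal{F} \in \widetilde{\mathfrak{F}}^{ss}_{\Lambda}(P)$ and let $\mathcal{G} \subseteq \mathcal{F}$ be its destabilizing subsheaf as an $\mathcal{O}_{\mathcal{X}}$-module, so that $\hat{\mu}_{\rm max}(\mathcal{F}) = \hat{\mu}_{\mathcal{E}}(\mathcal{G})$. Applying Lemma \ref{406} to the subsheaf $\mathcal{G} \subseteq \mathcal{F}$ gives
\begin{align*}
\hat{\mu}_{\rm max}(\mathcal{F}) = \hat{\mu}_{\mathcal{E}}(\mathcal{G}) \leq \hat{\mu}_{\mathcal{E}}(\mathcal{F}) + br,
\end{align*}
where $b$ depends only on $\Lambda$ and $\mathcal{E}$. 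Since the slope $\hat{\mu}_{\mathcal{E}}(\mathcal{F})$ and the rank $r$ are determined by $P$, the right-hand side is a constant depending only on $P$ and the fixed polarizations; call it $\mu_1$.

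Next I would transport this bound across the functor $F_{\mathcal{E}}$. The inequality $\hat{\mu}_{\rm max}(F_{\mathcal{E}}(\mathcal{H})) \leq \hat{\mu}_{\mathcal{E}}(\mathcal{H}) + \widetilde{m}\deg(\mathcal{O}_X(1))$ recalled in \S 3.5 is stated for $p$-semistable $\mathcal{H}$, so to apply it to our possibly unstable $\mathcal{F}$ I would pass through the Harder--Narasimhan filtration $0 = \mathcal{F}_0 \subseteq \cdots \subseteq \mathcal{F}_l = \mathcal{F}$ of the coherent sheaf $\mathcal{F}$. Each graded piece $gr_i^{\rm HN}(\mathcal{F})$ is $p$-semistable with $\hat{\mu}_{\mathcal{E}}(gr_i^{\rm HN}(\mathcal{F})) \leq \hat{\mu}_{\rm max}(\mathcal{F}) \leq \mu_1$, so the inequality gives $\hat{\mu}_{\rm max}(F_{\mathcal{E}}(gr_i^{\rm HN}(\mathcal{F}))) \leq \mu_1 + \widetilde{m}\deg(\mathcal{O}_X(1))$ for every $i$. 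Since $F_{\mathcal{E}}$ is exact, $F_{\mathcal{E}}(\mathcal{F})$ is filtered by the sheaves $F_{\mathcal{E}}(gr_i^{\rm HN}(\mathcal{F}))$, and the maximal slope of a filtered sheaf is at most the largest of the maximal slopes of its successive quotients; hence
\begin{align*}
\hat{\mu}_{\rm max}(F_{\mathcal{E}}(\mathcal{F})) \leq \mu_1 + \widetilde{m}\deg(\mathcal{O}_X(1)) =: \mu_0.
\end{align*}
This $\mu_0$ is independent of $\mathcal{F}$, so Corollary \ref{513} applies with this choice of $\mu_0$ and yields that $\widetilde{\mathfrak{F}}^{ss}_{\Lambda}(P)$ is bounded.

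The routine points I am deferring are the verification that $\hat{\mu}_{\mathcal{E}}(\mathcal{F})$ and $r$ are read off from $P$ (immediate from the definitions in \S 3.3) and the elementary fact that the maximal slope of a filtered coherent sheaf is dominated by the maximal slopes of its graded pieces. The genuine content sits entirely in Lemma \ref{406}, which is already established; the only subtlety to be careful about is that one must bound the maximal slope of the \emph{underlying} coherent sheaf rather than treat $\mathcal{F}$ as semistable, and then combine this with Nironi's slope comparison before invoking the stacky Kleiman-type boundedness of Corollary \ref{513}.
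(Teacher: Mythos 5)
Your proposal is correct and follows the same route as the paper: reduce to Corollary \ref{513} by bounding $\hat{\mu}_{\rm max}(F_{\mathcal{E}}(\mathcal{F}))$ via Lemma \ref{406}. The paper's own proof is a two-line citation of exactly these two results; you have merely made explicit the intermediate step (passing from the stacky slope bound on subsheaves to the maximal slope of $F_{\mathcal{E}}(\mathcal{F})$ on the coarse space through the Harder--Narasimhan graded pieces and Nironi's comparison inequality) that the paper leaves implicit, and that step is carried out correctly.
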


\begin{proof}
By Corollary \ref{513}, we only have to show that the slope $\hat{\mu}_{\rm max}(F_{\mathcal{E}}(\mathcal{F}))$ is bounded. The property of boundedness of the slope is given by Lemma \ref{406}. This finishes the proof of this corollary.
\end{proof}

\section{Moduli Space of $\Lambda$-modules on Projective Deligne-Mumford Stacks}
In this section, we will construct the moduli space of $\Lambda$-modules on projective Deligne-Mumford stacks. The setup is the same as \S 3.8. Let $S$ be a noetherian scheme of finite type, or an affine scheme, and let $\mathcal{X}$ be a projective Deligne-Mumford stack over $S$. Let $\mathcal{E}$ be a generating sheaf over $\mathcal{X}$ and let $\mathcal{O}_X(1)$ be a polarization over $X$, the coarse moduli space of $\mathcal{X}$.

By Theorem \ref{511}, ${\rm Quot}_{\Lambda}(\mathcal{G},P)$ is a projective scheme. Based on this result, we study the moduli problem of $p$-semistable $\Lambda$-modules over $\mathcal{X}$
\begin{align*}
\widetilde{\mathcal{M}}_{\Lambda}^{ss}(\mathcal{E},\mathcal{O}_X(1),P): (\text{Sch}/S)^{{\rm op}} \rightarrow \text{Set}.
\end{align*}
Given an $S$-scheme $T$, $\widetilde{\mathcal{M}}_\Lambda^{ss}(\mathcal{E},\mathcal{O}_X(1),P)(T)$ is the set of $T$-flat families of $p$-semistable $\Lambda$-modules on $\mathcal{X} \otimes_S T$ of pure dimension $d$ with modified Hilbert polynomial $P$ with respect to the following equivalence relation $``\sim"$. Let $(\mathcal{F}_T,\Phi_T), (\mathcal{F}'_T,\Phi'_{T}) \in \widetilde{\mathcal{M}}_\Lambda^{ss}(\mathcal{E},\mathcal{O}_X(1),P)(T)$ be two elements. We say $(\mathcal{F}_T,\Phi_T) \sim (\mathcal{F}'_T,\Phi'_{T})$ if and only if $\mathcal{F}_T \cong \mathcal{F}'_T \otimes p^* L$ and $\Phi_T \cong \Phi'_T \otimes 1_{p^* L}$ for some $L \in {\rm Pic}(T)$.

\begin{lem}\label{601}
Given a polynomial $P$, there is a positive integer $N_0$ depending on $\Lambda$, $\mathcal{E}$ and $P$ such that for any $m \geq N_0$ and any $p$-semistable $\Lambda$-module $\mathcal{F}$ with Hilbert polynomial $P$ on $\mathcal{X}$, we have
\begin{enumerate}
\item $H^0(\mathcal{X}/S,\mathcal{F}\otimes \mathcal{E}^{\vee} \otimes \pi^* \mathcal{O}_{X}(m))$ is locally free of rank $P(m)$ and $H^i(\mathcal{X}/S,\mathcal{F}\otimes \mathcal{E}^{\vee} \otimes \pi^* \mathcal{O}_{X}(m))=0$ for $i >0$.
\item The map
    \begin{align*}
    H^0(\mathcal{X}/S,\mathcal{F}\otimes \mathcal{E}^{\vee} \otimes \pi^* \mathcal{O}_{X}(m)) \otimes \mathcal{E} \otimes \pi^* \mathcal{O}_{X}(-m) \rightarrow \mathcal{F} \rightarrow 0
    \end{align*}
    is surjective.
\end{enumerate}
\end{lem}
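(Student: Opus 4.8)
The plan is to transport both assertions to the coarse moduli space $X$ through the functor $F_{\mathcal{E}}$ and then read them off from a uniform Castelnuovo--Mumford regularity bound supplied by boundedness. First I would record the identity
\begin{align*}
H^i(\mathcal{X}/S, \mathcal{F}\otimes \mathcal{E}^{\vee} \otimes \pi^* \mathcal{O}_X(m)) \cong H^i(X/S, F_{\mathcal{E}}(\mathcal{F})(m)), \quad i \geq 0.
\end{align*}
This holds because $\mathcal{E}$ is locally free, so $\mathcal{H}om_{\mathcal{O}_{\mathcal{X}}}(\mathcal{E},\mathcal{F}) = \mathcal{F}\otimes \mathcal{E}^{\vee}$; because $\pi_*$ is exact on a tame stack, so $R^i\pi_* = 0$ for $i>0$ and the Leray spectral sequence degenerates; and because the projection formula gives $\pi_*(\mathcal{F}\otimes\mathcal{E}^{\vee}\otimes\pi^*\mathcal{O}_X(m)) = F_{\mathcal{E}}(\mathcal{F})(m)$. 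Thus every cohomological claim about $\mathcal{F}\otimes\mathcal{E}^{\vee}\otimes\pi^*\mathcal{O}_X(m)$ on $\mathcal{X}$ becomes the corresponding claim about $F_{\mathcal{E}}(\mathcal{F})(m)$ on $X$.

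Next I would extract a uniform regularity. By Corollary \ref{514} the family $\widetilde{\mathfrak{F}}^{ss}_{\Lambda}(P)$ of $p$-semistable $\Lambda$-modules of pure dimension $d$ with modified Hilbert polynomial $P$ is bounded; here it is essential to invoke Corollary \ref{514} rather than Theorem \ref{310}, since the underlying $\mathcal{O}_{\mathcal{X}}$-module of a $p$-semistable $\Lambda$-module need not be $p$-semistable. By the equivalent characterizations of boundedness over stacks recalled in \S 3.5, boundedness is equivalent to the existence of a single non-negative integer $N_0$ such that $\mathcal{F}$ is $N_0$-regular, that is, $F_{\mathcal{E}}(\mathcal{F})$ is $N_0$-regular on $X$, for every $\mathcal{F} \in \widetilde{\mathfrak{F}}^{ss}_{\Lambda}(P)$. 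This $N_0$ depends only on $\Lambda$, $\mathcal{E}$ and $P$, which is exactly the uniformity required.

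For assertion (1), fix $m \geq N_0$. Since $F_{\mathcal{E}}(\mathcal{F})$ is $N_0$-regular, hence $(m+i)$-regular for every $i \geq 1$, the standard vanishing consequences of regularity give $H^i(X, F_{\mathcal{E}}(\mathcal{F})(m)) = 0$ for all $i>0$, whence $H^i(\mathcal{X}/S,\mathcal{F}\otimes\mathcal{E}^{\vee}\otimes\pi^*\mathcal{O}_X(m)) = 0$ for $i>0$ by the first paragraph. The vanishing of higher cohomology forces $h^0 = \chi = P_{\mathcal{E}}(\mathcal{F},m) = P(m)$ on each geometric fibre. Combining the fibrewise vanishing, the flatness of $\mathcal{F}$ over $S$, and cohomology-and-base-change, the relative $H^0$ is locally free of the constant rank $P(m)$, which proves (1). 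For assertion (2), the same regularity shows that $F_{\mathcal{E}}(\mathcal{F})(m)$ is globally generated, i.e. $H^0(X/S, F_{\mathcal{E}}(\mathcal{F})(m)) \otimes \mathcal{O}_X(-m) \rightarrow F_{\mathcal{E}}(\mathcal{F})$ is surjective. Applying the right-exact $\pi^*$ and tensoring by $\mathcal{E}$ yields a surjection onto $G_{\mathcal{E}}F_{\mathcal{E}}(\mathcal{F}) = \pi^*F_{\mathcal{E}}(\mathcal{F})\otimes\mathcal{E}$; composing with the adjunction morphism $\theta_{\mathcal{E}}(\mathcal{F}): G_{\mathcal{E}}F_{\mathcal{E}}(\mathcal{F}) \rightarrow \mathcal{F}$, which is surjective because $\mathcal{E}$ is a generating sheaf (\S 2.3), produces exactly the asserted surjection.

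I expect the main obstacle to be not any single computational step but the bookkeeping that makes $N_0$ genuinely uniform across the entire bounded family, together with the care needed to pass from the fibrewise cohomology statements to local freeness of the relative $H^0$ over $S$. Both are dispatched by quoting boundedness (Corollary \ref{514}) and cohomology-and-base-change, so that beyond the exactness of $\pi_*$ and the generating-sheaf surjectivity of $\theta_{\mathcal{E}}$ no genuinely new input is required; the $\Lambda$-structure enters only through the citation of Corollary \ref{514}.
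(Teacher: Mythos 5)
Your proof is correct and takes essentially the same route as the paper's: both reduce the lemma to the boundedness of the family of $p$-semistable $\Lambda$-modules (Corollary \ref{514}) — correctly noting that Theorem \ref{310} does not apply since the underlying $\mathcal{O}_{\mathcal{X}}$-module need not be $p$-semistable — and then extract a uniform $N_0$-regularity from which both assertions follow by the standard consequences of regularity recalled in \S 3.5. Your write-up merely makes explicit the steps the paper leaves implicit, namely the identification $H^i(\mathcal{X}/S,\mathcal{F}\otimes\mathcal{E}^{\vee}\otimes\pi^*\mathcal{O}_X(m))\cong H^i(X/S,F_{\mathcal{E}}(\mathcal{F})(m))$ via exactness of $\pi_*$, cohomology-and-base-change for the local freeness in $(1)$, and the surjectivity of $\theta_{\mathcal{E}}(\mathcal{F})$ for $(2)$.
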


\begin{proof}
By Corollary \ref{514}, we know that the family of $p$-semistable $\Lambda$-modules of pure dimension $d$ with a given modified Hilbert polynomial is bounded. Note that a $\Lambda$-module is also an $\mathcal{O}_{\mathcal{X}}$-module. Thus there is an integer $N_0$ such that when $m \geq N_0$, for any element $\mathcal{F}$ in this family, the $\Lambda$-module $\mathcal{F}$ is $m$-regular. By the equivalent conditions of boundedness and regularity in \S 3.5, the integer $m$ satisfies the requirements in the lemma.
\end{proof}

The $p$-semistability of coherent sheaves and $\Lambda$-modules on a projective scheme are open conditions \cite{HuLe,Simp2}. This statement can be directly generalized to $\Lambda$-modules over $\mathcal{X}$.

\begin{lem}\label{602}
Given an $S$-scheme $T$, let $\mathcal{X}_T \rightarrow T$ be a family of projective stacks over $T$ and let $\mathcal{F}_T$ be a family of $\Lambda$-modules on $\mathcal{X}_T$. There is an open subset $T^{ss} \subseteq T$ such that $\mathcal{F}_t$ is $p$-semistable if and only if $t \in T^{ss}$. The same argument holds for $p$-stable $\Lambda$-modules.
\end{lem}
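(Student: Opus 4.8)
The plan is to show that the non-semistable locus $T \setminus T^{ss}$ is closed, following the classical openness argument of Huybrechts--Lehn \cite[Proposition 2.3.1]{HuLe} but replacing ordinary subsheaves by $\Lambda$-submodules and the ordinary Quot scheme by the $\Lambda$-Quot scheme of Theorem \ref{511}. Since openness may be tested locally on $T$, I would first reduce to the case that $T$ is connected and Noetherian of finite type over $S$, and that $\mathcal{F}_T$ is $T$-flat with proper support; by flatness the modified Hilbert polynomial $P_{\mathcal{E}}(\mathcal{F}_t)$ is then independent of $t$, say equal to $P$. A point $t \in T$ fails to be $p$-semistable precisely when $\mathcal{F}_t$ admits a proper $\Lambda$-submodule $\mathcal{F}' \subseteq \mathcal{F}_t$ with $p_{\mathcal{E}}(\mathcal{F}') > p_{\mathcal{E}}(\mathcal{F}_t)$, equivalently a proper $\Lambda$-quotient $\mathcal{F}_t \twoheadrightarrow \mathcal{Q}$ with $p_{\mathcal{E}}(\mathcal{Q}) < p_{\mathcal{E}}(\mathcal{F}_t)$.

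Second, I would establish that these destabilizing $\Lambda$-submodules form a bounded family, so that only finitely many modified Hilbert polynomials occur among them. For any $\Lambda$-submodule $\mathcal{F}'$ the slope is bounded above by Lemma \ref{406}, namely $\hat{\mu}_{\mathcal{E}}(\mathcal{F}') \le \hat{\mu}_{\mathcal{E}}(\mathcal{F}_t) + br$, while the destabilizing condition $p_{\mathcal{E}}(\mathcal{F}') > p_{\mathcal{E}}(\mathcal{F}_t)$ forces $\hat{\mu}_{\mathcal{E}}(\mathcal{F}') \ge \hat{\mu}_{\mathcal{E}}(\mathcal{F}_t)$; hence $\hat{\mu}_{\max}(F_{\mathcal{E}}(\mathcal{F}'))$ is uniformly bounded, and Corollary \ref{513} together with Corollary \ref{514} yields boundedness. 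Let $Q_1, \dots, Q_N$ be the finitely many polynomials arising as $P_{\mathcal{E}}(\mathcal{Q})$ for the corresponding destabilizing quotients, that is, those with reduced polynomial strictly smaller than $p_{\mathcal{E}}(\mathcal{F}_t)$.

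Third, for each $Q_j$ I would form the relative $\Lambda$-Quot scheme $\mathrm{Quot}_\Lambda(\mathcal{F}_T, \mathcal{X}_T/T, Q_j) \to T$ parametrizing $\Lambda$-quotients of the family $\mathcal{F}_T$ with modified Hilbert polynomial $Q_j$. The key observation is that this sits as a \emph{closed} subscheme of the ordinary relative Quot scheme $\mathrm{Quot}(\mathcal{F}_T, Q_j)$: given a $T$-flat quotient $\mathcal{F}_T \twoheadrightarrow \mathcal{Q}_T$ with kernel $\mathcal{K}_T$, the kernel is $\Lambda$-stable if and only if the composite $\Lambda \otimes \mathcal{K}_T \to \Lambda \otimes \mathcal{F}_T \to \mathcal{F}_T \to \mathcal{Q}_T$ vanishes, and the vanishing of such a morphism into a $T$-flat sheaf is a closed condition; this is exactly the mechanism used to cut out $Q_3$ in the proof of Proposition \ref{5101}. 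Since $\mathcal{X}_T \to T$ is projective and $\mathcal{F}_T$ has proper support, $\mathrm{Quot}(\mathcal{F}_T, Q_j)$ is projective over $T$ by Theorem \ref{3021}, so each $\mathrm{Quot}_\Lambda(\mathcal{F}_T, Q_j)$ is proper over $T$.

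Finally, $T \setminus T^{ss}$ is exactly the union $\bigcup_{j=1}^N \mathrm{image}\big(\mathrm{Quot}_\Lambda(\mathcal{F}_T, Q_j) \to T\big)$, a finite union of images of proper $T$-schemes, hence closed; its complement $T^{ss}$ is therefore open. For the stable case one repeats the argument verbatim, allowing the destabilizing quotients to have reduced polynomial $\le p_{\mathcal{E}}(\mathcal{F}_t)$ (equivalently a proper $\Lambda$-submodule with $p_{\mathcal{E}}(\mathcal{F}') \ge p_{\mathcal{E}}(\mathcal{F}_t)$), which only enlarges the finite index set by the components where equality holds; the image is again closed, giving openness of $T^s$. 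I expect the main obstacle to be the boundedness step: one must carefully combine the slope bound of Lemma \ref{406} with Corollary \ref{513} to guarantee that the destabilizing $\Lambda$-submodules, ranging over all fibers and all admissible Hilbert polynomials simultaneously, really constitute a single bounded family, since only then is the set $\{Q_j\}$ finite and the union above a finite one.
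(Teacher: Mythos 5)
Your overall strategy --- detect non-semistability by destabilizing $\Lambda$-quotients, realize these inside finitely many relative Quot schemes proper over $T$, and conclude that the non-semistable locus is a finite union of closed images --- is the standard one; it is essentially how the cited references handle the scheme case, and the paper itself offers no proof of Lemma \ref{602} beyond the citation to \cite{HuLe,Simp2}. Your key structural observation is also correct and worth keeping: because $\mathcal{F}_T$ is \emph{already} a $\Lambda$-module, a quotient $\mathcal{F}_T\twoheadrightarrow\mathcal{Q}$ with kernel $\mathcal{K}$ is a $\Lambda$-quotient exactly when the composite $\Lambda_1\otimes\mathcal{K}\to\mathcal{Q}$ vanishes ($\Lambda$ being generated by $\Lambda_1$), and this is a closed condition on the ordinary relative Quot scheme, so $\mathrm{Quot}_{\Lambda}(\mathcal{F}_T,Q_j)$ is closed in $\mathrm{Quot}(\mathcal{F}_T,Q_j)$ and hence proper over $T$. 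This is cleaner than the locally closed construction of Proposition \ref{5101}, which is forced to be only locally closed because there $\mathcal{G}$ carries no $\Lambda$-structure.

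The genuine gap is the finiteness of the set $\{Q_j\}$, which you yourself flag as the main obstacle but then justify with results that do not apply. Lemma \ref{406} assumes (and its proof really uses) that $\mathcal{F}$ is a $p$-semistable $\Lambda$-module, which is exactly the property of $\mathcal{F}_t$ being tested, so it cannot be invoked for an arbitrary fibre. More seriously, Corollaries \ref{513} and \ref{514} each take a \emph{fixed} modified Hilbert polynomial as input, so they cannot yield the finiteness of the set of Hilbert polynomials of destabilizing submodules --- using them for this purpose is circular. What is actually needed is a Grothendieck-type lemma: for the single flat family $\mathcal{F}_T$ over a noetherian base, the saturated subsheaves $\mathcal{F}'\subseteq\mathcal{F}_t$ with $\hat{\mu}_{\mathcal{E}}(\mathcal{F}')\geq\hat{\mu}_{\mathcal{E}}(\mathcal{F}_t)$ form a bounded family and hence realize only finitely many modified Hilbert polynomials (no upper bound on slopes is required). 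This is the stacky version of \cite[Lemma 1.7.9]{HuLe}, available in \cite{Nir} and already used implicitly in the proof of Lemma \ref{3142}; it applies here because the saturation of a $\Lambda$-submodule is again a $\Lambda$-submodule with reduced polynomial at least as large. Two smaller corrections: non-purity of $\mathcal{F}_t$ is \emph{not} detected by a $\Lambda$-submodule with $p_{\mathcal{E}}(\mathcal{F}')>p_{\mathcal{E}}(\mathcal{F}_t)$ (the torsion part has strictly smaller reduced polynomial), so the two criteria in your second sentence are not equivalent and you must run the whole argument with the quotient criterion; and the list $\{Q_j\}$ must be restricted to polynomials of degree exactly $d=\deg P$ (equivalently, quotients with positive top-degree coefficient), since lower-dimensional $\Lambda$-quotients have reduced polynomial eventually below $p_{\mathcal{E}}(\mathcal{F}_t)$ without destabilizing anything, and including them would make your union strictly larger than the non-semistable locus.
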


Now we will construct a quasi-projective scheme parametrizing elements in $\widetilde{\mathfrak{F}}^{ss}_{\Lambda}(P)$, which is the set of $p$-semistable $\Lambda$-modules of pure dimension $d$ with the modified Hilbert polynomial $P$. We first give the idea of the construction and then prove the statements in detail (see Proposition \ref{603}).

By Lemma \ref{601}, we can take an integer $m$ such that for any $\Lambda$-module $(\mathcal{F},\Phi) \in \widetilde{\mathfrak{F}}^{ss}_{\Lambda}(P)$, the coherent sheaf $\mathcal{F}$ is $m$-regular. Moreover, by Proposition \ref{407}, we can choose an integer $N$ such that for any $(\mathcal{F},\Phi) \in \widetilde{\mathfrak{F}}^{ss}_{\Lambda}(P)$, we have
\begin{align*}
P(N) \geq P_{\mathcal{E}}(\mathcal{F},m)=h^0(X/S, F_{\mathcal{E}}(\mathcal{F})(m)).
\end{align*}
Let $V$ be the linear space $S^{P(N)}$. Let $\mathcal{G}$ be the coherent sheaf $\mathcal{E} \otimes \pi^* \mathcal{O}_X(-N)$. The above discussion tells us that each $\Lambda$-module $(\mathcal{F},\Phi) \in \widetilde{\mathfrak{F}}_{\Lambda}^{ss}(P)$ corresponds to a surjection $[V \otimes \mathcal{G} \rightarrow \mathcal{F}]$ together with an isomorphism $V \cong H^0(X,F_{\mathcal{E}}(\mathcal{F})(N))$. Note that this correspondence does not take the $\Lambda$-structure $\Phi$ into account. Therefore, the quot-scheme ${\rm Quot}(V \otimes \mathcal{G},P)$ is so small that it cannot cover all $\Lambda$-modules. We have to find a larger quot-scheme which can cover all $\Lambda$-modules of pure dimension $d$ with Hilbert polynomial $P$.

Let $k$ be a positive integer. We consider the quot-scheme ${\rm Quot}(\Lambda_k \otimes V \otimes \mathcal{G},P)$. Given an element $[\rho: \lambda_k \otimes V \otimes \mathcal{G} \rightarrow \mathcal{F}] \in Q'$, suppose that the quotient map $q$ have the following factorization
\begin{center}
\begin{tikzcd}
\Lambda_k \otimes V \otimes \mathcal{G} \arrow[rd, "1 \otimes \rho'"] \arrow[rr, "\rho"] &  & \mathcal{F}  \\
& \Lambda_k \otimes \mathcal{F} \arrow[ru,"\Phi_k"]&
\end{tikzcd}
\end{center}
such that
\begin{itemize}
\item the induced morphism $\rho':V \otimes \mathcal{G} \rightarrow \mathcal{F}$ is an element in ${\rm Quot}(V \otimes \mathcal{G},P)$;
\item $\Phi_k: \Lambda_k \otimes \mathcal{F} \rightarrow \mathcal{F}$ is a morphism.
\end{itemize}
If a quotient $[\rho:\Lambda_k \otimes V \otimes \mathcal{G} \rightarrow \mathcal{F}]$ has this factorization property, we say that $[\rho]$ \emph{admits a factorization}.

The map $\Phi_k$ in the factorization will give a $\Lambda$-structure on $\mathcal{F}$ under some good conditions. Given a $\Lambda$-module $(\mathcal{F},\Phi)$, the coherent sheaf $\mathcal{F}$ is included in ${\rm Quot}(V \otimes \mathcal{G},P)$ by Lemma \ref{601} and the morphism $\Phi:\Lambda \otimes \mathcal{F} \rightarrow \mathcal{F}$ induces a map $\Phi_k: \Lambda_k \otimes \mathcal{F} \rightarrow \mathcal{F}$ naturally. Therefore a $\Lambda$-module corresponds to an element in ${\rm Quot}(\Lambda_k \otimes V \otimes \mathcal{G},P)$ uniquely. On the other hand, for each element $[\rho: \Lambda_k \otimes V \otimes \mathcal{G} \rightarrow \mathcal{F}] \in {\rm Quot}(\Lambda_k \otimes V \otimes \mathcal{G},P)$, we have a natural map
\begin{align*}
V \otimes \mathcal{G} \rightarrow \Lambda_k \otimes V \otimes \mathcal{G} \rightarrow \mathcal{F}.
\end{align*}
This induces a natural morphism $\alpha: V \rightarrow H^0(X/S, F_{\mathcal{E}}(\mathcal{F})(N))$.

In summary, let $N$ be a large enough integer. We want to find a subset $Q^{ss}_{\Lambda}$ of ${\rm Quot}(\Lambda_k \otimes V \otimes \mathcal{G},P)$ such that $Q^{ss}_{\Lambda}$ contains all elements $[\rho:\Lambda_k \otimes V \otimes \mathcal{G} \rightarrow \mathcal{F}]$ satisfying the following conditions
\begin{itemize}
    \item a quotient $[\rho]$ admits a factorization and induces a unique $\Lambda$-structure on $\mathcal{F}$;
    \item $\mathcal{F}$ is a $p$-semistable $\Lambda$-module;
    \item $V \cong H^0(X/S, F_{\mathcal{E}}(\mathcal{F})(N))$.
\end{itemize}

Here is the formal setup of this problem. We fix a polynomial $P$. Let $N_0$ be the positive integer determined by Lemma \ref{601}. We choose integers $m,N$ as discussed above, and we consider the following moduli problem
\begin{align*}
\widetilde{Q}^{ss}_\Lambda: ({\rm Sch}/S)^{\rm op} \rightarrow {\rm Set},
\end{align*}
and for each $S$-scheme $T$, $\widetilde{Q}^{ss}_\Lambda(T)$ is the set of pairs $(\mathcal{F}_T,\alpha_T)$ such that
\begin{enumerate}
    \item $\mathcal{F}_T$ is a $p$-semistable $\Lambda$-module with the modified Hilbert polynomial $P$ on $\mathcal{X}_T$,
    \item $\alpha_T: V_T \cong H^0(X_T/T, F_{\mathcal{E}}(\mathcal{F}_T)(N))$ is an isomorphism.
\end{enumerate}

The proof of the following proposition is similar to that of Proposition \ref{5101}.
\begin{prop}\label{603}
The functor $\widetilde{Q}^{ss}_\Lambda$ is representable by a quasi-projective scheme $Q^{ss}_\Lambda$ over $S$.
\end{prop}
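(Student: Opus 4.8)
The plan is to realize $Q^{ss}_\Lambda$ as a locally closed subscheme of a single quot-scheme on $\mathcal{X}$, following the stratification strategy of Proposition \ref{5101}, and then to cut down further by the two open conditions coming from $p$-semistability and the framing isomorphism. First I would fix the positive integer $k$ as in the discussion preceding the statement and set $Q_1 := {\rm Quot}(\Lambda_k \otimes V \otimes \mathcal{G}, \mathcal{X}, P)$. By Theorem \ref{3021} (together with Nironi's extension to noetherian $S$ of finite type), each connected component of $Q_1$ is a projective $S$-scheme, so $Q_1$ supplies the ambient projective scheme in which everything will live.

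Next, exactly as in the proof of Proposition \ref{5101}, I would carve out a chain of subschemes of $Q_1$: an open subscheme $Q_2 \subseteq Q_1$ consisting of quotients $[\rho]$ that admit a factorization $\rho = \Phi_k \circ (1_{\Lambda_k} \otimes \rho')$ with $\rho' \in {\rm Quot}(V \otimes \mathcal{G}, P)$; a closed subscheme $Q_3 \subseteq Q_2$ on which the kernel condition $\Lambda_1 \otimes \mathcal{K} \to \mathcal{F}$ is trivial, making the $\Lambda_1$-action well defined; closed subschemes $Q_{4,j} \subseteq Q_3$, with $Q_{4,\infty} = \bigcap_j Q_{4,j}$, imposing triviality of $\mathcal{K}_j \otimes \mathcal{F} \to \mathcal{F}$ so that the $\Lambda_1$-structure extends to a genuine $\Lambda$-structure; and finally a closed subscheme $Q_5 \subseteq Q_{4,\infty}$ on which the $\Lambda_k$-structure induced from the recovered $\Lambda$-structure coincides with the original one. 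The representability of each of these conditions is verbatim the argument of Proposition \ref{5101}, carried out with $\mathcal{O}_{\mathcal{X}}$-module operations in place of $\mathcal{O}_X$-module ones; since $F_{\mathcal{E}}$ is exact and, by Lemma \ref{510}, a finitely presented finite monomorphism, these constructions descend from the scheme case without change. The locally closed subscheme $Q_5$ then parametrizes precisely those quotients $[\rho: \Lambda_k \otimes V \otimes \mathcal{G} \to \mathcal{F}]$ for which $\mathcal{F}$ carries a well-defined $\Lambda$-module structure.

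Finally I would impose the two remaining conditions. The locus where $\mathcal{F}$ is $p$-semistable is open in $Q_5$ by Lemma \ref{602}, and the locus where the induced map $\alpha: V \to H^0(X/S, F_{\mathcal{E}}(\mathcal{F})(N))$ is an isomorphism is open by cohomology and base change together with the stacky semicontinuity theorem \cite[Theorem 1.8]{Nir}, the relevant rank being constant equal to $P(N)$ by Lemma \ref{601}. Intersecting these two open loci inside $Q_5$ yields a locally closed $Q^{ss}_\Lambda \subseteq Q_1$, hence a quasi-projective $S$-scheme. To conclude I would check that the natural transformation sending $(\mathcal{F}_T, \alpha_T) \in \widetilde{Q}^{ss}_\Lambda(T)$ to the quotient obtained by composing the global-generation surjection $\Lambda_k \otimes V_T \otimes \mathcal{G}_T \to \Lambda_k \otimes \mathcal{F}_T$ (built from $\alpha_T$ via Lemma \ref{601}) with the $\Lambda_k$-action $\Phi_{k,T}$ defines a bijection onto $Q^{ss}_\Lambda(T)$, with inverse recovering $\mathcal{F}_T$ as the quotient and $\alpha_T$ from the framing.

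The main obstacle I expect is precisely this last verification: confirming that the stratification conditions cut out exactly the pairs of the functor, that the correspondence is functorial in $T$ and bijective, and in particular that the openness of the factorization and framing-isomorphism loci is compatible with arbitrary base change. The $\Lambda$-theoretic bookkeeping—the interplay between the $\Lambda_k$-quotient, the recovered $\Lambda$-structure, and the condition $Q_5$ forcing the two to agree—is the delicate point, but it is governed by the same mechanism already established in Proposition \ref{5101}, so the novelty here is confined to the two framing-and-stability openness statements and the passage from the scheme $X$ to the stack $\mathcal{X}$.
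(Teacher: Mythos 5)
Your proposal follows the paper's own proof essentially step for step: the same ambient quot-scheme $Q_1 = {\rm Quot}(\Lambda_k \otimes V \otimes \mathcal{G},P)$, the same chain $Q_2 \supseteq Q_3 \supseteq Q_{4,\infty} \supseteq Q_5$ imported from Proposition \ref{5101}, and the same two final open conditions (the framing isomorphism and $p$-semistability via Lemma \ref{602}) cutting out $Q^{ss}_\Lambda$ as a locally closed, hence quasi-projective, subscheme. The approach is correct and matches the paper's argument.
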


\begin{proof}
Let $V$ be the linear space $S^{P(N)}$ and let $\mathcal{G}$ be the coherent sheaf $\mathcal{E} \otimes \pi^* \mathcal{O}_X(-N)$. We fix a positive integer $k$. Denote by $Q_1$ the quot-scheme ${\rm Quot}(\Lambda_k \otimes V \otimes \mathcal{G},P)$. For each $S$-scheme $f:T \rightarrow S$, the set $Q_1(T)$ parameterizes the isomorphism classes of quotients
\begin{align*}
f^*(\Lambda_k \otimes V \otimes \mathcal{G}) \rightarrow \mathcal{F}_T \rightarrow 0
\end{align*}
with modified Hilbert polynomial $P$, where $\mathcal{F}_T$ is a coherent sheaf over $\mathcal{X}_T$.

There exists an open subscheme $Q_2 \subseteq Q_1$ such that any quotient map $\rho_T \in Q_2(T)$ admits a factorization. More precisely, let $[\rho_T: f^*(\Lambda_k \otimes V \otimes \mathcal{G}) \rightarrow \mathcal{F}_T]$ be a quotient in $Q_2(T)$. The map $\rho_T$ can be factored in the following way
\begin{center}
\begin{tikzcd}
f^*(\Lambda_k \otimes V \otimes \mathcal{G}) \arrow[rd, "1_{f^*(\Lambda_k)} \otimes \rho'_T"] \arrow[rr, "\rho_T"] &  & \mathcal{F}_T  \\
& f^*(\Lambda_k) \otimes \mathcal{F}_T \arrow[ru,"\Phi_k"]&
\end{tikzcd}
\end{center}
where the induced map $\rho'_T: f^*(V \otimes \mathcal{G}) \rightarrow \mathcal{F}_T$ is a quotient in ${\rm Quot}(V \otimes \mathcal{G},P)(T)$. As we discussed early in this section, a quotient map $\rho_T$ admitting a factorization gives us a quotient map $\rho'_T :f^*(V \otimes \mathcal{G}) \rightarrow \mathcal{F}_T$ and a $f^*(\Lambda_k)$-structure on the coherent sheaf $\mathcal{F}_T$.

If a quotient map is in $Q_2$, then the coherent sheaf will have a $\Lambda_k$-structure. However, this may not give a $\Lambda$-structure for the coherent sheaf. We will explore the conditions, under which a coherent sheaf with a $\Lambda_k$-structure is a $\Lambda$-module.

Let $[\rho_T: f^*(\Lambda_k \otimes V \otimes \mathcal{G}) \rightarrow \mathcal{F}_T]$ be a point in $Q_2(T)$. Denote by $\rho'_T: f^*(V \otimes \mathcal{G}) \rightarrow \mathcal{F}_T$ the quotient map in the factorization of $\rho_T$. Let $\mathcal{K}$ be the kernel of the quotient map
\begin{align*}
0 \rightarrow \mathcal{K} \rightarrow f^*(V \otimes \mathcal{G}) \xrightarrow{\rho'_T} \mathcal{F}_T \rightarrow 0.
\end{align*}
The quotient map $\rho_T$ induces the morphism $f^*(\Lambda_1 \otimes V \otimes \mathcal{G}) \rightarrow \mathcal{F}_T$, which gives us the following map
\begin{align*}
f^*(\Lambda_1) \otimes \mathcal{K} \rightarrow f^*(\Lambda_1 \otimes V \otimes \mathcal{G}) \rightarrow \mathcal{F}_T.
\end{align*}
There exists a closed subscheme $Q_3 \subseteq Q_2$ such that the induced map $f^*(\Lambda_1) \otimes \mathcal{K} \rightarrow \mathcal{F}_T$ is trivial.

Now let $[\rho_T]$ be a quotient in $Q_3$. By the discussion above, the quotient map $\rho_T$ induces the following one
\begin{align*}
f^*(\Lambda_1 \otimes V \otimes \mathcal{G}) \rightarrow \mathcal{F}_T.
\end{align*}
Therefore we have the following factorization
\begin{center}
\begin{tikzcd}
f^*(\Lambda_1 \otimes V \otimes \mathcal{G}) \arrow[rd] \arrow[rr] &  & \mathcal{F}_T  \\
& f^*(\Lambda_1) \otimes \mathcal{F}_T \arrow[ru, "\Phi_1"]&
\end{tikzcd}
\end{center}
For each positive integer $j$, we have a morphism
\begin{align*}
f^*(\overbrace{\Lambda_1 \otimes \dots \otimes \Lambda_1}^{j}) \otimes \mathcal{F}_T \rightarrow \mathcal{F}_T,
\end{align*}
which is induced by the morphism $\Phi_1:f^*(\Lambda_1) \otimes \mathcal{F}_T \rightarrow \mathcal{F}_T$. Denote by $\mathcal{K}_j$ the kernel of the surjection
\begin{align*}
\overbrace{\Lambda_1 \otimes \dots \otimes \Lambda_1}^{j} \rightarrow \Lambda_j \rightarrow 0.
\end{align*}
This gives us a well-defined map
\begin{align*}
f^*(\mathcal{K}_j) \otimes \mathcal{F}_T \rightarrow \mathcal{F}_T.
\end{align*}
Therefore given a positive integer $j$, there exists a closed subscheme $Q_{4,j} \subseteq Q_3$ such that $[\rho_T] \in Q_{4,j}(T)$ if the corresponding map $f^*(\mathcal{K}_j) \otimes \mathcal{F}_T \rightarrow \mathcal{F}_T$ is trivial. Denote by $Q_{4, \infty}$ the intersection of all of these closed subschemes $Q_{4,j}$, $j \geq 1$. The conditions for $Q_3$ and $Q_{4,\infty}$ guarantee that a coherent sheaf $\mathcal{F}$ with a $\Lambda_k$ structure is also a $\Lambda$-module.

The above discussion tells us that a quotient $[\rho_T: f^*(\Lambda_k \otimes V \otimes \mathcal{G}) \rightarrow \mathcal{F}_T] \in Q_{4,\infty}(T)$ gives a $f^{*}(\Lambda_k)$-structure on $\mathcal{F}_T$. This structure induces a $f^*(\Lambda_1)$-structure on $\mathcal{F}_T$. We know that $\Lambda_1$ generates $\Lambda$. Thus a $f^*(\Lambda_1)$-structure will give us a $f^*(\Lambda)$-structure on $\mathcal{F}_T$, which will induce a $f^*(\Lambda_k)$-structure. Note that this $f^*(\Lambda_k)$-structure may not be the same as the previous one. However, there is a closed subset $Q_5 \subseteq Q_{4,\infty}$ such that these two structures are the same.

After that, let $Q_6 \subseteq Q_5$ be the open subset such that if $\mathcal{F} \in Q_6$, then we have $V \cong H^0(X, F_{\mathcal{E}}(\mathcal{F})(N))$.

Finally, by Lemma \ref{602}, there is an open subset $Q^{ss}_\Lambda \subseteq Q_6$ such that $\mathcal{F}$ is a $p$-semistable $\Lambda$-module if and only if $\mathcal{F} \in Q^{ss}_\Lambda$.
\end{proof}

Now we come to the part of GIT. With the same reason as in \S 3.8, we consider the ${\rm SL}(V)$-action. There is a natural embedding
\begin{align*}
    \psi_N: {\rm Quot}(\Lambda_k \otimes V\otimes \mathcal{G},P)\hookrightarrow {\rm Grass}(H^0(X, F_{\mathcal{E}}(\Lambda_k \otimes V\otimes \mathcal{G})(N)),P(N) ),
\end{align*}
Let $\mathscr{L}_N$ be the pullback of the canonical invertible bundle over the Grassmannian, and $\mathscr{L}_N$ is an ample line bundle on ${\rm Quot}(\Lambda_k \otimes V \otimes \mathcal{G},P)$. There is a natural group action ${\rm SL}(V)$ on ${\rm Quot}(\Lambda_k \otimes V\otimes \mathcal{G},P)$, which induces an action on the line bundle $\mathscr{L}_N$. Given a group action ${\rm SL}(V)$ on ${\rm Quot}(\Lambda_k \otimes V \otimes \mathcal{G},P)$ and an ample line bundle $\mathscr{L}_N$ over ${\rm Quot}(\Lambda_k \otimes V \otimes \mathcal{G},P)$, semistable (resp. stable) points of ${\rm Quot}(\Lambda_k \otimes V\otimes \mathcal{G},P)$ are well-defined.

Denote by ${\rm Quot}^{ss}(\Lambda_k \otimes V\otimes \mathcal{G},P)$ the set of semistable points in ${\rm Quot}(\Lambda_k \otimes V\otimes \mathcal{G},P)$ with respect to the group action ${\rm SL}(V)$ and the line bundle $\mathscr{L}_N$. Next, we will show that $Q^{ss}_{\Lambda} \subseteq {\rm Quot}(\Lambda_k \otimes V\otimes \mathcal{G},P)$ (Lemma \ref{605}) and the closure of any ${\rm SL}(V)$-orbit of any element $[\rho] \in Q^{ss}_{\Lambda}$ is also included in $Q^{ss}_{\Lambda}$ (Lemma \ref{606}). Based on these results, we can prove that given two elements $[\rho_i: \Lambda_k \otimes V \otimes \mathcal{G} \rightarrow \mathcal{F}_i] \in Q^{ss}_{\Lambda}$, $i=1,2$, the closures of the orbits of these two elements intersect if $gr^{\rm JH}(\mathcal{F}_1) \cong gr^{\rm JH}(\mathcal{F}_2)$ (Lemma \ref{6071}).

\begin{lem}\label{605}
There is a large enough integer $N$ such that the subscheme $Q^{ss}_\Lambda \subseteq {\rm Quot}(\Lambda_k \otimes V\otimes \mathcal{G},P)$ is included in ${\rm Quot}^{ss}(\Lambda_k \otimes V\otimes \mathcal{G},P)$.
\end{lem}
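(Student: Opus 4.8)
The plan is to follow the first half of the proof of Theorem \ref{316}, replacing the sheaf-theoretic input by its $\Lambda$-module counterpart. First I would fix the embedding $\psi_N$ and rewrite the ambient Grassmannian in the form required by Lemma \ref{3141}. Since $V$ is a trivial vector space and $F_{\mathcal{E}}$ is additive in $V$, we have
\begin{align*}
H^0(X, F_{\mathcal{E}}(\Lambda_k \otimes V \otimes \mathcal{G})(N)) \cong V \otimes W, \quad W := H^0(X, F_{\mathcal{E}}(\Lambda_k \otimes \mathcal{G})(N)),
\end{align*}
so that the natural ${\rm SL}(V)$-action is exactly the action on the first tensor factor appearing in Lemma \ref{3141}. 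Thus for a point $[\rho : \Lambda_k \otimes V \otimes \mathcal{G} \to \mathcal{F}] \in Q^{ss}_{\Lambda}$ with image $U$ of dimension $P(N)$, semistability in the sense of GIT will follow once I verify, for every nonzero proper subspace $H \subseteq V$, that ${\rm Im}(H \otimes W) \ne 0$ and
\begin{align*}
\frac{\dim(H)}{\dim({\rm Im}(H \otimes W))} \leq \frac{\dim(V)}{P(N)}.
\end{align*}

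Next I would translate the two dimensions on the left into invariants of sheaves on $\mathcal{X}$. Because $[\rho]$ lies in $Q^{ss}_{\Lambda}$, it admits a factorization through $\Phi_k$, so the image of $\Lambda_k \otimes H \otimes \mathcal{G}$ in $\mathcal{F}$ is the $\Lambda_k$-submodule $\mathcal{F}_H$ generated by $\rho'(H \otimes \mathcal{G})$, and $\mathcal{F}_H \ne 0$ forces $\dim(\mathcal{F}_H)=d$ by purity of $\mathcal{F}$. Invoking the boundedness of the family $\widetilde{\mathfrak{F}}^{ss}_{\Lambda}(P)$ (Corollary \ref{514}) together with the boundedness of the saturated $\Lambda$-submodules that can occur, I would choose $N$ large enough, uniformly over $Q^{ss}_{\Lambda}$, so that all the relevant sheaves are $N$-regular. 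For such $N$ the higher cohomology vanishes and ${\rm Im}(H \otimes W) = H^0(X, F_{\mathcal{E}}(\mathcal{F}_H)(N))$, while $\dim(H) \leq h^0(X, F_{\mathcal{E}}(\mathcal{F}_H)(N))$ since $H$ generates $\mathcal{F}_H$; the desired inequality then reduces to
\begin{align*}
\frac{h^0(X, F_{\mathcal{E}}(\mathcal{F}_H)(N))}{r(\mathcal{F}_H)} \leq \frac{P(N)}{r(\mathcal{F})}.
\end{align*}

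The main obstacle is establishing this last inequality, which is the $\Lambda$-module analogue of Lemma \ref{3142}. Here $\mathcal{F}$ is only $p$-semistable as a $\Lambda$-module, so the pure-sheaf argument of Lemma \ref{3142} does not apply to $\mathcal{F}_H$ directly. The plan is to pass to the saturation of the $\Lambda$-submodule generated by $\mathcal{F}_H$, which is a genuine $\Lambda$-submodule of $\mathcal{F}$ and hence satisfies $p_{\mathcal{E}}(\,\cdot\,) \leq p_{\mathcal{E}}(\mathcal{F})$ by $\Lambda$-semistability. To control $\mathcal{F}_H$ itself I would combine the Harder--Narasimhan estimate of Corollary \ref{312} with the slope bound of Lemma \ref{406}, which bounds the slope of an \emph{arbitrary} (not necessarily $\Lambda$-invariant) subsheaf of a $p$-semistable $\Lambda$-module by $\hat{\mu}_{\mathcal{E}}(\mathcal{F}) + br$, where the correction term $br$ depends only on $\Lambda$, $\mathcal{E}$ and $P$ and is therefore uniform. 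The delicate point will be to make the single integer $N$ work simultaneously for all subspaces $H$ and all points of $Q^{ss}_{\Lambda}$; this is exactly what the finiteness of the modified Hilbert polynomials of the saturated $\Lambda$-submodules and the uniformity of the bound $b$ provide. The stable case follows by replacing each inequality with its strict form.
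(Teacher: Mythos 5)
Your proposal is correct and follows essentially the same route as the paper: both reduce GIT-semistability to the Grassmannian criterion of Lemma \ref{3141}, pass from the subsheaf generated by a subspace $H\subseteq V$ to its saturation, observe that this saturation is a genuine $\Lambda$-submodule (by the defining conditions of $Q^{ss}_\Lambda$), and then invoke $p$-semistability of $\mathcal{F}$ as a $\Lambda$-module together with boundedness to get the key inequality uniformly in $N$. Your write-up is in fact somewhat more explicit than the paper's about how the $\Lambda$-analogue of Lemma \ref{3142} is obtained (via Corollary \ref{312} and the slope bound of Lemma \ref{406}), a step the paper leaves implicit.
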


\begin{proof}
Let $[\rho: \Lambda_k \otimes V \otimes \mathcal{G} \rightarrow \mathcal{F}]$ be a point in ${\rm Quot}(\Lambda_k \otimes V\otimes \mathcal{G},P)$. We will prove that if the $\Lambda$-module $\mathcal{F}$ (induced by $[\rho]$) is a $p$-semistable $\Lambda$-module, then the point $[\rho]$ is also semistable with respect the invertible sheaf $\mathscr{L}_N$ and the action of ${\rm SL}(V)$.

We have a natural monomophism
\begin{align*}
& {\rm Quot}(\Lambda_k \otimes V \otimes \mathcal{G},P) \rightarrow {\rm Quot}(F_{\mathcal{E}}( \Lambda_k \otimes V \otimes \mathcal{G} ) ,P) \\
& [\Lambda_k \otimes V \otimes \mathcal{G} \longrightarrow \mathcal{F}] \rightarrow [F_{\mathcal{E}}( \Lambda_k \otimes V \otimes \mathcal{G} ) \longrightarrow F_{\mathcal{E}}(\mathcal{F})],
\end{align*}
which is induced by the exact functor $F_{\mathcal{E}}$ (see \cite[Lemma 6.1]{OlSt} or \S 3.1). Let $V'$ be a subspace of $V$. Denote by $\mathcal{F}'$ the subsheaf of $\mathcal{F}$ generated by $V'$. Let $\mathcal{F}'_{\rm sat}$ be the saturation of $\mathcal{F}'$. Note that $\mathcal{F}'_{\rm sat}$ is a $\Lambda$-module and its $\Lambda$-structure coincide with the one induced from $\Lambda_k \otimes \mathcal{F}'_{\rm sat} \rightarrow \mathcal{F}'_{\rm sat}$ by the property of $Q_5$. Therefore $\mathcal{F}'_{\rm sat}$ is a $\Lambda$-submodule of $\mathcal{F}$.

In summary, we have $V' \subseteq H^0(X, F_{\mathcal{E}}(\mathcal{F}'_{\rm sat})(N) ) \subseteq H^0(X, F_{\mathcal{E}}(\mathcal{F})(N) )$. This gives us the following inequalities
\begin{align*}
\frac{\dim V'}{r(\mathcal{F}')} \leq \frac{h^0(F_{\mathcal{E}}(\mathcal{F}'_{\rm sat})(N))}{r(\mathcal{F}'_{\rm sat})} \leq \frac{P(N)}{r(\mathcal{F})}.
\end{align*}
Note that $r(\mathcal{F}')=r(\mathcal{F}'_{\rm sat})$.

Now we are working on the coherent sheaves $F_{\mathcal{E}}(\mathcal{F})$ and $F_{\mathcal{E}}(\mathcal{F}'_{\rm sat})$ on $X$.
\begin{itemize}
\item If $\frac{\dim V'}{r(\mathcal{F}')} < \frac{P(N)}{r(\mathcal{F})}$, we can choose $M$ large enough such that $P_{\mathcal{E}}(\bullet ,M)$ approximates $r(\bullet) M^d$, where $\bullet=\mathcal{F}, \mathcal{F}'$. Then, we have
    \begin{align*}
    \frac{\dim V'}{P _{\mathcal{E}}(\mathcal{F}',M) } < \frac{P(N)}{P_{\mathcal{E}}(\mathcal{F},M)}.
    \end{align*}
\item If $\frac{\dim V'}{r(\mathcal{F}')} = \frac{P(N)}{r(\mathcal{F})}$, then we know that the $\Lambda$-submodule $\mathcal{F}'_{\rm sat}$ has the same reduced modified Hilbert polynomial as $\mathcal{F}$. Therefore, $\mathcal{F}'_{\rm sat}$ is a $p$-semistable $\Lambda$-module. Also, the equality $\frac{\dim V'}{r(\mathcal{F}')} = \frac{P(N)}{r(\mathcal{F})}$ implies that
\begin{align*}
h^0(F_{\mathcal{E}}(\mathcal{F}')(N)  )=h^0(F_{\mathcal{E}}(\mathcal{F}_{\rm sat}')(N)  ).
\end{align*}
We have $\mathcal{F}=\mathcal{F}'_{\rm sat}$. In this case, $\mathcal{F}$ and $\mathcal{F}'$ has the same reduced modified Hilbert polynomial.
\end{itemize}
Combining these two cases, the point $[\rho]$ is a semistable point by Theorem \ref{316}.
\end{proof}

\begin{lem}\label{606}
Given any point $[\rho] \in Q^{ss}_\Lambda$, the closure of any ${\rm SL}(V)$-orbit of $[\rho]$ in $Q^{ss}_\Lambda$ is contained in $Q^{ss}_\Lambda$, where the closure is taken in ${\rm Quot}^{ss}(\Lambda_k \otimes V \otimes \mathcal{G},P)$.
\end{lem}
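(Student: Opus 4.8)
The plan is to exhibit an arbitrary point of the orbit closure as a flat one-parameter degeneration of $[\rho]$ and to recognize it as the point attached to the associated graded of a filtration of $\mathcal F$ by $\Lambda$-submodules; I would then verify that this limit again satisfies every condition $Q_2,\dots,Q_6$ together with $p$-semistability, which by definition places it in $Q^{ss}_\Lambda$. Concretely, let $[\rho]\in Q^{ss}_\Lambda$ correspond to the $p$-semistable $\Lambda$-module $(\mathcal F,\Phi)$ with associated $\Lambda_k$-multiplication $\Phi_k$, and let $[\rho_0]$ lie in the closure of $\mathrm{SL}(V)\cdot[\rho]$ formed inside $\mathrm{Quot}^{ss}(\Lambda_k\otimes V\otimes\mathcal G,P)$. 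First I would choose a discrete valuation ring $R$ with fraction field $K$ and residue field $k$, together with a map $\mathrm{Spec}(R)\to\mathrm{Quot}^{ss}(\Lambda_k\otimes V\otimes\mathcal G,P)$ sending the generic point into the orbit and the closed point to $[\rho_0]$; this produces an $R$-flat family $\mathcal F_R$ on $\mathcal X_R$ whose generic fibre is isomorphic to $\mathcal F$ and whose special fibre $\mathcal F_0$ underlies $[\rho_0]$. The degeneration is then governed, exactly as in the construction used in Lemma \ref{3181}, by a weighted filtration of $V$ which descends to a filtration of $\mathcal F$, with $\mathcal F_0$ the associated graded.

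The first task is to see that $[\rho_0]$ still defines a genuine $\Lambda$-module compatibly with the $\Lambda_k$-quotient, i.e. that it lies in $Q_2\cap Q_3\cap Q_{4,\infty}\cap Q_5$. The conditions cutting out $Q_3$, $Q_{4,\infty}$ and $Q_5$ are all closed (they assert the vanishing of certain induced homomorphisms and the coincidence of two $\Lambda_k$-structures), hence pass to the special fibre of the flat family once $[\rho_0]\in Q_2$ is known. For the open condition $Q_2$ I would use the explicit weighted-filtration description: because the multiplication $\Phi_k$ is fixed throughout the family and the filtration induced by the degeneration is stable under the $\Lambda$-action carried by $[\rho]$, the filtration is by $\Lambda$-submodules, so its associated graded $\mathcal F_0$ carries the direct-sum $\Lambda$-structure and admits the required factorization through $\Lambda_k\otimes\mathcal F_0$. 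The surjectivity of the induced map $V\otimes\mathcal G\to\mathcal F_0$ needed for this factorization then follows from the constancy of the modified Hilbert polynomial along the flat family together with the $N$-regularity of all members of the bounded family in play.

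It remains to check $p$-semistability of $\mathcal F_0$ and the isomorphism $V\cong H^0(X,F_{\mathcal E}(\mathcal F_0)(N))$. Since $\mathcal F$ is a $p$-semistable $\Lambda$-module and $[\rho_0]$ is GIT-semistable, the slope comparison behind Theorem \ref{316} forces every graded piece of the filtration to have the same reduced modified Hilbert polynomial $p_{\mathcal E}(\mathcal F)$; the associated graded $\mathcal F_0$ is then a direct sum of $p$-semistable $\Lambda$-modules of equal reduced Hilbert polynomial, hence itself a $p$-semistable $\Lambda$-module with modified Hilbert polynomial $P$, openness of this condition being supplied by Lemma \ref{602}. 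The isomorphism cutting out $Q_6$ holds because, by flatness, $h^0(X,F_{\mathcal E}(\mathcal F_0)(N))=P(N)=\dim V$ once $N$ is large enough that every $p$-semistable $\Lambda$-module with Hilbert polynomial $P$ is $N$-regular, as guaranteed by Corollary \ref{514} and Lemma \ref{601}. Assembling these facts yields $[\rho_0]\in Q^{ss}_\Lambda$.

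The main obstacle is the second paragraph: the persistence of the open factorization condition $Q_2$, the reduction of an arbitrary orbit-closure point to a one-parameter degeneration, and the verification that the limiting weighted filtration is genuinely by $\Lambda$-submodules rather than merely by $\mathcal O_{\mathcal X}$-subsheaves. This is where the interplay between the fixed $\Lambda_k$-quotient data of the ambient scheme $\mathrm{Quot}(\Lambda_k\otimes V\otimes\mathcal G,P)$ and the induced $\Lambda$-action must be handled with care. The resolution I anticipate is that GIT-semistability of $[\rho_0]$ forces $\mathcal F_0$ to be pure with the correct Hilbert polynomial, which, combined with the closedness of $Q_3$, $Q_{4,\infty}$, $Q_5$ and the $N$-regularity estimates, makes the factorization available in the limit; once this is secured the remaining verifications are the routine slope and cohomology computations indicated above.
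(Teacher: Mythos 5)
Your overall strategy --- realize the limit point as the associated graded of a weighted filtration induced by a degeneration and check that this graded object still lies in $Q^{ss}_\Lambda$ --- is the same as the paper's, which works with a one-parameter subgroup $\varphi:\mathbb{G}_m\to{\rm SL}(V)$, the filtration $V_{\geq\beta}$, the subsheaves $\mathcal{F}_{\geq\beta}=\rho(\Lambda_k\otimes V_{\geq\beta}\otimes\mathcal{G})$, and the limit $\mathcal{F}'=\bigoplus_\beta\mathcal{F}_\beta$. But there is a genuine gap exactly at the point you flag as the ``main obstacle'' and then dispose of in one sentence: the claim that ``the slope comparison behind Theorem \ref{316} forces every graded piece of the filtration to have the same reduced modified Hilbert polynomial.'' This is the heart of the lemma and is not a formal consequence of Theorem \ref{316}, which only tells you that the limit is $p$-semistable \emph{as a coherent sheaf} with $V\cong H^0$. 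The paper has to work for this: it passes to the saturations $\mathscr{H}_{\geq\beta}$ of $\mathcal{F}_{\geq\beta}$ (these, not the $\mathcal{F}_{\geq\beta}$ themselves, are the pure $\Lambda$-submodules one can control), introduces the image $\mathscr{I}_\beta$ of $\mathcal{F}_\beta$ in $\mathscr{H}_\beta$, the kernel $J_\beta$ of $V\to H^0(X,F_{\mathcal{E}}(\mathscr{I}_\beta)(N))$ and the subsheaf $\mathscr{J}_\beta$ it generates, and applies GIT-semistability of the limit point to $J_\beta$ via Lemmas \ref{3141} and \ref{3142} to obtain $\dim(J_\beta)/r(\mathscr{J}_\beta)\leq P(N)/r(\mathcal{F})$. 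Combined with $\dim(J_\beta)\geq P(N)-h^0(X,F_{\mathcal{E}}(\mathscr{I}_\beta)(N))$ and $r(\mathscr{J}_\beta)\leq r(\mathcal{F})-r(\mathscr{I}_\beta)$ this yields the \emph{reverse} inequality $P(N)/r(\mathcal{F})\leq h^0(X,F_{\mathcal{E}}(\mathscr{I}_\beta)(N))/r(\mathscr{I}_\beta)$, and a descending induction on $\beta$ (using $p$-semistability of $\mathcal{F}/\mathscr{H}_{\geq\beta+1}$ as a $\Lambda$-module) then forces equality everywhere. Only after that does global generation plus Lemma \ref{508} identify $\mathcal{F}_\beta$ with $\mathscr{H}_\beta$, so that the limit is genuinely $\bigoplus_\beta\mathscr{H}_\beta$ with its direct-sum $\Lambda$-structure. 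Without this identification your assertion that the limit carries the direct-sum structure of $p$-semistable $\Lambda$-modules of the correct reduced polynomial is unsupported: a priori $\mathcal{F}_\beta$ is merely a quotient of images of $\Lambda_k\otimes V_\beta\otimes\mathcal{G}$, not the saturated (hence pure) piece.

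Two smaller points. First, an arbitrary point of the orbit closure reached along a DVR is not automatically a one-parameter-subgroup limit, so ``the degeneration is governed by a weighted filtration exactly as in Lemma \ref{3181}'' needs justification; the paper sidesteps this by invoking the Hilbert--Mumford criterion and restricting to one-parameter limits from the outset. Second, your appeal to the closedness of $Q_3$, $Q_{4,\infty}$, $Q_5$ is fine as far as it goes, but these are closed \emph{inside} $Q_2$ (respectively inside the previous stratum), and $Q_2$ and $Q_6$ are open; so the real content is again concentrated in showing the limit satisfies the open conditions, which returns you to the computation above rather than to a soft closedness argument.
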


\begin{proof}
Based on the Hilbert-Mumford Criterion \cite[Theorem 2.1]{MumFogKir}, we need to find the limit point of any one-parameter subgroup action on a given point in $Q^{ss}_\Lambda$. Let $\varphi: \mathbb{G}_m \rightarrow {\rm SL}(V)$ be an one parameter-subgroup. The vector space $V$ can be decomposed as $V= \bigoplus\limits_{\alpha} V_{\alpha}$, where $t \cdot v_{\alpha}=t^{\alpha}v_{\alpha}$ for $v_{\alpha} \in V_{\alpha}$. Therefore we can define a filtration $V_{\geq \beta}:=\bigoplus\limits_{\alpha \geq \beta}V_{\alpha}$ of $V$.

Let $[\rho_{\mathcal{F}}: \Lambda_k \otimes V \otimes \mathcal{G} \rightarrow \mathcal{F}]$ be a point in $Q^{ss}_{\Lambda}$. The filtration $\mathcal{F}_{\geq \beta}$ of $\mathcal{F}$ is defined as
\begin{align*}
\mathcal{F}_{\geq \beta}:=\rho_{\mathcal{F}}(\Lambda_k \otimes V_{\geq \beta} \otimes \mathcal{G}),
\end{align*}
and the graded part is
\begin{align*}
\mathcal{F}_{\beta}:=\rho_{\mathcal{F}}(\Lambda_k \otimes V_{\beta} \otimes \mathcal{G}).
\end{align*}
With respect to the one-parameter subgroup $\varphi$ and the point $\rho_{\mathcal{F}}$, the limit point is
\begin{align*}
[\rho_{\mathcal{F}'}:= \Lambda_k \otimes V \otimes \mathcal{G} \rightarrow \mathcal{F}'] \in {\rm Quot}^{ss}(\Lambda_k \otimes V \otimes \mathcal{G},P),
\end{align*}
where $\mathcal{F}'=\bigoplus\limits_{\beta} \mathcal{F}_{\beta}$. It suffices to show that the point $\rho_{\mathcal{F}'}$ is contained in $Q^{ss}_\Lambda$.

Now let $\mathscr{H}_{\geq \beta}$ be the saturation of $\mathcal{F}_{\geq \beta}$. By the property of saturation, $\mathscr{H}_{\geq \beta}$ are $\Lambda$-modules (see \S 4.1). Define $\mathscr{H}_{\beta}:= \mathscr{H}_{\geq \beta}/\mathscr{H}_{\geq \beta+1}$, which are also $\Lambda$-modules. Therefore, proving $[\rho_{\mathcal{F}'}] \in Q^{ss}_\Lambda$, it suffices to prove that
\begin{itemize}
\item $\mathscr{H}_\beta$ is a $p$-semistable $\Lambda$-submodule of $\mathcal{F}$;
\item the reduced modified Hilbert polynomial of $\mathscr{H}_\beta$ is the same as that of $\mathcal{F}$;
\item $\mathcal{F}' \cong \bigoplus\limits_{\beta} \mathscr{H}_{\beta}$.
\end{itemize}

Now we will prove the above statements. Note that there is a natural map $\mathcal{F}_\beta \rightarrow \mathscr{H}_\beta$, and the image of this map is denoted by $\mathscr{I}_\beta$. The composed morphism
\begin{align*}
\Lambda_k \otimes V \otimes \mathcal{G} \rightarrow \mathcal{F} \rightarrow \mathcal{F}_\beta \rightarrow \mathscr{H}_\beta
\end{align*}
induces a map $V \rightarrow H^0(X, F_{\mathcal{E}}(\mathscr{I}_\beta)(N))$. Let $J_\beta$ be the kernel of $V \rightarrow H^0(X, F_{\mathcal{E}}(\mathscr{I}_\beta)(N))$. We have
\begin{align*}
\dim(J_\beta) \geq P(N)-h^0(X, F_{\mathcal{E}}(\mathscr{I}_\beta)(N) ).
\end{align*}
Denote by $\mathscr{J}_\beta$ the subsheaf of $\mathcal{F}$ generated by the image of $\Lambda_k \otimes J_\beta \otimes \mathcal{G}$. The following graph explains the relation of the above notations.
\begin{center}
\begin{tikzcd}
    J_{\beta} \arrow[r] \arrow[d,dotted] & V \arrow[r] \arrow[d,dotted]  & V_{\beta} \arrow[r] \arrow[d,dotted]  & H^0(X, F_{\mathcal{E}}(\mathscr{I}_\beta)(N)) \arrow[d,dotted] \\
    \mathscr{J}_\beta \arrow[r] & \mathcal{F} \arrow[r] & \mathcal{F}_{\beta} \arrow[r] & \mathscr{I}_\beta \subseteq \mathscr{H}_\beta
\end{tikzcd}
\end{center}
Note that $\mathscr{J}_\beta$ maps zero in $\mathscr{H}_\beta$ and also in $\mathscr{I}_\beta$. This implies that
\begin{align*}
r(\mathscr{J}_\beta) \leq r(\mathcal{F})-r(\mathscr{I}_\beta).
\end{align*}
Since $\rho_{\mathcal{F}'} \in {\rm Quot}^{ss}(\Lambda_k \otimes V \otimes \mathcal{G},P)$, by Lemma \ref{3141}, \ref{3142} and Theorem \ref{316}, we have
\begin{align*}
\frac{\dim(J_\beta)}{ \mathscr{J}_\beta} \leq \frac{P(N)}{r(\mathcal{F})}.
\end{align*}
Combing the above three inequalities, we have
\begin{align*}
\frac{P(N)}{r(\mathcal{F})} \leq \frac{h^0(X, F_{\mathcal{E}}(\mathscr{I}_\beta)(N)  ) }{r(\mathscr{I}_\beta)}.
\end{align*}

We will prove that $\mathscr{H}_{\geq \beta}$ is a $p$-semistable $\Lambda$-submodule of $\mathcal{F}$ with the same reduced modified Hilbert polynomial by induction on $\beta$. Suppose that the statement holds for $\mathscr{H}_{\geq \beta+1}$. Then $\mathcal{F}/\mathscr{H}_{\geq \beta+1}$ is a $p$-semistable $\Lambda$-module. Note that $\mathscr{I}_\beta \subseteq \mathscr{H}_\beta \subseteq \mathcal{F}/\mathscr{H}_{\geq \beta+1}$, which implies
\begin{align*}
\frac{ h^0(X, F_{\mathcal{E}}(\mathscr{I}_\beta)(N)   ) }{ r(\mathscr{I}_\beta) } \leq  \frac{ h^0(X, F_{\mathcal{E}}(\mathscr{H}_\beta)(N)   ) }{ r(\mathscr{H}_\beta) } \leq \frac{P(N)}{r(\mathcal{F})}.
\end{align*}
Together with the inequality $\frac{P(N)}{r(\mathcal{F})} \leq \frac{h^0(X, F_{\mathcal{E}}(\mathscr{I}_\beta)(N)  ) }{r(\mathscr{I}_\beta)}$ we discussed above, we have
\begin{itemize}
\item $h^0(X, F_{\mathcal{E}}(\mathscr{I}_\beta)(N)  ) =h^0(X, F_{\mathcal{E}}(\mathscr{H}_\beta)(N)$;
\item the reduced modified Hilbert polynomials of $\mathcal{F}$, $\mathscr{H}_{\beta}$ are the same.
\end{itemize}
Therefore, $\mathscr{H}_{\beta}$ is a $p$-semistable $\Lambda$-module. This also implies that $\mathscr{H}_{\geq \beta}$ is $p$-semistable. This finishes the proof by induction.

The above proof also tells us that
\begin{itemize}
    \item $\mathscr{H}_\beta$ is a $p$-semistale $\Lambda$-submodule of $\mathcal{F}$;
    \item The reduced modified Hilbert polynomials of $\mathscr{H}_{\beta}$ are the same;
    \item $h^0(X, F_{\mathcal{E}}(\mathscr{I}_\beta)(N)  ) =h^0(X, F_{\mathcal{E}}(\mathscr{H}_\beta)(N))$.
\end{itemize}
The only thing left is $\mathcal{F}' \cong \bigoplus\limits_{\beta} \mathscr{H}_{\beta}$. To prove this isomorphism, we show $\mathcal{F}_\beta \cong \mathscr{H}_\beta$.

Since the integer $N$ is large enough, $F_{\mathcal{E}}(\mathscr{I}_\beta)(N)$ and $F_{\mathcal{E}}(\mathscr{H}_\beta)(N)$ are generated by global sections. By Lemma \ref{508}, the equality $h^0(X, F_{\mathcal{E}}(\mathscr{I}_\beta)(N)  ) =h^0(X, F_{\mathcal{E}}(\mathscr{H}_\beta)(N))$ implies that $\mathscr{I}_\beta=\mathscr{H}_\beta$. Recall that $\mathscr{I}_\beta$ is defined as the image of $\mathcal{F}_\beta$ in $\mathscr{H}_\beta$. Therefore we have $\mathcal{F}_\beta \cong \mathscr{H}_\beta$, and
\begin{align*}
\mathcal{F}' = \bigoplus\limits_\beta \mathcal{F}_\beta \cong \bigoplus\limits_\beta \mathscr{H}_\beta.
\end{align*}
This finishes the proof of this lemma.
\end{proof}

\begin{lem}\label{6071}
Let $[\Lambda_k \otimes V \otimes \mathcal{G} \rightarrow \mathcal{F}_i]$, $i=1,2$ be two points in $Q_{\Lambda}^{ss}$. The closures of the corresponding orbits in $Q_{\Lambda}^{ss}$ intersect if and only if $gr^{\rm JH}(\mathcal{F}_1) \cong gr^{\rm JH}(\mathcal{F}_2)$ with respect to the $\Lambda$-Jordan-H\"older filtrations.
\end{lem}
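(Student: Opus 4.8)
The plan is to reduce this statement to the analogous result for ordinary coherent sheaves (Lemma \ref{3181}) by exploiting the $\Lambda$-Jordan-H\"older filtration constructed in Proposition \ref{404} and the orbit-closure analysis already carried out in Lemma \ref{606}. The overall strategy mirrors the classical proof \cite[Lemma 4.4.3, Theorem 4.3.3]{HuLe}, but every appearance of a subsheaf must be replaced by a $\Lambda$-submodule, and one must check that the one-parameter degenerations produced stay inside $Q^{ss}_\Lambda$, which is exactly what Lemma \ref{606} guarantees.

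First I would prove the ``if'' direction. Given a point $[\rho_i:\Lambda_k \otimes V \otimes \mathcal{G} \rightarrow \mathcal{F}_i] \in Q^{ss}_{\Lambda}$, take the $\Lambda$-Jordan-H\"older filtration
\begin{align*}
0 = {\rm JH}_0(\mathcal{F}_i) \subseteq {\rm JH}_1(\mathcal{F}_i) \subseteq \dots \subseteq {\rm JH}_{l}(\mathcal{F}_i) = \mathcal{F}_i
\end{align*}
from Proposition \ref{404}. Since $N$ is large enough, each $F_{\mathcal{E}}({\rm JH}_j(\mathcal{F}_i))(N)$ is globally generated, so I can set $V_{\leq j}$ to be the subspace of $V$ with $V_{\leq j} \cong H^0(X/S, F_{\mathcal{E}}({\rm JH}_j(\mathcal{F}_i))(N))$ and $V_j := V_{\leq j}/V_{\leq j-1}$, inducing surjections $V_j \otimes \mathcal{G} \rightarrow gr_j^{\rm JH}(\mathcal{F}_i)$. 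Summing these and using the $\Lambda$-structure on each graded piece, I obtain a point $[\bar\rho_i:\Lambda_k \otimes V \otimes \mathcal{G} \rightarrow gr^{\rm JH}(\mathcal{F}_i)]$. Exactly as in Lemma \ref{3181}, the filtration $V_{\leq \bullet}$ defines a one-parameter subgroup $\lambda$ of ${\rm SL}(V)$ (following \cite[Lemma 4.4.3]{HuLe}), and the limit $\lim_{t \to 0}\lambda(t)\cdot[\rho_i]$ is precisely $[\bar\rho_i]$; Lemma \ref{606} ensures this limit lies in $Q^{ss}_\Lambda$. Hence if $gr^{\rm JH}(\mathcal{F}_1) \cong gr^{\rm JH}(\mathcal{F}_2)$ as $\Lambda$-modules, the points $[\bar\rho_1]$ and $[\bar\rho_2]$ differ only by the choice of basis identifying the two isomorphic graded objects with $V$, hence lie in a common ${\rm SL}(V)$-orbit, so the closures of the orbits of $[\rho_1]$ and $[\rho_2]$ intersect.

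For the converse, suppose the orbit closures meet. By Lemma \ref{606} the degeneration of a semistable point along any one-parameter subgroup $\varphi$ is the associated graded $\bigoplus_\beta \mathscr{H}_\beta$ of the saturated filtration $\mathscr{H}_{\geq \beta}$, and each $\mathscr{H}_\beta$ is a $p$-semistable $\Lambda$-submodule with the same reduced modified Hilbert polynomial as $\mathcal{F}_i$. Iterating the degeneration and refining the filtration, the unique closed orbit in the closure corresponds to a $\Lambda$-polystable representative whose summands form a $\Lambda$-Jordan-H\"older graded object; by the uniqueness statement in Proposition \ref{404} this graded object is $gr^{\rm JH}(\mathcal{F}_i)$ up to isomorphism. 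Since both orbit closures contain this common closed orbit, we get $gr^{\rm JH}(\mathcal{F}_1) \cong gr^{\rm JH}(\mathcal{F}_2)$. The main obstacle I anticipate is the bookkeeping in the converse direction: one must verify that the limit produced by an arbitrary one-parameter subgroup genuinely realizes a $\Lambda$-Jordan-H\"older degeneration (rather than merely a Harder-Narasimhan-type one), and that the closed orbit is $\Lambda$-polystable. This is precisely where the saturation argument and the $p$-semistability-with-equal-slope conclusions of Lemma \ref{606} must be combined with the $\Lambda$-analogue of the polystability characterization (Lemma \ref{3182}); I would prove the needed $\Lambda$-version of Lemma \ref{3182} first, since the semicontinuity argument there carries over verbatim with ${\rm Hom}$ replaced by $\Lambda$-module homomorphisms.
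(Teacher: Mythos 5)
Your proposal is correct and follows essentially the same route as the paper, which simply declares the proof to be the same as that of Lemma \ref{3181}: construct the quotient onto $gr^{\rm JH}(\mathcal{F}_i)$ from the $\Lambda$-Jordan-H\"older filtration, realize it as the limit of a one-parameter subgroup as in \cite[Lemma 4.4.3]{HuLe}, and invoke the uniqueness of the graded object. Your additional care in citing Lemma \ref{606} to keep the degeneration inside $Q^{ss}_{\Lambda}$ and in flagging the need for a $\Lambda$-analogue of Lemma \ref{3182} for the converse is consistent with, and slightly more explicit than, what the paper records.
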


\begin{proof}
The proof of this lemma is the same as the proof of Lemma \ref{3181}.
\end{proof}

By Theorem \ref{314}, let ${\rm Quot}^{ss}(\Lambda_k \otimes V \otimes \mathcal{G},P)/{\rm SL}(V)$ be the good geometric quotient. Denote by
\begin{align*}
\varphi: {\rm Quot}^{ss}(\Lambda_k \otimes V \otimes \mathcal{G},P) \rightarrow {\rm Quot}^{ss}(\Lambda_k \otimes V \otimes \mathcal{G},P)/{\rm SL}(V)
\end{align*}
the morphism.

\begin{lem}\label{6072}
The image $\varphi(Q^{ss}_{\Lambda})$ is a locally closed subscheme.
\end{lem}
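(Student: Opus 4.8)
The plan is to exhibit $\varphi(Q^{ss}_\Lambda)$ as the complement, inside a closed subset of the good quotient, of another closed subset, so that it is automatically locally closed. Write $\overline{Q^{ss}_\Lambda}$ for the closure of $Q^{ss}_\Lambda$ in ${\rm Quot}^{ss}(\Lambda_k \otimes V \otimes \mathcal{G},P)$ and set $B := \overline{Q^{ss}_\Lambda} \setminus Q^{ss}_\Lambda$. Since $Q^{ss}_\Lambda$ is locally closed and ${\rm SL}(V)$-invariant (Proposition \ref{603} and Lemma \ref{605}), both $\overline{Q^{ss}_\Lambda}$ and $B$ are ${\rm SL}(V)$-invariant, with $\overline{Q^{ss}_\Lambda}$ closed and $B$ closed in it. By Theorem \ref{314}, the good quotient $\varphi$ carries ${\rm SL}(V)$-invariant closed subsets to closed subsets, and each fiber of $\varphi$ contains a unique closed orbit. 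Let $\psi$ denote the restriction of $\varphi$ to $\overline{Q^{ss}_\Lambda}$; it is again a good quotient onto the closed subset $\varphi(\overline{Q^{ss}_\Lambda})$, and $\psi(B)$ is closed as well.

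First I would prove the decisive point, namely that $Q^{ss}_\Lambda$ is \emph{saturated} inside $\overline{Q^{ss}_\Lambda}$, i.e. $Q^{ss}_\Lambda = \psi^{-1}(\psi(Q^{ss}_\Lambda))$. Let $z \in Q^{ss}_\Lambda$ and let $w \in \overline{Q^{ss}_\Lambda}$ satisfy $\psi(w) = \psi(z)$. Then the orbit closures $\overline{{\rm SL}(V)\cdot z}$ and $\overline{{\rm SL}(V)\cdot w}$ meet, so they share the unique closed orbit $O$ of that fiber. By Lemma \ref{606}, $\overline{{\rm SL}(V)\cdot z} \subseteq Q^{ss}_\Lambda$, whence $O \subseteq Q^{ss}_\Lambda$; moreover $O \subseteq \overline{{\rm SL}(V)\cdot w}$. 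Now $\overline{{\rm SL}(V)\cdot w}$ is irreducible and the orbit ${\rm SL}(V)\cdot w$ is open and dense in it, while $Q^{ss}_\Lambda \cap \overline{{\rm SL}(V)\cdot w}$ is open in $\overline{{\rm SL}(V)\cdot w}$ (because $Q^{ss}_\Lambda$ is open in its closure $\overline{Q^{ss}_\Lambda} \supseteq \overline{{\rm SL}(V)\cdot w}$) and nonempty, since it contains $O$; hence it is dense too. Two dense open subsets of an irreducible space intersect, so ${\rm SL}(V)\cdot w$ meets $Q^{ss}_\Lambda$, and the ${\rm SL}(V)$-invariance of $Q^{ss}_\Lambda$ forces $w \in Q^{ss}_\Lambda$. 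This proves saturation, and in particular $\psi(Q^{ss}_\Lambda) \cap \psi(B) = \emptyset$.

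Granting saturation, one has $\varphi(Q^{ss}_\Lambda) = \varphi(\overline{Q^{ss}_\Lambda}) \setminus \psi(B)$: indeed $\varphi(\overline{Q^{ss}_\Lambda}) = \psi(Q^{ss}_\Lambda) \cup \psi(B)$ because $\overline{Q^{ss}_\Lambda} = Q^{ss}_\Lambda \cup B$, and the two pieces are disjoint by the step above. As the difference of the closed set $\varphi(\overline{Q^{ss}_\Lambda})$ and the closed set $\psi(B)$, the image $\varphi(Q^{ss}_\Lambda)$ is open in $\varphi(\overline{Q^{ss}_\Lambda})$, hence locally closed in ${\rm Quot}^{ss}(\Lambda_k \otimes V \otimes \mathcal{G},P)/{\rm SL}(V)$. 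I would endow it with the induced (open-in-closed) subscheme structure; the same saturation moreover shows that $\psi|_{Q^{ss}_\Lambda}\colon Q^{ss}_\Lambda \to \varphi(Q^{ss}_\Lambda)$ is itself a good quotient, a fact to be reused when building the moduli space.

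The main obstacle is the saturation step, and it is precisely here that Lemma \ref{606} is indispensable: without knowing that orbit degenerations of points of $Q^{ss}_\Lambda$ stay in $Q^{ss}_\Lambda$, a boundary point $w \in B$ could be $S$-equivalent (its orbit closure meeting that of an interior point) and the image would fail to be locally closed. A secondary, purely technical point is relativizing the orbit-geometry inputs (orbits open in their closures, irreducibility of orbit closures, and uniqueness of the closed orbit in a fiber) over the base $S$; since these are statements about geometric fibers and $Q^{ss}_\Lambda$ is of finite type over $S$, I would check them on geometric points and descend using the universality of the good quotient in Theorem \ref{314}, Lemma \ref{606} having already been stated in the form required.
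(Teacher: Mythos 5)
Your argument is correct and is essentially the same as the paper's: the paper's proof consists only of the citation ``same as \cite[Lemma 4.5, 4.6]{Simp2}'', and Simpson's lemmas are exactly this saturation argument (orbit closures of points of $Q^{ss}_\Lambda$ stay in $Q^{ss}_\Lambda$ by Lemma \ref{606}, hence $Q^{ss}_\Lambda$ is saturated in its closure, so its image is the open complement of the closed set $\psi(B)$ inside the closed set $\varphi(\overline{Q^{ss}_\Lambda})$). Your write-up in fact supplies the details the paper outsources, including the correct identification of Lemma \ref{606} as the indispensable input.
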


\begin{proof}
The proof is same as \cite[Lemma 4.5, 4.6]{Simp2}.
\end{proof}

Let
\begin{align*}
\mathcal{M}_\Lambda^{ss}(\mathcal{E},\mathcal{O}_X(1),P)=Q^{ss}_\Lambda/ {\rm SL}(V)
\end{align*}
be the GIT quotient, and let $\mathcal{M}_\Lambda^{s}(\mathcal{E},\mathcal{O}_X(1),P)$ be the set for stable points in $\mathcal{M}_\Lambda^{ss}(\mathcal{E},\mathcal{O}_X(1),P)$. In this section, we only give the proof for the semistable case. In fact, the stable case can be proved similarly.

The discussion in this section gives the following theorem.
\begin{thm}\label{607}
\leavevmode
\begin{enumerate}
\item The moduli space $\mathcal{M}_\Lambda^{ss}(\mathcal{E},\mathcal{O}_X(1),P)$ is a quasi-projective $S$-scheme.
\item There exists a natural morphism
\begin{align*}
\widetilde{\mathcal{M}}_{\Lambda}^{ss}(\mathcal{E},\mathcal{O}_X(1),P) \rightarrow \mathcal{M}_\Lambda^{ss}(\mathcal{E},\mathcal{O}_X(1),P)
\end{align*}
such that $\mathcal{M}_\Lambda^{ss}(\mathcal{E},\mathcal{O}_X(1),P)$ universally co-represents $\widetilde{\mathcal{M}}_\Lambda^{ss}(\mathcal{E},\mathcal{O}_X(1),P)$, and the points of $\mathcal{M}_\Lambda^{ss}(\mathcal{E},\mathcal{O}_X(1),P)$ represent the $S$-equivalent classes of $p$-semistable $\Lambda$-modules with modified Hilbert polynomial $P$.
\item $\mathcal{M}_\Lambda^{s}(\mathcal{E},\mathcal{O}_X(1),P)$ is a coarse moduli space of $\widetilde{\mathcal{M}}_\Lambda^{s}(\mathcal{E},\mathcal{O}_X(1),P)$, and its points represent isomorphism classes of $p$-stable $\Lambda$-modules.
\end{enumerate}
\end{thm}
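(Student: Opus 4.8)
The plan is to assemble the GIT package built up in Lemmas \ref{605}--\ref{6072} in exactly the way Theorem \ref{317} was deduced in the coherent-sheaf case, replacing the sheaf-theoretic inputs by their $\Lambda$-module counterparts. Concretely, I would start from the ambient quot-scheme ${\rm Quot}(\Lambda_k \otimes V \otimes \mathcal{G},P)$ with its ${\rm SL}(V)$-action and ${\rm SL}(V)$-linearized ample line bundle $\mathscr{L}_N$, and apply Theorem \ref{314} to obtain a projective good geometric quotient $\varphi: {\rm Quot}^{ss}(\Lambda_k \otimes V \otimes \mathcal{G},P) \rightarrow {\rm Quot}^{ss}(\Lambda_k \otimes V \otimes \mathcal{G},P)/{\rm SL}(V)$, together with the universality statement contained in that theorem. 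Everything then reduces to understanding how this quotient restricts to the locally closed piece $Q^{ss}_\Lambda$.

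For statement (1), Lemma \ref{605} gives the inclusion $Q^{ss}_\Lambda \subseteq {\rm Quot}^{ss}(\Lambda_k \otimes V \otimes \mathcal{G},P)$, so $\varphi$ is defined on $Q^{ss}_\Lambda$. The key point is that the restriction of $\varphi$ to $Q^{ss}_\Lambda$ is again a good geometric quotient: Lemma \ref{606} guarantees that $Q^{ss}_\Lambda$ is saturated with respect to orbit closures, so no identifications with points outside $Q^{ss}_\Lambda$ occur and the fibers of $\varphi|_{Q^{ss}_\Lambda}$ are precisely the ${\rm SL}(V)$-orbits inside $Q^{ss}_\Lambda$. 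Combined with Lemma \ref{6072}, which says $\varphi(Q^{ss}_\Lambda)$ is a locally closed subscheme of the projective quotient, this endows $\mathcal{M}_\Lambda^{ss}(\mathcal{E},\mathcal{O}_X(1),P) = Q^{ss}_\Lambda/{\rm SL}(V)$ with the structure of a quasi-projective $S$-scheme.

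For statement (2), the natural transformation $\widetilde{\mathcal{M}}_\Lambda^{ss} \to \mathcal{M}_\Lambda^{ss}$ is produced as in Theorem \ref{317}: a $T$-flat family of $p$-semistable $\Lambda$-modules is, fppf-locally on $T$, classified by a map to $Q^{ss}_\Lambda$ after rigidifying via the isomorphism $V \cong H^0(X_T/T,F_{\mathcal{E}}(\mathcal{F}_T)(N))$ supplied by Lemma \ref{601}; composing with $\varphi$ and descending along the resulting ${\rm SL}(V)$-torsor ambiguity yields a well-defined $T$-point of $\mathcal{M}_\Lambda^{ss}$, and the equivalence relation $\sim$ is exactly what makes this descent unambiguous. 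Universal co-representation then follows from the universality clause of Theorem \ref{314} by the same formal argument used for coherent sheaves; the identification of closed points with $S$-equivalence classes of $p$-semistable $\Lambda$-modules is precisely Lemma \ref{6071}, since a good quotient identifies two semistable points exactly when their orbit closures meet, i.e. when their $\Lambda$-Jordan-H\"{o}lder graded objects agree. For statement (3) I would restrict to $Q^{s}_\Lambda$, where by Theorem \ref{314} the quotient $\mathcal{M}_\Lambda^s(\mathcal{E},\mathcal{O}_X(1),P)$ is a geometric quotient with all orbits closed; a $p$-stable $\Lambda$-module has trivial $\Lambda$-Jordan-H\"{o}lder filtration, so $S$-equivalence collapses to isomorphism, and to upgrade co-representation to a coarse moduli space in the sense of \S2.5 it remains only to check that $\alpha_S: \widetilde{\mathcal{M}}_\Lambda^s(S) \to {\rm Hom}(S,\mathcal{M}_\Lambda^s)$ is a bijection, which is immediate once $S$-equivalence equals isomorphism.

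The hard part will be the descent step in (1) rather than any of the co-representation formalities: because $Q^{ss}_\Lambda$ is only a locally closed subscheme of the full semistable locus, the GIT good quotient does not restrict to it automatically, and one must use the orbit-closure saturation of Lemma \ref{606} together with the locally-closedness of Lemma \ref{6072} to see that $Q^{ss}_\Lambda/{\rm SL}(V)$ genuinely exists as a scheme carrying the good geometric quotient property. I would take particular care to verify that these two lemmas are compatible, namely that the scheme-theoretic image $\varphi(Q^{ss}_\Lambda)$ really is the good quotient of $Q^{ss}_\Lambda$ and not merely its set-theoretic image, since this is the one place where the $\Lambda$-module argument differs substantively from the coherent-sheaf construction underlying Theorem \ref{317}.
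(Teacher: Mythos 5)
Your proposal is correct and assembles the proof in essentially the same way the paper does: the paper's own justification is precisely that Lemmas \ref{605}, \ref{606}, \ref{6071} and \ref{6072}, together with Theorem \ref{314} and the formal co-representation arguments already carried out for Theorem \ref{317}, yield the three statements. The subtlety you flag — that the good quotient must genuinely restrict to the locally closed, orbit-closure-saturated subscheme $Q^{ss}_\Lambda$ — is exactly the role Lemmas \ref{606} and \ref{6072} play in the paper's construction.
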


\begin{exmp}\label{608}
Let $\mathcal{X}$ be a projective Deligne-Mumford stack over an affine scheme $S$. Let $\mathcal{L}$ be a fixed line bundle over $\mathcal{X}$. Similar to \S 2.5.2, we can define the moduli problem of $p$-semistable $\mathcal{L}$-twisted Hitchin pairs $\widetilde{\mathcal{M}}^{ss}_H(\mathcal{X},\mathcal{L})$. In \S 4.1, $\mathcal{L}$-twisted Hitchin pairs can be considered as $\Lambda$-modules. Therefore, the moduli space of $p$-semistable $\mathcal{L}$-twisted Hitchin pairs $\mathcal{M}^{ss}_H(\mathcal{X},\mathcal{L})$ exists and universally co-represents the moduli problem $\widetilde{\mathcal{M}}^{ss}_H(\mathcal{X},\mathcal{L})$.
\end{exmp}

\bigskip
\noindent\small{\textsc{Department of Mathematics, South China University of Technology}\\
381 Wushan Rd, Tianhe Qu, Guangzhou, Guangdong, China}\\
\emph{E-mail address}:  \texttt{hsun71275@scut.edu.cn}


\begin{thebibliography}{99}
\bibitem{AbOlVis}
Abramovich, D., Olsson, M., Vistoli, A.: Tame stacks in positive characteristic. In Annales de l'institut Fourier \textbf{58}(4), 1057-1091 (2008).

\bibitem{AlHRy}
Alper, J., Hall, J., Rydh, D.: A Luna étale slice theorem for algebraic stacks. Ann. of Math. (2) \textbf{191}(3), 675-738 (2020).

\bibitem{ArCorGri}
Arbarello, E., Cornalba, M., Griffiths, P.: Geometry of algebraic curves: Volume II. With a contribution by Joseph Daniel Harris \textbf{268}. Springer, Heidelberg (2011).

\bibitem{Art}
Artin, M.: Algebraization of formal moduli.I. Global Analysis. Univ. Tokyo Press, Tokyo, 21–71 (1969).

\bibitem{BaSes}
Balaji, V., Seshadri, C.: Moduli of parahoric $\mathscr{G}$-torsors on a compact Riemann surface. J. Algebraic Geom. \textbf{24}(1), 1-49 (2015).

\bibitem{BerPiGro}
Berthelot P, Ferrand D, Grothendieck A, et al: Th\'eorie des Intersections et Th\'eor\`eme de Riemann-Roch: S\'eminaire de G\'eom\'etrie Alg\'ebrique du Bois Marie 1966/67 (SGA 6) \textbf{225}. Springer (2006).

\bibitem{Bis97}
Biswas, I.: Parabolic bundles as orbifold bundles. Duke Math. J. \textbf{88}(2), 305-325 (1997).

\bibitem{BisRam}
Biswas, I., Ramanan, S.:  An infinitesimal study of the moduli of Hitchin pairs. J. London Math. Soc.(2) \textbf{49}(2), 219-231 (1994).

\bibitem{Boalch2011}
Boalch, P. P.: Riemann–Hilbert for tame complex parahoric connections. Transform. Groups \textbf{16}(1), 27-50 (2011).

\bibitem{Boalch2018}
Boalch, P.: Wild character varieties, meromorphic Hitchin systems and Dynkin diagrams. Geometry and Physics. Vol. 2. Oxford Univ. Press, Oxford. 433-454 (2018).

\bibitem{BSP}
Bruzzo, U., Sala, F., Pedrini, M.: Framed sheaves on projective stacks. Adv. Math. \textbf{272}, 20-95 (2015).

\bibitem{Cavi}
Caviglia, G.: Bounds on the Castelnuovo-Mumford regularity of tensor products. Proc. Amer. Math. Soc. \textbf{135}(7), 1949-1957 (2007).


\bibitem{CasaWise}
Casalaina-Martin, Sebastian., Wise, Jonathan.: An introduction to moduli stacks, with a view towards Higgs bundles on algebraic curves. The geometry, topology and physics of moduli spaces of Higgs bundles. Lect. Notes Ser. Inst. Math. Sci. Natl. Univ. Singap. \textbf{36}. World Sci. Publ., Hackensack, NJ (2018).

\bibitem{FuSt}
Furuta, M., Steer, B.: Seifert fibred homology $3$-spheres and the Yang-Mills equations on Riemann surfaces with marked points. Adv. Math. \textbf{96}(1), 38-102 (1992).

\bibitem{KSZ1901}
Kydonakis, G., Sun, H., and Zhao, L.: The Beauville-Narasimhan-Ramanan correspondence for twisted Higgs $ V $-bundles and components of parabolic $\text {Sp}(2n,\mathbb {R}) $-Higgs moduli. arXiv:1901.09148 (2019). \emph{to appear in Trans. Amer. Math. Soc. .}

\bibitem{Groth}
Grothendieck, A.: \'El\'ements de g\'eom\'etrie alg\'ebrique: IV. \'Etude locale des sch\'emas et des morphismes de sch\'emas. III. Inst. Hautes \'Etudes Sci. Publ. Math. \textbf{28} (1966).

\bibitem{HaRy}
Hall, J., Rydh, D.: Artin's criteria for algebraicity revisited. Algebra Number Theory \textbf{13}(4), 749-796 (2013).

\bibitem{Hall}
Hall, J.: Openness of versality via coherent functors. J. Reine Angew. Math. \textbf{722}, 137-82 (2017).

\bibitem{HaMo}
Harris, J., Morrison, I. Moduli of curves. Graduate Texts in Mathematics, \textbf{187}. Springer-Verlag, New York (1998).

\bibitem{Hit1987}
Hitchin, N. J.: The self-duality equations on a Riemann surface. Proc. London Math. Soc. \textbf{3}(1), 59-126 (1987).

\bibitem{HuLe}
Huybrechts, D., Lehn, M.: The geometry of moduli spaces of sheaves. Second edition. Cambridge Mathematical Library. Cambridge University Press, Cambridge (2010).

\bibitem{Kre}
Kresch, A.: On the geometry of Deligne-Mumford stacks. Algebraic geometry-Seattle 2005. Proc. Sympos. Pure Math. \textbf{80}, Part 1, 259-271 (2009).

\bibitem{Lan}
Langer, A.: Semistable sheaves in positive characteristic. Ann. of Math. (2) \textbf{159}(1), 251-276 (2004)..

\bibitem{Lie}
Lieblich, M.: Remarks on the stack of coherent algebras. Int. Math. Res. Not., Art. Id 75273 (2006).

\bibitem{MehSes}
Mehta, V. B., Seshadri, C. S.:  Moduli of vector bundles on curves with parabolic structures. Math. Ann. \textbf{248}(3), 205-239 (1980).

\bibitem{MumFogKir}
Mumford, D., Fogarty, J., Kirwan, F.: Geometric invariant theory. Third edition. Ergebnisse der Mathematik und ihrer Grenzgebiete (2) \textbf{34}. Springer-Verlag, Berlin (1994).

\bibitem{NaSt}
Nasatyr, B., Steer, B.: Orbifold Riemann surfaces and the Yang-Mills-Higgs equations. Ann. Scuola Norm. Sup. Pisa Cl. Sci. (4) \textbf{22}(4), 595-643 (1995).

\bibitem{Nir}
Nironi, F.: Moduli spaces of semistable sheaves on projective deligne-mumford stacks. arXiv: 0811.1949 (2009).
\bibitem{Ol}
Olsson, M.: Algebraic spaces and stacks. American Mathematical Society Colloquium Publications \textbf{62}. American Mathematical Society, Providence (2016).

\bibitem{OlSt}
Olsson, M. , Starr, J.: Quot functors for deligne-mumford stacks. Comm. Algebra, \textbf{31}(8), 4069-4096 (2003).

\bibitem{Sch}
Schlessinger, M.: Functors of Artin rings. Trans. Amer. Math. Soc. \textbf{130}(2), 208-222 (1968).

\bibitem{Simp1990}
Simpson, C. T.: Harmonic bundles on noncompact curves. J. Amer. Math. Soc. \textbf{3}(3), 713-770 (1990).

\bibitem{Simp2}
Simpson, C. T.: Moduli of representations of the fundamental group of a smooth projective variety I. Inst. Hautes. \'Etudes Sci. Publ. Math. \textbf{79}(1), 47-129 (1994).




\bibitem{Simp2010}
Simpson, C.: Local systems on proper algebraic V-manifolds. Pure Appl. Math. Q. \textbf{7}(4), 1675-1759 (2010).

\bibitem{Sun20194}
Sun, H.: Moduli Problem of Hitchin Pairs over Deligne-Mumford Stack. arXiv: 1909.04543 (2019).

\bibitem{Sta}
The Stacks Project Authors: Stacks Project. https://stacks.math.columbia.edu (2018).

\bibitem{Wise}
Wise, J.: Obstruction theories and virtual fundamental classes. arXiv:1111.4200 (2011).
\end{thebibliography}
\end{document}